\newtheorem{theorem}{Theorem}[section]
\newtheorem{lemma}[theorem]{Lemma}
\newtheorem{corollary}[theorem]{Corollary}
\newtheorem{example}[theorem]{Example}
\newtheorem*{claim}{Claim}
\theoremstyle{definition}
\newtheorem{definition}[theorem]{Definition}
\newtheorem{question}[theorem]{Question}
\theoremstyle{remark}
\newtheorem{remark}[theorem]{Remark}
\newcommand{\imp}{\rightarrow}
\newcommand{\Imp}{\Rightarrow}
\newcommand{\biimp}{\leftrightarrow}
\newcommand{\Nb}{\mathbb{N}}
\newcommand{\Psf}{\mathsf{P}}
\newcommand{\Qsf}{\mathsf{Q}}
\newcommand{\Acal}{\mathcal{A}}
\newcommand{\Ccal}{\mathcal{C}}
\newcommand{\Mcal}{\mathcal{M}}
\newcommand{\Pcal}{\mathcal{P}}
\newcommand{\Rcal}{\mathcal{R}}
\newcommand{\Scal}{\mathcal{S}}
\newcommand{\cs}{2^\Nb}
\newcommand{\str}{2^{<\Nb}}
\newcommand{\uh}{{\upharpoonright}}
\newcommand{\llb}{\llbracket}
\newcommand{\rrb}{\rrbracket}
\newcommand{\cyl}[1]{\llb {#1} \rrb}
\renewcommand{\setminus}{\smallsetminus}
\newcommand{\bad}{\mathrm{Bad}}
\newcommand{\set}[1]{\left\{ #1 \right\}}
\newcommand{\card}[1]{\left| #1 \right|}
\newcommand{\tuple}[1]{\left\langle #1 \right\rangle}
\newcommand{\cond}[1]{\left\{\begin{array}{ll} #1 \end{array}\right.}
\newcommand{\s}[1]{\ensuremath{\sf{#1}}}
\newcommand{\ran}[1]{#1\mbox{-}\s{RAN}}
\newcommand{\wwkls}[1]{#1\mbox{-}\s{WWKL}}
\DeclareMathOperator{\h}{\operatorname{-}}
\DeclareMathOperator{\rca}{\s{RCA}_0}
\DeclareMathOperator{\aca}{\s{ACA}_0}
\DeclareMathOperator{\wkl}{\s{WKL}_0}
\DeclareMathOperator{\dnr}{\s{DNR}}
\DeclareMathOperator{\dnrzp}{\s{DNR}[\emptyset']}
\DeclareMathOperator{\isig}{\s{I}\Sigma}
\DeclareMathOperator{\bsig}{\s{B}\Sigma}
\DeclareMathOperator{\bst}{\bsig^0_2}
\DeclareMathOperator{\ist}{\s{I}\Sigma^0_2}
\DeclareMathOperator{\cst}{\s{C}\Sigma^0_2}
\DeclareMathOperator{\rkl}{\s{RKL}}
\DeclareMathOperator{\rwkl}{\s{RWKL}}
\DeclareMathOperator{\rwwkl}{\s{RWWKL}}
\DeclareMathOperator{\rt}{\s{RT}}
\DeclareMathOperator{\srt}{\s{SRT}}
\DeclareMathOperator{\rrt}{\s{RRT}}
\DeclareMathOperator{\ads}{\s{ADS}}
\DeclareMathOperator{\sads}{\s{SADS}}
\DeclareMathOperator{\cads}{\s{CADS}}
\DeclareMathOperator{\cac}{\s{CAC}}
\DeclareMathOperator{\coh}{\s{COH}}
\DeclareMathOperator{\pizog}{\Pi^0_1\s{G}}
\DeclareMathOperator{\fpf}{\s{FPF}}
\DeclareMathOperator{\ts}{\s{TS}}
\DeclareMathOperator{\sts}{\s{STS}}
\DeclareMathOperator{\srrt}{\s{SRRT}}
\DeclareMathOperator{\wsrrt}{\s{WSRRT}}
\DeclareMathOperator{\fs}{\s{FS}}
\DeclareMathOperator{\sfs}{\s{SFS}}
\DeclareMathOperator{\hyp}{\s{HYP}}
\DeclareMathOperator{\emo}{\s{EM}}
\DeclareMathOperator{\semo}{\s{SEM}}
\DeclareMathOperator{\opt}{\s{OPT}}
\DeclareMathOperator{\amt}{\s{AMT}}
\DeclareMathOperator{\ipt}{\s{IPT}}
\def\str{2^{<\Nb}}
\def\N{\mathbb{N}}
\title{Somewhere over the rainbow Ramsey theorem for pairs}
\author{
  Ludovic Patey
}
\date{\today}
\begin{document}

\begin{abstract}
The rainbow Ramsey theorem states that every coloring of tuples where each color is used
a bounded number of times has an infinite subdomain on which no color appears twice.
The restriction of the statement to colorings over pairs ($\mathsf{RRT}^2_2$) admits several characterizations:
it is equivalent to finding an infinite subset of a 2-random, to diagonalizing against Turing machines
with the halting set as oracle...
In this paper we study principles that are closely related to the rainbow Ramsey theorem, 
the Erd\H{o}s Moser theorem and the thin set theorem
within the framework of reverse mathematics. We prove that 
the thin set theorem for pairs implies $\mathsf{RRT}^2_2$,
and that the stable thin set theorem for pairs implies the atomic model theorem
over~$\mathsf{RCA}_0$.
We define different notions of stability for the rainbow Ramsey theorem and establish characterizations in terms of Ramsey-type K\"onig's lemma, relativized Schnorr randomness
or diagonalization of $\Delta^0_2$ functions.
\end{abstract}

\maketitle

%\tableofcontents

\section{Introduction}

Reverse mathematics is a vast mathematical program whose goal is to
find the provability content of theorems. Empirically, many ``ordinary'' 
(i.e. non set-theoretic) theorems happen to require very weak axioms,
and furthermore to be equivalent to one of five main subsystems of second order arithmetic.
However, among theorems studied in reverse mathematics, Ramseyan principles
are known to contradict this observation. Their computational complexities
are difficult to tackle and the introduction to a new Ramseyan principle often leads
to a new subsystem of second order arithmetics.

The Ramsey theorem for pairs ($\rt^2_2$) states that for every coloring
of pairs into two  colors, there exists an infinite restriction of the domain
on which the coloring is monochromatic. This principle benefited
of a particular attention from the scientific community \cite{Bovykin2005strength,Cholak2001strength,Chong2014metamathematics,Jockusch1972Ramseys,Liu2010Cone, Seetapun1995strength}.
The questions of its relations with $\wkl$ -- K\"onig's lemma
restricted to binary trees -- and $\srt^2_2$ -- the restriction
of $\rt^2_2$ to stable colorings -- have been opened for decades
and went recently solved. Liu~\cite{Liu2012RT22} proved that
$\rt^2_2$ does not imply for~$\wkl$ over~$\rca$, and Chong et Slaman~\cite{Chong2014metamathematics} proved
that~$\srt^2_2$ does not imply $\rt^2_2$, using non-standard models.
It remains open whether every $\omega$-model of $\srt^2_2$
is also model of $\rt^2_2$.

\bigskip

\subsection{The rainbow Ramsey theorem}

Among the consequences of Ramsey's theorem, the rainbow Ramsey theorem
intuitively states the existence of an infinite injective restriction
of any function which is already close to being injective.
We now provide its formal definition.

\begin{definition}[Rainbow Ramsey theorem]
Fix $n, k \in \N$. A coloring function $f: [\N]^n \to \N$ is \emph{$k$-bounded}
  if for every $y \in \N$, $\card{f^{-1}(y)} \leq k$. A set $R$ is a \emph{rainbow} for $f$ (or an \emph{$f$-rainbow})
  if $f$ is injective over~$[R]^n$.
  $\rrt^n_k$ is the statement ``Every $k$-bounded function $f : [\N]^n \to \N$
  has an infinite $f$-rainbow''.
  $\rrt$ is the statement: $(\forall n)(\forall k)\rrt^n_k$.
\end{definition}

A proof of the rainbow Ramsey theorem is due to Galvin who noticed that
it follows easily from Ramsey's theorem.
Hence every computable 2-bounded coloring function $f$ over $n$-tuples has an infinite $\Pi^0_n$ rainbow.
Csima and Mileti proved in \cite{Csima2009strength} that every 2-random bounds
an~$\omega$-model of $\rrt^2_2$
and deduced that  $\rrt^2_2$ implies neither $\sads$ nor $\wkl$ over $\omega$-models.
Conidis \& Slaman adapted in~\cite{Conidis2013Random} the argument from Cisma and Mileti 
to obtain $\rca \vdash \ran{2} \imp \rrt^2_2$.

Wang proved in~\cite{WangSome,Wang2013Rainbow} that $\rca + \rrt^3_2 \nvdash \aca$
and $\rrt^2_2$ is $\Pi^1_1$-conservative over $\rca + \bst$.
He refined his result in \cite{Wang2014Cohesive}, proving that $\rrt^3_2$
implies neither $\wkl$ nor $\rrt^4_2$ over~$\omega$-models.

Csima and Mileti proved in \cite{Csima2009strength} that
for every $n \in \Nb$, there exists a computable 2-bounded 
coloring over~$[\Nb]^n$ with no infinite $\Sigma^0_n$ rainbow.
Conidis \& Slaman proved in \cite{Conidis2013Random} that $\rca + \rrt^2_2$ proves $\cst$.
Later, Slaman proved in~\cite{Slam2011First} that $\rrt^2_2$ -- in fact even $\ran{2}$ --
does not imply $\bst$ over $\rca$.

In a computational perspective, Miller \cite{MillerPersonal} proved
that $\rrt^2_2$ is equivalent to $\dnrzp$ where $\dnr[\emptyset^{(n)}]$
is the statement ``for every set $X$, there exists
a function $f$ such that $f(e) \neq \Phi^{X^{(n)}}_e(e)$ for every $e$''.
$\dnrzp$ is known to be equivalent to the ability to escape
finite $\Sigma^0_2$ sets of uniformly bounded size, to diagonalize against partial $\emptyset'$-computable functions,
to find an infinite subset of a 2-random, or an infinite subset of a path in a $\Delta^0_2$ tree of positive measure.

Having so many simple characterizations speak in favor of the
naturality of the rainbow Ramsey theorem for pairs.
Its characterizations are formally stated in sections~\ref{sect:srrt22-dnr} and~\ref{srrt22-konig}
and are adapted to obtain characterizations of stable versions of the rainbow Ramsey theorem.

\bigskip

\subsection{Somewhere over \texorpdfstring{$\rrt^2_2$}{RRT22}}

There exist several proofs of the rainbow Ramsey theorem, partly due to 
the variety of its characterizations. Among them are statements about
graph theory and thin set theorem.

The \emph{Erd\H{o}s-Moser theorem} ($\emo$) states that every infinite tournament (see below)
has an infinite transitive subtournament. It can be seen
as the ability to find an infinite subdomain of an arbitrary 2-coloring of pairs
on which the coloring behaves like a linear order. It is why $\emo$, together with the ascending descending sequence principle ($\ads$),
proves $\rt^2_2$ over $\rca$.

\begin{definition}[Erd\H{os}s-Moser theorem] 
A tournament $T$ on a domain $D \subseteq \N$ is an irreflexive binary relation on~$D$ such that for all $x,y \in D$ with $x \not= y$, exactly one of $T(x,y)$ or $T(y,x)$ holds. A tournament $T$ is \emph{transitive} if the corresponding relation~$T$ is transitive in the usual sense. A tournament $T$ is \emph{stable} if $(\forall x \in D)[(\forall^{\infty} s) T(x,s) \vee (\forall^{\infty} s) T(s, x)]$.
$\emo$ is the statement ``Every infinite tournament $T$ has an infinite transitive subtournament.''
$\semo$ is the restriction of $\emo$ to stable tournaments.
\end{definition}

Bovykin and Weiermann proved in \cite{Bovykin2005strength} that $\emo + \ads$
is equivalent to $\rt^2_2$ over $\rca$, and the same equivalence holds between the stable versions.
Lerman \& al. \cite{Lerman2013Separating} proved over $\rca + \bst$ that $\emo$ implies $\opt$
and that there is an $\omega$-model of $\emo$ not model of $\srt^2_2$.
Kreuzer proved in~\cite{Kreuzer2012Primitive} that $\semo$ implies
$\bst$ over $\rca$. Bienvenu et al.~\cite{Bienvenu2014Ramsey}
and Flood \& Towsner~\cite{Flood2014Separating} proved independently that $\rca \vdash \semo \imp \rwkl$,
hence there is an $\omega$-model of $\rrt^2_2$ not model of $\semo$.
We prove that that $\emo$ implies $\rrt^2_2$ over $\rca$ using both a direct proof
and the equivalence between $\rrt^2_2$ and $\dnrzp$. 
We also prove that $\rca \vdash \emo \imp [\sts(2) \vee \coh]$.

The \emph{thin set theorem} ($\ts$) states that every coloring of tuples has a restriction over an infinite domain
on which it avoids a color. It is often studied together with the free set theorem $\fs$.
Its study has been initiated by Friedman
in the FOM mailing list \cite{FriedmFom:53:free,Friedman2010Boolean}.

\begin{definition}[Free set theorem]
Let $k \in \Nb$ and $f : [\Nb]^k \to \Nb$.
A set $A$ is \textit{free for $f$} (or \emph{$f$-free}) if for every $x_1 < \dots < x_k \in A$,
if $f(x_1, \dots, x_k) \in A$ then $f(x_1, \dots, x_k) \in \set{x_1, \dots, x_k}$.
$\fs(k)$ is the statement ``every function $f : [\Nb]^k \to \Nb$ has
an infinite set free for~$f$''. 
A function~$f : [\Nb]^{k+1} \to \Nb$ is \emph{stable}
if for every~$\sigma \in [\Nb]^k$, $\lim_s f(\sigma, s)$ exists.
$\sfs(k)$ is the restriction of $\fs(k)$ to stable functions.
$\fs$ is the statement~$(\forall k)\fs(k)$
\end{definition}

\begin{definition}[Thin set theorem]
Let $k \in \Nb$ and $f : [\Nb]^k \to \Nb$.
A set $A$ is \textit{thin for $f$} (or \emph{$f$-thin}) if $f([A]^n) \neq \Nb$.
$\ts(k)$ is the statement ``every function $f : [\Nb]^k \to \Nb$ has
an infinite set thin for~$f$''.
$\sts(k)$ is the restriction of $\ts(k)$ to stable functions.
$\ts$ is the statement~$(\forall k)\ts(k)$.
\end{definition}

Cholak \& al. studied extensively free set and thin set principles
in \cite{Cholak2001Free}, proving that $\fs(1)$ holds in $\rca$ while $\fs(2)$ does not,
$\fs(k+1)$ (resp. $\ts(k+1)$)
implies $\fs(k)$ (resp. $\ts(k)$) over $\rca$. They proved that
$\fs$ implies $\ts$ over $\rca$, and the more finely-grained result that $\fs(k)$ implies $\ts(k)$
and $\sfs(k)$ implies $\sts(k)$
over $\rca$ for every $k$. 

Some of the results where already stated by Friedman~\cite{FriedmFom:53:free}
without proof, notably there is an $\omega$-model of $\wkl$
which is not a model of $\ts(2)$, and $\aca$ does not imply $\ts$.
Cholak \& al. also proved that $\rca + \rt^k_2$ implies $\fs(k)$ for every~$k$
hence $\aca$ proves $\fs(k)$.
Wang showed in \cite{Wang2014Some} that neither $\fs$
nor $\ts$ implies $\aca$. He proved that $\rca \vdash \fs(k) \rightarrow \rrt^k_2$.
Rice~\cite{RiceThin} proved that $\sts(2)$ implies $\dnr$ over $\rca$.

We prove, using the equivalence between $\rrt^2_2$ and $\dnrzp$, that $\rca \vdash \ts(2) \imp \rrt^2_2$
and more generally $\rca \vdash \ts(k+1) \imp \dnr[\emptyset^{(k)}]$. We also prove that
$\sts(2)$ implies $\amt$ over $\rca$.

\bigskip

\subsection{Stable versions of the rainbow Ramsey theorem}

Consider a 2-bounded coloring $f$ of pairs as the history of interactions
between people in an infinite population. $f(x, s) = f(y, s)$ means that $x$ and $y$
interact at time $s$. In this world, $x$ and $y$ get \emph{married} if $f(x, s) = f(y, s)$
for cofinitely many $s$, whereas a person $x$ becomes a \emph{monk} if $f(x, s)$ is a fresh color
for cofinitely many $s$. Finally, a person $x$ is \emph{wise} if for each~$y$, 
either $x$ and~$y$ get married or $x$ and~$y$ eventually
break up forever, i.e., $(\forall y)[(\forall^\infty s) f(x,s) = f(y,s) \vee (\forall^\infty s)f(x,s) \neq f(y,s)]$. 
In particular married people and monks are wise.
Note that 2-boundedness implies that a person~$x$ can get married to at most one~$y$.

$\rrt^2_2$ states that given an world, we can find infinitely many instants where
people behave like monks. However we can weaken our requirement, leading to new principles.

\begin{definition}[Stable rainbow Ramsey theorem]
A coloring $f : [\Nb]^2 \to \Nb$ is 
\emph{rainbow-stable} if for every $x$, one of the following holds:
\begin{itemize}
	\item[(a)] There is a $y \neq x$ such that 
	$(\forall^\infty s)f(x, s) = f(y,s)$
	\item[(b)] $(\forall^{\infty} s) \card{\set{y \neq x : f(x, s) = f(y, s)}} = 0$
\end{itemize}
$\srrt^2_2$ is the statement ``every rainbow-stable 2-bounded coloring $f:[\Nb]^2 \to \Nb$
has a rainbow.''
\end{definition}

Hence in the restricted world of $\srrt^2_2$, everybody either gets married or becomes a monk.
$\srrt^2_2$ is a particular case of $\rrt^2_2$. It
is proven to have $\omega$-models with only low sets, hence is strictly weaker than $\rrt^2_2$.
Characterizations of $\rrt^2_2$ extend to $\srrt^2_2$ which is equivalent to diagonalizing
against any $\emptyset'$-computable total function, finding an infinite subset of a path
in a $\Delta^0_2$ tree of positive $\emptyset'$-computable measure, or being the subset
of an infinite set passing a Schnorr test relativized to~$\emptyset'$.

$\srrt^2_2$ happens to be useful as a factorization principle: It is strong enough to imply
principles like $\dnr$ or $\opt$ and weak enough to be consequence of many stable principles,
like $\srt^2_2$, $\sts(2)$ or $\semo$. It thus provides a factorization of the proofs
that $\ts(2)$ or $\emo$ both imply $\opt$ and $\dnr$ over $\omega$-models, which were proven independently
in \cite{Csima2009strength,RiceThin} for $\ts(2)$ and \cite{Lerman2013Separating} for $\emo$.

Wang used in \cite{Wang2014Cohesive} another version of stability for rainbow Ramsey theorems
to prove various results, like the existence of non-PA solution to any instance of $\rrt^3_2$.
This notion leads to a principle between $\rrt^2_2$ and $\srrt^2_2$.

\begin{definition}[Weakly stable rainbow Ramsey theorem]
A coloring $f : [\Nb]^2 \to \Nb$ is \emph{weakly rainbow-stable} if
$$
(\forall x)(\forall y)[(\forall^{\infty} s)f(x, s) = f(y,s) \vee (\forall^{\infty} s)f(x, s) \neq f(y,s)]
$$
$\wsrrt^2_2$ is the statement ``every weakly rainbow-stable 2-bounded coloring $f:[\Nb]^2 \to \Nb$
has an infinite rainbow.''
\end{definition}

Weak rainbow-stability can be considered as the ``right'' notion of stability for 2-bounded colorings
as one can extract an infinite weakly rainbow-stable restriction of any 2-bounded coloring using
cohesiveness.

However the exact strength of $\wsrrt^2_2$ is harder to tackle. A characterization
candidate would be computing an infinite subset of a path in a $\emptyset'$-computably
graded $\Delta^0_2$ tree where the notion of computable gradation is taken from the restriction
of Martin-L\"of tests to capture \emph{computable} random reals. We prove that it is enough be able to escape finite
$\Delta^0_2$ sets to prove $\wsrrt^2_2$. We also separate $\wsrrt^2_2$
from~$\rrt^2_2$ by proving that $\wsrrt^2_2$ contains an $\omega$-model with only low sets.
The question of exact characterizations of $\wsrrt^2_2$ remains open.

Due to the lack of characterizations of $\wsrrt^2_2$, only $\sfs(2)$ is proven to be strong enough
to imply $\wsrrt^2_2$ among $\sfs(2)$, $\sts(2)$ and $\semo$.

\bigskip

\subsection{Notation}

The set of finite binary strings is denoted by $2^{<\Nb}$. We write~$\epsilon$ for the empty string.
The length of $\sigma \in 2^{<\Nb}$ is denoted $|\sigma|$. For $i \in \N$, and $\sigma \in \str$, $\sigma(i)$ is the $(i+1)$-th bit of $\sigma$. For $\sigma,\tau \in \str$, we say that $\sigma$ is a prefix of $\tau$ (written $\sigma \preceq \tau$) if $|\sigma| \leq |\tau|$ and $\sigma(i)=\tau(i)$ for all $i<|\sigma|$. Given a finite string $\sigma$, 
$\Gamma_\sigma = \{\tau \in 2^{<\Nb} : \sigma \preceq \tau \}$.

We denote by $2^{\Nb}$ the space of infinite binary sequences. We also refer to the elements of $2^{\Nb}$ as \emph{sets (of integers)}, as any $X \subseteq  \N$ can be identified with its characteristic sequence, which is an element of $2^{\Nb}$. For a string $\sigma$, $\cyl{\sigma}$ is the set of $X \in \cs$ whom $\sigma$ is a prefix of.

A \emph{binary tree} $T$ is a subset of $2^{<\Nb}$ downward closed under prefix relation. 
Unless specified otherwise we will consider only binary trees.
A sequence $P$ is a \emph{path} of $T$ if any initial segment of $P$ is in $T$. We denote by $[T]$ the $\Pi^0_1$
class of paths through $T$.

Given a set $X$ and an element $a$, we write $a < X$ to state that $a$ is strictly below each member of $X$.
We denote by $\Gamma^i_X$ the set $\{ \tau \in 2^{<\Nb} : (\forall s < \card{\tau}) s \in X \imp \tau(s) = i \}$.
$\Gamma_X = \Gamma^0_X \cup \Gamma^1_X$. Whenever $X = \{n\}$, we shall write $\Gamma^i_n$ for $\Gamma^i_{\{n\}}$.

\section{Rainbow Ramsey theorem}

The computational strength of the rainbow Ramsey theorem for pairs is well understood,
thanks to its remarkable connections with algorithmic randomness, and more precisely
the notion of diagonal non-computability.

\begin{definition}[Diagonal non-computability]
A function $f : \Nb \to \Nb$ is \emph{diagonally non-computable} relative to $X$ if $(\forall e)f(e) \neq \Phi^X_e(e)$. 
A function $f : \Nb \to \Nb$ is \emph{fixpoint-free} relative to $X$
if $(\forall e)W_e \neq W_{f(e)}$.
$\dnr$ (resp. $\fpf$) is the statement ``For every $X$, there exists a function d.n.c. (resp. f.p.f.) relative to~$X$''.
For every $n \in \Nb$, $\dnr[0^{(n)}]$ is the statement ``For every $X$, there exists a function
d.n.c. relative to~$X^{(n)}$''.
\end{definition}

It is well-known that fixpoint-free degrees are precisely d.n.c. degrees, and that this equivalence
holds over $\rca$. Hence $\rca \vdash \dnr \biimp \fpf$.
Miller~\cite{MillerPersonal} gave a characterization of d.n.c. degrees relative to $\emptyset'$:

\begin{theorem}[Miller \cite{MillerPersonal}]\label{thm-rrt22-dnrzp}
$\rca \vdash \rrt^2_2 \biimp \dnrzp$
\end{theorem}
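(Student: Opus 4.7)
The plan is to prove the two implications separately, working throughout in $\rca$.

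For $\rrt^2_2 \Rightarrow \dnrzp$, the strategy is to reduce both principles to the ability to escape $\Sigma^0_2$ traces of uniformly bounded size, a reformulation highlighted in the paragraph preceding the theorem statement. Recall the classical equivalence (provable in $\rca$) between being d.n.c.\ relative to $X'$ and computing a function $h$ with $h(e) \notin T_e$ for every uniformly $\Sigma^{0,X}_2$ sequence $(T_e)_e$ satisfying $|T_e| \leq 2$. Given such a sequence, I would construct an $X$-computable 2-bounded coloring $f : [\N]^2 \to \N$ as follows: for each $e$ and each value $n$ enumerated into $T_e$ at some stage $s$, I pick a fresh pair of integers tied to the code of $(e,n)$ and paint it with a color $c_{e,n}$ used exactly once elsewhere, so that each color appears at most twice and the coloring is $X$-computable. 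Any infinite rainbow $R$ then has the property that, for each $e$, it must omit at least one of the marked pairs; parsing this omission $X$-computably produces, for every $e$, an element avoiding $T_e$, hence the sought-after function $h$.

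For $\dnrzp \Rightarrow \rrt^2_2$, fix a 2-bounded coloring $f$ and work relative to an oracle coding $f$. Two routes are available. The more indirect one assembles results cited in the introduction: by Conidis--Slaman's $\rca$-formalization of Csima--Mileti, one has $\rca \vdash \ran{2} \imp \rrt^2_2$, while $\dnrzp$ is known to suffice for computing an infinite subset of a Martin-L\"of 2-random sequence, which in turn computes a rainbow for $f$. The more direct approach is a Mathias-style forcing whose conditions are finite rainbows for $f$, in which each density requirement has the form ``there exists an extending element avoiding the forbidden $\Sigma^{0,f}_2$ set of bounded size''; the d.n.c.-relative-to-$f'$ function provided by $\dnrzp$ is exactly what is needed to meet all such requirements uniformly.

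The main obstacle is the forward direction: calibrating the coloring so that it is genuinely 2-bounded while still forcing every infinite rainbow to escape $T_e$ for every $e$. A too-parsimonious coloring (each new enumeration event painted with a fresh color) is 1-bounded and has trivial rainbows that encode nothing, whereas overly aggressive recycling violates 2-boundedness. The correct balance — pairing each enumerated element of $T_e$ with a freshly introduced auxiliary integer and reusing the assigned color exactly once on a second carefully selected pair — is the technical heart of the construction. A secondary technical point is to verify that both the construction and the extraction of $h$ from a rainbow go through with only the $\Sigma^0_1$-induction available in $\rca$, without appealing to $\bst$ or stronger induction.
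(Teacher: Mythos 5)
The paper itself contains no proof of Theorem~\ref{thm-rrt22-dnrzp}: it is imported from Miller's personal communication, so your attempt can only be measured against the standard argument and against the closely analogous constructions the paper does carry out (Theorems~\ref{thm:em-dnrzp} and~\ref{thm:ts2-dnrzp}). Measured that way, your forward direction has a genuine gap. In the working construction one does not attach fresh auxiliary pairs to enumeration events of a $\Sigma^{0,X}_2$ trace. Instead one fixes, by the limit lemma, an $X$-computable $g(e,s)$ with $\lim_s g(e,s) = \Phi^{X'}_e(e)$ when defined, interprets $g(e,s)$ as a finite set $D_{e,s}$ whose size grows with $e$ (e.g.\ $3^{e+1}$, as in Theorem~\ref{thm:em-dnrzp}), and at each stage $s$ picks a pair $\{x,y\} \subseteq D_{e,s}$ disjoint from the higher-priority sets (this is exactly why the sizes must grow, and why traces of size $2$ do not suffice for this step) and gives the pairs $(x,s)$ and $(y,s)$ a common color. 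A rainbow $R$ then computes the function $h(e) = $ code of the first $|D_e|$-many elements of $R$, and $h(e) = \Phi^{X'}_e(e)$ is impossible: it would put $D_e \subseteq R$, and after the approximation stabilizes every large $s \in R$ would produce a monochromatic pair inside $R$. The essential feature is that the colored pairs live on the candidate rainbow itself, so that a wrong guess of $h$ is refuted by $R$'s own elements. In your scheme the marked pairs are fresh integers tied to $(e,n)$: an infinite rainbow is free to omit or include them with no constraint linking its behaviour to membership in $T_e$, and there is no $X$-computable way to ``parse the omission'' --- which pairs were marked for which $n$, and whether $n$ really stays in $T_e$, are $\Sigma^0_2$ matters, not data readable off the rainbow. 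You yourself flag the ``correct balance'' as the unresolved technical heart; the point is that no calibration of fresh-pair marking closes it, because the diagonalization has to be carried by pairs of the form (element of the predicted set, current stage).

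The reverse direction is closer in spirit but underspecified, and one of its two routes does not yield an $\rca$ proof at all: chaining ``$\dnrzp$ computes an infinite subset of a 2-random, which computes a rainbow'' is an $\omega$-model/degree-theoretic argument, not a formalization over $\rca$, which is what the theorem asserts. The Mathias-style route is the right one, but your conditions cannot be arbitrary finite rainbows: a finite rainbow may contain $x,y$ with $f(x,s) = f(y,s)$ for cofinitely many $s$, and such a condition has only finitely many admissible one-point extensions, so density fails. The forbidden set attached to a finite condition $F$ must be $\{\, y : (\exists x \in F)(\forall^\infty s)\, f(x,s) = f(y,s) \,\}$, which by 2-boundedness has size at most $2|F|$ (a bound you must know in advance to invoke the escaping-function form of $\dnrzp$, as in Theorem~\ref{thm:dnrzn-char-dnr}) and is uniformly $\Sigma^{0,f}_2$. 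This builds a prerainbow, not a rainbow, and the passage from prerainbow to rainbow is Lemma~\ref{lem:prerainbow-equiv}, stated over $\rca + \bst$; so the induction issue you mention in passing is a real obligation of the proof, not a side remark.
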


A first consequence of Theorem~\ref{thm-rrt22-dnrzp} 
is another proof of $\rca + \ran{2} \vdash \rrt^2_2$.
Moreover it will enable us to prove
a lot of implications from other principles to $\rrt^2_2$ -- Theorem~\ref{thm:em-dnrzp}, Theorem~\ref{thm:ts2-dnrzp} --.
The author, together with Bienvenu and Shafer defined in~\cite{Bienvenu2014role} a property over $\omega$-structures, the No Randomized Algorithm
property, and classified a wide range of principles depending on whether their $\omega$-models have this property.
They proved that for any principle $P$ having this property, there exists an $\omega$-model of $\rca + \ran{2}$
which is not a model of~$P$. In particular, there exists an $\omega$-model of $\rca + \rrt^2_2$ which is not a model of~$P$.

A careful look at the proof of Theorem~\ref{thm-rrt22-dnrzp} gives the following relativized version:
\begin{theorem}[Miller \cite{MillerPersonal}, $\rca$]\label{thm:rrt22-dnrzp-rel}
Fix a set $X$. 
\begin{itemize}
  \item There is an $X$-computable 2-bounded coloring $f : [\Nb]^2 \to \Nb$
such that every infinite $f$-rainbow computes (not relative to $X$) a function d.n.c. relative to $X^{'}$.
  \item For every $X$-computable 2-bounded coloring $f : [\Nb]^2 \to \Nb$
  and every function $g$ d.n.c. relative to $X'$, there exists a $g \oplus X$-computable infinite $f$-thin set.
\end{itemize}
\end{theorem}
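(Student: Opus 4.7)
The plan is to relativize Miller's proof of Theorem~\ref{thm-rrt22-dnrzp}, replacing $\emptyset$ by $X$ and $\jump$ by $X'$ throughout. Both directions of Miller's argument are visibly uniform in any side oracle, so what remains is to check that each ingredient stays well-typed under the substitution.

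For the first bullet I fix an $X$-computable approximation $(X'_s)_{s \in \Nb}$ of $X'$, for instance the step-by-step enumeration of $X'$ as a c.e.$(X)$ set. For each index $e$ set $\varphi_e(s) = \Phi^{X'_s}_{e,s}(e)$ when this converges and undefined otherwise; the $\varphi_e$ are $X$-computable and $\lim_s \varphi_e(s) = \Phi^{X'}_e(e)$ whenever the latter converges. Following Miller~\cite{MillerPersonal} and Csima--Mileti~\cite{Csima2009strength}, I build a 2-bounded $X$-computable coloring $f$ that, for each $e$ and each stage $s$ at which $\varphi_e$ changes value, introduces exactly one controlled colour duplication at a pair carrying the current candidate. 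Careful scheduling across the requirements keeps the global colour-reuse count at two. Any infinite rainbow $R$ must omit, for each $e$, the duplicating half of all but finitely many change-events, and the unique value left uncontradicted on $R$ is $\lim_s \varphi_e(s)$; a fixed shift in $e$ then produces a function $g \leq_T R$ with $g(e) \neq \Phi^{X'}_e(e)$ whenever the right-hand side converges. The decoding consults only $R$, which is why the theorem states that the resulting d.n.c.\ function is computed without help from $X$.

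For the second bullet I invoke the characterization recalled in the introduction: $g$ is d.n.c.\ relative to $X'$ iff $g$ uniformly escapes $\Sigma^{0,X}_2$ sequences of finite sets of uniformly bounded size. Miller's proof of the converse direction builds an infinite rainbow stage by stage. Given $R_n = \{a_0, \dots, a_{n-1}\}$ on which $f$ is already injective, one extracts from the coloring and the current partial rainbow a uniformly $\Sigma^{0,X}_2$ family of forbidden ``collision'' extensions --- bounded in size thanks to 2-boundedness --- and consults $g$ to pick a safe $a_n > a_{n-1}$. Iterating yields a $g \oplus X$-computable infinite $f$-rainbow, which in particular is an infinite $f$-thin set.

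The principal obstacle is the bookkeeping in the first bullet: simultaneously, across all $e \in \Nb$, one must arrange the colour duplications so that 2-boundedness is preserved globally and so that every infinite rainbow still decodes $\lim_s \varphi_e(s)$ for every $e$ on which $\Phi^{X'}_e(e)$ converges. This is the combinatorial heart of Miller's original construction. Relativization to $X$ introduces no new difficulty because the approximation $(X'_s)$ is $X$-computable and the rainbow-to-d.n.c.\ decoding is oracle-free, so the entire argument transposes essentially verbatim.
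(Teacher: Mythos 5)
Your route is the same as the paper's: for this statement the paper offers no proof at all, merely attributing the result to Miller and observing that his (unpublished) proof of Theorem~\ref{thm-rrt22-dnrzp} relativizes by inspection, and your proposal carries out exactly that relativization, correctly isolating the two points that make it work --- the approximation of $X'$ is $X$-computable, and the d.n.c.\ function is decoded from the rainbow alone, which is what ``not relative to $X$'' records. One cosmetic remark: ``$f$-thin set'' in the second bullet is plainly a slip for ``$f$-rainbow'', and your argument rightly produces a rainbow (your parenthetical that a rainbow is automatically thin is not literally true, though for a 2-bounded coloring a finite adjustment of any infinite set yields a thin set, so nothing is at stake).
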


Theorem~\ref{thm:rrt22-dnrzp-rel} can be generalized by a straightforward adaptation
of \cite[Theorem~2.5]{Csima2009strength}. We first state a technical lemma.

\begin{lemma}[$\rca$]\label{lem:rrtn-jump}
Fix a standard $n \geq 1$ and $X \subseteq \Nb$.
For every $X'$-computable 2-bounded coloring $f : [\Nb]^n \to \Nb$
there exists an $X$-computable 2-bounded coloring $g : [\Nb]^{n+1} \to \Nb$
such that every rainbow for $g$ is a rainbow for $f$.
\end{lemma}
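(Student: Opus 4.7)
\medskip
\noindent\textbf{Proof plan.}
The plan is to turn the $X'$-computable coloring $f$ into an $X$-computable coloring $g$ by adding a ``stage'' coordinate. By the relativized limit lemma, since $f : [\Nb]^n \to \Nb$ is $X'$-computable, I obtain an $X$-computable approximation $\hat f$ with $\lim_s \hat f(\vec x, s) = f(\vec x)$ for every $\vec x \in [\Nb]^n$. The two-bounded function $g : [\Nb]^{n+1} \to \Nb$ will be defined, on an $(n+1)$-subset whose maximum is $s$ and whose remaining elements form $\vec x \in [\Nb]^n$, by
\[
  g(\vec x \cup \{s\}) \;=\; \langle \tilde f(\vec x, s),\, s \rangle,
\]
where $\tilde f$ is a suitably ``repaired'' $X$-computable approximation that is 2-bounded at each stage.

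\smallskip
\noindent To build $\tilde f$, I would enumerate, at each stage $s$, the finitely many $\vec x \in [\Nb]^n$ with $\max \vec x < s$ in some fixed order $\vec x_1, \vec x_2, \dots$, and set $\tilde f(\vec x_i, s) = \hat f(\vec x_i, s)$ unless the value $\hat f(\vec x_i, s)$ has already been assigned by $\tilde f(\cdot, s)$ to two earlier tuples, in which case I assign a fresh color (say $\langle i, s, \text{fresh}\rangle$). This makes $\tilde f(\cdot, s)$ 2-bounded for every $s$, and the construction is $X$-computable. To see that $\tilde f(\vec x, s) \to f(\vec x)$, fix $\vec x$ and choose $s$ large enough that $\hat f(\vec y, s) = f(\vec y)$ for every tuple $\vec y$ preceding $\vec x$ in the enumeration at stage $s$; then, because $f$ is 2-bounded, $\hat f(\cdot, s)$ restricted to those tuples is already 2-bounded, so the ``repair'' never fires for $\vec x$ and $\tilde f(\vec x, s) = f(\vec x)$.

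\smallskip
\noindent The verifications are then short. For 2-boundedness of $g$: if two distinct $(n+1)$-subsets $A_1, A_2$ satisfy $g(A_1) = g(A_2)$, the second coordinate forces $\max A_1 = \max A_2 = s$, and writing $A_i = \vec x_i \cup \{s\}$ with $\vec x_1 \neq \vec x_2$ yields $\tilde f(\vec x_1, s) = \tilde f(\vec x_2, s)$; by 2-boundedness of $\tilde f(\cdot, s)$, there are at most two such $\vec x_i$, hence at most two such $A_i$. For the rainbow property: suppose $R$ is an infinite $g$-rainbow but some distinct $\vec x_1, \vec x_2 \in [R]^n$ satisfy $f(\vec x_1) = f(\vec x_2)$. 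Pick $s \in R$ with $s > \max(\vec x_1 \cup \vec x_2)$ and $s$ large enough for $\tilde f(\vec x_i, s) = f(\vec x_i)$ for $i = 1, 2$; such $s$ exists because $R$ is infinite. Then $A_i := \vec x_i \cup \{s\}$ are distinct members of $[R]^{n+1}$ with $g(A_1) = \langle f(\vec x_1), s\rangle = \langle f(\vec x_2), s\rangle = g(A_2)$, contradicting the hypothesis on $R$.

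\smallskip
\noindent The only mildly delicate step is the construction of $\tilde f$: one must use 2-boundedness of $f$ (not of $\hat f$) to argue that the repair eventually stops firing on any given tuple, so that $\tilde f(\vec x, s) \to f(\vec x)$. Everything else is a direct coding argument and goes through in $\rca$.
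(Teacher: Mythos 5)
Your proposal is correct and is essentially the paper's own argument: apply the limit lemma to get an $X$-computable approximation, add the stage as an extra coordinate, repair the approximation stage-by-stage with fresh colors so that $g$ stays 2-bounded, and then use 2-boundedness of $f$ (not of the approximation) to argue the repair is inactive at all large stages, so a true $f$-collision forces a $g$-collision inside the rainbow. The paper implements the repair by tagging the two kinds of values, $\langle h(\vec x,s),s,0\rangle$ versus a rank-based color tagged with $1$, which rules out even accidental coincidences between repaired and genuine values; if you keep your explicit code $\langle i,s,\mathrm{fresh}\rangle$, make sure ``fresh'' really means a value unused at that stage (or tag the genuine values as well), but this is a cosmetic fix, not a gap.
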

\begin{proof}
Using the Limit Lemma, there exists an $X$-computable approximation function $h: [\Nb]^{n+1} \to \Nb$
such that $\lim_s h(\vec{x}, s) = f(\vec{x})$ for every $\vec{x} \in [\Nb]^n$.

Let~$\tuple{ \dots }$ be a standard coding of the lists of integers into~$\Nb$
and~$\prec_\Nb$ be a standard total order over~$\Nb^{<\Nb}$.
We define an $X$-computable 2-bounded coloring $g:[\Nb]^{n+1}$ as follows.
$$
g(\vec{x}, s) = \cond{
   \tuple{h(\vec{x}, s), s, 0} & 
   \mbox{ if there is at most one } \vec{y} \prec_\Nb \vec{x} \mbox{ s.t. } h(\vec{y},s) = h(\vec{x}, s)\\
   \tuple{rank_{\prec_\Nb}(\vec{x}), s, 1} & \mbox{ otherwise}
}
$$
(where $rank_{\prec_\Nb}(\vec{x})$ is the position of $\vec{x}$
for any well-order $\prec_\Nb$ over tuples). 

By construction $g$ is 2-bounded and $X$-computable.
We claim that every infinite rainbow for $g$ is a rainbow for $f$.

Let $A$ be an infinite rainbow for $g$. Assume for the sake of contradiction that $\vec{x}, \vec{y} \in [A]^n$
are such that $\vec{y} \prec_\Nb \vec{x}$ and $f(\vec{y}) = f(\vec{x})$.
Fix $t \in \Nb$ such that $h(\vec{z}, s) = f(\vec{z})$ whenever $\vec{z} \preceq_\Nb \vec{x}$
and $s \geq t$. Fix $s$ such that $s \in A$, $s \geq t$ and $s > max(\vec{x})$. Notice that 
since $f$ is 2-bounded and $h(\vec{z}, s) = f(\vec{z})$ for every $\vec{z} \preceq_\Nb \vec{x}$,
we have $g(\vec{z}, s) = \tuple{h(\vec{z}, s), s, 0} = \tuple{f(\vec{z}), s, 0}$ for every $\vec{z} \preceq_\Nb \vec{x}$.
Hence
$$
g(\vec{x}, s) = \tuple{f(\vec{x}), s, 0} = \tuple{f(\vec{y}), s, 0} = g(\vec{y}, s)
$$
contradicting the fact that $A$ is a rainbow for $g$.
\end{proof}

We can now deduce several relativizations of some existing results.

\begin{theorem}[$\rca$]\label{thm:rrtk-dnrzk}
For every standard $n \geq 1$ and $X \subseteq \Nb$,
there is an $X$-computable 2-bounded coloring function $f : [\Nb]^{n+1} \to \Nb$
such that every infinite rainbow for $c$ computes (not relative to $X$) a function d.n.c. relative to $X^{(n)}$.
\end{theorem}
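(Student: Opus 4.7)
The plan is to induct on the standard integer $n \geq 1$, using Theorem~\ref{thm:rrt22-dnrzp-rel} as the base case and Lemma~\ref{lem:rrtn-jump} to perform the inductive step. Since $n$ is standard, this induction is carried out externally, once for each $n$, so no internal induction axiom in $\rca$ is required.

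For the base case $n = 1$, the first clause of Theorem~\ref{thm:rrt22-dnrzp-rel} applied to $X$ directly yields an $X$-computable 2-bounded coloring $f : [\Nb]^2 \to \Nb$ such that every infinite $f$-rainbow computes a function d.n.c.\ relative to $X' = X^{(1)}$.

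For the inductive step, suppose the result holds at level $n$ for every oracle. Given $X$, apply the induction hypothesis with $X'$ in place of $X$ to obtain an $X'$-computable 2-bounded coloring $f : [\Nb]^{n+1} \to \Nb$ such that every infinite rainbow for $f$ computes a function d.n.c.\ relative to $(X')^{(n)} = X^{(n+1)}$. Now feed this $f$ into Lemma~\ref{lem:rrtn-jump}: the lemma produces an $X$-computable 2-bounded coloring $g : [\Nb]^{n+2} \to \Nb$ with the property that every rainbow for $g$ is also a rainbow for $f$. Chaining the two conclusions, any infinite rainbow for $g$ is an infinite rainbow for $f$ and so computes a function d.n.c.\ relative to $X^{(n+1)}$, completing the step.

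There is no real obstacle here beyond the bookkeeping: the work is packaged in Lemma~\ref{lem:rrtn-jump}, which trades one oracle jump for one additional arity while preserving rainbows, and Theorem~\ref{thm:rrt22-dnrzp-rel} furnishes the base case. The only subtlety worth noting is the restriction to standard $n$, which ensures that the iterated jump $X^{(n)}$ and the iterated application of the lemma are both well-defined in $\rca$.
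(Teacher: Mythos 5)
Your proposal is correct and follows essentially the same route as the paper: an external induction on the standard $n$, with Theorem~\ref{thm:rrt22-dnrzp-rel} as the base case and Lemma~\ref{lem:rrtn-jump} trading a jump of the oracle for one extra coordinate in the inductive step. If anything, your bookkeeping of the arities and of the iterated jump $(X')^{(n)} = X^{(n+1)}$ is stated more carefully than in the paper's own write-up.
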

\begin{proof}
By induction over $n$.
Case $n=1$ is exactly the statement of Theorem~\ref{thm:rrt22-dnrzp-rel}.
Assume it holds for some $n \in \Nb$. Fix an $X'$-computable
2-bounded coloring $g : [\Nb]^n \to \Nb$ such that every infinite rainbow for $g$ computes
a function d.n.c. relative to $(X^{(n-1)})' = X^n$.
By Lemma~\ref{lem:rrtn-jump} there exists an $X$-computable coloring $f: [\Nb]^{n+1} \to \Nb$
such that every infinite rainbow for $f$ is a rainbow for $g$.
\end{proof}

\begin{corollary}\label{cor:rrtk-dnrzk}
For every standard $k \geq 1$, $\rca \vdash \rrt^{(k+1)}_2 \imp \dnr[\emptyset^{(k)}]$.
\end{corollary}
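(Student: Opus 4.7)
My plan is that this corollary follows almost immediately from Theorem~\ref{thm:rrtk-dnrzk}, and the work is essentially bookkeeping to check that the implication formalizes in $\rca$. To prove $\rca \vdash \rrt^{k+1}_2 \imp \dnr[\emptyset^{(k)}]$, I would reason inside $\rca$ as follows: fix an arbitrary set $X$; the goal is to produce a function d.n.c.\ relative to $X^{(k)}$.

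Apply Theorem~\ref{thm:rrtk-dnrzk} with $n = k$ to the set $X$ in order to obtain an $X$-computable 2-bounded coloring $f : [\Nb]^{k+1} \to \Nb$ such that every infinite $f$-rainbow computes a function d.n.c.\ relative to $X^{(k)}$. Since $f$ is a valid instance of $\rrt^{k+1}_2$, the principle $\rrt^{k+1}_2$ hands us an infinite rainbow $R$ for $f$. Then $R$ computes the desired diagonally non-computable function relative to $X^{(k)}$, witnessing $\dnr[\emptyset^{(k)}]$ for the set $X$. Since $X$ was arbitrary, $\dnr[\emptyset^{(k)}]$ holds.

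The only point worth double-checking is that the chain of implications really goes through in $\rca$ rather than merely in $\omega$-models. This is fine because both Theorem~\ref{thm:rrtk-dnrzk} and Lemma~\ref{lem:rrtn-jump} on which it rests are stated over $\rca$, and the proof of Theorem~\ref{thm:rrtk-dnrzk} only uses induction up to a fixed standard $k$, so no extra induction axioms are incurred. I do not anticipate a serious obstacle: the whole content is packaged into Theorem~\ref{thm:rrtk-dnrzk}, and the corollary is just its external statement obtained by combining it with the hypothesis $\rrt^{k+1}_2$.
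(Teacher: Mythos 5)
Your proof is correct and takes exactly the approach the paper intends: Corollary~\ref{cor:rrtk-dnrzk} is stated without a separate proof precisely because it is the immediate deduction from Theorem~\ref{thm:rrtk-dnrzk} (with $n=k$), which is what you carry out. Your remark that the reasoning formalizes over $\rca$ since the induction in Theorem~\ref{thm:rrtk-dnrzk} runs only up to the fixed standard $k$ is also the right point to flag.
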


The other direction does not hold. In fact, for every standard $k$,
there exists an $\omega$-model of $\dnr[\emptyset^{(k)}]$ not model of $\rrt^3_2$ as we will see later (Remark~\ref{rem:dnrzk-rrtk}).

\begin{definition}[Hyperimmunity]\ 
A function $h:  \Nb \to \Nb$ \emph{dominates} a function $g : \Nb \to \Nb$ if $h(n) > g(n)$ for all but finitely many $n \in \Nb$.
The \emph{principal function}~$p_A$ of a set~$A = \{x_0 < x_1 < \dots \}$ is defined by~$p_A(n) = x_n$ for every~$n \in \Nb$.
Given a set $X$, a set $A$ is \emph{hyperimmune relative to $X$} if its principal function $p_A$ is not dominated
by any $X$-computable function.
$\hyp$ is the statement ``For every set~$X$, there exists a set $Y$ hyperimmune relative to~$X$''.
\end{definition}

\begin{theorem}[$\rca$]\label{thm:rrtk-opt}
For every standard $n \geq 1$ and $X \subseteq \Nb$,
there is an $X$-computable 2-bounded coloring function $f : [\Nb]^{n+2} \to \Nb$
such that every infinite rainbow for $f$ is a set hyperimmune relative to $X^{(n)}$.
\end{theorem}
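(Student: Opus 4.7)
The plan is to mirror the proof of Theorem~\ref{thm:rrtk-dnrzk}: induct on~$n$ using Lemma~\ref{lem:rrtn-jump} as the inductive step, with a base case supplied by a relativized version of the Csima--Mileti construction from~\cite{Csima2009strength}.

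The base case I would establish is the following claim: for every set~$Y$, there exists a $Y$-computable 2-bounded coloring $g : [\Nb]^2 \to \Nb$ every infinite rainbow of which is hyperimmune relative to~$Y$. I would build~$g$ block by block, reserving a $Y$-computable sequence of consecutive intervals $I_0 < I_1 < \dots$ along~$\Nb$, and coloring inside each~$I_e$ so that every color used there is repeated exactly twice (with all colors in distinct blocks fresh). This repetition forces any infinite rainbow to meet~$I_e$ in at most one element. Choosing $\min(I_e)$ to grow faster than $\Phi^Y_e(e)$ whenever the latter converges then ensures that no total $Y$-computable function can dominate the principal function of any infinite rainbow.

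Given the base case, I instantiate it at $Y = X^{(n)}$ to obtain an $X^{(n)}$-computable 2-bounded coloring $g_0 : [\Nb]^2 \to \Nb$ whose infinite rainbows are hyperimmune relative to~$X^{(n)}$. I then apply Lemma~\ref{lem:rrtn-jump} iteratively: at step $i \in \{1,\dots,n\}$, starting from an $X^{(n-i+1)}$-computable 2-bounded coloring $g_{i-1} : [\Nb]^{i+1} \to \Nb$ whose infinite rainbows are all rainbows of $g_0$, the lemma produces an $X^{(n-i)}$-computable 2-bounded coloring $g_i : [\Nb]^{i+2} \to \Nb$ whose infinite rainbows are all rainbows of $g_{i-1}$. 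After $n$ applications, $f := g_n$ is an $X$-computable 2-bounded coloring on $[\Nb]^{n+2}$ every infinite rainbow of which is an infinite rainbow of $g_0$, and hence is hyperimmune relative to~$X^{(n)}$.

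The main obstacle will be the base case. The inductive step is pure bookkeeping once Lemma~\ref{lem:rrtn-jump} is in hand, but the block construction must simultaneously be $Y$-computable, maintain 2-boundedness, keep enough intra-block color repetition to cap any rainbow at one element per block, and have block endpoints growing fast enough to diagonalize against every total $Y$-computable function. This is the standard Csima--Mileti style argument, carried out with a $Y$-oracle throughout.
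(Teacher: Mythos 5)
Your inductive step ($n$ iterated applications of Lemma~\ref{lem:rrtn-jump}) is exactly what the paper does; the departure is in the base case, and it is a genuine gap. The paper invokes Csima--Mileti's Theorem~4.1 (relativized): for every~$X$ there is an $X$-computable 2-bounded coloring $f : [\Nb]^3 \to \Nb$ all of whose infinite rainbows are hyperimmune relative to~$X'$. Notice the arity is three and the hyperimmunity is relative to the \emph{jump} of the oracle. Your base case asks for a coloring of \emph{pairs} whose rainbows are hyperimmune relative to~$Y$ with \emph{no} jump --- a strictly stronger statement --- and the block construction you sketch cannot deliver it.

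Two concrete problems. First, making every color used inside $[I_e]^2$ repeat exactly twice does not force a rainbow $R$ to meet $I_e$ in at most one element: if $R \cap I_e = \{a,b\}$, the color $f(a,b)$ is shared by at most one other pair $\{u,v\}$, and nothing compels $u,v \in R$, so $f(a,b)$ appears only once on $[R]^2$ and there is no collision. More generally, the strongest a 2-bounded pair coloring can impose on a block is, for each large $s$, a matching on $I_e$ via $f(a,s)=f(b,s)$ (this already exhausts both slots of each color); any matching on $I_e$ has an independent set of size $|I_e|/2$, so a rainbow may still contain half of every block, and fast-growing block boundaries then buy no hyperimmunity at all. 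Csima--Mileti go to triples precisely to have a time-stamp coordinate against which a movable-marker construction can thin the rainbow inside a block as far as needed. Second, the layout cannot be $Y$-computable: guaranteeing $\min(I_e) > \Phi^Y_e(e)$ ``whenever the latter converges'' requires deciding convergence of $\Phi^Y_e(e)$, a $Y'$-question; approximating the answer in stages again needs the time-stamp coordinate that a pair coloring lacks. The two ways your base case is stronger than Csima--Mileti's --- pairs instead of triples, no jump --- are exactly the two places the argument breaks, and that is not a coincidence.
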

\begin{proof}
As usual, by induction over $n$.
Case $n=1$ is exactly the statement of Theorem~4.1 of \cite{Csima2009strength}.
Assume it holds for some $n \in \Nb$. Fix an $X'$-computable
2-bounded coloring $g : [\Nb]^{n+1} \to \Nb$ such that every infinite rainbow for $g$ is a set hyperimmune
relative to $(X^{(n-1)})' = X^n$.
By Lemma~\ref{lem:rrtn-jump} there there exists an $X$-computable coloring $f: [\Nb]^{n+2} \to \Nb$
such that every infinite rainbow for $f$ is a rainbow for $g$. This concludes the proof.
\end{proof}

A simple consequence is that any $\omega$-model of $\rrt^3_2$
is a model of $\amt$. We will see later by a more careful analysis that $\rca \vdash \rrt^3_2 \imp \sts(2)$
and $\rca \vdash \sts(2) \imp \amt$. Bienvenu et al.\ proved in~\cite{Bienvenu2014role} 
the existence of an $\omega$-model of $\rrt^2_2$ not model of $\amt$.

\begin{remark}\label{rem:rt-hif}
This theorem is optimal in the sense that every computable 2-bounded coloring $c : [\Nb]^{n+1} \to \Nb$
has an infinite rainbow of hyperimmune-free degree relative to $0^{(n)}$ by combining
a theorem from Jockusch~\cite{Jockusch1972Ramseys} and the relativized version of the hyperimmune-free basis theorem.
\end{remark}

As $\srt^2_2$ and $\rrt^2_2$ are both consequences of $\rt^2_2$ over $\rca$, one might wonder
how they do relate each other. The answer is that they are incomparable as states Corollary \ref{cor-srt-rrt}
and Csima \& Mileti in~\cite{Csima2009strength}.

The first direction is a consequence of a very tricky proof of separation
of $\rt^2_2$ and $\srt^2_2$ using non-standard models. This separation question
was a long-standing open question, recently positively answered by Chong et al.\ in~\cite{Chong2014metamathematics}.

\begin{theorem}[Chong et al.\ \cite{Chong2014metamathematics}]
There is a model of $\rca + \bst + \neg \ist + \srt^2_2$ having only $\Delta^0_2$ (in fact low) sets.
\end{theorem}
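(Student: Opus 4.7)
The plan is to build a countable non-standard structure $\mathcal{M}=(M,\mathcal{S})$ by fixing a suitable first-order part already witnessing $\bst + \neg\ist$ and then iteratively adding low solutions to every $\srt^2_2$-instance without disturbing that first-order theory. First I would fix a countable $M \models \isig^0_1 + \bst + \neg\ist$ together with a $\Sigma^0_2$-definable proper cut $I \subseteq M$ witnessing the failure of $\ist$; such an $M$ is a standard object in the first-order theory of arithmetic. I would take $\mathcal{S}_0$ to be the collection of $\Delta^0_1$-definable subsets of $M$, so that $(M,\mathcal{S}_0) \models \rca$.

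Next, in a bookkeeping fashion I would enumerate all stable $2$-colorings $f:[M]^2 \to 2$ with parameters in the eventual second-order part. At stage $k$, given the coloring $f_k$, its limit function $h_k(x)=\lim_s f_k(x,s)$ is $\Delta^{0,f_k}_2$, and the class of infinite $h_k$-homogeneous sets forms a non-empty $\Pi^{0,f_k}_2$-class inside $M$. Applying a non-standard, relativised version of the Low Basis Theorem carried out inside $M$ produces an infinite solution $G_k$ whose jump is computable from $f_k'$, hence $G_k$ is low and $\Delta^0_2$ over $M$. Setting $\mathcal{S}_{k+1}$ to be the Turing-ideal generated by $\mathcal{S}_k \cup \{G_k\}$, and $\mathcal{S} = \bigcup_k \mathcal{S}_k$, yields a second-order part containing only low $\Delta^0_2$ sets in which every $\srt^2_2$-instance is solved.

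The main obstacle, and the deep content of Chong, Slaman, and Yang's argument, is to verify that each stage preserves $\bst + \neg\ist$. A naively chosen $G_k$ might code a $\Sigma^0_2$-bounded unbounded function on a bounded interval, thereby restoring $\ist$, or introduce a new $\Sigma^0_2$-failure of collection. The central technical device is a forcing construction whose approximation conditions are controlled by the distinguished cut $I$: one arranges that every $\Sigma^0_2$-statement about the generic $G_k$ reflects to a $\Sigma^0_2$-statement in the language of $M$ with parameters bounded in $I$. The preservation lemma then guarantees that $I$ continues to witness $\neg\ist$ in every extension $(M,\mathcal{S}_{k+1})$, and $\bst$ is preserved in parallel; checking that the preservation survives the countable iteration is the most delicate point, and requires a careful choice of bookkeeping so that no single set ever demands unboundedly many parameters outside $I$. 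The union model $(M,\mathcal{S})$ is then the desired low $\Delta^0_2$ model of $\rca + \bst + \neg\ist + \srt^2_2$.
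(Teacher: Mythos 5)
The paper cites this theorem from Chong, Slaman, and Yang without giving a proof, so there is no in-paper argument to compare against; I am judging your proposal against the actual content of their argument.

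Your high-level skeleton is right: start from a countable first-order model $M \models \bst + \neg\ist$ with a $\Sigma^0_2$-definable cut $I$ witnessing $\neg\ist$, iterate a forcing that adds a low solution to one $\srt^2_2$-instance at a time, and preserve $\bst + \neg\ist$ along the way. But there is a genuine gap in the way you propose to obtain the low solution, and it misidentifies where the real difficulty lies. You assert that for a stable coloring $f_k$, the infinite homogeneous sets for the limit colouring $h_k$ form a $\Pi^0_2$ class to which a ``non-standard, relativised Low Basis Theorem'' applies. There is no such theorem: the Low Basis Theorem is a fact about $\Pi^0_1$ classes, and the condition ``$G$ is infinite and homogeneous for $h_k$'' is genuinely $\Pi^0_2$ (infinitude alone is $\Pi^0_2$, and $h_k$ is only $\Delta^0_2$). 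More decisively, Downey, Hirschfeldt, Lempp, and Solomon (the reference \cite{Downey20010_2} cited in this paper) exhibit a computable stable colouring with no low solution at all; this is a theorem of $\rca + \ist$, so in any $\omega$-model, or any model of $\ist$, your step fails outright. This shows that $\neg\ist$ is not merely a property one must be careful to \emph{preserve}, as your proposal frames it: the failure of $\ist$ is what makes the existence of low solutions possible in the first place. In the Chong--Slaman--Yang argument the lowness of the generic and the preservation of the first-order theory come out of one and the same forcing, whose conditions are tailored to the cut $I$ (roughly, Mathias-style conditions whose infinite reservoirs have a definability and density behaviour calibrated to $I$), and the lowness is established by a direct, $I$-bounded decision procedure for the jump, not by appeal to any basis theorem. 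Your proposal correctly names the reflection-to-$\Sigma^0_2$-below-$I$ idea for preservation, but by treating lowness as a black box you have skipped the step that actually requires the non-standard hypothesis, and so as written the argument does not go through.
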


\begin{theorem}
There is no model of $\rca + \rrt^2_2$ having only $\Delta^0_2$ sets.
\end{theorem}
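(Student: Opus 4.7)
The plan is to combine Miller's equivalence (Theorem~\ref{thm-rrt22-dnrzp}) with a self-referential argument: in any model of $\rrt^2_2$, one can produce a function that is d.n.c.\ relative to $\emptyset'$, but a function that is simultaneously $\emptyset'$-computable and d.n.c.\ relative to $\emptyset'$ cannot exist.

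In more detail, I would argue as follows. Suppose for contradiction that $\Mcal$ is a model of $\rca + \rrt^2_2$ in which every set is $\Delta^0_2$. By Theorem~\ref{thm-rrt22-dnrzp}, $\Mcal \models \dnrzp$. Instantiating $\dnrzp$ at the set $X = \emptyset$ inside $\Mcal$ yields a function $f \in \Mcal$ such that $\Mcal \models (\forall e)\,f(e) \neq \Phi^{\emptyset'}_e(e)$. Because $\Mcal$ contains only $\Delta^0_2$ sets, $\Mcal$ proves that $f$ is $\Delta^0_2$; by the Limit Lemma available inside $\rca$, there is (in $\Mcal$) an index $e_0$ such that $f = \Phi^{\emptyset'}_{e_0}$ as total functions. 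Then $f(e_0) = \Phi^{\emptyset'}_{e_0}(e_0)$, contradicting the d.n.c.\ property of $f$ at $e_0$.

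The only point that requires any care is the transfer of the self-referential step to an arbitrary (possibly non-standard) model $\Mcal$: one must check that \emph{all} the ingredients used above are internal to $\Mcal$. Specifically, the enumeration $e \mapsto \Phi^{\emptyset'}_e$ of $\emptyset'$-partial computable functions, the Limit Lemma, and the fixed-point/diagonal coding of a $\Delta^0_2$ function as some $\Phi^{\emptyset'}_{e_0}$ are all formalizable in $\rca$, so the index $e_0$ exists in the sense of $\Mcal$ and the evaluation $f(e_0) = \Phi^{\emptyset'}_{e_0}(e_0)$ takes place inside $\Mcal$. Hence the assumption ``every set is $\Delta^0_2$'' gives an internal contradiction regardless of whether $\Mcal$ is an $\omega$-model.

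The main (and essentially only) obstacle I anticipate is making the last step fully rigorous inside $\rca$: one must know that any set whose characteristic function is a limit of a computable double sequence is captured by some standard index $e_0$ for $\Phi^{\emptyset'}_{e_0}$, and that this index is available within the model. Once this internal formalization is checked, the argument is immediate from Theorem~\ref{thm-rrt22-dnrzp}.
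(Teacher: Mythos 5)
Your proof is correct and follows the same route as the paper's: invoke Miller's equivalence $\rrt^2_2 \biimp \dnrzp$, obtain a function $f$ d.n.c.\ relative to $\emptyset'$ inside the model, observe that if $f$ is $\Delta^0_2$ then $f = \Phi^{\emptyset'}_{e_0}$ for some index $e_0$, and derive $f(e_0) = \Phi^{\emptyset'}_{e_0}(e_0)$, contradicting d.n.c.-ness. The only difference is that you spell out the internalization to possibly non-standard models (noting that the index $e_0$ and the diagonal evaluation are available in the sense of $\Mcal$), whereas the paper presents the argument tersely; this is a helpful clarification but not a different proof.
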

\begin{proof}
Using the characterization of $\rrt^2_2$ by $\dnrzp$ proven by Joe Miller -- Theorem \ref{thm-rrt22-dnrzp} --,
let $f$ be a diagonally non-computable function relative to $\emptyset'$.
If $f$ were $\Delta^0_2$, then letting $e$ be a Turing index such that $\Phi^{\emptyset'}_e = f$,
we would have $f(e) \neq \Phi^{\emptyset'}_e(e)$, contradiction.
\end{proof}

\begin{corollary}\label{cor-srt-rrt}
$\rca + \srt^2_2 \not \vdash \rrt^2_2$
\end{corollary}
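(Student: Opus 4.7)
The plan is to combine the two theorems immediately preceding the corollary to obtain a model of $\rca + \srt^2_2$ that fails to satisfy $\rrt^2_2$.

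First, I would invoke the Chong--Slaman--Yang result, which provides a (necessarily non-$\omega$) model $\Mcal$ of $\rca + \bst + \neg\ist + \srt^2_2$ in which every set is $\Delta^0_2$ (indeed low). In such a model, every set $X$ belonging to the second-order part of $\Mcal$ is computable from $\emptyset'$ in the sense of $\Mcal$.

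Second, I would apply the preceding theorem, which says $\Mcal$ cannot satisfy $\rrt^2_2$: by Miller's characterization (Theorem~\ref{thm-rrt22-dnrzp}), any model of $\rrt^2_2$ must contain a function $f$ that is d.n.c.\ relative to $\emptyset'$, and the standard diagonal argument -- if $f$ were $\Delta^0_2$, pick $e$ with $\Phi^{\emptyset'}_e = f$ and derive $f(e) \neq \Phi^{\emptyset'}_e(e) = f(e)$ -- shows no such $f$ can be $\Delta^0_2$. Hence $\Mcal \not\models \rrt^2_2$, and the separation $\rca + \srt^2_2 \not\vdash \rrt^2_2$ follows.

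There is essentially no obstacle, since both ingredients are black-boxed from the earlier discussion: the hard content sits entirely in the Chong--Slaman--Yang construction (which is cited, not reproduced) and in Miller's equivalence $\rrt^2_2 \biimp \dnrzp$. The only subtlety worth flagging is that the Chong et al.\ model is non-standard -- the separation is in the full sense of provability from $\rca$, not merely an $\omega$-model separation -- and that is precisely why one needs their argument rather than an easier computability-theoretic construction.
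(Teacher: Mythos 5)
Your proposal is correct and is precisely the argument the paper intends: the corollary is placed immediately after the Chong--Slaman--Yang theorem and the theorem that no model of $\rca + \rrt^2_2$ has only $\Delta^0_2$ sets (proved via Miller's equivalence $\rrt^2_2 \biimp \dnrzp$ and the diagonal argument you reproduce), and it follows by combining them exactly as you do. Your remark that the separation is via a non-standard model, hence a genuine provability separation rather than an $\omega$-model one, is also consistent with the paper, which notes that the $\omega$-model question remains open.
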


The question of separating $\rt^2_2$ and $\srt^2_2$ in $\omega$-models is still an open question.
A related question whose positive answer would give a separation of $\rt^2_2$ from $\srt^2_2$ is the following:

\begin{question}
Is there an $\omega$-model of $\rca + \srt^2_2$ which not a model of $\rrt^2_2$~?
\end{question}

Even if the question is stronger, this approach could be simpler as $\rrt^2_2$ coincide with $\dnrzp$
which admits a set complete for the corresponding Muchnik degree, ie any set d.n.c. relative to $\emptyset'$.

Csima \& Mileti proved in~\cite{Csima2009strength} that there exists a computable 2-bounded
coloring $c : [\Nb]^2 \to \Nb$ such that every infinite set thin for $c$ computes
a set of hyperimmune degree. We now give an alternative proof of the same statement using $\Pi^0_1$-genericity.

\begin{definition}
A set $X$ is \emph{$\Pi^0_1$-generic} if for all $\Sigma^0_2$ classes $G$,
either $X$ is in $G$ or there is a $\Pi^0_1$ class $F$ disjoint from $G$ such that $X$ is in $F$.
\end{definition}

\begin{theorem}[Monin in \cite{Monin2014Higher}]\label{thm:char-pi01gen}
A set $X$ is $\Pi^0_1$-generic iff it is of hyperimmune-free degree.
\end{theorem}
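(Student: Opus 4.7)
The plan is to prove both implications by standard compactness arguments involving $\Sigma^0_2$ classes defined from $\Pi^0_2$ predicates.

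For the direction $\Pi^0_1$-generic $\Rightarrow$ hyperimmune-free degree, I would fix any total $X$-computable function $f = \Phi^X_e$. Since \qt{$\Phi^Y_e$ is total} is a $\Pi^0_2$ predicate in $Y$, the class $G_e = \{Y : \Phi^Y_e \text{ is not total}\}$ is $\Sigma^0_2$ and $X \notin G_e$. By $\Pi^0_1$-genericity there is a $\Pi^0_1$ class $F \ni X$ disjoint from $G_e$, so $\Phi^Y_e$ is total for every $Y \in F$. I then define $g(n) = \max\{\Phi^Y_e(n) : Y \in F\}$; this is finite by compactness and computable by the following routine. Enumerate \qt{resolved} strings $\sigma$ which are either dead in the computable tree for $F$ (a $\Sigma^0_1$ event) or satisfy $\Phi^\sigma_e(n)\halts$, and wait for a stage at which every $\tau \in 2^s$ extends a resolved string; this must occur by K\"onig's lemma, since every $Y \in 2^\Nb$ either eventually leaves $F$ or satisfies $\Phi^Y_e(n)\halts$ with finite use. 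Then $g(n)$ is the maximum of $\Phi^\sigma_e(n)$ over the converged resolved strings in the finite antichain, and since $X \in F$ its prefix in the antichain must be converged, giving $g(n) \geq \Phi^X_e(n) = f(n)$.

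For the reverse direction, assume $X$ is of hyperimmune-free degree and fix a $\Sigma^0_2$ class $G$ with $X \notin G$. Writing $G = \bigcup_n [T_n]$ for a uniformly computable sequence of trees $T_n \subseteq 2^{<\Nb}$, the hypothesis $X \notin [T_n]$ gives that some prefix of $X$ is eventually pruned in $T_n$. I define an $X$-computable $f$ by letting $f(n)$ be the least $\tuple{k,s}$ such that every extension of $X \uh k$ of length $s$ lies outside $T_n$. Hyperimmune-freeness, together with a finite modification absorbing the finitely many domination exceptions, yields a computable $g$ with $g(n) \geq f(n)$ for every $n$. Set
$$F = \{Y \in 2^\Nb : (\forall n)(\exists \tuple{k,s} \leq g(n))[\text{every extension of } Y \uh k \text{ of length } s \text{ lies outside } T_n]\}.$$
Each clause is decidable uniformly in $n$ and depends only on $Y \uh g(n)$ (since $k \leq \tuple{k,s} \leq g(n)$), so $F$ is a $\Pi^0_1$ class. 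It contains $X$ by choice of $g$, and is disjoint from $G$ since any $Y \in F$ has, for each $n$, a prefix witnessing $Y \notin [T_n]$.

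The main obstacle is ensuring that $F$ in the reverse direction is genuinely $\Pi^0_1$ rather than $\Pi^0_2$. The key device is to package both the prefix length $k$ and the witness stage $s$ into a single integer $\tuple{k,s}$ bounded by the $X$-computable $f$, so that a computable bound $g(n)$ replaces an existential stage quantifier by a bounded and hence decidable one; without this packaging one would only obtain a $\Pi^0_2$ class. The forward direction is then routine once one identifies \qt{$\Phi^Y_e$ total} as a $\Pi^0_2$ predicate.
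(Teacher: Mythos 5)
The paper does not reproduce a proof of this theorem; it cites Monin~\cite{Monin2014Higher}, so there is no in-paper argument to compare against. Your argument is the standard compactness proof of this well-known characterization (``$\Pi^0_1$-generic $=$ computably dominated''), and the forward direction is correct as written: identifying $\{Y : \Phi^Y_e \text{ not total}\}$ as $\Sigma^0_2$, taking a $\Pi^0_1$ class $F \ni X$ disjoint from it, and computing a dominating bound by resolving levels of $F$ via compactness.

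In the reverse direction the strategy is right, but there is a concrete bug in the definition of $f$. When $s < k$, the clause ``every extension of $X \uh k$ of length $s$ lies outside $T_n$'' is vacuously true (there are no such extensions), so with the Cantor pairing the least code satisfying the clause is a fixed small constant such as $\tuple{1,0}$, independent of $n$ and of $T_n$. Then $f$ is constant, $g$ can be taken constant, and the class $F$ you define is all of $2^\Nb$, which is not disjoint from a nonempty $G$. You need to restrict to pairs with $s \geq k$, or, cleaner given that you already chose to present $G = \bigcup_n [T_n]$ with $T_n$ uniformly \emph{computable} trees: the predicate ``$Y \uh k \notin T_n$'' is already decidable, so $f(n) = \min\{k : X \uh k \notin T_n\}$ and $F = \{Y : (\forall n)(\exists k \leq g(n))\, Y \uh k \notin T_n\}$ do the job with no stage parameter at all. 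The ``witness stage'' you invoke in your final paragraph would only be needed if $T_n$ were presented co-c.e.\ (pruned over time), so that ``$\sigma \notin T_n$'' were a $\Sigma^0_1$ rather than a decidable event; in the representation you chose there is no existential stage quantifier to eliminate, and folding in the spurious $s$ is what creates the vacuous-pair problem.
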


\begin{theorem}\label{thm:dnrzp-hyp}
No $\Pi^0_1$-generic set computes a function d.n.c. relative to $\emptyset'$.
\end{theorem}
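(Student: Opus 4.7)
The plan is to argue by contradiction, using $\Pi^0_1$-genericity to trap $X$ inside a $\Pi^0_1$ class that cannot exist by the Low Basis Theorem. Suppose some Turing functional $\Phi$ witnesses that $X$ computes a function d.n.c.\ relative to $\emptyset'$; so $\Phi^X$ is total and $\Phi^X(n) \neq \Phi^{\emptyset'}_n(n)$ for every~$n$.

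First I would form the class
$$
G_\Phi \;=\; \bigl\{\, Y : (\exists n)\bigl[\Phi^Y(n)\uparrow \ \vee\ \Phi^Y(n) = \Phi^{\emptyset'}_n(n)\bigr] \,\bigr\}
$$
of oracles for which $\Phi^Y$ fails either totality or the d.n.c.\ condition somewhere, and check that it is a $\Sigma^0_2$ class in the sense of the $\Pi^0_1$-genericity definition. The complexity check is routine: \qt{$\Phi^Y(n)\uparrow$ for some $n$} is already $\Sigma^0_2(Y)$, and \qt{$\Phi^Y(n) = \Phi^{\emptyset'}_n(n)$} unpacks as $(\exists v)(\Phi^Y(n) = v \wedge \Phi^{\emptyset'}_n(n) = v)$, i.e.\ a conjunction of a $\Sigma^0_1(Y)$ statement and a $\Sigma^0_1(\emptyset') = \Sigma^0_2$ statement, which becomes $\Sigma^0_2(Y)$ once the outer existential over $(n,v)$ is absorbed.

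By the standing assumption on~$X$, $X \notin G_\Phi$, so the definition of $\Pi^0_1$-genericity hands us a $\Pi^0_1$ class $F \ni X$ disjoint from $G_\Phi$; every $Y \in F$ then makes $\Phi^Y$ total and d.n.c.\ relative to $\emptyset'$. Next I would apply the Low Basis Theorem to this nonempty $\Pi^0_1$ class~$F$ to pick a low member $Y_0 \in F$, so that $Y_0 \leq_T Y_0' \leq_T \emptyset'$ and therefore $\Phi^{Y_0} \leq_T \emptyset'$. Fixing an index $e_0$ with $\Phi^{\emptyset'}_{e_0} = \Phi^{Y_0}$, the d.n.c.\ condition read at $e_0$ becomes $\Phi^{Y_0}(e_0) \neq \Phi^{\emptyset'}_{e_0}(e_0) = \Phi^{Y_0}(e_0)$, the desired contradiction.

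The delicate step is the first one: one has to be careful that folding the $\emptyset'$-oracle inside the quantifier over $n$ genuinely keeps $G_\Phi$ at the $\Sigma^0_2$ level of the arithmetic hierarchy used in the paper's definition of $\Pi^0_1$-genericity. After that, the Low Basis Theorem combined with the self-referential diagonalisation closes the argument. An alternative, slightly less self-contained route would go through Theorem~\ref{thm:char-pi01gen} and invoke the Ambos-Spies--Kjos-Hanssen--Lempp--Slaman result that every d.n.c.\ function has hyperimmune degree, but the direct argument above avoids that detour.
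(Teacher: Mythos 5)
Your proof is correct and takes essentially the same route as the paper: the same $\Sigma^0_2$ class of oracles $Y$ with $\Phi^Y$ partial or non-d.n.c., and the same use of $\Pi^0_1$-genericity to trap $X$ in a $\Pi^0_1$ class $F$ disjoint from it. The only (minor) difference is the closing step — you apply the Low Basis Theorem directly to $F$ to get a low member $Y_0$ and diagonalize against $\Phi^{Y_0} \leq_T \emptyset'$, while the paper instead observes that a $\Delta^0_2$ set of PA degree computes a member of $F$; both reduce to the fact that no $\Delta^0_2$ function can be d.n.c.\ relative to $\emptyset'$, and your version is, if anything, slightly more self-contained.
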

\begin{proof}
Fix any functional $\Psi$.
Consider the $\Sigma^0_2$ class
$$
U = \set{X \in \cs : (\exists e)[\Psi^X(e) \uparrow \vee \Psi^X(e) = \Phi^{\emptyset'}_e(e)]}
$$

Consider any $\Pi^0_1$-generic $X$ such that $\Psi^X$ is total.
Either $X \in U$, in which case $\Psi^X(e) = \Phi_e^{\emptyset'}(e)$ hence $\Psi^X$ is not d.n.c. relative to $\emptyset'$.
Or there exists a $\Pi^0_1$ class $F$ disjoint from $U$ and containing $X$. Any member of $F$ computes a function d.n.c. relative to $\emptyset'$.
In particular any $\Delta^0_2$ set of PA degree computes such a function, contradiction.
\end{proof}

\begin{corollary}
Every function d.n.c. relative to $\emptyset'$ is of hyperimmune degree.
\end{corollary}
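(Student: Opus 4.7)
The plan is to derive this corollary almost immediately by combining the two preceding results: Theorem~\ref{thm:dnrzp-hyp} (no $\Pi^0_1$-generic set computes a function d.n.c.\ relative to $\emptyset'$) and Theorem~\ref{thm:char-pi01gen} (the $\Pi^0_1$-generic sets are exactly the sets of hyperimmune-free degree). The corollary asserts that no d.n.c.\ function relative to $\emptyset'$ has hyperimmune-free degree, which is the contrapositive packaging of these two facts.

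Concretely, I would argue by contradiction. Suppose $f$ is d.n.c.\ relative to $\emptyset'$ and that $f$ has hyperimmune-free degree. By Theorem~\ref{thm:char-pi01gen}, $f$ is then itself $\Pi^0_1$-generic. But $f$ trivially computes $f$, so $f$ is a $\Pi^0_1$-generic set computing a function d.n.c.\ relative to $\emptyset'$, directly contradicting Theorem~\ref{thm:dnrzp-hyp}. Hence $f$ must be of hyperimmune degree.

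There is essentially no obstacle here, since all of the work has already been done in Theorems~\ref{thm:char-pi01gen} and~\ref{thm:dnrzp-hyp}. The only subtlety worth flagging in the write-up is making sure the reader sees that the $\Pi^0_1$-genericity of $f$ is applied to $f$ itself (rather than to some member of its degree), which is legitimate because $\Pi^0_1$-genericity is characterized here as a property of Turing degrees via hyperimmune-freeness. If one preferred a more symmetric formulation, one could equivalently state the argument as: any set $A$ of hyperimmune-free degree that computes $f$ would, by Theorem~\ref{thm:char-pi01gen}, be $\Pi^0_1$-generic, again contradicting Theorem~\ref{thm:dnrzp-hyp}; in particular $A = f$ is ruled out.
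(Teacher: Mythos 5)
Your proposal is correct and is essentially the paper's own argument: the paper likewise combines Theorem~\ref{thm:char-pi01gen} and Theorem~\ref{thm:dnrzp-hyp}, restating the latter as ``no hyperimmune-free set computes a function d.n.c.\ relative to $\emptyset'$'' and concluding immediately. Your contradiction phrasing and the remark about applying genericity to $f$ itself (via degree-invariance) are just minor presentational variants of the same two-line deduction.
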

\begin{proof}
Thanks to Theorem~\ref{thm:char-pi01gen} we can restate Theorem~\ref{thm:dnrzp-hyp}
as \emph{no hyperimmune-free set computes a function d.n.c. relative to $\emptyset'$},
hence every such function is of hyperimmune degree.
\end{proof}

\section{The Erd\H{o}s-Moser theorem}
Bovykin and Weiermann~\cite{Bovykin2005strength} decomposed
$\rt^2_2$ into $\emo$ and $\ads$ as follows:
Given a coloring~$f : [\Nb]^2 \to 2$, we can see~$f$ as a tournament~$T$
such that whenever~$x <_\Nb y$, $T(x,y)$ holds if and only if~$f(x,y) = 1$.
Any transitive subtournament~$H$ can be seen as a linear order $(H, \prec)$
such that whenever~$x <_\Nb y$, ~$x \prec y$ if and only if~$f(x,y) = 1$.
Any infinite ascending or descending sequence is $f$-homogeneous.
This decomposition also holds for the stable versions and enables us
to make~$\semo$ inherit from several properties of~$\srt^2_2$.

Many principles in reverse mathematics are~$\Pi^1_2$ statements~$(\forall X)(\exists Y)\Phi(X,Y)$,
where $\Phi$ is an arithmetic formula. They usually come with a natural collection of \emph{instances}~$X$.
A set~$Y$ such that~$\Phi(X,Y)$ holds is a \emph{solution} to~$X$.
For example, in Ramsey's theorem for pairs, and instance is a coloring~$f : [\Nb]^2 \to 2$
and a solution to~$f$ is an infinite $f$-homogeneous set.
Many proofs of implications~$\Qsf \imp \Psf$ over~$\rca$ happen to be \emph{computable reductions} from $\Psf$ to~$\Qsf$.

\begin{definition}[Computable reducibility]
Fix two~$\Pi^1_2$ statements~$\Psf$ and~$\Qsf$.
We say that $\Psf$ is \emph{computably reducible} to~$\Qsf$ (written $\Psf \leq_c \Qsf$)
if every~$\Psf$-instance~$I$ computes a~$\Qsf$-instance~$J$ such that for every solution~$X$ to~$J$,
$X \oplus I$ computes a solution to~$I$.
\end{definition}

A computable reducibility~$\Psf \leq_c \Qsf$ can be seen as a degenerate case of an implication~$\Qsf \imp \Psf$
over~$\omega$-models, in which the principle~$\Qsf$ is applied at most once.
In order to prove that~$\Psf \not \leq_c \Qsf$, it suffices to construct one $\Psf$-instance $I$
such that for every $I$-computable $\Qsf$-instance~$J$, there exists some solution~$X$ to~$J$
such that~$X \oplus I$ does not compute a solution to~$I$.
We need the following stronger notion of avoidance which implies in particular computable non-reducibility.

\begin{definition}[Avoidance]
Let $\Psf$ and $\Qsf$ be $\Pi^1_2$ statements. $\Psf$ is \emph{$\Qsf$-avoiding} if
for any set $X$ and any $X$-computable instance $I$ of $\Qsf$ having no $X$-computable solution,
any $X$-computable instance of $\Psf$ has a solution $S$ such that $I$ has no $X \oplus S$-computable solution.
\end{definition}

\begin{example}
Hirschfeldt and Shore proved in~\cite{Hirschfeldt2007Combinatorial}
that if some set~$X$ does not compute an infinite d.n.c.\ function,
every $X$-computable linear order has an infinite ascending or descending sequence~$Y$
such that~$X \oplus Y$ does not compute an infinite d.n.c.\ function.
Therefore~$\ads$ is~$\dnr$-avoiding.

On the other side, the author~\cite{PateyCombinatorial} showed
the existence of an infinite computable binary tree $T \subseteq 2^{<\Nb}$ with no infinite, computable path,
together with a computable coloring~$f : [\Nb]^2 \to 2$ such that every infinite $f$-homogeneous
set computes an infinite path through~$T$. Therefore $\rt^2_2$ is not $\wkl$-avoiding.
\end{example}

\begin{theorem}\label{thm:avoidance-vs-semo}
If $\Psf \leq_c \srt^2_2$ and $\sads$ is $\Psf$-avoiding, then $\Psf \leq_c \semo$.
\end{theorem}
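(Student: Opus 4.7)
The plan is to combine the Bovykin--Weiermann decomposition of $\rt^2_2$ into $\emo$ and $\ads$ with the avoidance hypothesis, used contrapositively. Fix a $\Psf$-instance $I$; the goal is to produce an $I$-computable $\semo$-instance $T$ such that for every infinite transitive subtournament $H$ of $T$, $I \oplus H$ computes a solution to $I$. If $I$ already has an $I$-computable solution, the reduction is trivial, so assume it does not.

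First, apply the hypothesis $\Psf \leq_c \srt^2_2$ to obtain an $I$-computable stable coloring $f : [\Nb]^2 \to 2$ such that, for every infinite $f$-homogeneous set $S$, $I \oplus S$ computes a solution to $I$. Next, following Bovykin--Weiermann, define an $I$-computable tournament $T$ on $\Nb$ by setting $T(x,y)$ to hold exactly when $x <_\Nb y$ and $f(x,y) = 1$, or $y <_\Nb x$ and $f(y,x) = 0$. Since $f$ is stable, $T$ is a stable tournament, so $T$ is a genuine $\semo$-instance.

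The main step is the following claim: every infinite transitive subtournament $H$ of $T$ satisfies that $I \oplus H$ computes a solution to $I$. Suppose for contradiction that some infinite transitive $H$ does not yield an $I \oplus H$-computable solution to $I$. Set $X = I \oplus H$. Then $I$ is an $X$-computable $\Psf$-instance with no $X$-computable solution. The transitive subtournament $H$, under the order $x \prec y \Leftrightarrow T(x,y)$, is an $X$-computable linear order, and it is stable because $T$ is: each $x \in H$ satisfies $T(x,s)$ for cofinitely many $s \in \Nb$, hence for cofinitely many $s \in H$, or the dual. Thus $(H, \prec)$ is an $X$-computable $\sads$-instance. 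By the $\sads$-avoidance of $\Psf$ applied to $X$, there is an infinite ascending or descending sequence $S \subseteq H$ such that $I$ has no $X \oplus S$-computable solution, i.e., no $(I \oplus H \oplus S)$-computable solution.

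To close the contradiction, observe that such an $S$ is $f$-homogeneous: the definition of $T$ from $f$ ensures that an ascending sequence in $(H,\prec)$ is $f$-homogeneous with colour $1$ and a descending sequence is $f$-homogeneous with colour $0$. By the choice of $f$, $I \oplus S$ already computes a solution to $I$, hence so does $I \oplus H \oplus S$, contradicting the conclusion of the avoidance step. The main obstacle is therefore purely conceptual rather than technical: one must recognise that the $\sads$-avoidance hypothesis is invoked contrapositively, yielding, in the failing case, an ascending or descending sequence that is forbidden by the $\srt^2_2$-reduction. Once this is seen, the argument reduces to checking that the Bovykin--Weiermann translation preserves stability and the ascending/descending-to-homogeneous correspondence, both of which are immediate.
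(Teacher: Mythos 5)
Your proof is correct and follows essentially the same route as the paper: translate the stable coloring obtained from $\Psf \leq_c \srt^2_2$ into a stable tournament \`a la Bovykin--Weiermann, view any infinite transitive subtournament as a stable linear order, and use the fact that $\sads$ is $\Psf$-avoiding to extract an ascending or descending sequence, which is $f$-homogeneous and yields the contradiction. The extra checks you include (stability of the tournament and of the induced order, the trivial case where $I$ has an $I$-computable solution) are fine and only make explicit what the paper leaves implicit.
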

\begin{proof}
Let $I$ be any instance of $\Psf$. As $\Psf \leq_c \srt^2_2$, there exists
an $I$-computable stable coloring $f : [\Nb]^2 \to 2$ such that for any infinite $f$-homogeneous set $H$,
$I \oplus H$ computes a solution to $I$. The coloring $f$ can be seen as a tournament $T$
where for each~$x < y$, $T(x, y)$ holds iff $f(x, y) = 1$. If $T$ has an infinite sub-tournament
$U$ such that $I \oplus H$ does not compute a solution to $I$, consider $H$ as an $I \oplus H$-computable
stable linear order. Then by $\Psf$-avoidance of $\sads$, there exists a solution $S$ to $H$
such that $I \oplus H \oplus S$ does not compute a solution to $I$. But $S$ is an infinite $f$-homogeneous set,
contradicting our choice of $f$.
\end{proof}

\begin{corollary}[Kreuzer \cite{Kreuzer2012Primitive}]
There exists a transitive computable tournament having no low infinite subtournament.
\end{corollary}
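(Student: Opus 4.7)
The plan is to apply Theorem~\ref{thm:avoidance-vs-semo} to a suitably chosen $\Pi^1_2$ statement $\Psf$. Let $c_0 : [\Nb]^2 \to 2$ be the Downey--Hirschfeldt--Lempp--Solomon (DHLS) computable stable $2$-coloring, which has no low infinite homogeneous set, and define $\Psf$ to be the principle whose $X$-instance asks for a set $Y$ such that $X \oplus Y$ computes an infinite $c_0$-homogeneous set. Equivalently, $\Psf$ is the restriction of $\srt^2_2$ to the single instance $c_0$, viewed as a $\Pi^1_2$ statement with an auxiliary oracle parameter.

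The reduction $\Psf \leq_c \srt^2_2$ is immediate: every $X$-instance of $\Psf$ reduces to $c_0$, and every infinite $c_0$-homogeneous set is a $\Psf$-solution. The nontrivial hypothesis of Theorem~\ref{thm:avoidance-vs-semo} is that $\sads$ is $\Psf$-avoiding; concretely, this asks that for every oracle $X$ not computing an infinite $c_0$-homogeneous set and every $X$-computable stable linear order $L$, there is an infinite ascending or descending sequence $S$ in $L$ such that $X \oplus S$ still fails to compute an infinite $c_0$-homogeneous set. I would prove this by a Mathias-style forcing on $L$ with conditions $(F, R)$ consisting of a finite $\prec_L$-monotone fragment $F$ and an infinite reservoir $R \subseteq L$, diagonalizing at each stage against a Turing functional attempting to produce an infinite $c_0$-homogeneous set from the generic. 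Stability of $L$ partitions the reservoir into ``small'' and ``large'' elements, and at least one of these directions preserves the diagonalization, mirroring the combinatorial core of the Hirschfeldt--Shore proof that $\ads$ is $\dnr$-avoiding.

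Granted this avoidance, Theorem~\ref{thm:avoidance-vs-semo} yields $\Psf \leq_c \semo$. Applied to the empty-oracle instance of $\Psf$, the reduction produces a computable tournament $T$ such that every infinite transitive subtournament $H$ of $T$ computes an infinite $c_0$-homogeneous set. Were $H$ low, the computed homogeneous set would also be low, since every set computable from a low set is low, contradicting the defining property of $c_0$. Hence $T$ is a computable tournament with no low infinite transitive subtournament.

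The main obstacle will be the forcing argument proving that $\sads$ is $\Psf$-avoiding: the preserved property is the more delicate ``no infinite $c_0$-homogeneous set is computed from the oracle'' rather than d.n.c.-avoidance, and one must thin the reservoir carefully at each diagonalization step to maintain this property through both the ascending and descending extensions of the generic. If the full $\Psf$-avoidance of $\sads$ resists a direct adaptation of Hirschfeldt--Shore, a fallback is to substitute $\Psf$ by a coarser principle tracking only the non-existence of an oracle-computable $c_0$-homogeneous set at each finite stage of the forcing, which suffices for the single computable instance needed by the corollary.
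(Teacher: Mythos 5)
Your overall skeleton (invoke Theorem~\ref{thm:avoidance-vs-semo} for a suitable $\Psf$ and then specialize to the computable instance) is the same as the paper's, but the choice of $\Psf$ creates a genuine gap. With your $\Psf$ (``$X \oplus Y$ computes an infinite $c_0$-homogeneous set''), the hypothesis ``$\sads$ is $\Psf$-avoiding'' is exactly the statement: for \emph{every} oracle $X$ not computing an infinite $c_0$-homogeneous set, every $X$-computable stable linear order has an infinite ascending or descending sequence $S$ such that $X \oplus S$ still computes no infinite $c_0$-homogeneous set. This is not a known cited result, and your sketch does not establish it. It is not analogous to Hirschfeldt--Shore's $\dnr$-avoidance: there one diagonalizes against a single $\Sigma^0_1$-definable family, whereas here you must avoid an entire mass problem (all infinite $c_0$-homogeneous sets, equivalently infinite subsets of the $\Delta^0_2$ limit sets of $c_0$), with reservoirs confined to the $\Delta^{0,X}_2$ $\omega$- and $\omega^*$-parts of $L$, and with the disjunction between the ascending and descending sides to manage. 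Note also that the cheap argument which works at $X=\emptyset$ (take $S$ low over $X$ and quote Downey--Hirschfeldt--Lempp--Solomon) does \emph{not} relativize, since DHLS rules out homogeneous sets low over $\emptyset$, not over an arbitrary $X$; and the avoidance is genuinely needed at oracles of the form $X = I \oplus U$ in the proof of Theorem~\ref{thm:avoidance-vs-semo}. It is not even clear your avoidance statement is true: nothing you say rules out an oracle $X$ that computes a stable linear order whose $\omega$-part is so entangled with the limit sets of $c_0$ that every monotone sequence computes a homogeneous set, while $X$ itself computes none. Your ``fallback'' principle is too vague to check against the hypotheses of Theorem~\ref{thm:avoidance-vs-semo}.

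The paper avoids all of this by taking $\Psf = \overline{\mathsf{Low}}$, the principle ``for every $X$ there is $Y$ with $(Y \oplus X)' \not\leq_T X'$''. Then $\overline{\mathsf{Low}} \leq_c \srt^2_2$ is precisely the \emph{relativized} DHLS theorem (for every $X$ there is an $X$-computable stable coloring with no solution low over $X$), and ``$\sads$ is $\overline{\mathsf{Low}}$-avoiding'' is immediate from Hirschfeldt--Shore: every stable linear order $L$ has an ascending or descending sequence $S$ low over $L$, and anything computable from $X \oplus S$ is then low over $X$, hence not a solution of $\overline{\mathsf{Low}}$. Theorem~\ref{thm:avoidance-vs-semo} then gives $\overline{\mathsf{Low}} \leq_c \semo$, and the computable instance yields the corollary. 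If you want to salvage your route, you would have to prove your preservation lemma for $\sads$ outright (a Seetapun-style argument handling both sides of the order against the $c_0$-homogeneous mass problem relative to arbitrary $X$); as written, that lemma is the missing core of the proof, and the property being preserved should be replaced by ``not low over $X$'', which is what actually propagates.
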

\begin{proof}
Consider the principle $\overline{\mathsf{Low}}$ stating ``$\forall X \exists Y (Y \oplus X)' \not \leq_T X'$''.
Downey et al.\ proved in~\cite{Downey20010_2} that for every set~$X$, there exists an~$X$-computable instance of~$\srt^2_2$ with no solution low over~$X$. In other words, $\overline{\mathsf{Low}} \leq_c \srt^2_2$. On the other side,
Hirschfeldt et al.\ \cite{Hirschfeldt2007Combinatorial} proved that every linear order~$L$ of order type~$\omega+\omega^{*}$ has 
an infinite ascending or descending sequence which is low over~$L$. Therefore~$\sads$ is $\overline{\mathsf{Low}}$-avoiding.
By Theorem~\ref{thm:avoidance-vs-semo}, $\overline{\mathsf{Low}} \leq_c \semo$.
\end{proof}

\begin{corollary}\label{cor:omega-sem-dnr}
Any $\omega$-model of $\semo$ is a model of $\dnr$.
\end{corollary}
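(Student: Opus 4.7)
The plan is to apply Theorem~\ref{thm:avoidance-vs-semo} with $\Psf = \dnr$: once I verify the two hypotheses $\dnr \leq_c \srt^2_2$ and ``$\sads$ is $\dnr$-avoiding'', the theorem delivers $\dnr \leq_c \semo$, and since the witnessing reduction is uniform in its oracle, relativizing it to any set $X$ of an $\omega$-model of $\semo$ produces a d.n.c.\ function relative to $X$ inside the model, which is exactly what is required.

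The second hypothesis is essentially already recorded in the example preceding Theorem~\ref{thm:avoidance-vs-semo}: the Hirschfeldt-Shore strong cone-avoidance result says that $\ads$ is $\dnr$-avoiding, and since their argument operates uniformly on arbitrary linear orders, it specializes to stable linear orders and yields that $\sads$ is $\dnr$-avoiding as well. For the first hypothesis, I would use the standard relativized Jockusch-style construction: uniformly in $X$, build an $X$-computable stable coloring $f : [\N]^2 \to 2$ whose two $\Delta^0_2(X)$ limit sets $A_i = \{x : \lim_s f(x,s) = i\}$ each encode a function d.n.c.\ relative to $X$ (for instance by partitioning $\N$ into suitably chosen blocks $I_n$ and arranging the pattern of $A_i$ on $I_n$ to differ from $\Phi^X_n(n)$ whenever the latter converges). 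Any infinite $f$-homogeneous set is then eventually contained in one of the $A_i$ and so, together with $X$, computes a function d.n.c.\ relative to $X$; this is exactly the stable, relativized form of the familiar fact that $\srt^2_2$ implies $\dnr$.

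Plugging these two ingredients into Theorem~\ref{thm:avoidance-vs-semo} yields $\dnr \leq_c \semo$ uniformly in the oracle, so that for every $X$ there is an $X$-computable stable tournament $T$ whose every infinite transitive subtournament $H$ satisfies that $X \oplus H$ computes a function d.n.c.\ relative to $X$; this immediately gives the closure property that characterizes $\omega$-models of $\dnr$. The only genuinely technical step is the explicit construction of the stable coloring witnessing $\dnr \leq_c \srt^2_2$ with enough uniformity in $X$; the remainder is a citation of the Hirschfeldt-Shore avoidance result and a direct invocation of Theorem~\ref{thm:avoidance-vs-semo} in the spirit of the preceding Kreuzer corollary.
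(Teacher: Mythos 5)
Your proposal is correct and takes the same route as the paper: both invoke Theorem~\ref{thm:avoidance-vs-semo} with $\Psf = \dnr$, using the Hirschfeldt--Shore result that $\ads$ (hence a fortiori $\sads$) is $\dnr$-avoiding, plus $\dnr \leq_c \srt^2_2$. The only difference is that the paper simply cites Hirschfeldt et al.~\cite{Hirschfeldt2008strength} for $\dnr \leq_c \srt^2_2$, whereas you sketch the block-coding construction behind that citation; your sketch is in the right spirit, and your observation about passing from $\dnr \leq_c \semo$ to closure of $\omega$-models is correct (note that uniformity in $X$ is not actually needed for this last step, only that each $X$-computable $\semo$-instance and each of its solutions lie in the model).
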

\begin{proof}
Hirschfeldt et al.\ proved in~\cite{Hirschfeldt2008strength} that $\dnr \leq_c \srt^2_2$
and in \cite{Hirschfeldt2007Combinatorial} that $\ads$ is $\dnr$-avoiding.
By Theorem~\ref{thm:avoidance-vs-semo}, $\dnr \leq_c \semo$.
\end{proof}

\begin{corollary}
There exists an $\omega$-model of $\cac$ which is not a model of $\semo$.
\end{corollary}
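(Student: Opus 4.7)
My plan is to use Corollary~\ref{cor:omega-sem-dnr} to reduce the problem to a $\dnr$-avoidance argument for $\cac$. Since every $\omega$-model of $\semo$ is a model of $\dnr$, it suffices to construct an $\omega$-model of $\cac$ that is \emph{not} a model of $\dnr$, i.e., one in which no set computes a function d.n.c.\ relative to $\emptyset$. Such a model cannot satisfy $\semo$, giving the corollary.

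The technical heart of the construction is the following preservation claim: whenever a set $Z$ computes no d.n.c.\ function, every $Z$-computable partial order on $\Nb$ has an infinite chain or antichain $Y$ such that $Z \oplus Y$ still computes no d.n.c.\ function. Granted this, the desired $\omega$-model is built by the standard iteration. Starting from $Z_0 = \emptyset$, at stage $s$ one enumerates the pairs (set already in the model, Turing index) and adjoins a solution $Y_s$ to the $s$-th $\cac$-instance computable from the current base, chosen so that joining $Y_s$ preserves the non-d.n.c.\ property; one then closes under $\oplus$ and Turing reducibility. The resulting $\omega$-model satisfies $\cac$ by construction, while $\emptyset$ belongs to the model but no member of the model is d.n.c.\ relative to $\emptyset$, so $\dnr$ fails.

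To prove the preservation claim, I would adapt the Hirschfeldt--Shore strategy for $\ads$ from~\cite{Hirschfeldt2007Combinatorial}. First, pre-compose with a cohesive set, which itself preserves non-d.n.c.-ness, to reduce to the case of a \emph{stable} partial order: one in which each vertex $x$ is eventually either comparable or eventually incomparable with every other vertex. Stability produces a $\Delta^0_2$ two-partition of $\Nb$ according to which of these behaviours is witnessed by infinitely many $y$, and one then runs a Mathias-style forcing inside whichever side is infinite, arguing as in the $\sads$ preservation proof that the generic object, together with $Z$, cannot diagonalize against any fixed $\emptyset$-computable enumeration of partial computable functions.

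The main obstacle, and the genuine departure from the $\ads$ argument, is the branching in the stable case: the forcing may need to commit either to building an infinite chain or to building an infinite antichain, and non-d.n.c.-ness must be preserved along at least one of these branches. The cleanest way to handle this, I expect, is a disjunctive extension lemma showing that for each diagonalization requirement at least one of the two candidate reservoir extensions keeps the requirement satisfiable; one then organizes the forcing as a $2$-branching tree of conditions and extracts an infinite branch along which all requirements survive. Verifying this disjunctive step is where the bulk of the combinatorics lies.
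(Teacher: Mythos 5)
Your reduction is exactly the paper's: by Corollary~\ref{cor:omega-sem-dnr}, every $\omega$-model of $\semo$ satisfies $\dnr$, so it suffices to exhibit an $\omega$-model of $\cac$ in which $\dnr$ fails. The difference is in how that second ingredient is obtained. The paper simply cites Hirschfeldt and Shore~\cite{Hirschfeldt2007Combinatorial}, who constructed an $\omega$-model of $\cac$ which is not a model of $\dnr$; your proposal instead undertakes to re-derive this separation via a $\dnr$-avoidance property of $\cac$ and the standard iteration-and-closure construction of the model.

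As written, this leaves a genuine gap: the entire content of the corollary beyond Corollary~\ref{cor:omega-sem-dnr} is the preservation claim that every $Z$-computable partial order (for $Z$ computing no d.n.c.\ function) has an infinite chain or antichain $Y$ with $Z \oplus Y$ still computing no d.n.c.\ function, and you do not prove it. Your sketch reduces it to two sub-claims, both left open: that $\coh$ preserves non-d.n.c.-ness (true, but itself requiring a Mathias-forcing argument, not a freebie), and the ``disjunctive extension lemma'' for the stable case, where one must keep the diagonalization requirements alive while hedging between building a chain among the eventually-comparable elements and an antichain among the eventually-isolated ones. You explicitly flag this last step as unverified, and it is precisely where the work of the Hirschfeldt--Shore separation lies (their treatment of $\sads$ does not transfer verbatim, as you note). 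So the proposal is an outline of a plausible proof of the cited literature result rather than a proof of the corollary; it becomes complete either by actually carrying out the disjunctive forcing argument, or, much more simply, by replacing the whole second and third paragraphs with the citation of~\cite{Hirschfeldt2007Combinatorial}, which is what the paper does.
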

\begin{proof}
Hirschfeldt et al.\ constructed in~\cite{Hirschfeldt2007Combinatorial}
an $\omega$-model of~$\cac$ which is not a model of~$\dnr$.
\end{proof}

\begin{corollary}
$\coh \leq_c \srt^2_2$ if and only if $\coh \leq_c \semo$
\end{corollary}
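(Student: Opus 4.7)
The plan is to chain an elementary reduction with Theorem~\ref{thm:avoidance-vs-semo}.

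For the direction $\coh \leq_c \semo \Rightarrow \coh \leq_c \srt^2_2$, I would first observe that $\semo \leq_c \srt^2_2$ via the tautological identification of a stable tournament $T$ with the stable 2-coloring $c : [\Nb]^2 \to 2$ given by $c(x,y) = 1$ iff $T(x,y)$, for $x <_\Nb y$. Then every infinite $c$-homogeneous set $H$ is automatically an infinite transitive subtournament of $T$: if $c$ is constantly $1$ on $[H]^2$, the restriction of $T$ to $H$ coincides with $<_\Nb$, and if constantly $0$, with $>_\Nb$. Stability of $T$ clearly transfers to $c$. Computable reducibility being transitive, composing $\coh \leq_c \semo$ with this yields $\coh \leq_c \srt^2_2$.

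For the converse direction, I would apply Theorem~\ref{thm:avoidance-vs-semo} with $\Psf = \coh$: the hypothesis $\coh \leq_c \srt^2_2$ is exactly what is assumed, so it suffices to verify that $\sads$ is $\coh$-avoiding. Unfolding the definition, I need to show that for every set $X$, every $X$-computable sequence $\vec R = \langle R_n \rangle_{n \in \Nb}$ admitting no $X$-computable $\vec R$-cohesive set, and every $X$-computable stable linear order $L$, there is an infinite ascending or descending sequence $S$ of $L$ such that $\vec R$ still has no $X \oplus S$-computable cohesive set.

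The $\coh$-avoidance of $\sads$ would be established by a Mathias-style forcing whose conditions are pairs $(\sigma, M)$, with $\sigma$ a finite monotone stem in $L$ and $M$ an infinite reservoir lying in the $L$-component consistent with $\sigma$ such that $M$ does not compute any $\vec R$-cohesive set over $X$. The key combinatorial lemma is that for such an $M$ and each $n$, at least one of $M \cap R_n$ or $M \setminus R_n$ is infinite and still fails to compute any $\vec R$-cohesive set over $X$: this lets one refine the reservoir at each step while diagonalizing against each functional $\Phi_e^{X \oplus S}$, forcing it to be either partial, non-total, or not $\vec R$-cohesive.

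The main obstacle is the two-sided structure of a stable linear order: one does not know a priori whether the generic $S$ will end up ascending or descending in $L$, so the forcing must maintain two candidate reservoirs in parallel, one in each $L$-component, and only commit to one when an extension step forces it. This branching construction follows the template of the forcing argument used to prove the $\dnr$-avoidance of $\sads$ in Hirschfeldt and Shore~\cite{Hirschfeldt2007Combinatorial}, and is the step requiring the most care.
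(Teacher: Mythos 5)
Your overall skeleton matches the paper's: the easy direction is the Bovykin--Weiermann identification giving $\semo \leq_c \srt^2_2$ plus transitivity, and the substantive direction is obtained by applying Theorem~\ref{thm:avoidance-vs-semo} with $\Psf = \coh$, which reduces everything to showing that $\sads$ is $\coh$-avoiding. The divergence, and the gap, is in how you propose to prove that avoidance.

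Your Mathias-forcing sketch does not establish $\coh$-avoidance of $\sads$ as described. The ``key combinatorial lemma'' you isolate is trivially true but beside the point: since $\vec R$ is uniformly $X$-computable, both $M \cap R_n$ and $M \setminus R_n$ are computable from $M \oplus X$, so of course whichever is infinite still fails to compute an $\vec R$-cohesive set; but refining the reservoir along $R_n$ only controls where the generic $S$ sits relative to $R_n$, whereas the requirement concerns the sets $\Phi_e^{X \oplus S}$ computed \emph{from} $S$. For those, the step ``force $\Phi_e^{X \oplus S}$ to be partial, non-total, or not $\vec R$-cohesive'' is exactly where the difficulty lives: non-cohesiveness of the computed set means that for some fixed $n$ it has infinitely many elements in both $R_n$ and $\overline{R_n}$, a $\Pi^0_2$ commitment that no single condition can secure, and you have no way to steer on which side of $R_n$ the outputs of $\Phi_e^{X \oplus S}$ land. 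The Seetapun-style/Hirschfeldt--Shore template you invoke handles cone avoidance and avoidance of closed classes (such as the d.n.c.\ functions), not avoidance of the non-closed class of $\vec R$-cohesive sets, so the analogy does not carry the argument. The paper avoids this entirely: it quotes from \cite{Patey2015Dissent} a $\Pi^{0,\emptyset'}_1$ class $\Ccal(\vec R)$ such that a degree bounds an $\vec R$-cohesive set if and only if its jump bounds a member of $\Ccal(\vec R)$, and combines this with the Hirschfeldt--Shore theorem \cite{Hirschfeldt2007Combinatorial} that every stable linear order of type $\omega + \omega^*$ has an ascending or descending sequence $Y$ low over the instance; then $(Y \oplus X)' \leq_T X'$ transfers ``$X'$ computes no member of $\Ccal(\vec R)$'' to $Y \oplus X$, giving $\coh$-avoidance with no new forcing. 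If you want a self-contained forcing proof, you would in effect have to redo first-jump control (e.g.\ reprove the lowness basis for $\sads$) or directly manage the $\Sigma^0_2$ facts that the $\Ccal(\vec R)$ characterization encapsulates; as written, your sketch omits that essential ingredient.
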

\begin{proof}
The author associated in~\cite{Patey2015Dissent}
a $\Pi^{0,\emptyset'}_1$ class $\Ccal(\vec{R})$
to any sequence of sets~$R_0$, $R_1$, ..., so that
a degree bounds an~$\vec{R}$-cohesive set if and only if
its jump bounds a member of~$\Ccal(\vec{R})$.
Hirschfeldt et al.~\cite{Hirschfeldt2007Combinatorial} proved that
every $X$-computable instance $I$ of~$\sads$ has a solution $Y$ low over~$X$.
Therefore, if $X$ does not compute an~$\vec{R}$-cohesive set,
then~$X'$ does not compute a member of~$\Ccal(\vec{R})$.
As~$(Y \oplus X)' \leq X'$, $(Y \oplus X)'$ does not compute a member of~$\Ccal(\vec{R})$,
$Y \oplus X$ does not compute an~$\vec{R}$-cohesive set.
In other words, $\sads$ is~$\coh$-avoiding. Conclude by Theorem~\ref{thm:avoidance-vs-semo}.
\end{proof}

\begin{definition}
Let $T$ be a tournament on a domain $D \subseteq \N$. A $n$-cycle is
a tuple $\{x_1, \dots, x_n\} \in D^n$ such that 
for every $0 < i < n$, $T(x_i, x_{i+1})$ holds
and $T(x_n, x_1)$ holds.
\end{definition}

Kang~\cite{Kang2014Combinatorial} attributed to Wang a direct proof of $\rca \vdash \emo \imp \rrt^2_2$.
We provide an alternative proof using the characterization of $\rrt^2_2$
by $\dnrzp$ from~Miller.

\begin{theorem}\label{thm:em-dnrzp}
$\rca \vdash \emo \rightarrow \dnrzp$
\end{theorem}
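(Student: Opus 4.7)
The plan is to leverage Miller's characterization (Theorem~\ref{thm-rrt22-dnrzp}) and its effective form (Theorem~\ref{thm:rrt22-dnrzp-rel}) to reduce the goal to a rainbow-producing problem. By Theorem~\ref{thm:rrt22-dnrzp-rel}, for each set~$X$ there is an $X$-computable $2$-bounded coloring $c : [\Nb]^2 \to \Nb$ whose every infinite rainbow~$R$ makes~$R \oplus X$ compute a function d.n.c.\ relative to~$X'$. It therefore suffices, given~$X$, to build an $X$-computable tournament~$T$ on~$\Nb$ all of whose infinite transitive subtournaments are infinite $c$-rainbows; applying~$\emo$ to~$T$ then produces the desired DNC function.

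The construction exploits the fact that a transitive tournament contains no $3$-cycle. Since~$c$ is $2$-bounded, each color~$v$ has at most two preimage pairs, and when both exist, say $\{a_1, b_1\} \neq \{a_2, b_2\}$, they determine a \emph{witness set} $W_v$ of three or four vertices. We plan to orient~$T$ so that~$T \uh W_v$ contains a $3$-cycle for every such~$W_v$; no infinite transitive subtournament can then contain all of~$W_v$, so it cannot realize the color repetition and is therefore a $c$-rainbow.

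The main obstacle is consistency: distinct witnesses~$W_v$ and~$W_{v'}$ may share vertices, or even share a pair, so the $3$-cycle prescriptions may clash on common edges. To handle this I plan to pre-process~$c$ into a sparser but equivalently informative coloring. Stage by stage, I $X$-computably build an infinite set $D = \{d_0 < d_1 < \dots\}$ such that inside~$D$ any two distinct pairs sharing a $c$-color are vertex-disjoint, and the resulting witness sets are pairwise disjoint. Setting $\tilde c(i,j) = c(d_i, d_j)$ gives an $X$-computable $2$-bounded coloring with pairwise disjoint witnesses; the tournament~$T$ can then be defined edge by edge, inserting a $3$-cycle on each witness quadruple and orienting the remaining edges arbitrarily. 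Any infinite transitive subtournament~$H$ of~$T$ is an infinite $\tilde c$-rainbow, and lifting through $i \mapsto d_i$ returns an infinite $c$-rainbow, from which (together with~$X$) the required d.n.c.\ function relative to~$X'$ is computed.

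The delicate point to verify is that the sparsification step genuinely succeeds, namely that at each stage~$k+1$ infinitely many valid candidates for~$d_{k+1}$ remain. Two kinds of configurations must be avoided when~$d_{k+1}$ is added: (i) two pairs $\{d_{k+1},a\}, \{d_{k+1},b\}$ with $a,b \in D_k$ sharing a $c$-color, and (ii) a pair $\{d_{k+1},a\}$ with $a \in D_k$ sharing a $c$-color with some pair already present in~$D_k$. Type~(ii) excludes only finitely many values since each pair in~$D_k$ has at most one twin by $2$-boundedness. For type~(i), a density argument exploiting the bounded multiplicity of~$c$ on pairs through each fixed $a \in D_k$ should show that cofinitely many~$d$ satisfy the requirement; this is the step I would need to verify most carefully, and if it fails for the coloring handed by Theorem~\ref{thm:rrt22-dnrzp-rel}, one should be able to first replace~$c$ by an equivalent coloring that has the needed sparsity by construction, without losing the DNC-computing property of its rainbows.
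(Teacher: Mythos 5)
Your overall route --- take the hard $X$-computable $2$-bounded coloring $c$ of Theorem~\ref{thm:rrt22-dnrzp-rel} and build an $X$-computable tournament all of whose infinite transitive subtournaments yield $c$-rainbows --- is legitimate in principle (it is essentially Wang's direct proof of $\emo \imp \rrt^2_2$ combined with Miller's theorem), but the construction breaks down exactly at the step you flagged, and not merely for lack of verification: the sparsification is provably impossible. Suppose $D = \{d_0 < d_1 < \dots\}$ were an infinite $X$-computable set such that any two distinct pairs inside $D$ with the same $c$-color are vertex-disjoint. Then $c$ would already have an infinite $X$-computable rainbow, built greedily inside $D$: given a finite rainbow $R_k \subseteq D$, a candidate $s \in D$ with $s > \max R_k$ is unsuitable only if some new pair $\{x,s\}$ with $x \in R_k$ shares its color with a pair inside $R_k$, and by $2$-boundedness each pair of $R_k$ excludes at most one such $s$; the other failure mode, a collision $c(x,s) = c(y,s)$ with $x,y \in R_k$, is ruled out outright because those two pairs share the vertex $s$. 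So all but finitely many $s \in D$ are acceptable and acceptability is $X$-decidable. But the coloring of Theorem~\ref{thm:rrt22-dnrzp-rel} has no $X$-computable infinite rainbow, since such a rainbow would give an $X$-computable function d.n.c.\ relative to $X'$, which is absurd. Hence no such $D$ exists; and by the same greedy argument your fallback --- replacing $c$ by a coloring having the sparsity ``by construction'' --- cannot preserve the property that every rainbow computes a function d.n.c.\ relative to $X'$, because any $2$-bounded coloring with pairwise vertex-disjoint monochromatic pairs has a computable-in-the-coloring rainbow. Your type~(i) density claim also fails already at the combinatorial level: the coloring defined by $c(d,a) = c(d,b) = v_d$ with $v_d$ fresh for every $d$ is $2$-bounded, yet a single unlucky pair $\{a,b\} \subseteq D_k$ excludes every candidate $d_{k+1}$.

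For contrast, the paper's proof never routes through a fixed hard coloring or through rainbows at all. It takes an $X$-computable approximation $g(e,s)$ of $\Phi^{X'}_e(e)$, interprets $g(e,s)$ as a finite set $D_{e,s}$ of size $3^{e+1}$, and at stage $s+1$ inserts, for each $e < s$, a $3$-cycle $\{x,y,s\}$ with $\{x,y\} \subseteq D_{e,s} \setminus \bigcup_{k<e} D_{k,s}$, using only the fresh edges incident to $s$. The consistency problem you worried about disappears by design: the cardinalities $3^{e+1}$ guarantee that the requirements active at a common stage select disjoint pairs, and there is no cross-stage conflict since each stage only assigns edges touching the new vertex $s$. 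The diagonalizing value is then read off directly as the code of the first $3^{e+1}$ elements of the transitive subtournament, so that if $\Phi^{X'}_e(e)$ equaled this code, a settled pair from $D_e$ together with a large stage $s$ in the subtournament would form a $3$-cycle inside it. If you want to keep a reduction-to-rainbows strategy instead, the known way (Kang/Wang) only makes transitive subtournaments \emph{compute} rainbows, not literally be rainbows; arranging literal rainbows via disjoint witnesses is precisely what cannot be done.
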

\begin{proof}

Let $X$ be a set. Let $g(.,.)$ be a total $X$-computable function such that $\Phi_e^{X'}(e)=\lim_s g(e,s)$ if the limit exists, and $\Phi^{X'}_e(e) \uparrow$ if the limit does not exist. Interpret $g(e,s)$ as the code of a finite set $D_{e,s}$ of size $3^{e+1}$. We define the tournament~$T$ by $\Sigma_1$-induction as follows. Set $T_0=\emptyset$. At stage~$s+1$, do the following. Start with $T_{s+1}=T_s$. Then, for each $e<s$, take the first pair $\{x,y\} \in D_{e,s} \setminus \bigcup_{k<e} D_{k,s}$ (notice that such a pair exists by cardinality assumptions on the $D_{e,s}$), and if $T_{s+1}(s,x)$ and $T_{s+1}(s,y)$ are not already assigned, assign them in a way that $\{x,y,s\}$ forms a $3$-cycle in $T_{s+1}$. Finally, for any $z<s$ such that $T_{s+1}(s,z)$ remains undefined, assign any truth value to it in a predefined way (e.g., for any such pair $\{x,y\}$, set $T_{s+1}(x,y)$ to be true if $x<y$, and false otherwise). This finishes the construction of $T_{s+1}$. Set $T=\bigcup_s T_s$, which must exist as a set by $\Sigma_1$-induction. 

First of all, notice that $T$ is a tournament of domain $[\N]^2$, as at the end of stage~$s+1$ of the construction $T(x,y)$ is assigned a truth value for (at least) all pairs $\{x,y\}$ with $x<s$ and $y<s$. By $\emo$, let $T'$ be a transitive subtournament of~$T$ of infinite domain~$A$. Let $f(e)$ be the code of the finite set $A_e$ consisting of the first $3^{e+1}$ elements of~$T'$. We claim that $f(e) \not=\Phi_e^{X'}(e)$ for all~$e$, which would prove $\dnrzp$. Suppose otherwise, i.e., suppose that $\Phi_e^{X'}(e)=f(e)$ for some~$e$. Then there is a stage $s_0$ such that $f(e)=g(e,s)$ for all~$s \geq s_0$ or equivalently $D_{e,s}=A_e$ for all~$s \geq s_0$. Let $N_e=\max(A_e)$. We claim that for any $s$ be bigger than both $\max(\bigcup_{e,s < N_e} D_{e,s})$ and $s_0$, the restriction of $T$ to $A_e \cup \{s\}$ is not a transitive subtournament, which contradicts the fact that the restriction $T'$ of $T$ to the infinite set $A$ containing $A_e$ is transitive. 

To see this, let $s$ be such a stage. At that stage~$s$ of the construction of $T$, a pair $\{x,y\} \in D_{e,s} \setminus \bigcup_{k<e} D_{k,s}$ is selected, and since $D_{e,s}=A_e$, this pair is contained in $A_e$. Furthermore, we claim that $T(s,x)$ and $T(s,y)$ become assigned at that precise stage, i.e., were not assigned before. This is because, by construction of $T$, when the value of some $T(a,b)$ is assigned at a stage~$t$, either $a \leq t$ or $b \leq t$. Thus, if $T(s,x)$ was already assigned at the beginning of stage~$s$, it would have in fact been assigned during or before stage~$x$. However, $x \in A_e$, so $x < N_e$, and at stage~$N_e$ the number $s$, by definition of $N_e$, has not appeared in the construction yet. In particular $T(s,x)$ is not assigned at the end of stage~$x$. This proves our claim, therefore $T(s,x)$ and $T(s,y)$ do become assigned exactly at stage~$s$, in a way -- still by construction -- that $\{x,y,s\}$ form a $3$-cycle for~$T$. Therefore the restriction of $T$ to $A_e \cup \{s\}$ is not a transitive subtournament, which is what we needed to prove. 
\end{proof}

\begin{corollary}[Wang in~\cite{Kang2014Combinatorial}]\label{cor:emo-rrt22}
$\rca \vdash \emo \to \rrt^2_2$
\end{corollary}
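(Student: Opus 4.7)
The plan is to derive this corollary as an immediate consequence of two results that have already been established, with essentially no new work required. First I would invoke Theorem~\ref{thm:em-dnrzp}, whose proof has just been completed, to obtain $\rca \vdash \emo \to \dnrzp$. Then I would apply Miller's characterization Theorem~\ref{thm-rrt22-dnrzp}, which provides $\rca \vdash \dnrzp \to \rrt^2_2$ (in fact the full biconditional $\rca \vdash \rrt^2_2 \biimp \dnrzp$). Chaining these two implications in $\rca$ yields $\rca \vdash \emo \to \rrt^2_2$, as desired.

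The only point one might want to double-check is that both implications in the chain are genuinely formalizable over $\rca$, and not merely established for $\omega$-models. For the first, the construction of the tournament~$T$ in the proof of Theorem~\ref{thm:em-dnrzp} is carried out explicitly by $\Sigma_1$-induction (the author notes this when setting $T = \bigcup_s T_s$), and the cycle-blocking argument that shows any infinite transitive subtournament codes a function diagonalizing against $\Phi^{X'}_e(e)$ is a routine first-order manipulation inside $\rca$. For the second, Theorem~\ref{thm-rrt22-dnrzp} is stated by the author as holding over $\rca$, so it can be cited directly.

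Consequently there is no serious obstacle here: the corollary exists precisely to record what one gets by piping Theorem~\ref{thm:em-dnrzp} through Miller's equivalence. If Theorem~\ref{thm-rrt22-dnrzp} were unavailable, one would have to inline the direction $\dnrzp \to \rrt^2_2$ of Miller's argument, but since that theorem has already been quoted, no such inlining is needed and the proof reduces to a single line citing the two preceding theorems.
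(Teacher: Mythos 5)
Your proposal matches the paper exactly: the corollary is obtained there by the same one-line chaining of Theorem~\ref{thm:em-dnrzp} with Miller's equivalence Theorem~\ref{thm-rrt22-dnrzp}, both stated over $\rca$. Your additional remarks about formalizability in $\rca$ are correct but not needed beyond citing the two theorems.
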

\begin{proof}
Immediate by Theorem~\ref{thm:em-dnrzp} and Theorem~\ref{thm-rrt22-dnrzp}.
\end{proof}

\begin{corollary}
$\semo$ does not imply~$\emo$ over~$\rca$.
\end{corollary}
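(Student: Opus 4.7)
The plan is to separate $\semo$ from $\emo$ via a non-standard model, using the tools already assembled in the excerpt: the equivalence $\rca \vdash \rrt^2_2 \biimp \dnrzp$ (Theorem~\ref{thm-rrt22-dnrzp}), the implication $\rca \vdash \emo \imp \dnrzp$ (Theorem~\ref{thm:em-dnrzp}), and the fact that no model of $\rca + \rrt^2_2$ consists only of $\Delta^0_2$ sets (the theorem preceding Corollary~\ref{cor-srt-rrt}). The idea is to exhibit one model of $\rca + \semo$ in which $\dnrzp$ fails; such a model cannot satisfy $\emo$.

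The natural candidate is the Chong--Slaman--Yang model of $\rca + \bst + \neg \ist + \srt^2_2$ containing only low (hence $\Delta^0_2$) sets, quoted in the theorem preceding Corollary~\ref{cor-srt-rrt}. First I would verify that this model satisfies $\semo$, which amounts to using the Bovykin--Weiermann decomposition of the stable Ramsey theorem: from a stable tournament~$T$, define a stable coloring $f:[\Nb]^2 \to 2$ by $f(x,y) = 1$ iff $T(x,y)$ (for $x < y$); an infinite $f$-homogeneous set is exactly an infinite transitive subtournament of~$T$. This reduction is provable in $\rca$, so $\rca \vdash \srt^2_2 \imp \semo$, and the Chong--Slaman--Yang model satisfies $\semo$.

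Next, suppose for contradiction that this model also satisfies $\emo$. By Theorem~\ref{thm:em-dnrzp}, it then satisfies $\dnrzp$, and by Theorem~\ref{thm-rrt22-dnrzp} it satisfies $\rrt^2_2$. But by the theorem recalled above, no model of $\rca + \rrt^2_2$ has only $\Delta^0_2$ sets, contradicting the defining property of our model. Hence $\emo$ fails in the Chong--Slaman--Yang model, witnessing that $\rca + \semo \nvdash \emo$.

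The substantial mathematics here --- the construction of the Chong--Slaman--Yang model, Theorem~\ref{thm:em-dnrzp}, and Miller's Theorem~\ref{thm-rrt22-dnrzp} --- is all already available in the excerpt, so the remaining work is essentially bookkeeping. The only point that requires care is the reduction $\srt^2_2 \imp \semo$ inside a non-standard model: I would make sure the coding of tournaments as 2-colorings of pairs is carried out by a $\Delta^0_1$ definition so that it goes through in $\rca$ without any appeal to induction beyond what~$\rca$ provides.
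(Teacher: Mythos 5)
Your proposal is correct and follows essentially the same route as the paper: the paper's proof cites Theorem~\ref{thm:em-dnrzp}, Theorem~\ref{thm-rrt22-dnrzp} and Corollary~\ref{cor-srt-rrt}, which amounts exactly to your argument that the Chong--Slaman--Yang model of $\srt^2_2$ (with only $\Delta^0_2$ sets) satisfies $\semo$ via the stable Bovykin--Weiermann decomposition but cannot satisfy $\emo$, since $\emo$ would yield $\dnrzp$ and hence $\rrt^2_2$, which has no model consisting solely of $\Delta^0_2$ sets. Your extra care about the $\Delta^0_1$ coding of stable tournaments as stable colorings is exactly the implicit step the paper leaves to the reader.
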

\begin{proof}
Immediate by Theorem~\ref{thm:em-dnrzp},
Theorem~\ref{thm-rrt22-dnrzp} and Corollary~\ref{cor-srt-rrt}
\end{proof}

We have seen (see Corollary~\ref{cor:omega-sem-dnr})
that every $\omega$-model of~$\semo$ is a model of~$\dnr$.
We now give a direct proof of it and show that it holds over~$\rca$.

\begin{theorem}
$\rca \vdash \semo \rightarrow \dnr$
\end{theorem}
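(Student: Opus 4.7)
The plan is to mirror the proof of Theorem~\ref{thm:em-dnrzp} almost word for word, replacing the $\Delta^0_2$ approximation of $\Phi^{X'}_e(e)$ by the one-shot $X$-computable approximation of $\Phi^X_e(e)$. The tournament we build will then be automatically stable, so that $\semo$ (rather than the full $\emo$) is enough to extract a transitive subtournament.

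Fix $X \subseteq \Nb$. Let $g(e,s) = \Phi^X_{e,s}(e)$ be the standard stage-$s$ approximation; when defined, interpret $g(e,s)$ as the code of a finite set $D_{e,s}$ of cardinality $3^{e+1}$. Build $T$ by $\Sigma_1$-induction exactly as in Theorem~\ref{thm:em-dnrzp}: at stage $s+1$, for each $e<s$ with $g(e,s)\halts$, select the first pair $\{x,y\} \in D_{e,s} \setminus \bigcup_{k<e,\ g(k,s)\halts} D_{k,s}$ (existence by the same cardinality count $3^{e+1} > \sum_{k<e} 3^{k+1}$), and, whenever $x,y<s$, arrange $T(s,x), T(s,y)$ so that $\{x,y,s\}$ forms a $3$-cycle in $T$; all remaining edges $T(s,z)$ receive a fixed default orientation. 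Then apply $\semo$ to $T$ to obtain an infinite transitive subtournament with domain $A$, and define $f(e)$ as the code of the set of the first $3^{e+1}$ elements of~$A$. The same diagonalization argument as in Theorem~\ref{thm:em-dnrzp} shows that $f(e)\neq \Phi^X_e(e)$: if equality held, then $D_e \subseteq A$, the eventually-stable pair selected for $e$ would lie in $A$, and any sufficiently large $s \in A$ would create a $3$-cycle in~$A$, contradicting transitivity. Thus $f$ is d.n.c.\ relative to~$X$ and computable from~$A$.

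The key new ingredient is stability. The approximation $g$ is monotonic in the sense that either $\Phi^X_e(e)\halts$ and $g(e,s)$ is eventually equal to $\Phi^X_e(e)$, or $g(e,s)\uparrow$ for every $s$. Therefore, for each $e$, the set $D_{e,s}$ is either always undefined or eventually constant, so the pair selected at stage $s$ for $e$ (if any) becomes eventually constant. For each fixed $x$, whether $x$ appears in the pair chosen for some $e < s$ at stage $s$ is hence eventually determined, so the value of $T(s,x)$ stabilizes as $s \to \infty$, and $T$ is stable.

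The main obstacle, such as it is, lies in recognizing that stability comes for free once the approximation is one-shot. In Theorem~\ref{thm:em-dnrzp} the underlying $\Delta^0_2$ approximation was allowed to oscillate arbitrarily, so the pair selected for a given~$e$ genuinely varied with $s$, producing a tournament which could not be assumed stable---hence the appeal there to the full $\emo$. The monotonicity of the new $g$ is precisely what locks in the pair choices and yields a stable tournament, so that $\semo$ suffices to close the argument.
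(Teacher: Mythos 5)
Your overall strategy is the same as the paper's: rerun the construction of Theorem~\ref{thm:em-dnrzp} with an approximation to $\Phi^X_e(e)$ instead of $\Phi^{X'}_e(e)$ and argue that the resulting tournament is stable, so that $\semo$ suffices. The diagonalization half of your argument is fine. The gap is in the stability verification. From ``each requirement's selected pair is eventually constant'' you infer that, for a fixed $x$, whether $x$ lies in the pair selected for \emph{some} $e<s$ is eventually determined. That inference is not valid as stated: as $s$ grows, requirements of arbitrarily large index enter the construction (a program $e$ may halt arbitrarily late, and it is only processed once $e<s$), and nothing in your set-up prevents infinitely many halting indices $e$ from coding sets $D_e$ containing $x$. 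So the eventual constancy of each individual $P_{e,s}$ does not by itself rule out that $x$ drifts in and out of selected pairs forever. This is exactly the point the paper guards against by imposing the extra requirement $\min(D_{e,s})\geq e$, which you dropped: with it, only the finitely many requirements $e\leq x$ can ever touch $x$, and stability is immediate (the paper then uses $\bst$, available from $\semo$ by Kreuzer, to settle all $D_{e,s}$ with $e\leq u$; with your one-shot approximation even $\bsig^0_1$, hence $\rca$, would do).

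Your construction can in fact be rescued without the $\min$ condition, but it needs an argument you did not give: let $e^*$ be the least halting index with $x\in D_{e^*}$ (if none exists, $x$ always gets the default orientation). Once $e^*$ has halted, every halting $e'>e^*$ selects its pair inside $D_{e'}\setminus\bigcup_{k<e'}D_{k,s}$, which excludes $D_{e^*}\ni x$, so no requirement of larger index ever again selects a pair containing $x$; halting indices below $e^*$ never had $x\in D_k$; and $e^*$'s own selection stabilizes after the finitely many $k<e^*$ that ever halt have halted (a $\bsig^0_1$ bound). Only then does $T(x,s)$ stabilize. So either supply this least-index/exclusion argument, or do what the paper does and require $\min(D_{e,s})\geq e$ (adjusting your d.n.c.\ candidate to return the code of the first $3^{e+1}$ elements of $A$ that are $\geq e$, so the diagonalization still matches the interpretation of $\Phi^X_e(e)$). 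As written, the stability step is a genuine gap, though a repairable one.
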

\begin{proof}
This is obtained by small variation of the proof of Theorem~\ref{thm:em-dnrzp}.
Fix a set $X$. Let $g(.,.)$ be a total $X$-computable function such that 
$\Phi^X_e(e)=\lim_s g(e,s)$ if~$\Phi^X_e(e) \downarrow$
and~$\lim_s g(e,s) = 0$ otherwise. Interpret $g(e,s)$ as a code
of a finite set $D_{e,s}$ of size $3^{e+1}$ such that~$min(D_{e,s}) \geq e$ and construct the infinite tournament~$T$
accordingly. The argument for constructing a function d.n.c.\ relative to~$X$
given an infinite transitive subtournament is similar. We will only prove that
the tournament~$T$ is stable.

Fix some~$u \in \Nb$. 
By $\bst$, which is provable from $\semo$ over $\rca$ (see \cite{Kreuzer2012Primitive}),
there exists some stage~$s_0$ after which~$D_{e,s}$ remains constant
for every~$e \leq u$. If~$u$ is part of a pair~$\{x,y\} \subset D_{e,s}$
for some~$s \geq s_0$ and~$e$, then~$e \leq u$ because $min(D_{e,s}) \geq e$.
As the~$D_{e,s}$'s remain constant for each~$e \leq u$, 
the pair~$\{x,y\}$ will be chosen at every stage~$s \geq s_0$
and therefore~$T(u, s)$ will be assigned the same value for every~$s \geq s_0$.
If~$u$ is not part of a pair~$\{x,y\}$, it will always be assigned
the default value at every stage~$s \geq s_0$.
In both cases, $T(u,s)$ stabilizes at stage~$s_0$.
\end{proof}

\section{Free set and thin set theorems}

Some priority or forcing constructions involving $\srt^2_2$ split their requirements
by color and do not exploit the fact that there exists only two colors.
For example the absence of universal instance for principles between~$\rt^2_2$ and~$\srt^2_2$
proven by Mileti in~\cite[Theorem~5.4.2]{Mileti2004Partition} has been generalized by the author to principles between~$\rt^2_2$
and~$\sts(2)$ in~\cite{Patey2015Degrees}. The separation of~$\emo$ from~$\srt^2_2$ by Lerman et al.~\cite{Lerman2013Separating}
has been adapted to a separation of~$\emo$ from~$\sts(2)$ as well (see~\cite{Patey2013note}).

\begin{question}
Does $\fs(2)$ imply $\emo$ (or even $\semo$) over $\rca$ ?
\end{question}

The following question is still open:
\begin{question}
Is there any $k$ such that $\rca \vdash \ts(k) \rightarrow \fs(k)$ ?
\end{question}
Cholak et al.\ conjectured that it is never the case.

\begin{lemma}\label{lem-srt-sfs}
$\rca \vdash \srt^2_2 \rightarrow \sfs(2)$
\end{lemma}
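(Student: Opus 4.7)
The plan is to apply $\srt^2_2$ to a stable coloring that regularizes the position of $f(x,y)$ relative to $x$ and $y$, and then to build a free set from the resulting structure using stability.

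Let $f : [\Nb]^2 \to \Nb$ be stable, and set $g(x) := \lim_s f(x,s)$, which exists by stability. Apply $\srt^2_2$ (equivalently, $\srt^2_5$ over $\rca$) to the stable $5$-coloring $c(x,y)$ which records in which of the regions $(-\infty, x)$, $\{x\}$, $(x,y)$, $\{y\}$, $(y,+\infty)$ the value $f(x,y)$ lies. For each fixed $x$ and $y$ large enough, $f(x,y) = g(x)$ lies in exactly one of the first three regions; in particular, the colors ``$f(x,y)=y$'' and ``$f(x,y)>y$'' each have limit~$0$, so no infinite homogeneous set can be monochromatic in them. Hence the resulting infinite homogeneous set $H$ falls into one of three cases: color $\{x\}$, color $(x,y)$, or color $(-\infty,x)$.

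In the color $\{x\}$, $H$ is already free for $f$. In the color $(x,y)$, one has $g(x)>x$ on $H$, and we build $A = \{a_0 < a_1 < \cdots\} \subseteq H$ greedily: at stage $n+1$, pick $a_{n+1} \in H$ above $a_n$ that avoids the finite forbidden set $\{f(a_i,a_j) : i<j \leq n\}$ and is large enough that $f(a_i,a_{n+1}) = g(a_i)$ for every $i \leq n$ (such a choice exists by stability). Because $g(a_{n+1}) > a_{n+1} > a_m$ for every $m \leq n$, no newly introduced $g$-value coincides with a past element of $A$, and the inductive clause that each chosen $a_{n+1}$ avoids the values $f(a_i, a_n)$ (which equal $g(a_i)$ once $a_n$ is past stabilization) guarantees that no new element is equal to any past $g$-value. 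One therefore has $g(A) \cap A = \emptyset$, and combined with the stability thinning, $f(a_i,a_j) = g(a_i) \notin A$ for all $i<j$ in $A$, so $A$ is free.

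The remaining case is $H$ monochromatic in $(-\infty, x)$, where $g(x) < x$ on $H$ and the greedy construction fails because $g(a_{n+1}) < a_{n+1}$ may coincide with a past $a_m$. We handle it by applying $\srt^2_2$ once more to the stable coloring $c'(x,y) := [f(x,y) \in H]$, whose limit is $[g(x) \in H]$. On the resulting homogeneous set $H' \subseteq H$, if the color is $0$ then $f(x,y) \notin H \supseteq H'$ for all $x<y$ in $H'$, so $H'$ is free. Otherwise $g(H') \subseteq H$, and since $g(x) < x$, the function $g$ induces a well-founded forest on $H$. An infinite independent set $A \subseteq H'$ in this forest (obtained by selecting vertices of a fixed parity of depth) yields the desired free set. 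The main obstacle is formalizing the independent-set construction within $\rca$: depths are finite and bounded by $x$, at least one parity class must be infinite by a pigeonhole argument, and the depths can be computed from $H$ and the approximation $f$ together with the fact that every $g$-chain terminates in at most $x$ steps.
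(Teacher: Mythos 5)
Your reduction to the position $5$-coloring is fine (the colors ``$f(x,y)=y$'' and ``$f(x,y)>y$'' are indeed impossible for an infinite homogeneous set, and color ``$f(x,y)=x$'' gives freeness outright), but both remaining cases have genuine gaps. In the case $x<f(x,y)<y$, your greedy construction does not do what you claim: the forbidden set $\{f(a_i,a_j):i<j\leq n\}$ can never contain $g(a_n)$, the limit value of the element you just added, since no pair with first coordinate $a_n$ has yet been observed. So nothing prevents $a_{n+1}=g(a_n)$, and then for every later $a_j$ one gets $f(a_n,a_j)=g(a_n)=a_{n+1}\in A\setminus\{a_n,a_j\}$, destroying freeness; your sentence about ``no new element is equal to any past $g$-value'' only covers $g(a_0),\dots,g(a_{n-1})$. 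Worse, for a proof over $\rca$ the clause ``$a_{n+1}$ is large enough that $f(a_i,a_{n+1})=g(a_i)$ for all $i\leq n$'' is a $\Delta^0_2$ condition, so the sequence you define is only $\Delta^0_2$ in $f\oplus H$; $\rca$ has no comprehension to form such a set and it is not the output of any application of $\srt^2_2$ you have made. (Replacing the stabilization clause by the checkable requirement $f(a_i,a_{n+1})\notin\{a_0,\dots,a_{n+1}\}\setminus\{a_i\}$ does force freeness of whatever is built, but the construction can then stall for exactly the same reason: once some $a_{n+1}$ happens to equal $g(a_n)$, no sufficiently large candidate passes the check.)

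The case $f(x,y)<x$ has the same two defects in sharper form. The assertion that ``the depths can be computed from $H$ and the approximation $f$'' is false: $g=\lim_s f(\cdot,s)$ is only $f'$-computable, and the bound $g(x)<x$ on chain length does not make $g$, the depth function, or the parity classes computable; each parity class is a $\Delta^0_2(f\oplus H)$ set, so selecting an infinite one is not licensed by $\rca$ (you would need something like $D^2_2$, i.e., a further application of $\srt^2_2$, which you never invoke). Moreover, even granting a $g$-independent infinite $A\subseteq H'$, freeness does not follow: for $x<y$ in $A$ with $y$ below the stabilization point of $x$, the value $f(x,y)$ need not equal $g(x)$ and may land on a smaller element of $A$; your argument never controls these pre-limit values. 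This is exactly the difficulty the paper's route avoids: following \cite[Theorem~5.2]{Cholak2001Free}, one colors each pair by whether $f(x,y)\in\{x,y\}$, whether $f(x,y)>y$, and otherwise by the index $i(x,y)$ together with the parity of the iteration count $c(x,y)$ of the descent map $h$; the parity argument makes an infinite homogeneous set for that $6$-coloring directly yield a free set, with no non-effective thinning, so the only new work needed for $\sfs(2)$ is checking that this coloring is stable when $f$ is. To salvage your approach you would need to replace every appeal to ``past stabilization'' and to depth parity by checkable conditions plus additional applications of $\srt^2_2$ (or $D^2_2$), which is precisely the machinery currently missing.
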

\begin{proof}
We adapt the proof of~\cite[Theorem~5.2]{Cholak2001Free}.
Let $f : [\Nb]^2 \to \Nb$ be a stable coloring function over pairs.
For $\vec w$ an ordered $k$-tuple and $1 \leq j \leq k$ we write $(\vec w)_j$
for the $j$th component of $\vec w$.
Define
$$
S = \set{(x,y) \in \Nb^2 : f(x,y) < y \wedge f(x,y) \not \in \set{x,y}}
$$
Given some $\vec x \in S$, 
let $i(\vec x)$ be the least $j$ such that $f(\vec x) < (\vec x)_j$. Such a $j$
exists because $\vec x \in S$.
Let $h(\vec x)$ be the increasing ordered pair obtained from $\vec x$
by replacing $(\vec x)_{i(\vec x)}$ by $f(\vec x)$. Note that~$h(\vec x)$ is lexicographically
smaller than~$\vec x$.
Let $c(\vec x)$ be the least $j \in \Nb$ such that $h^{(j)}(\vec x) \not \in S$ or $i(h^{(j)}(\vec x)) \neq i(\vec x)$
where $h^{(j)}$ is the $j$th iteration of $h$. The function $c$ is well-defined because the lexicographic order
is a well-order. Define a function $g : [\Nb]^2 \to 6$ as follows for each~$x < y$:
$$
g(x,y) = \cond{
  0 & \mbox{ if } f(x,y) \in \set{x,y}\\
  1 & \mbox{ if } f(x,y) > y\\
  2i(x,y) + j & \mbox{ if } (x,y) \in S, j \leq 1 \mbox{ and } c(x,y) \equiv j \mbox{ mod } 2
}
$$
Fix an $x$. Because $f$ is stable there is a $y_0$ such that for every $y \geq y_0$ $f(x,y) = f(x,y_0)$.

  \emph{Case 1}: If there is a $y_1$ such that $f(x,y_1) \in \set{x,y_1}$ then for every
$y, w > max(y_0,y_1)$, $f(x,y) \in \set{x,y}$ iff $f(x,w) \in \set{x,w}$ and hence
after a threshold first condition will either be always fulfilled or will never be.
  
  \emph{Case 2}: For every $y \geq max(y_0, f(x,y_0))$, $f(x,y) = f(x,y_0) \leq y$.
  Hence second condition will be fulfilled for finitely many $y$.
  
  \emph{Case 3}: It suffices to check that $i$ and $c$ are stable when $f$ is.
  If $f(x,y_0) < x$ then $i(x,y) = 1$ for every $y \geq y_0$. If $x \leq f(x,y_0) < y_0$
  then $x \leq f(x,y) < y_0 \leq y$ for every $y \geq y_0$. Hence $i$ is stable.
  It remains to check stability of $c(x,y)$. By induction over $x$:
  \begin{itemize}
  \item If $f(x,y_0) < x$ then $h(x,y) = (f(x,y_0), y)$ for every $y \geq y_0$.
  By stability of $f$, there is a $y_1$ such that $f(f(x,y_0),y) = f(f(x,y_0), y_1)$
  for every $y \geq y_1$. For $y > max(y_1, f(f(x,y_0), y_1))$, $f(f(x,y_0), y) = f(f(x,y_0),y_1) < y$.
  If $f(f(x,y_0),y_1) = f(x,y_0)$ then $(f(x,y_0),y_1) = h(x,y) \not \in S$ hence $j = 1$ for every 
  $y > max(y_1, f(f(x,y_0), y_1))$.
  Otherwise $h(x,y) \in S$. If $f(h(x,y)) = f(f(x,y_0), y)\allowbreak = f(f(x,y_0), y_1) > f(x,y_0)$ then 
  $i(h(x,y)) \neq i(x,y)$ and $j = 1$ for every $y > max(y_1, f(f(x,y_0), y_1))$. Otherwise
  $h(x,y) \in S$ and $i(h(x,y) = i(x,y)$ so $j = 1 + i$ where $i$ is the least integer such that
  $h^{(i)}(f(x,y_0), y) \in S$ or $i(h^{(i)}(f(x,y_0), y) \neq i(x,y) = i(h(x,y)$.
  Hence $j = 1 + c(f(x,y_0), y)$. By induction hypothesis, there is a $y_2$ such that
  for every $y \geq y_2$, $c(f(x,y_0), y) = c(f(x,y_0),y_2)$. So for every $y, w > max(y_1, y_2, f(f(x,y_0), y_1))$
  $c(x,y) = c(x, w)$.
  \item If $x \leq f(x,y_0) < y_0$ then for every $y \geq y_0$, $h(x,y) = (x,f(x,y_0))$
  and hence $c(x,y) = c(x,y_0)$.
  \end{itemize} 
\end{proof}

\begin{corollary}\label{cor-srt-sts}
$\rca \vdash \srt^2_2 \rightarrow \sts(2)$
\end{corollary}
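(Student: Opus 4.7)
The plan is to chain Lemma~\ref{lem-srt-sfs} together with the known implication $\sfs(2) \to \sts(2)$ over $\rca$, which is one of the results of Cholak, Giusto, Hirst and Jockusch~\cite{Cholak2001Free} mentioned in the introduction. Since the target statement is a corollary immediately after the lemma, no substantive new computation is needed; the proof is a two-step deduction.

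More concretely, I would proceed as follows. First, given an arbitrary stable $f : [\Nb]^2 \to \Nb$ (an instance of $\sts(2)$), observe that $f$ is automatically an instance of $\sfs(2)$ as the two notions share the same underlying stability condition. Second, apply Lemma~\ref{lem-srt-sfs} in contrapositive form: invoking $\srt^2_2$ produces an infinite $f$-free set $A$. Finally, recall that every $f$-free set is automatically $f$-thin: if $A$ is free and $a \in A$ with $a$ not among the first two elements of $A$, then $a \notin f([A]^2)$ whenever the two inputs lie strictly below $a$ in $A$, so $f([A]^2) \neq \Nb$. This last step is the $\sfs(k) \to \ts(k)$ implication from~\cite{Cholak2001Free}, restricted to the stable case to get $\sfs(2) \to \sts(2)$.

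The main ``obstacle,'' such as it is, would only be to double-check that the Cholak et al.\ derivation of $\ts$ from $\fs$ restricts cleanly to the stable case — but this is routine, since the argument merely extracts a thin set from a free set without altering the coloring, so stability of the instance is preserved. Therefore the corollary follows directly by composing Lemma~\ref{lem-srt-sfs} with the stable-case version of~\cite[Theorem~3.2]{Cholak2001Free}.
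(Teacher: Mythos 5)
Your proposal is correct and follows essentially the same route as the paper's own proof, namely composing Lemma~\ref{lem-srt-sfs} with the restriction of Theorem~3.2 of Cholak et al.~\cite{Cholak2001Free} to stable functions. One minor imprecision in your inline sketch: a free set $A$ need not itself be thin, since a pair containing an element $a$ may still receive color $a$; the standard extraction instead passes to $A \setminus \{a\}$, which is infinite and avoids the color $a$ — but this is exactly what the cited theorem provides, so the corollary stands as you argue.
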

\begin{proof}
Apply Lemma~\ref{lem-srt-sfs} using the restriction of 
Theorem 3.2 to stable functions in~\cite{Cholak2001Free}.
\end{proof}

\begin{theorem}\label{thm:rrt2n-tsn}
$\rca \vdash (\forall n)[\rrt^{n+1}_2 \imp \ts(n)]$
\end{theorem}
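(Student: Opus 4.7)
I proceed by induction on $n$. The base case $n=1$ is essentially immediate: $\rca \vdash \ts(1)$, because given $g : \Nb \to \Nb$ the sets $g^{-1}(\{0\})$ and $\Nb \setminus g^{-1}(\{0\})$ partition $\Nb$, so at least one is infinite, and each is thin for $g$ (missing $\Nb \setminus \{0\}$ or missing $\{0\}$ respectively).

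For $n \geq 2$ the strategy is to give a uniform computable reduction from $\ts(n)$ to $\rrt^{n+1}_2$. Given a $\ts(n)$-instance $g : [\Nb]^n \to \Nb$, I build a $g$-computable 2-bounded coloring $f : [\Nb]^{n+1} \to \Nb$ such that every infinite $f$-rainbow is thin for $g$. The idea is to use the extra coordinate as an enumeration stage and to pair tuples lying in the same $g$-fiber. Concretely, for $(\vec x, s) = (x_1, \dots, x_n, s) \in [\Nb]^{n+1}$ let $v = g(\vec x)$ and let $k(\vec x, s)$ be the lexicographic rank of $\vec x$ in the finite set $\{\vec y \in [\Nb]^n : y_n < s,\ g(\vec y) = v\}$, and set
$$
f(\vec x, s) \;=\; \bigl\langle v,\ \lfloor k(\vec x, s)/2 \rfloor,\ s \bigr\rangle.
$$
Two-boundedness is automatic: each color $\langle v, j, s \rangle$ is realized only by the (at most two) tuples of rank $2j$ and $2j+1$ in the $s$-bounded enumeration, paired with $s$. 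Applying $\rrt^{n+1}_2$ to $f$ yields an infinite rainbow $R$.

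The crux of the proof — and the main obstacle — is showing that any such $R$ must be thin for $g$. Assume towards contradiction that $g([R]^n) = \Nb$. The rainbow condition forces, for every $s \in R$ and every value $v$, that distinct tuples $\vec x, \vec x' \in [R \cap [0,s)]^n$ with $g(\vec x) = g(\vec x') = v$ must occupy distinct rank-buckets $\{2j, 2j+1\}$; letting $s$ range through $R$, the ranks stabilize and this yields a sparsity constraint on $[R]^n \cap g^{-1}(v)$ inside the full fiber $g^{-1}(v)$. The hard part is turning this sparsity into a pigeonhole collapse: the plain pairing above only forbids rank-consecutive tuples from coexisting in $R$, which is a priori compatible with every value being hit. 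The likely fix is to refine the construction so that $s$ additionally pairs preimages across adjacent values of $v$ (e.g., by coupling the rank-parities of $g^{-1}(2k)$ and $g^{-1}(2k+1)$), or to combine $f$ with a cohesion/stability-style secondary coloring on $[R]^n$, so that some fiber $g^{-1}(v^\star)$ is forced to be empty in $[R]^n$. Carrying out this combinatorial bookkeeping inside $\rca$ (avoiding $\bsig_2$-style induction) is the main technical burden.
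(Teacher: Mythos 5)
There is a genuine gap, and you have named it yourself: with your coloring, a rainbow $R$ only excludes pairs of tuples of the same $g$-value that happen to land in the same rank-bucket $\{2j,2j+1\}$ of the \emph{full} fiber $g^{-1}(v)$ restricted below $s$; since the tuples of $[R]^n$ can be arbitrarily sparse inside each fiber, nothing prevents $g([R]^n)=\Nb$. The ``likely fixes'' you sketch (coupling parities across adjacent values, or adding a secondary stability/cohesion coloring) are not carried out, and it is far from clear they can be pushed through in $\rca$; in particular, any argument that tries to force \emph{some} fiber to be empty on $[R]^n$ by a pigeonhole over all values $v$ runs into exactly the bookkeeping/induction issues you flag. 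So the crux of the theorem --- that rainbows are thin --- is not established by the proposal.

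The paper's construction avoids the problem by a different design, which you may find instructive. Do not rank tuples within fibers and do not use the extra coordinate as a stage; instead, decode the \emph{value} of the instance as a pair of small numbers and let that pair dictate a color collision. Given the instance $g:[\Nb]^n\to\Nb$, define a 2-bounded $h:[\Nb]^{n+1}\to\Nb$ on tuples $(y,\vec z)$ with $y<\min(\vec z)$ by: if $g(\vec z)=\tuple{x,y}$ for some $x<y<\min(\vec z)$, set $h(y,\vec z)=h(x,\vec z)$; otherwise give $h(y,\vec z)$ a fresh color. If $H$ is an infinite $h$-rainbow and $x<y$ are its two least elements, then the tail $H_1=H\setminus[0,y]$ avoids the color $\tuple{x,y}$: any $\vec z\in[H_1]^n$ with $g(\vec z)=\tuple{x,y}$ would give $h(x,\vec z)=h(y,\vec z)$ with $x,y,\vec z$ all in $H$, contradicting the rainbow property. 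Thus the avoided color is determined by the solution itself (its first two elements), and no pigeonhole over values, no stabilization of ranks, and no induction beyond what $\rca$ provides is needed. This is a single uniform reduction for each $n$, so your outer induction on $n$ is also unnecessary (the $n=1$ case is subsumed, though your direct observation that $\rca\vdash\ts(1)$ is fine).
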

\begin{proof}
Fix some~$n \in \Nb$ and let $f : [\Nb]^{n} \to \Nb$ be a coloring. We build a $\Delta^{0,f}_1$
2-bounded coloring $g : [\Nb]^{n+1} \to \Nb$
such that every infinite rainbow for $g$ is, up to finite changes, thin for $f$.
For every $y \in \Nb$ and $\vec{z} \in [\Nb]^n$, if $f(\vec z) = \tuple{x,y}$ with $x < y < min(\vec z)$,
then set $g(y, \vec z) = g(x, \vec z)$. Otherwise assign $g(y, \vec z)$ a fresh color.
The function~$g$ is clearly 2-bounded.
Let $H$ be an infinite rainbow for $g$ and let~$x, y \in H$ be such that~$x < y$.
Set~$H_1 = H \setminus [0, y]$. We claim that~$H_1$ is $f$-thin with color~$\tuple{x,y}$.
Indeed, for every~$\vec z \in [H_1]^n$, if~$f(\vec z) = \tuple{x,y}$
then $x < y < min(\vec z)$, so~$g(x, \vec z) = g(y, \vec z)$.
This contradicts the fact that~$H$ is a~$g$-rainbow.
\end{proof}

\begin{theorem}\label{thm:rrt2n-fsn}
For every standard~$n$, 
$\rca \vdash \rrt^{2n+1}_2 \imp \fs(n)$
\end{theorem}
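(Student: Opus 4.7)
The plan is to extend the construction of Theorem~\ref{thm:rrt2n-tsn} to the free set setting. Given $f : [\Nb]^n \to \Nb$, I would construct an $f$-computable $2$-bounded coloring $g : [\Nb]^{2n+1} \to \Nb$ such that every infinite $g$-rainbow contains, after deletion of a finite initial segment, an infinite $f$-free set. This mirrors the pattern of the thin set proof, which built a $2$-bounded coloring of arity $n+1$ whose rainbows are nearly $f$-thin.

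The conceptual difference between thinness and freeness motivates the choice of arity $2n+1$ rather than $n+1$. For thinness, one needs to avoid a single color on the rainbow, and the code $\tuple{x,y}$ already packs enough data to pair two $(n+1)$-tuples $(x,\vec z)$ and $(y,\vec z)$. For freeness, one must avoid, for each $\vec z \in [A]^n$, the specific value $f(\vec z)$ as a non-$\vec z$ element of $A$; equivalently, one needs a whole $(n+1)$-element block of $A$ to witness that $f(\vec z)$ is not among them. The extra $n$ coordinates of arity $2n+1$ supply exactly this block, which plays the role of the single element $y$ in the thin set proof.

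Concretely, view $\vec w = (w_0 < \dots < w_{2n}) \in [\Nb]^{2n+1}$ as a pair $(\vec y, \vec z)$ with $\vec y = (w_0, \dots, w_n) \in [\Nb]^{n+1}$ (the candidate forbidden block) and $\vec z = (w_{n+1}, \dots, w_{2n}) \in [\Nb]^n$ (the tested $n$-subtuple). Compute $f(\vec z)$; if $f(\vec z)$ codes a pair $\tuple{x, y'}$ with $x < y' \leq w_n$ and $\{x, y'\} \subseteq \vec y$, set $g(\vec y, \vec z) = g(\vec y', \vec z)$, where $\vec y'$ is the canonical $(n+1)$-tuple obtained from $\vec y$ by a swap involving $x$ and $y'$ analogous to the thin set pairing. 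Otherwise, assign $g(\vec y, \vec z)$ a fresh color. The coloring is $2$-bounded since each shared color corresponds to a specific code value, which determines the pair of $(2n+1)$-tuples sharing it. From an infinite $g$-rainbow $A$, an argument parallel to the thin set proof --- iterated over the pairs $x < y'$ drawn from a sufficient initial segment of $A$ --- shows that for $\vec z \in [A']^n$ with $A' = A \setminus [0, y_0]$ for a suitable $y_0$, the value $f(\vec z)$ cannot lie in $A' \setminus \vec z$, so $A'$ is $f$-free.

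The main obstacle is handling all three positional regimes of $f(\vec z)$ relative to $\vec z$ --- below $\vec z$, interleaved with $\vec z$, or above $\vec z$ --- within a single $2$-bounded coloring. The thin set construction naturally covered only the ``below $\vec z$'' regime. The other two regimes require considering additional canonical decompositions of $\vec w$ into $(\vec y, \vec z)$ pieces in parallel, and ensuring that all the resulting pairings coexist without violating $2$-boundedness. This combinatorial bookkeeping --- and the verification that the rainbow produces genuine freeness after finitely many deletions rather than a weaker pseudo-freeness --- is the principal technical challenge of the argument, and presumably the reason the arity is $2n+1$ rather than something smaller.
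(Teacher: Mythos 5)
There is a genuine gap, and it lies in what you take the solution set to be. You aim for the free set to be a tail $A' = A \setminus [0,y_0]$ of the $g$-rainbow $A$ itself, but your coloring only triggers a collision when $f(\vec z)$ \emph{codes a pair} $\tuple{x,y'}$ of elements sitting inside the block $\vec y$. A genuine freeness violation for $A'$ is the event $f(\vec z) = a$ for a \emph{single} element $a \in A' \setminus \vec z$; such an $a$ need not be the code of any pair of rainbow elements, so your coloring assigns fresh colors everywhere relevant and no contradiction with rainbow-ness arises. More structurally, a $2$-bounded coloring can only forbid the rainbow from containing all elements of two tuples that share a color, so to diagonalize one needs the bad event to force \emph{two} designated numbers into the solution. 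That is exactly why the paper does not take the free set inside the rainbow: writing $H = \{x_0 < y_0 < x_1 < y_1 < \dots\}$, it takes $H_1 = \{\tuple{x_i,y_i} : i \in \Nb\}$, so that $f(\vec z) \in H_1$ forces both $x_i$ and $y_i$ into $H$; the coloring is defined on tuples $\vec z \bowtie_t u$ obtained by decoding each coordinate of $\vec z$ into its pair (that is where the $2n$ comes from) and inserting one extra coordinate $u$, and it sets $g(\vec z \bowtie_t x) = g(\vec z \bowtie_t y)$ when $f(\vec z) = \tuple{x,y}$. The collision between $\vec z \bowtie_t x_i$ and $\vec z \bowtie_t y_i$, both made of elements of $H$, is what kills the bad event. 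Your decomposition (apply $f$ to the raw top $n$ coordinates, look for the value inside the bottom $n+1$ block, and ``swap'' within that block) does not produce two all-in-$H$ tuples from a freeness violation, and the swap itself is not well defined since both $x$ and $y'$ already lie in $\vec y$.

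The second issue you flag --- the positional regimes of $f(\vec z)$ --- you correctly identify but leave unresolved, whereas the paper disposes of it by citing Wang's reduction to $t$-trapped functions ($z_{t-1} \leq f(\vec z) < z_t$) together with a monotone pairing function; trappedness pins down the single insertion position $t$ and makes the well-formedness of $\vec z \bowtie_t x$ and $\vec z \bowtie_t y$ automatic except in the degenerate case $f(\vec z) = z_{t-1}$, which is harmless for freeness. Without that reduction (or some substitute), your plan of running several decompositions ``in parallel'' while preserving $2$-boundedness is precisely the unproved core of the argument, so the proposal as it stands does not yield the theorem.
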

\begin{proof}
Let~$\tuple{\cdot,\cdot}$ be a bijective coding from~$\{(x, y) \in \Nb^2 : x < y\}$ to~$\Nb$,
such that~$\tuple{x,y} < \tuple{u,v}$ whenever~$x < u$ and~$y < v$. We shall refer to this property as (P1).
We say that a function~$f : [\Nb]^{n} \to \Nb$ is $t$-trapped for some~$t \leq n$
if for every $\vec z \in [\Nb]^n$, $z_{t-1} \leq f(\vec z) < z_t$, 
where~$z_{-1} = -\infty$ and~$z_n = +\infty$.
Wang proved in~\cite[Lemma 4.3]{Wang2014Some} that we can restrict without loss of generality to trapped functions
when $n$ is a standard integer.

Let $f : [\Nb]^{n} \to \Nb$ be a $t$-trapped coloring for some~$t \leq n$. 
We build a $\Delta^{0,f}_1$ 2-bounded coloring $g : [\Nb]^{2n+1} \to \Nb$
such that every infinite rainbow for $g$ computes an infinite set thin for $f$.
Given some~$\vec z \in [\Nb]^n$, we write~$\vec z \bowtie_t u$ to denote
the $(2n+1)$-uple
\[
x_0, y_0, \dots, x_{t-1}, y_{t-1}, u, x_t, y_t, \dots, x_{n-1}, y_{n-1}
\]
where~$z_i = \tuple{x_i, y_1}$ for each~$i < n$.
We say that~$\vec z \bowtie_t u$ is \emph{well-formed} if the sequence above is a strictly increasing.

For every $y \in \Nb$ and $\vec{z} \in [\Nb]^n$ such that~$\vec z \bowtie_t y$ is well-formed, 
if $f(\vec z) = \tuple{x,y}$ for some~$x$ such that~$\vec z \bowtie_t x$ is well-formed, 
then set $g(\vec z \bowtie_t y) = g(\vec z \bowtie_t x)$.
Otherwise assign $g(\vec z \bowtie_t y)$ a fresh color.
The function~$g$ is total and 2-bounded.

Let $H = \{ x_0 < y_0 < x_1 < y_1 < \dots \}$ be an infinite rainbow for $g$ and let
$H_1 = \{ \tuple{x_i, y_i} : i \in \Nb \}$. We claim that~$H_1$ is $f$-free.
Let~$\vec z \in [H_1]^n$ be such that~$f(\vec z) \in H_1$.
In particular, $f(\vec z) = \tuple{x_i, y_i}$ for some~$i \in \Nb$.
By $t$-trapeness of~$f$ and by (P1), if~$f(\vec z) \neq z_{t-1}$
then~$\vec z \bowtie_t x_i$ and~$\vec z \bowtie_t y_i$ are both well-formed.
Hence~$g(\vec z \bowtie_t x_i) = g(\vec z \bowtie_s y_i)$. Because~$H$ is a $g$-rainbow, either~$x_i$
or~$y_i$ is not in~$H$, contradicting~$\tuple{x_i,y_i} \in H_1$. Therefore~$f(\vec z) = z_{t-1}$.
\end{proof}

\begin{corollary}
$\rrt$ and~$\fs$ coincide over~$\omega$-models.
\end{corollary}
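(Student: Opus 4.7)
The corollary is essentially a bookkeeping consequence of Theorem~\ref{thm:rrt2n-fsn} together with Wang's earlier implication from~\cite{Wang2014Some}, since in an $\omega$-model every quantifier over $n$ and $k$ ranges only over standard integers. The plan is therefore to verify the two inclusions separately, pointing to the appropriate previously established component for each.

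For the direction $\fs \Rightarrow \rrt$, the plan is to invoke Wang's theorem $\rca \vdash \fs(n) \imp \rrt^n_2$ for every standard $n$, giving any $\omega$-model of $\fs$ the full $\rrt^n_2$ schema. To upgrade this to $\rrt^n_k$ for all standard $k$, I would apply $\rrt^n_2$ finitely many times: given a $k$-bounded $f : [\Nb]^n \to \Nb$, for each pair $i < j \leq k$ attach an auxiliary 2-bounded coloring $g_{ij}$ that identifies exactly the $i$th and $j$th lex-ranked elements of each fibre of $f$ and assigns fresh colors elsewhere. A set is an $f$-rainbow precisely when it is simultaneously a $g_{ij}$-rainbow for every such pair, and this common rainbow is produced by successive application of $\rrt^n_2$ to the $\binom{k}{2}$ (standardly finite) many colorings $g_{ij}$.

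For the reverse direction $\rrt \Rightarrow \fs$, Theorem~\ref{thm:rrt2n-fsn} directly gives $\rca \vdash \rrt^{2n+1}_2 \imp \fs(n)$ for each standard $n$, so an $\omega$-model of $\rrt$ automatically validates every $\fs(n)$, which is $\fs$ in this context. No new combinatorial content is required: the only mildly delicate step is the finite iteration used to derive $\rrt^n_k$ from $\rrt^n_2$, but since $k$ is standard the iteration terminates after boundedly many stages and works uniformly inside any $\omega$-model.
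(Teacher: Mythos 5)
Your proposal is correct and is essentially the argument the paper intends: the corollary is stated without proof precisely because it is the conjunction of Wang's $\rca \vdash \fs(k) \imp \rrt^k_2$ and Theorem~\ref{thm:rrt2n-fsn}, applied in an $\omega$-model where all $n,k$ are standard. Your only additional ingredient, passing from $\rrt^n_2$ to $\rrt^n_k$, is the standard reduction and your construction works, with one small correction: rank the elements of a fibre of $f$ by their codes (an ordering of type $\omega$) rather than lexicographically, since the lexicographic rank of a tuple inside its fibre is only limit-computable from $f$, whereas the code-rank makes the auxiliary colorings $g_{ij}$ genuinely $f$-computable and hence available in the $\omega$-model.
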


We now strengthen Wang's result by proving that $\ts(2) \rightarrow \rrt^2_2$
using Miller's characterization (Theorem~\ref{thm-rrt22-dnrzp}).
We will see in Corollary~\ref{cor:rrt22-sts2} that the implication is strict by showing that $\wwkls{n}$
does not imply $\sts(2)$ over $\rca$ for every~$n$.

\begin{theorem}\label{thm:ts2-dnrzp}
$\rca \vdash \ts(2) \imp \dnrzp$
\end{theorem}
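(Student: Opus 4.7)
The plan is to establish the implication by constructing, for an arbitrary set $X$, an $X$-computable coloring $f : [\Nb]^2 \to \Nb$ such that every infinite $f$-thin set, together with $X$, computes a function d.n.c.\ relative to $X'$. By relativization it suffices to handle the case $X = \emptyset$, and the argument then gives the full statement.

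By the Shoenfield limit lemma, I would first fix a total computable $g : \Nb \times \Nb \to \Nb$ such that $\lim_s g(e,s) = \Phi^{\emptyset'}_e(e)$ whenever the right-hand side converges. The basic coloring to consider is $f(x,y) = g(x,y)$ for $x < y$. For any infinite $f$-thin set $A$ missing a color $c$, the following key observation holds by a pigeonhole argument: $\Phi^{\emptyset'}_e(e) \neq c$ for every $e \in A$ (whenever convergent). Indeed, if $\Phi^{\emptyset'}_e(e) = c$ for some $e \in A$, then $g(e,s) = c$ for cofinitely many $s$, and since $A$ is infinite some $s \in A$ with $s > e$ would realize $f(e, s) = c$, contradicting the choice of $c$ as a missing color.

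The next step is to promote this pointwise information, valid only for $e \in A$, into a d.n.c.\ function on all of $\Nb$. I would invoke the $s$-$m$-$n$ theorem to produce, for every $e$, an infinite uniformly computable set $R_e$ of padded indices $e'$ with $\Phi^{\emptyset'}_{e'}(e') = \Phi^{\emptyset'}_e(e)$ (the constant-function padding). If $A$ met each $R_e$, one could find some $e' \in A \cap R_e$ computably in $A$ and set $h(e) = c$; the resulting $h$ would be d.n.c.\ relative to $\emptyset'$ since $h(e) = c \neq \Phi^{\emptyset'}_{e'}(e') = \Phi^{\emptyset'}_e(e)$.

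The main obstacle is precisely that for an arbitrary infinite $A$, the intersection $A \cap R_e$ may be empty for some $e$, and moreover the missing color $c$ itself is only $\Pi^{0, A}_1$-definable from $A$. I expect to resolve both difficulties at once by packing diagonalization information for many indices into each color: take $f(x,y) = \langle g(0,y), g(1,y), \ldots, g(x,y) \rangle$ encoded as a single integer. A missing tuple $c = \langle u_0, \ldots, u_k \rangle$ with $k \in A$ then forces the tuple $(u_0,\ldots,u_k)$ to disagree with the true diagonal $(\Phi^{\emptyset'}_0(0), \ldots, \Phi^{\emptyset'}_k(k))$ in at least one coordinate, for every sufficiently large $y \in A$ realizes the approximation. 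Exploiting this richer information together with the padding correspondence over the various $k \in A$ should let one assemble an $A$-computable function $h$ with $h(e) \neq \Phi^{\emptyset'}_e(e)$ for all $e$, completing the reduction.
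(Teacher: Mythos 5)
Your opening move — approximating $\Phi^{\emptyset'}_e(e)$ via a limit function $g$ and making the color itself carry the approximation — points in the right general direction, and you correctly identify the two obstacles (the missing color $c$ is only $\Pi^{0,A}_1$ from $A$, and the pointwise diagonalization holds only for $e \in A$). However, the proposed fix does not work, and the underlying idea is quite far from the paper's actual construction.

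The concrete gap is in the second coloring $f(x,y) = \langle g(0,y), \ldots, g(x,y) \rangle$: the length of the output tuple is $x+1$, so $f([A]^2)$ only contains codes of tuples of lengths in $\{a+1 : a \in A\}$. Hence \emph{every coinfinite infinite set} (in particular, every infinite computable set) is automatically thin for this $f$, by choosing a tuple length $\ell+1$ with $\ell \notin A$. The instance of $\ts(2)$ is therefore trivial and extracts no computational content. The same degeneracy plagues any variant where the color of $(x,y)$ encodes data depending only on $y$ in a length-indexed way. The first, simpler coloring $f(x,y) = g(x,y)$ is not degenerate, but the padding idea you propose to bootstrap from $A$ to all of $\Nb$ genuinely fails, just as you suspect: nothing forces $A$ to meet $R_e$, and $c$ is not $A$-computable, so the candidate $h$ can neither be defined nor evaluated from $A$ alone.

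The paper sidesteps both problems with a different idea, closely modeled on the proof of $\emo \to \dnrzp$. One does \emph{not} try to read the diagonal value off the colors. Instead, the d.n.c.\ function $h$ is defined so that $h(e)$ outputs the code of the finite set $A_e$ consisting of the first $3^{e\cdot i+1}$ elements of $A$, where $i$ is the witness color. The coloring $f$ is built by a finite-injury-style construction: $g(e,s)$ is interpreted as a code of a finite set $D_{e,i,s}$, and at each stage one plants color $i$ on a fresh element of $D_{e,i,s}$ not yet claimed by a higher-priority requirement. If $\Phi^{\emptyset'}_e(e) = h(e)$, then eventually $D_{e,i,s} = A_e \subseteq A$, so the construction paints color $i$ onto a pair inside $A$ — contradicting that $A$ avoids $i$. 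This is computable from $A$ (no need to compute $c$), applies to every $e$ (not just those in $A$), and the cardinality $3^{e\cdot i+1}$ ensures an unclaimed element is always available. This priority/marker mechanism is the key idea your proposal is missing.
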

\begin{proof}
We prove that for every set $X$, is an $X$-computable coloring function $f : [\Nb]^2 \to \Nb$
such that every infinite set thin for $f$ computes (not relative to $X$) a function d.n.c. relative to $X^{'}$. 
The structure of the proof is very similar to Theorem~\ref{thm:em-dnrzp}, but instead of diagonalizing against
computing an infinite transitive tournament, we will diagonalize against computing an infinite set avoiding color $i$. 
Applying diagonalization for each color $i$, we will obtain the desired result. 

Let $X$ be a set and $g(.,.)$ be a total $\Delta^{0,X}_1$ function such that $\Phi_e^{X'}(e)=\lim_s g(e,s)$ if the limit exists, and $\Phi^{X'}_e(e) \uparrow$ if the limit does not exist. For each~$e, i, s \in \Nb$,
interpret $g(e,s)$ as the code of a finite set $D_{e,i,s}$ of size $3^{e\cdot i}$. 
We define the coloring~$f$ by $\Sigma_1$-induction as follows. Set $f_0=\emptyset$. At stage~$s+1$, do the following. Start with $f_{s+1}=c_s$. Then, for each $\alpha(e,i) <s$ -- where $\alpha(.,.)$ is the Cantor pairing function, i.e., 
$\alpha(e,i) = \frac{(e+i)(e+i+1)}{2}+e$ -- take the first element $x \in D_{e,i,s} \setminus \bigcup_{(e',i') < (e,i)} D_{e',i',s}$ (notice that these exist by cardinality assumptions on the $D_{e,i,s}$), and if $f_{s+1}(s,x)$ is not already assigned, assign it to color $i$. Finally, for any $z<s$ such that $f_{s+1}(s,z)$ remains undefined, assign any color to it in a predefined way (e.g., for any such pair $\{x,y\}$, set $f_{s+1}(x,y)$ to be $0$). This finishes the construction of $f_{s+1}$. Set $f=\bigcup_s f_s$, which must exist as a set by $\Sigma_1$-induction. 

First of all, notice that $f$ is a coloring function of domain $[\N]^2$, as at the end of stage~$s+1$ of the construction $f(x,y)$ is assigned a value for (at least) all pairs $\{x,y\}$ with $x<s$ and $y<s$. By $\ts(2)$, let~$A$ be an infinite set thin for $f$.
Let $i \in \Nb \setminus f([A]^2)$. Let $h(e)$ be the code of the finite set $A_e$ consisting of the first $3^{e\cdot i+1}$ elements of~$A$. We claim that $h(e) \not=\Phi_e^{X'}(e)$ for all~$e$, which would prove $\dnrzp$. Suppose otherwise, i.e., suppose that $\Phi_e^{X'}(e)=h(e)$ for some~$e$. Then there is a stage $s_0$ such that $h(e)=g(e,s)$ for all~$s \geq s_0$ or equivalently $D_{e,i,s}=A_e$ for all~$s \geq s_0$. Let $N_e=\max(A_e)$. The same argument as in the proof of Theorem~\ref{thm:em-dnrzp} shows that for any $s$ be bigger than both $\max(\bigcup_{e,i,s < N_e} D_{e,i,s})$ and $s_0$, the restriction of $f$ to $A_e \cup \{s\}$ does not avoid color $i$, which contradicts the fact that the infinite set $A$ containing $A_e$ avoids color $i$ in~$f$.
\end{proof}

\begin{corollary}
$\rca \vdash \ts(2) \imp \rrt^2_2$
\end{corollary}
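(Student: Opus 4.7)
The plan is essentially to chain together two results already established in the paper, so the proof will be very short. By Theorem~\ref{thm:ts2-dnrzp} we already have $\rca \vdash \ts(2) \imp \dnrzp$, and by Miller's characterization (Theorem~\ref{thm-rrt22-dnrzp}) we have $\rca \vdash \dnrzp \imp \rrt^2_2$ (in fact the equivalence). Composing these two implications within $\rca$ yields the desired result.

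More concretely, I would argue inside $\rca$ as follows: assume $\ts(2)$; then by Theorem~\ref{thm:ts2-dnrzp} the principle $\dnrzp$ holds, i.e., for every set~$X$ there is a function d.n.c.\ relative to~$X'$; by Theorem~\ref{thm-rrt22-dnrzp} this is equivalent to $\rrt^2_2$, so $\rrt^2_2$ holds. Since both intermediate implications are formalized over $\rca$ in the cited theorems, the composition is also provable in $\rca$.

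There is no real obstacle here; the substantive combinatorial work has already been done in the proof of Theorem~\ref{thm:ts2-dnrzp}, where an $X$-computable coloring $f$ is constructed from an $X'$-computable enumeration $g(\cdot,\cdot)$ in such a way that any infinite $f$-thin set, together with any avoided color $i$, recovers a function d.n.c.\ relative to $X'$. Miller's theorem then translates this into the existence of infinite rainbows for 2-bounded 2-dimensional colorings. Hence the proof of the corollary is simply: \emph{Immediate by Theorem~\ref{thm:ts2-dnrzp} and Theorem~\ref{thm-rrt22-dnrzp}.}
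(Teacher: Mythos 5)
Your proof is correct and is exactly the paper's own argument: the corollary is obtained by composing Theorem~\ref{thm:ts2-dnrzp} ($\ts(2) \imp \dnrzp$ over $\rca$) with Miller's equivalence $\rrt^2_2 \biimp \dnrzp$ (Theorem~\ref{thm-rrt22-dnrzp}). Nothing further is needed.
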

\begin{proof}
Immediate by Theorem~\ref{thm:ts2-dnrzp} and Theorem~\ref{thm-rrt22-dnrzp}.
\end{proof}

\begin{corollary}
The following are true
\begin{itemize}
  \item $\rca \not \vdash \sfs(2) \to \fs(2)$
  \item $\rca \not \vdash \sts(2) \to \ts(2)$
\end{itemize}
\end{corollary}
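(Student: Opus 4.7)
The plan is to prove both non-implications by contraposition, chaining the implications already established in the preceding sections until we contradict Corollary~\ref{cor-srt-rrt}, which asserts $\rca + \srt^2_2 \not\vdash \rrt^2_2$.

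For the second item, suppose toward a contradiction that $\rca \vdash \sts(2) \to \ts(2)$. Corollary~\ref{cor-srt-sts} gives $\rca \vdash \srt^2_2 \to \sts(2)$, and the corollary immediately following Theorem~\ref{thm:ts2-dnrzp} gives $\rca \vdash \ts(2) \to \rrt^2_2$. Composing these three implications yields $\rca \vdash \srt^2_2 \to \rrt^2_2$, contradicting Corollary~\ref{cor-srt-rrt}.

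For the first item, suppose toward a contradiction that $\rca \vdash \sfs(2) \to \fs(2)$. By Lemma~\ref{lem-srt-sfs}, $\rca \vdash \srt^2_2 \to \sfs(2)$, and the work of Cholak et al.~\cite{Cholak2001Free} recalled in the introduction gives $\rca \vdash \fs(2) \to \ts(2)$. Combining these with the assumption and again with $\rca \vdash \ts(2) \to \rrt^2_2$ produces $\rca \vdash \srt^2_2 \to \rrt^2_2$, once more contradicting Corollary~\ref{cor-srt-rrt}.

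There is no real obstacle here: the corollary is a pure diagram-chase once all the arrows are in place. The only point worth being careful about is that all implications are over $\rca$ (so composing them is legitimate), and that the separation from Corollary~\ref{cor-srt-rrt} is a genuine model-theoretic non-derivability (as opposed to only an $\omega$-model separation), so the contradiction is of the right form.
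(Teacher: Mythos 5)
Your proof is correct and follows essentially the same route as the paper's: the paper's (very terse) proof likewise chains $\srt^2_2 \to \sfs(2)$ (Lemma~\ref{lem-srt-sfs}), $\fs(2) \to \ts(2)$, $\ts(2) \to \rrt^2_2$ (via Theorem~\ref{thm:ts2-dnrzp} and Theorem~\ref{thm-rrt22-dnrzp}) against the separation of Corollary~\ref{cor-srt-rrt}. Your explicit use of Corollary~\ref{cor-srt-sts} for the second item and your remark that the separation is genuine non-derivability over $\rca$ are just slightly more detailed renderings of the same argument.
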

\begin{proof}
Immediate by Theorem~\ref{thm-rrt22-dnrzp}, Lemma~\ref{lem-srt-sfs},
Corollary~\ref{cor-srt-rrt} and the fact that~$\rca \vdash \fs(2) \imp \ts(2)$ (see Theorem 3.2 in \cite{Cholak2001Free}).
\end{proof}

Notice that the function constructed in the proof of Theorem~\ref{thm:ts2-dnrzp}
is ``stable by blocks'', i.e., for each $x$, there is a 
color $i$ and a finite set $X$ containing $x$ such that 
$(\forall^{\infty} s) i \in f(X, s)$. This can be exploited to prove the implication
from a \emph{polarized} version of Ramsey theorem for pairs.

\begin{definition}[Increasing polarized Ramsey's theorem]
Fix $n, k \geq 1$ and $f : [\Nb]^n \to k$.
\begin{itemize}
	\item An \emph{increasing p-homogeneous} set for $f$ is a sequence $\tuple{H_1, \dots, H_n}$
  of infinite sets such that for some $i < k$, $f(x_1, \dots, x_n) = i$
  for every \emph{increasing} tuple $\tuple{x_1, \dots, x_n} \in H_1 \times \dots \times H_n$.
	\item $\ipt^n_k$ is the statement ``Every coloring $f : [\Nb]^n \to k$
  has an increasing p-homogeneous set.''
\end{itemize}
\end{definition}

Dzhafarov et al.\ proved in~\cite{Dzhafarov2009polarized} 
that $\rca \vdash \ipt^n_k \biimp \aca$ for each $n \geq 3$ and $k \geq 2$.
They also proved that $\ipt^2_2$ lies between $\rt^2_2$ and $\srt^2_2$
and asked which of the implications is strict. We will prove that $\srt^2_2$
does not imply $\ipt^2_2$ over $\rca + \bst$ using the non-standard model of $\srt^2_2$
constructed by Chong et al.\ in~\cite{Chong2014metamathematics}.

\begin{theorem}\label{thm:ipt22-dnrzp}
$\rca \vdash \ipt^2_2 \imp \dnrzp$
\end{theorem}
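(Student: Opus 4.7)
The plan is to adapt the proof of Theorem~\ref{thm:ts2-dnrzp} by replacing the multi-color, thin-set diagonalization with a $2$-color, polarized diagonalization. Fix a set $X$, and let $g(\cdot,\cdot)$ be a total $X$-computable function with $\Phi^{X'}_e(e) = \lim_s g(e,s)$ whenever the limit exists. For each pair $(e,i)$ with $e \in \Nb$ and $i \in \{0,1\}$, interpret $g(e,s)$ as coding a finite set $D_{e,i,s}$ of size $3^{\alpha(e,i)+1}$, where $\alpha$ is the Cantor pairing function (so $D_{e,i,s}$ depends on $s$ and $e$, with $i$ merely parametrizing how the code is unpacked).

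I will construct a coloring $f : [\Nb]^2 \to 2$ by $\Sigma_1$-induction. Set $f_0 = \emptyset$. At stage $s+1$, process the pairs $(e,i)$ with $\alpha(e,i) < s$ in increasing order of $\alpha(e,i)$: for each such pair, pick the least $x \in D_{e,i,s}$ not already selected at this stage such that $f_{s+1}(x,s)$ is still undefined, and set $f_{s+1}(x,s) = 1-i$. The cardinality bound $|D_{e,i,s}| = 3^{\alpha(e,i)+1} > \alpha(e,i)$ guarantees such an $x$ exists, and (as in Theorem~\ref{thm:em-dnrzp}) values of the form $f(z,s)$ have never been assigned at any earlier stage because $s$ had not yet appeared in the construction. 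After all pairs are processed, assign any undefined $f_{s+1}(z,s)$ with $z<s$ to the default color $0$, and set $f = \bigcup_s f_s$.

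Now, given an infinite increasing p-homogeneous pair $(H_1,H_2)$ for some color $i \in \{0,1\}$ (which exists by $\ipt^2_2$), define $h(e)$ to be the code of the finite set $A^i_e$ consisting of the first $3^{\alpha(e,i)+1}$ elements of $H_1$. Then $h$ is $H_1$-computable, and I claim $h(e) \neq \Phi^{X'}_e(e)$ for every $e$, which gives $\dnrzp$. If, on the contrary, $\Phi^{X'}_e(e) = h(e)$, then $D_{e,i,s} = A^i_e$ for all sufficiently large $s$; choosing $s \in H_2$ with $s > \max A^i_e$ and $s$ larger than every element previously used at stage $s+1$ by pairs of smaller Cantor rank, the construction selects some $x \in D_{e,i,s} = A^i_e \subseteq H_1$ and sets $f(x,s) = 1-i$. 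Since $x < s$, $x \in H_1$, and $s \in H_2$, this contradicts p-homogeneity of $(H_1,H_2)$ for color $i$.

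The only real obstacle is bookkeeping: one must verify the timestamp observation that $f(x,s)$ is always fresh at stage $s+1$, and confirm that the cardinality bound on $D_{e,i,s}$ remains sufficient even after the earlier-ranked pairs at the same stage consume elements. Both follow the same pattern as Theorem~\ref{thm:em-dnrzp} and Theorem~\ref{thm:ts2-dnrzp}, and formalization in $\rca$ via $\Sigma_1$-induction is routine since the size of each $D_{e,i,s}$ is primitive recursive in $\alpha(e,i)$. The essential new point is that diagonalizing against both candidate colors $i \in \{0,1\}$ simultaneously, via the $i$-indexed family $D_{e,i,s}$, lets a single p-homogeneous pair $(H_1,H_2)$—whatever its color—produce the d.n.c.\ function.
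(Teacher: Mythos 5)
Your proof is correct, but it takes a genuinely different route from the paper's. The paper proves this theorem in a couple of lines by \emph{reusing} the tournament $T$ from Theorem~\ref{thm:em-dnrzp}: viewing $T$ as a $2$-coloring $f(x,y) = T(x,y)$, the $3$-cycle constraint $\{x,y,s\}$ on the designated pair $\{x,y\} \subset D_{e,s}$ automatically forces $f(x,s) \neq f(y,s)$, so both colors appear among $\{f(x,s) : x \in A_e\}$ for every large $s$. This contradicts increasing p-homogeneity \emph{regardless} of the witness color, so the function $h$ is defined from $H_0$ exactly as in Theorem~\ref{thm:em-dnrzp}, with no $i$-dependence. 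Your construction, by contrast, builds a fresh $2$-coloring from scratch, indexed by $(e,i)$, and diagonalizes one color at a time by placing single elements rather than $3$-cycled pairs. Both arguments succeed, and both share the standard trick of multiply decoding $g(e,s)$ as sets of different sizes depending on $i$ (as in Theorem~\ref{thm:ts2-dnrzp}); your $h$ ends up depending on the witness color $i$, which is fine since $i$ is computable from $H_1 \oplus H_2 \oplus f$. The paper's version buys economy (a two-line reduction to an already-built object) and avoids the $i$-parametrization entirely via the two-color-in-one-step structure of a $3$-cycle; your version buys self-containment and avoids any reliance on the tournament formalism, at the cost of a somewhat more elaborate bookkeeping scheme. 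Two small points worth tightening: your freshness condition ``$s$ larger than every element previously used at stage $s+1$'' is written circularly (the elements used at stage $s+1$ depend on $s$); what you actually need is simply $s > \max A^i_e$ together with the cardinality bound $|D_{e,i,s}| = 3^{\alpha(e,i)+1} > \alpha(e,i)$, which already guarantees that some element of $A^i_e$ is fresh. And as in Theorem~\ref{thm:em-dnrzp}, you should quietly restrict to elements $x < s$ when assigning $f(x,s)$ at stage $s+1$, ignoring any members of $D_{e,i,s}$ that are $\geq s$; this is harmless by the same choice of large $s$.
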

\begin{proof}
Fix a set~$X$ and let~$T$ be the tournament constructed in the proof of Theorem~\ref{thm:em-dnrzp}.
We can see~$T$ as a function~$f : [\Nb]^2 \to 2$ defined for each~$x < y$ by~$f(x,y) = T(x,y)$.
Let $\tuple{H_0, H_1}$ be an increasing p-homogeneous set for $f$.
Define $h(e)$ to be the code of the finite set $A_e$ consisting
of the first $3^{e+1}$ elements of $H_0$.
We claim that $h(e) \neq \Phi_e^{X'}(e)$ for all $e$, which would prove $\dnrzp$.
Suppose for the sake of contradiction that $\Phi_e^{X'}(e) = h(e)$ for some $e$.
Then there is a stage $s_0$ such that $h(e) = g(e,s)$ for all $s \geq s_0$,
or equivalently $D_{e,s} = A_e$ for all $s \geq s_0$.
Let $N_e = max(A_e)$. We claim that for any $s$ bigger than both $max(\bigcup_{e,s < N_e} D_{e,s})$
and $s_0$, $\{0,1\} \subset f(A_e, s)$. As $A_e \subseteq H_0$, this contradicts the fact
that $\tuple{H_0, H_1}$ is increasing p-homogeneous set.
The proof of the claim is the same as in~Theorem~\ref{thm:em-dnrzp}.
\end{proof}

\begin{corollary}
$\rca + \bst + \srt^2_2 \not \vdash \ipt^2_2$
\end{corollary}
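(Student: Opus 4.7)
The plan is to combine the freshly proved Theorem~\ref{thm:ipt22-dnrzp} with two results already stated earlier in the paper: the Chong--Slaman construction of a model of $\rca + \bst + \neg\ist + \srt^2_2$ containing only $\Delta^0_2$ (in fact low) sets, and the observation (proved just before Corollary~\ref{cor-srt-rrt}) that no model of $\rca + \rrt^2_2$ can consist only of $\Delta^0_2$ sets. Since $\rrt^2_2$ is equivalent to $\dnrzp$ over $\rca$ by Theorem~\ref{thm-rrt22-dnrzp}, the latter obstruction is really an obstruction to $\dnrzp$.

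Concretely, I would fix the Chong--Slaman model $\Mcal \models \rca + \bst + \srt^2_2$ in which every set is $\Delta^0_2$. Towards a contradiction, suppose $\Mcal \models \ipt^2_2$. By Theorem~\ref{thm:ipt22-dnrzp}, applied inside $\Mcal$, we would get $\Mcal \models \dnrzp$, hence there would be in $\Mcal$ a function $f$ that is d.n.c.\ relative to $\emptyset'$. Since every set of $\Mcal$ is $\Delta^0_2$ in the sense of $\Mcal$, such an $f$ would have a $\Mcal$-index $e$ with $\Phi^{\emptyset'}_e = f$, and then $f(e) \neq \Phi^{\emptyset'}_e(e) = f(e)$, a contradiction (this is exactly the argument used to prove that no model of $\rrt^2_2$ has only $\Delta^0_2$ sets). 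Hence $\Mcal \not\models \ipt^2_2$, which gives the desired unprovability $\rca + \bst + \srt^2_2 \not\vdash \ipt^2_2$.

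The only subtle point --- and the step I would be most careful with --- is verifying that Theorem~\ref{thm:ipt22-dnrzp} and the $\Delta^0_2$-obstruction argument for $\dnrzp$ go through in the non-standard model $\Mcal$, not merely in $\omega$-models. The proof of Theorem~\ref{thm:ipt22-dnrzp} is carried out over $\rca$ and uses only the $\Sigma_1$-inductive construction of Theorem~\ref{thm:em-dnrzp}, so it relativizes to any model of $\rca$. The diagonalization $f(e) \neq \Phi^{\emptyset'}_e(e)$ is a local statement about a single index $e$ in the model and uses no induction principle beyond what is already available in $\rca$, so it too is valid in $\Mcal$. Consequently the chain $\ipt^2_2 \Rightarrow \dnrzp \Rightarrow \text{``some set is not }\Delta^0_2\text{''}$ is available inside $\Mcal$, and the corollary follows.
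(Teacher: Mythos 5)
Your proof is correct and is essentially the paper's argument: the paper derives the corollary from Theorem~\ref{thm:ipt22-dnrzp} together with Corollary~\ref{cor-srt-rrt}, and you have simply unpacked the latter into its ingredients (the Chong--Slaman $\Delta^0_2$ model of $\rca+\bst+\srt^2_2$ and the diagonalization showing $\dnrzp$ fails in any model with only $\Delta^0_2$ sets). Your extra care about the argument holding in the non-standard model is a reasonable sanity check but matches what the paper implicitly relies on.
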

\begin{proof}
Straightforward using Theorem~\ref{thm:ipt22-dnrzp} and Corollary~\ref{cor-srt-rrt}.
\end{proof}

One can generalize Theorem~\ref{thm:ts2-dnrzp} to arbitrary jumps by a simple iteration.

\begin{theorem}[$\rca$]\label{thm:tsk-dnrzk}
For every standard $k \geq 1$, $\rca \vdash \ts(k+1) \imp \dnr[0^{(k)}]$.
\end{theorem}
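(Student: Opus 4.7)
The plan is to mirror exactly the strategy used in Theorem~\ref{thm:rrtk-dnrzk}, replacing the role played there by Lemma~\ref{lem:rrtn-jump} with a thin-set analogue. The base case $k=1$ is already Theorem~\ref{thm:ts2-dnrzp}, so everything reduces to setting up the induction step.

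First I would state and prove the following \emph{jump lemma} for thin sets: fix a standard $n \geq 1$ and $X \subseteq \Nb$; for every $X'$-computable function $h : [\Nb]^n \to \Nb$ there is an $X$-computable function $g : [\Nb]^{n+1} \to \Nb$ such that every infinite $g$-thin set is also $h$-thin. The construction is direct and in fact lighter than in Lemma~\ref{lem:rrtn-jump}, because there is no 2-boundedness constraint to preserve. By the Limit Lemma one obtains an $X$-computable approximation $g(\vec x, s)$ with $\lim_s g(\vec x, s) = h(\vec x)$ for all $\vec x \in [\Nb]^n$; this $g$ is the required instance. To verify the reduction, suppose $A$ is an infinite $g$-thin set and let $i$ be a color missing from $g([A]^{n+1})$. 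For any $\vec x \in [A]^n$, there are infinitely many $s \in A$ with $s > \max(\vec x)$, and for all such $s$ the tuple $(\vec x, s)$ lies in $[A]^{n+1}$, forcing $g(\vec x, s) \neq i$. Passing to the limit gives $h(\vec x) = \lim_s g(\vec x, s) \neq i$, so $A$ is $h$-thin and witnessed by the same missing color.

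With the lemma in hand, the theorem follows by induction on the standard integer $k$, carried out inside $\rca$ exactly as in Theorem~\ref{thm:rrtk-dnrzk}. The induction hypothesis asserts that for every $Y$ there is a $Y$-computable function $h : [\Nb]^{k+1} \to \Nb$ such that every infinite $h$-thin set computes a function d.n.c.\ relative to $Y^{(k)}$. Applying this with $Y = X'$ produces an $X'$-computable $h$ whose thin sets compute functions d.n.c.\ relative to $(X')^{(k)} = X^{(k+1)}$; feeding $h$ into the jump lemma yields an $X$-computable $f : [\Nb]^{k+2} \to \Nb$ whose infinite thin sets are thin for $h$ and therefore still compute such functions. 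This closes the induction and gives the desired $\ts(k+2) \imp \dnr[\emptyset^{(k+1)}]$.

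The only genuine subtlety is that the induction ranges over \emph{standard} $k$, so the theorem is formalized as a schema over~$\rca$ rather than a single sentence. This is the same issue encountered with Lemma~\ref{lem:rrtn-jump} and Theorem~\ref{thm:rrtk-dnrzk} and requires no new ideas. I do not expect any combinatorial obstacle; the whole argument is essentially a bookkeeping exercise since thin sets, unlike rainbows, have no structural constraint that must survive the approximation step.
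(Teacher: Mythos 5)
Your proposal is correct and matches the paper's proof in all essentials: the base case is Theorem~\ref{thm:ts2-dnrzp}, and the induction step uses the Limit Lemma to replace an $X'$-computable coloring on $n$-tuples by an $X$-computable approximation on $(n+1)$-tuples, then observes that a thin set for the approximation (avoiding some color~$i$) is thin for the limit coloring with the same missing color. You package this step as an explicit "jump lemma," while the paper carries it out inline, but the argument is identical; your observation that no 2-boundedness constraint needs to be preserved (unlike in Lemma~\ref{lem:rrtn-jump}) is exactly why this version is simpler.
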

\begin{proof}
We will prove our theorem by induction over $k \geq 1$
that for every $X \subseteq \Nb$, there is an $X$-computable coloring function $f : [\Nb]^{k+1} \to \Nb$
such that every infinite set thin for $f$ computes (not relative to $X$) a function d.n.c.\ relative to $X^{(k)}$.
Case $k=1$ is exactly the proof of Theorem~\ref{thm:ts2-dnrzp}.

Assume it holds for some $k \in \Nb$. Fix an $X'$-computable
coloring $f : [\Nb]^k \to \Nb$ such that every infinite set thin for $f$ computes
a function d.n.c.\ relative to $(X^{(k-1)})' = X^k$.
Using the Limit Lemma, there exists an $X$-computable approximation function $g: [\Nb]^{k+1} \to \Nb$
such that $\lim_s g(\vec{x}, s) = f(\vec{x})$ for every $\vec{x} \in [\Nb]^k$.
We claim that every infinite set thin for $g$ computes a function d.n.c.\ relative to $X^{(k)}$.

Let $A$ be an infinite set thin for $g$ avoiding some color $i$. If $f(\vec{x}) = i$,
then $g(\vec{x}, s) = i$ for all but finitely many $s$. Hence the set $A$ must be finite. Contradiction.
\end{proof}

\begin{corollary}
$\rca + \rt^2_2 \nvdash \ts(3)$
\end{corollary}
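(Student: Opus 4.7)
The plan is to combine the preceding Theorem~\ref{thm:tsk-dnrzk} (at $k=2$), which gives $\rca \vdash \ts(3) \imp \dnr[\emptyset'']$, with an appropriate low$_2$ $\omega$-model of $\rt^2_2$. Thus it suffices to exhibit an $\omega$-model of $\rca + \rt^2_2$ that is not a model of $\dnr[\emptyset'']$.

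For this, I would invoke the Cholak--Jockusch--Slaman low$_2$ basis theorem for $\rt^2_2$: every computable instance of $\rt^2_2$ admits a low$_2$ infinite homogeneous set, and more generally, starting from any countable low$_2$-ideal $\Mcal$ and any $\Mcal$-computable instance $f : [\Nb]^2 \to 2$, there is an infinite $f$-homogeneous set $H$ such that $H \oplus X$ is low$_2$ for every $X \in \Mcal$. Iterating this construction in the standard way produces a countable $\omega$-model $\Mcal$ of $\rca + \rt^2_2$ in which every set is low$_2$.

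I claim no such model can satisfy $\dnr[\emptyset'']$. The argument is a direct adaptation of the $\Delta^0_2$ case treated earlier in the paper: if $X \in \Mcal$ is low$_2$, then every function $f \leq_T X$ satisfies $f \leq_T X \leq_T X'' \leq_T \emptyset''$, so $f$ is $\emptyset''$-computable. Picking a Turing index $e$ with $\Phi^{\emptyset''}_e = f$, we obtain $f(e) = \Phi^{\emptyset''}_e(e)$, contradicting the assumption that $f$ is d.n.c.\ relative to $\emptyset''$. Hence $\Mcal \not\models \dnr[\emptyset'']$, and by Theorem~\ref{thm:tsk-dnrzk}, $\Mcal \not\models \ts(3)$.

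There is no real obstacle: the only nontrivial ingredient is the existence of the low$_2$ $\omega$-model of $\rt^2_2$, which is classical. Everything else is a one-line fixpoint diagonalization identical in spirit to the $\Delta^0_2$ argument used in the paper for $\rrt^2_2$ versus $\srt^2_2$. One small care point is simply to note that the low$_2$ basis theorem relativizes so that the iteration closes under join and Turing reducibility while maintaining the low$_2$-over-$\emptyset$ property for every element of the resulting ideal.
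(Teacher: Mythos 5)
Your proof is correct and follows essentially the same route as the paper: apply Theorem~\ref{thm:tsk-dnrzk} at $k=2$, invoke the Cholak--Jockusch--Slaman construction of an $\omega$-model of $\rca + \rt^2_2$ whose sets are all $\Delta^0_3$ (the paper states this directly; you phrase it as low$_2$, which implies $\Delta^0_3$), and then conclude by the fixpoint observation that no $\emptyset''$-computable function can be d.n.c.\ relative to~$\emptyset''$. The only cosmetic difference is that the paper works with the weaker ``$\Delta^0_3$'' hypothesis rather than ``low$_2$'', which is all the diagonalization needs.
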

\begin{proof}
By Cholak, Jockusch and Slaman \cite{Cholak2001strength}, there exists an $\omega$-model $\Mcal \models \rca + \rt^2_2$
containing only $\Delta^0_3$ sets. By Theorem~\ref{thm:tsk-dnrzk}, if $\Mcal \models \ts(3)$ 
then $\Mcal \models \dnr[\emptyset'']$ but such a model can't contain only $\Delta^0_3$ sets.
\end{proof}

\begin{theorem}[$\rca + \ist$]\label{thm:sts-no-low}
There exists a computable stable coloring $f : [\Nb]^2 \to \Nb$ with no low infinite set thin for $f$.
\end{theorem}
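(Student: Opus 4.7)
The plan is a finite-injury priority construction. Since every low set is $\jump$-computable, it suffices to build a computable stable $f$ such that, for every index $e$ and every colour $i$, if $A_e := \{x : \Phi_e^{\jump}(x) = 1\}$ is (total, $\{0,1\}$-valued and) infinite then some pair in $[A_e]^2$ receives colour $i$. By Shoenfield's Limit Lemma, uniformly in $e$ fix a total computable approximation $g_e(x, s) \in \{0, 1\}$ with $\lim_s g_e(x, s) = \Phi_e^{\jump}(x)$ whenever the latter is defined. Enumerate the requirements $R_n = R_{\langle e_n, i_n \rangle}$ in a priority order.

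At stage $s$ I compute, in priority order, witnesses $x_n(s)$ for $n < s$: $x_n(s)$ is the least $x < s$ with $g_{e_n}(x, s) = 1$ that is distinct from $x_0(s), \dots, x_{n-1}(s)$, or undefined if no such $x$ exists. I then set, for each $x < s$,
\[
f(x, s) = \begin{cases} i_n & \text{if } x = x_n(s) \text{ for the unique } n < s \text{ selecting it,} \\ 0 & \text{otherwise.} \end{cases}
\]
The coloring $f$ is computable by inspection. The intuition is that for each $e_n$ with $A_{e_n}$ infinite, $x_n(s)$ will eventually settle on the least element of $A_{e_n}$ not already occupied by some $x_m^*$ for $m < n$, and then the column $f(x_n^*, \cdot)$ stabilizes to $i_n$, supplying the required colour-$i_n$ pair together with any $y > x_n^*$ in $A_{e_n}$.

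Verification proceeds by $\ist$-induction on $n$. The inductive hypothesis is that for every $m < n$ the witness $x_m(s)$ eventually stabilizes to some value $x_m^*$; this is a $\Sigma^0_2$ statement, accessible under $\ist$. To handle ill-formed requirements (where $\Phi_{e_m}^{\jump}$ is partial or $A_{e_m}$ is finite), use a fairness discipline, e.g.\ only move $x_m(s)$ when its previous value $x$ fails $g_{e_m}(x, s) = 1$, so that $x_m(s)$ stabilizes in every case. At the inductive step, $x_n(s)$ is then the least element of a (set-theoretic) approximation to $A_{e_n}$ lying outside a stable finite set, so it stabilizes to some $x_n^* \in A_{e_n}$ whenever $A_{e_n}$ is infinite. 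Consequently the column $f(x_n^*, \cdot)$ stabilizes to $i_n$, and any $y \in A_{e_n}$ above the stabilization stage witnesses $R_n$ with $\{x_n^*, y\} \subseteq A_{e_n}$ and $f(x_n^*, y) = i_n$. Columns that are never assigned as a permanent witness obviously stabilize to $0$.

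The principal obstacle is exactly this coordination: $f(x, s)$ must be committed irrevocably at stage $s$, yet the correctness of each commitment is only visible in the limit. The built-in priority restraint together with $\Sigma^0_2$-induction is what allows us to simultaneously bound the number of witness moves per requirement and locate the stage after which every relevant $g_{e_m}$ has settled, pushing the whole construction and verification through inside $\rca + \ist$.
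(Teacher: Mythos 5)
There is a genuine gap, and it sits in your very first reduction. You propose to build a computable stable $f$ such that for every index $e$ and every colour $i$, if $A_e=\{x:\Phi^{\jump}_e(x)=1\}$ is infinite then $[A_e]^2$ receives colour $i$ --- that is, to make \emph{no} infinite $\Delta^0_2$ set thin for $f$. This is impossible: every computable stable colouring $f:[\Nb]^2\to\Nb$ has an infinite $\emptyset'$-computable thin set. Indeed, the limit function $\tilde f(x)=\lim_s f(x,s)$ is $\emptyset'$-computable; at least one of $B_0=\{x:\tilde f(x)\neq 0\}$, $B_1=\{x:\tilde f(x)\neq 1\}$ is infinite, say $B_i$, and with $\emptyset'$ one can greedily choose $x_0<x_1<\cdots$ in $B_i$, each new element taken beyond a stage after which the columns of the previously chosen elements have settled away from $i$; the resulting set avoids colour $i$. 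So your family of requirements is not simultaneously satisfiable, no matter how the priority construction is arranged. Concretely, the verification breaks at the claim that the witness $x_n(s)$ stabilizes: take the requirement attached to (an index for) this $\emptyset'$-computable thin set $H$ and the colour $i$ it avoids. Every element of $H$ has limit colour $\neq i$, so each time your construction commits a column $f(x,\cdot)$ to the value $i$, that $x$ cannot lie in $H$, its approximation $g_e(x,s)$ eventually drops to $0$, and by your own fairness rule the witness must move again; it therefore moves infinitely often, and no appeal to $\ist$ can rescue a stabilization claim that is simply false.

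The missing idea is that lowness, not mere $\Delta^0_2$-ness, has to be exploited: $H$ low means $H'=\lim_s\Psi(\cdot,s)$ for some computable $\Psi$, and it is against these approximations of the \emph{jump} that one must diagonalize. This is what the paper does, adapting Downey--Hirschfeldt--Lempp--Solomon \cite{Downey20010_2}: for each colour $i$, each candidate set $U\subseteq A_i=\{x:(\forall^\infty s)f(x,s)\neq i\}$ and each $\Psi$, a requirement forces $U'\neq\lim_s\Psi(\cdot,s)$ by enumerating axioms $\tuple{e,\set{x}}$ for $U'$ as elements $x$ appear in $U$, and, when $\Psi(e,\cdot)$ is seen to output a value other than $1$, committing all such $x$ to limit colour $i$, which expels them from every infinite thin set avoiding $i$; either way the guessed value of $U'(e)$ is wrong. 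Your framework has no analogue of this jump-control step, and without it the theorem is out of reach.
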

\begin{proof}
This is a straightforward adaptation of the proof of~\cite{Downey20010_2}.
We assume that definitions and the procedure $P(m)$ has been defined like in the original proof.
Given a stable coloring $f : [\Nb]^2 \to \Nb$, define $A_i = \set{x \in \Nb : (\forall^{\infty} s)f(x, s) \neq i}$.

We need to satisfy the following requirements for all $\Sigma^0_2$ sets $U$, all partial computable
binary functions $\Psi$ and all $i \in \Nb$:
$$
\Rcal_{U, \Psi, i} : U \subseteq A_i \wedge U \in \Delta^0_2 \wedge U \mbox{ infinite } \wedge
\forall n(\lim_s \Psi(n, s) \mbox{ exists}) \imp U' \neq \lim_s \Psi(-,s)
$$

The strategy for satisfying a single requirement $\Rcal_{U, \Psi, i}$ is almost the same.
It begins by choosing an $e \in \Nb$. Whenever a number $x$ enters $U$, 
it enumerates the axiom $\tuple{e, \set{x}}$ for $U'$. Whenever it sees that $\Psi(e, s) \downarrow \neq 1$
for some new number $s$, it \emph{commits} every $x$ for which it has enumerated an axiom $\tuple{e, \set{x}}$
to be assigned color $i$, i.e. starts settings $f(x, t) = i$ for every $t \geq s$.

If $U$ is $\Delta^0_2$ and infinite, and $\lim_s \Psi(e, s)$ exists and is not equal to 1,
then eventually an axiom $\tuple{e, \set{x}}$ for some $x \in U$ is enumerated, in which case
$U'(e) = 1 \neq \lim_s \Psi(e,s)$. On the other hand, if $U \subseteq A_i$ and $\lim_s \Psi(e,s) = 1$
then for all axioms $\tuple{e, \set{x}}$ that are enumerated by our strategy, $x$ is eventually commited to 
be assigned color $i$, which implies that $x \not \in U$. Thus in this case, $U'(e) = 0 \neq \lim_s \Psi(e,s)$.

The global construction is exactly the same as in the original proof.
\end{proof}

\begin{theorem}
$\rca \vdash \emo \rightarrow [\sts(2) \vee \coh]$
\end{theorem}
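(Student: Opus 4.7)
The plan is: given a sequence $\vec R = (R_i)_{i \in \Nb}$ and a stable coloring $f : [\Nb]^2 \to \Nb$, to construct an $(\vec R \oplus f)$-computable tournament $T$ such that every infinite transitive sub-tournament of $T$ produces either an $\vec R$-cohesive set or an infinite $f$-thin set. Applying $\emo$ to $T$ then witnesses $\sts(2) \vee \coh$.

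I would define $T$ on $[\Nb]^2$ as follows. For $x < y$, let $k(x,y)$ denote the least $k \leq x$ with $R_k(x) \neq R_k(y)$, if such $k$ exists. If $k(x,y)$ exists, declare $T(x,y)$ to hold iff $R_{k(x,y)}(x) = 0$; that is, $T$ records the lex-direction of the first low-index disagreement between the $\vec R$-signatures of $x$ and $y$. Otherwise---when $x$ and $y$ agree on $R_0, R_1, \ldots, R_x$---declare $T(x,y)$ to hold iff $f(x,y)$ is even. This yields a total $(\vec R \oplus f)$-computable tournament.

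After applying $\emo$ to obtain an infinite transitive sub-tournament $H = \{h_0 < h_1 < \cdots\}$, I would split the analysis into two cases. In the first case, for cofinitely many $x \in H$, the set $\{y \in H : y > x \text{ and } R_i(x) = R_i(y) \text{ for all } i \leq x\}$ is cofinite in $H$; then each $R_i$ eventually stabilizes along $H$, making a computable cofinite subset of $H$ an $\vec R$-cohesive set, settling $\coh$. In the second case I would exploit the transitivity of $T$ on $H$ together with the stability of $f$ to iteratively thin $H$: pick elements so that each newly selected $h'$ satisfies $f(h, h') = c(h) = \lim_s f(h,s)$ for every previously chosen $h$; the resulting $B \subseteq H$ would satisfy $f([B]^2) = \{c(b) : b \in B\}$, and $\ts(1)$ (provable in $\rca_0$) applied to $b \mapsto c(b)$ would yield an infinite $f$-thin subset.

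The main obstacle will be the second case. There, many pairs in $H$ are governed by the $\vec R$-rule of $T$ rather than the $f$-rule, so the $f$-based thinning does not immediately go through. I expect to resolve this by analyzing the lex-monotone structure that transitivity forces on the $\vec R$-signatures along $H$: when the signatures fail to stabilize yet are lex-ordered consistently, a pigeonhole-style argument on their initial coordinates should either recover a cohesive sub-regime (reducing to the first case) or yield an infinite subset of $H$ on which $k(x,y)$ is undefined uniformly, thereby returning the analysis to the $f$-driven regime where the stability-based thinning succeeds. Keeping this whole argument inside $\rca_0$---in particular avoiding $\Sigma^0_2$-induction during the sequential selection of $B$---will be the main technical challenge.
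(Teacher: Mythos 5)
Your proposal takes a genuinely different route from the paper, and it has a real gap --- one you yourself flag, but which I don't believe your sketched repair can close.

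Your tournament compares $\vec R$-signatures lexicographically and only falls back to $f$ when the signatures of $x$ and $y$ agree on all $R_i$ with $i \leq \min(x,y)$. This creates two problems in your Case~2. First, the $f$-rule is invoked only for pairs whose signatures fully agree up to $\min$; but if that agreement held for all pairs of an infinite set $H$, then for every $i$ the set $H \cap (i, \infty)$ would already be $R_i$-monochromatic, i.e.\ $H$ would be $\vec R$-cohesive. So ``the $f$-driven regime'' is not a genuinely separate case --- it collapses back into cohesiveness, and in Case~2 proper (signatures failing to stabilize) the tournament restricted to $H$ is governed almost entirely by the $\vec R$-rule and carries essentially no information about $f$. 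Second, and more fundamentally, your proposed thinning in Case~2 does not actually use $T$ or $\emo$ at all: you iteratively pick $h'$ with $f(h,h') = c(h) = \lim_s f(h,s)$ for all previously chosen $h$, and then apply $\ts(1)$ to $b \mapsto c(b)$. If that worked, it would prove $\sts(2)$ outright over $\rca_0$, which is false since $\sts(2)$ implies $\dnr$. The hidden cost is that $c$ is only $\Delta^{0,f}_2$, so the iterative selection and the invocation of $\ts(1)$ on $c$ are not available in $\rca_0$; this is exactly where $\emo$ must be doing the work, and your construction does not feed any power from the transitive subtournament back into the thinning step. I also don't see the ``pigeonhole on initial coordinates'' argument going through: lex-transitivity of $\prec_T$ on $H$ does not align with the natural order of $\Nb$, so one cannot straightforwardly extract a subsequence on which a prefix of the signature stabilizes, and even if one could, stabilizing only finitely many coordinates gives neither cohesiveness nor any leverage on $f$.

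The paper couples $f$ and $\vec R$ much more tightly. For a pair $\{x,s\}$ with $x < s$, the direction of the edge is determined \emph{by the color value itself}: writing $f(x,s)$ as $2i$ or $2i+1$, one sets $T(x,s)$ to hold iff $f(x,s)$ is even and $s \in R_i$, or $f(x,s)$ is odd and $s \notin R_i$ (and $T(s,x)$ otherwise). With this coupling, if a transitive $H$ is neither $\vec R$-cohesive nor thin for $\tilde f = \lim_s f(\cdot,s)$, one picks $i$ with $H$ meeting both $R_i$ and $\overline{R_i}$ infinitely often, picks $x,y \in H$ with $\tilde f(x) = 2i$ and $\tilde f(y) = 2i+1$, and then picks large $s_0 \in R_i \cap H$, $s_1 \in \overline{R_i} \cap H$; the four edges $T(x,s_0)$, $T(s_0,y)$, $T(y,s_1)$, $T(s_1,x)$ form a $4$-cycle, contradicting transitivity. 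The key point is that the index $i$ interrogated by the tournament is dictated by the color $f(x,s)$, so the failure of thinness and the failure of cohesiveness combine into a single finite contradiction, with no case analysis or unbounded thinning needed.
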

\begin{proof}
Let $f : [\Nb]^2 \to \Nb$ be a stable coloring and $R_0, R_1, \dots$ be a uniform sequence of sets.
We denote by $\tilde{f}$ the function defined by $\tilde{f}(x) = \lim_s f(x,s)$.
We build a $\Delta^{0, f \oplus \vec{R}}_1$ tournament $T$ such that every infinite transitive subtournament
is either thin for $\tilde{f}$ or is an $\vec{R}$-cohesive set. As every set $H$ thin for $\tilde{f}$
$H \oplus f$-computes a set thin for $f$, we are done.
For each $x, s \in \Nb$, set $T(x,s)$ to hold if one of the following holds:
\begin{itemize}
	\item[(i)] $f(x,s) = 2i$ and $x \in R_i$  
	\item[(ii)] $f(x,s) = 2i+1$ and $x \not \in R_i$
\end{itemize}
Otherwise set $T(s,x)$ to hold. Let $H$ be an infinite transitive subtournament of $T$
which is not $\tilde{f}$-thin. Suppose for the sake of absurd that $H$ is not $\vec{R}$-cohesive.
Then there exists an $i \in \Nb$ such that $H$ intersects $R_i$ and $\overline{R_i}$ infinitely many times.
As $H$ is not $\tilde{f}$-thin, there exists $x, y \in H$ such that $\tilde{f}(x) = \lim_s f(x,s) = 2i$
and $\tilde{f}(y) = \lim_s f(y,s) = 2i+1$. As $H$ intersects $R_i$ and $\overline{R_i}$ infinitely many times,
there exists $s_0 \in R_i \cap H$ and $s_1 \in \overline{R_i} \cap H$ 
such that $f(x,s_0) = f(x,s_1) = 2i$ and $f(y,s_0) = f(y,s_1) = 2i+1$.
But then $T(x,s_0)$, $T(s_0, y)$, $T(y, s_1)$ and $T(s_1, x)$ hold, forming a 4-cycle
and therefore contradicting transitivity of $H$.
\end{proof}

\begin{definition}[Atomic model theorem]
A formula $\varphi(x_1, \dots, x_n)$ of $T$ is an \emph{atom} of a theory $T$ if for each formula $\psi(x_1, \dots, x_n)$
  we have $T \vdash \varphi \imp \psi$ or $T \vdash \varphi \imp \neg \psi$ but not both.
  A theory $T$ is \emph{atomic} if, for every formula $\psi(x_1, \dots, x_n)$ consistent with $T$,
  there is an atom $\varphi(x_1, \dots, x_n)$ of $T$ extending it, i.e. one such that $T \vdash \varphi \imp \psi$.
  A model $\Acal$ of $T$ is \emph{atomic} if every $n$-tuple from $\Acal$ satisfies an atom of $T$. 
$\amt$ is the statement ``Every complete atomic theory has an atomic model''.
\end{definition}

$\amt$ has been introduced as a principle by Hirschfeldt et al.\ in~\cite{Hirschfeldt2009atomic} together with $\pizog$.
They also proved that $\wkl$ and $\amt$ were incomparable on $\omega$-models, 
proved over $\rca$ that $\amt$ is strictly weaker than $\sads$.
They proved that $\amt$ is restricted $(r\h)\Pi^1_2$ conservative over $\rca$, deduced from it that $\amt$ implied none
of $\rt^2_2$, $\srt^2_2$, $\cac$, $\cads$ or even $\dnr$. 
They finally proved that $\amt$ and $\pizog$ have the same $\omega$-models.
Bienvenu et al.\ proved in~\cite{Bienvenu2014role} the existence of an $\omega$-model of $\rrt^2_2$ 
which is not a model of $\amt$.

\begin{theorem}\label{thm:sts2-amt}
$\rca \vdash \sts(2) \imp \amt$
\end{theorem}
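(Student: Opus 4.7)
The plan is to reduce to the equivalent formulation of $\amt$ as $\pizog$, established over $\rca$ by Hirschfeldt, Shore, and Slaman~\cite{Hirschfeldt2009atomic}: every uniformly~$\Pi^0_1$ sequence $\{P_e\}_{e \in \Nb}$ of dense subclasses of $2^{<\Nb}$ admits a set $G$ such that every initial segment of~$G$ has an extension in each~$P_e$. It then suffices to establish $\rca \vdash \sts(2) \imp \pizog$.

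Given such an instance, I mirror the strategy of Theorem~\ref{thm:ts2-dnrzp}. Using a Cantor pairing $\alpha(e,n)$ to code requirements, and using the Limit Lemma to get a computable approximation $P_{e,s}$ of the $\Pi^0_1$-data, I build a computable stable coloring $f : [\Nb]^2 \to \Nb$ in stages. At stage $s+1$, for each pair $(e,n)$ with $\alpha(e,n) \leq s$, I select a candidate extension $\tau_{e,n,s} \in P_{e,s}$ and commit a fresh pair $\{x,s\}$ to color $\alpha(e,n)$, encoding the requirement that the generic being built must extend $\tau_{e,n,s}$ through $x$; the remaining pairs receive a default color. Stability of~$f$ follows because each approximation eventually stabilizes and only finitely many requirements can act on any given $x$.

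Applying $\sts(2)$ produces an infinite~$A$ together with a missed color $c = \alpha(e_0,n_0) \notin f([A]^2)$. As in Theorem~\ref{thm:ts2-dnrzp}, where a single missed color suffices to diagonalize against all $\Phi^{\emptyset'}_e$ at once, the Cantor interleaving of requirements here forces $A$ to computably decode, for every~$e$, an initial segment extending into~$P_e$. Stitching these segments together within $\rca$ via $\Sigma^0_1$-induction yields a set $G$ meeting every~$P_e$, hence a $\pizog$-solution and therefore an $\amt$-solution.

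The main obstacle is the combinatorial design of~$f$: avoidance of a single color at the limit must compel the thin set $A$ to encode witnesses for \emph{all} the $P_e$ simultaneously, which is the precise analog of the delicate step in Theorem~\ref{thm:ts2-dnrzp}. Should a single application of~$\sts(2)$ not suffice directly, one can instead iterate it along a recursive enumeration of requirements, at each step restricting the next coloring to the previously obtained thin set; $\Sigma^0_1$-induction then glues the nested thin sets into a single~$G$.
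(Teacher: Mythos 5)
Your reduction to $\pizog$ is a reasonable organizational move, and the general idea of imitating Theorem~\ref{thm:ts2-dnrzp} (using the paper's coloring-construction template and a movable-marker/finite-injury argument to enforce stability) is indeed the right family of techniques. The paper's own proof does not route through $\pizog$; it works directly with the tree of Henkin constructions of models of $T$ and a monotone procedure $\Phi$ from~\cite[Theorem~4.1]{Hirschfeldt2009atomic}, defining requirements $\Rcal_{e,i}$ with markers $d_{e,i,s}$ so that any infinite set thin for $f$ with witness color $i$ computes an atomic model.

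However, there is a genuine gap in your design of the coloring, and it is precisely the ``delicate step'' you flag. You assign color $\alpha(e,n)$ to the pair created for requirement $(e,n)$, and then apply $\sts(2)$ to get a thin set $A$ missing a single color $c=\alpha(e_0,n_0)$. But $A$ then only ``sees'' the single requirement $(e_0,n_0)$: avoiding color $\alpha(e_0,n_0)$ tells you nothing whatsoever about requirements with other indices. Your claim that ``the Cantor interleaving\dots forces $A$ to computably decode, for every $e$, an initial segment extending into $P_e$'' does not follow from anything in the construction as described. Compare with Theorem~\ref{thm:ts2-dnrzp}: there the requirement index is $(e,i)$ but the assigned color is just $i$, so a fixed missed color $i$ still covers the entire infinite family $\{(e,i) : e \in \Nb\}$. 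The paper's proof of the present theorem does the same thing with requirements $\Rcal_{e,i}$ indexed so that every single color $i$ by itself suffices to carry a complete family of density requirements. Without this, avoidance of one color does not give you a $\pizog$-solution.

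Your fallback, iterating $\sts(2)$ along a recursive enumeration of requirements, does not obviously repair this. An $\omega$-long iteration of the principle produces a decreasing chain of thin sets whose intersection can be finite, and $\Sigma^0_1$-induction alone does not let you glue these into a single set $G$ all of whose initial segments extend into every $P_e$. You would need a genuine diagonal construction together with a proof that the resulting $G$ is infinite and still has the required density property, and this is not sketched. The correct fix is structural: redesign the coloring so that each color $i$ encodes an entire infinite column of requirements, exactly as in the paper's $\Rcal_{e,i}$ setup, and then use a finite-injury marker argument (with the $\ist$ considerations the paper invokes) to obtain stability of $f$.
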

\begin{proof}
We prove that for any atomic theory $T$, there exists a $\Delta^{0,T}_1$ stable
coloring $f : [\Nb]^2 \to \Nb$ such that for any infinite
set $H$ thin for $f$, there is a~$\Delta^{0, H \oplus T}_1$ atomic model of $T$.
The proof is very similar to~\cite[Theorem~4.1]{Hirschfeldt2009atomic}.
We begin again with an atomic theory $T$ and consider the tree $\Scal$ of standard
Henkin constructions of models of $T$. We want to define a stable coloring $f : [\Nb]^2 \to \Nb$
such that any infinite set thin for $f$ computes an infinite path $P$ through $\Scal$
that corresponds to an atomic model $\Acal$ of $T$.
Define as in~\cite[Theorem~4.1]{Hirschfeldt2009atomic} 
a monotonic computable procedure $\Phi$ which on tuple $\tuple{x_1, \dots, x_n}$
will return a tuple $\tuple{\sigma_1, \dots, \sigma_n}$ such that $\sigma_{i+1}$ is the
least node of $\Scal$ extending $\sigma_i$ such that we have found no witness that $\sigma_{i+1}$
is not an atom about $c_0, \dots, c_{x_i}$ after a standardized search of $x_{i+1}$ many steps.
$\sigma_1$ is the least node on $\Scal$ mentionning $c_0$ and such that we have not found a witness
that $\sigma_1$ is not an atom about $c_0$ after $x_1$ many steps.

The construction of the coloring $f$ will involve a movable marker procedure.
At each stage $s$, we will ensure to have defined $f$ on $\set{x : x \leq s}$.
For each color $i$, we can associate the set $C_i = \{x : (\forall^{\infty} s)f(x,s) = i\}$.
At stage $s$, we maintain a set $C_{i,s}$ with the intuition that $C_i = \lim_s C_{i,s}$.

For each~$e, i \in \Nb$, the requirement
$\Rcal_{e,i}$ ensures that for any sequence
$x_1, \dots, x_n, d_{e,i,t}$ in $\overline{C_i}$ that is increasing
in natural order, $\sigma_{n+1}$ includes an atom about $c_0, \dots, c_{x_n}$
where $d_{e,i,t}$ is the value of the marker~$d_{e,i}$ associated to~$\Rcal_{e,i}$
at stage~$t$, and $\Phi(x_1, \dots, x_n, d_{e,i,t}) = \tuple{\sigma_1, \dots, \sigma_{n+1}}$.

We say that the requirement $\Rcal_{e,i}$ \emph{needs attention at stage $s$}
if there exists a sequence $x_1, \dots, x_n, d_{e,i,s}$ of elements of $\overline{C_{i,s}}$ increasing in natural order,
such that $\Phi(x_1, \dots, x_n, d_{e,i,s}) = \tuple{\sigma_1, \dots, \sigma_{n+1}}$ and by stage $s$ we have seen
a witness that $\sigma_{n+1}$ does not supply an atom about $c_0, \dots, c_{x_n}$.

At stage $s$, suppose the highest priority requirement needing attention is $\Rcal_{e,i}$.
The strategy \emph{commits to $C_i$} each $x < s$ that are in greater or equal to $d_{e,i,s}$.
We let $d_{e, i,s+1} = s$. All $d_{u, j, s+1}$ become undefined for $\tuple{u,j} > \tuple{e,i}$.
If no requirement needs attention, we let $d_{u,j,s+1} = s$
for the least $\tuple{u,j}$ such that $d_{u,j,s}$ is undefined.
For each $x < s$, set $f(x, s) = i$ if $x$ is commited to be in $C_i$.
Otherwise set $f(x, s) = 0$. We then go to the next stage.

\begin{claim}
The resulting coloring is stable.
\end{claim}
\begin{proof}
Take any $x \in \Nb$. If no requirement ever commits $x$ to be in some $D_i$
then $x$ is commited at stage $x+1$ to be in $C_0$ and this commitment is never injured,
so $(\forall^\infty s)f(x,s) = 0$. Otherwise by $\isig^0_1$ there is a requirement
of highest priority that commits $x$ to be in some $C_i$.
Say it is $\Rcal_{e,i}$ and it acts to commit $x$ at stage $s$.
This means that $d_{e,i,s} \leq x < s$. Then we set $d_{e,i,s+1} = s$ and never decrease this marker.
No requirement of higher priority will act after stage $s$ on $x$ by our choice of $\Rcal_{e,i}$
and the markers for strategies of lower priority will be initialized after stage $s$ to 
a value greater than $s$. So $x$ will stay for ever in $C_i$. Thus $(\forall^\infty s)f(x,s) = i$.
\end{proof}

\begin{claim}
Each requirement $\Rcal_{e,i}$ acts finitely often and
$d_{e,i,s}$ will eventually remain fixed. Moreover,
if $d_{e,i,s}$ never changes after stage $t$, then, for any sequence
$x_1, \dots, x_n, d_{e,i,t}$ in $\overline{C_i}$ that is increasing
in natural order, $\sigma_{n+1}$ includes an atom about $c_0, \dots, c_{x_n}$
where $\Phi(x_1, \dots, x_n, d_{e,i,t}) = \tuple{\sigma_1, \dots, \sigma_{n+1}}$.
\end{claim}
\begin{proof}
We prove it by $\Sigma^0_1$ induction.
Assume that $\Rcal_{e,i}$ acts at stage $s$ and no requirement of higher
priority ever acts again. We then set $d_{e,i,s+1} = s$ and 
now act again for $\Rcal_{e,i}$ only if we discover a new witness as described
in the definition of needing attention. As we never act for any requirement
of higher priority, at any stage $t > s$ the numbers between $d_{e,i,s}$ and $d_{e,i,t}$
will all be commited to~$C_i$. Then the sequences $x_1, \dots, x_n \leq d_{e,i,t}$ in $\overline{C_i}$,
increasing in natural order are sequences $x_1, \dots, x_n \leq d_{e,i,s}$ in $\overline{C_i}$.
Hence their set is bounded. By the same trick as in~\cite[Theorem~4.1]{Hirschfeldt2009atomic},
we can avoid the use of $\bst$ by constructing a single atom extending each $\sigma(x_1, \dots, x_n)$
where $\sigma(x_1, \dots, x_n)$ is the next to last value under $\Phi$.
By $\ist$, there is a first such atom and a bound on the witnesses needed to show that all smaller candidates
are not such an atom. Once we passed such a stage, no change occurs in $d_{e,i,t}$ and
its value must also be above the stage where all witnesses are found. After such
a stage, $\Rcal_{e,i}$ will never need attention again.
\end{proof}

The construction of an atomic model of $T$ from any infinite set thin for $f$
with witness color $i$ is exactly the same as in~\cite[Theorem~4.1]{Hirschfeldt2009atomic}.
\end{proof}

\begin{corollary}\label{cor:wwkl-sts2}
For every~$n$, $\wwkls{n}$ does not imply~$\sts(2)$.
\end{corollary}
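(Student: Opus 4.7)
The plan is to apply Theorem~\ref{thm:sts2-amt} contrapositively: if $\wwkls{n}$ implied $\sts(2)$, then by that theorem $\wwkls{n}$ would imply $\amt$ over $\rca$. It therefore suffices to produce, for each~$n$, an $\omega$-model of $\wwkls{n}$ that fails $\amt$.

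For the case $n=2$ this is already available in the excerpt, namely the separation of $\ran{2}$ (equivalently $\wwkls{2}$) from $\amt$ obtained by Bienvenu, Patey and Shafer~\cite{Bienvenu2014role} via the No Randomized Algorithm property: $\amt$ is shown there to have NRA, and any principle with NRA admits an $\omega$-model of $\rca + \ran{2}$ that fails it. The way I would extend this to an arbitrary fixed~$n$ is by relativizing the NRA construction to the jump $\emptyset^{(n-2)}$. Concretely, instead of assembling an $\omega$-model closed under Turing reducibility from $2$-randoms avoiding solutions to a fixed computable hard instance of $\amt$, one assembles such a model from sets that are $n$-random relative to $\emptyset^{(n-2)}$, keeping the same avoidance property at each stage. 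The resulting model will contain an $n$-random relative to every one of its members (witnessing $\wwkls{n}$) while omitting any atomic model of the distinguished theory, so $\amt$ fails in it.

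The main obstacle is verifying that the probabilistic forcing construction of~\cite{Bienvenu2014role} transports cleanly to higher levels of randomness. The measure-theoretic obstruction underlying NRA, namely that the class of oracles computing an atomic model of the hard theory has measure zero, is preserved under any finite iteration of the jump, so conceptually nothing new is required; however, one must thread the randomness requirements and the NRA-avoidance requirements through the construction simultaneously at level~$n$, which is essentially a bookkeeping exercise rather than a new combinatorial input. A conceptually cleaner alternative would be to use the equivalence $\amt \equiv \pizog$ over $\omega$-models from Hirschfeldt, Shore and Slaman~\cite{Hirschfeldt2009atomic} and argue directly that Turing downward closures generated by $n$-randoms can be arranged to omit a solution to a fixed $\pizog$-instance, exploiting the orthogonality between randomness at any finite level and the $\pizog$-genericity needed for such solutions.
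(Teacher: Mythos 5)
Your overall strategy is the paper's: use Theorem~\ref{thm:sts2-amt} to reduce the problem to separating $\wwkls{n}$ from $\amt$, and then invoke the result of Bienvenu, Patey and Shafer~\cite{Bienvenu2014role} about $\amt$. Where you diverge is in how you handle arbitrary~$n$, and this is also where your argument is weakest. The form of the cited result that is actually needed is simply: there is a computable complete atomic theory~$T$ such that the class of oracles computing an atomic model of~$T$ has measure zero. From this single unrelativized fact one gets, for every~$n$ at once, an $\omega$-model of $\wwkls{n}$ omitting atomic models of~$T$: pick one real~$Z$ that is sufficiently random (say $n$-random, or arithmetically random to cover all~$n$) and lies outside that null class -- possible since the null class removes nothing of measure -- and take the Turing ideal generated by the columns of~$Z$; by van~Lambalgen-type arguments this ideal satisfies $\wwkls{n}$, and since every member is Turing below~$Z$, none computes an atomic model of~$T$, so $\amt$ fails and a fortiori $\sts(2)$ fails. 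Your proposed route instead rebuilds the model by an iterated, relativized NRA construction, adding at each stage a random relative to what has been built while ``keeping the same avoidance property.'' That iteration needs more than the measure-zero statement: at each stage you must know that for the current oracle~$X$ (which does not compute an atomic model of~$T$), the class of~$Z$ with $X \oplus Z$ computing an atomic model of~$T$ is still null -- this relativized avoidance is precisely the content of the NRA property, not something that follows from the plain null-measure fact, and it is not what ``preserved under any finite iteration of the jump'' expresses (the jump is not the issue; relativization to members of the model is). Likewise, sets ``$n$-random relative to $\emptyset^{(n-2)}$'' is not the right requirement for an $\omega$-model of $\wwkls{n}$; one needs randomness relative to the sets already in the model. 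Your construction can very likely be pushed through, since \cite{Bienvenu2014role} establishes exactly this kind of iterable avoidance for $\amt$, but as written the key verification is deferred to a ``bookkeeping exercise,'' whereas the single-random-real cone argument used by the paper sidesteps all of it.
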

\begin{proof}
Bienvenu et al.~\cite{Bienvenu2014role} have shown the existence of a computable complete atomic theory~$T$
such that the measure of oracles computing an atomic model of~$T$ is null.
Therefore there exists an $\omega$-model of $\wwkls{n}$ which is not a model of~$\amt$,
and a fortiori which is not a model of~$\sts(2)$.
\end{proof}

\begin{corollary}\label{cor:rrt22-sts2}
$\rrt^2_2$ does not imply~$\sts(2)$ over~$\rca$.
\end{corollary}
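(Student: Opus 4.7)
The plan is to package together Corollary~\ref{cor:wwkl-sts2} with the Conidis--Slaman implication $\rca \vdash \ran{2} \imp \rrt^2_2$ that was mentioned in the introduction. The strategy is entirely model-theoretic: build (or extract) an $\omega$-model that satisfies $\rrt^2_2$ and witnesses the failure of $\sts(2)$, and to do so it suffices to find an $\omega$-model of something strong enough to force $\rrt^2_2$ but weak enough to admit a model avoiding $\sts(2)$.

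Concretely, I would instantiate Corollary~\ref{cor:wwkl-sts2} with $n = 2$, which supplies an $\omega$-model $\Mcal$ of $\rca + \wwkls{2}$ such that $\Mcal \not\models \sts(2)$. The next step is to argue that $\Mcal \models \rrt^2_2$. For this I would invoke the standard fact that, over $\omega$-models, $\wwkls{2}$ is equivalent to $\ran{2}$: namely, for every $X$ in $\Mcal$, $\Mcal$ contains a Martin-L\"of random relative to $X \oplus \jump$, which is exactly the content of $\ran{2}$. Then Conidis and Slaman's theorem $\rca \vdash \ran{2} \imp \rrt^2_2$, cited at the beginning of the paper, delivers $\Mcal \models \rrt^2_2$. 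Combining with $\Mcal \not\models \sts(2)$ finishes the separation.

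The only step that requires a little care is the passage from $\wwkls{2}$ to $\ran{2}$ inside~$\Mcal$; in first-order strength these principles need not coincide, but in the $\omega$-model setting the equivalence is routine and well known from the reverse-mathematical study of randomness. I do not expect any genuine obstacle: once Corollary~\ref{cor:wwkl-sts2} is in hand the result is essentially a one-line citation chain $\wwkls{2} \Rightarrow \ran{2} \Rightarrow \rrt^2_2$ followed by $\wwkls{2} \not\Rightarrow \sts(2)$.
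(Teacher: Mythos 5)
Your proposal is correct and matches the paper's argument: both separate $\rrt^2_2$ from $\sts(2)$ by applying Corollary~\ref{cor:wwkl-sts2} to get an $\omega$-model of $\wwkls{2}$ omitting $\sts(2)$, and then showing that $\omega$-model must satisfy $\rrt^2_2$. The only cosmetic difference is the citation chain: the paper invokes $\rca \vdash \wwkls{2} \imp \rrt^2_2$ directly (attributed to Csima--Mileti), whereas you insert the extra hop $\wwkls{2} \Rightarrow \ran{2}$ over $\omega$-models before applying Conidis--Slaman's $\rca \vdash \ran{2} \imp \rrt^2_2$; both routes lead to the same model-theoretic separation.
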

\begin{proof}
Csima \& Mileti~\cite{Csima2009strength} proved that~$\rca \vdash \wwkls{2} \imp \rrt^2_2$.
Apply Corollary~\ref{cor:wwkl-sts2}.
\end{proof}

\begin{question}
Does $\fs(2)$ imply $\bst$ over $\rca$ ?
\end{question}

\begin{question}
Does $\fs(2)$ imply $\semo$ over $\rca$ ?
\end{question}

\section{Stable rainbow Ramsey theorem}

In this section, we study a stable version of the rainbow Ramsey theorem.
There exist different notions of stability for $k$-bounded functions.
The naturality of this version is justified by the existence of various
simple characterizations of the stable rainbow Ramsey theorem for pairs.
We shall later study another version which seems more natural in the sense 
that a stable instance can be obtained from a non-stable one by an application
of the cohesiveness principle. However the latter version does not admit
immediate simple characterizations.

\subsection{Definitions}

\begin{definition}
A 2-bounded coloring $f : [\Nb]^2 \to \Nb$ is 
\emph{strongly rainbow-stable} if  
$(\forall x)(\exists y \neq x)(\forall^\infty s)f(x, s) = f(y,s)$
A set $X \subseteq \Nb$ is a \emph{prerainbow} for a 2-bounded coloring $f : [\Nb]^2 \to \Nb$ if 
$(\forall x \in X)(\forall y \in X)(\forall^\infty s \in X)[f(x, s) \neq f(y, s)]$.
$\srrt^2_2$ is the statement ``every rainbow-stable 2-bounded coloring $f : [\Nb]^2 \to \Nb$
has a rainbow.''
\end{definition}

\begin{lemma}[Wang in \cite{WangSome}, $\rca + \bst$]\label{lem:prerainbow-equiv}
Let $f : [\Nb]^2 \to \Nb$ be a $2$-bounded coloring and $X$ be an infinite prerainbow for $f$. 
Then $X \oplus f$ computes an infinite $f$-rainbow $Y \subseteq X$.
\end{lemma}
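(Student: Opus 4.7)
The plan is to build $Y = \{y_0 < y_1 < \dots\}$ inside $X$ by an $X \oplus f$-computable greedy recursion. Set $y_0 = \min X$. At stage $n+1$, given $y_0 < \dots < y_n$ in $X$ such that $f$ is injective on $[\{y_0, \dots, y_n\}]^2$, I would search for the least $s \in X$ with $s > y_n$ satisfying (i) $f(y_i, s) \neq f(y_j, s)$ for all $i < j \leq n$ and (ii) $f(y_i, s) \neq f(y_j, y_k)$ for all $i \leq n$ and $j < k \leq n$, and define $y_{n+1} = s$. Condition (i) forbids two of the newly added pairs $\{y_i, y_{n+1}\}$ from sharing a color, while (ii) prevents a new pair from colliding with an old one; together with the inductive hypothesis this keeps $f$ injective on $[\{y_0, \dots, y_{n+1}\}]^2$, so the limit $Y$ is an infinite $f$-rainbow contained in $X$.

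The real content of the proof is to show that the search always terminates, i.e.\ that such an $s$ exists. For (ii), 2-boundedness of $f$ ensures that each color $c = f(y_j, y_k)$ has at most one further preimage, so for each triple $(i, j, k)$ there is at most one $s$ spoiling (ii); hence the set of $s$ forbidden by (ii) is finite and $X \oplus f$-computable, with a bound that can be read off directly. For (i), the prerainbow hypothesis tells us that for each fixed pair $i < j \leq n$ the set $\{s \in X : f(y_i, s) = f(y_j, s)\}$ is finite, which is a $\Sigma^0_2$ statement. To combine these finitely many $\Sigma^0_2$ facts into a single bound $M$ past which (i) holds uniformly over all such $(i, j)$, I would invoke bounded $\Sigma^0_2$-collection, i.e.\ $\bst$. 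Any $s \in X$ greater than $M$ and greater than all values forbidden by (ii) then witnesses $y_{n+1}$.

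The main obstacle is precisely this appeal to $\bst$: without it, $\rca$ alone may not prove that finitely many ``eventually'' statements admit a uniform bound, so the search might not provably terminate. This is why the lemma is stated over $\rca + \bst$. Once uniform finiteness is secured, the rest is routine: the search condition at each stage is $\Sigma^0_1$ in $X \oplus f$ with a provable witness, so each $y_{n+1}$ can be effectively located, and $Y$ is an infinite, $X \oplus f$-computable rainbow contained in $X$, as required.
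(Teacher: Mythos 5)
Your construction is correct: the greedy extension argument, with 2-boundedness handling collisions with previously fixed colors and the prerainbow hypothesis handling collisions among the new pairs, is exactly the natural proof, and you correctly pinpoint the only non-elementary step — uniformizing the finitely many $\Sigma^0_2$ (indeed $\Pi^0_1$-witnessed) "eventually distinct" facts at each stage — as the place where $\bst$ is needed, which matches the hypothesis under which the lemma is stated. The paper itself gives no proof (it cites Wang), so there is nothing further to compare against; your argument is essentially the standard one.
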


\begin{theorem}\label{thm:rainbow-stable-vs-strongly}
The following are equivalent over $\rca + \bst$:
\begin{itemize}
  \item[(i)] $\srrt^2_2$
  \item[(ii)] Every strongly rainbow-stable 2-bounded coloring $f : [\Nb]^2 \to \Nb$ has a rainbow.
\end{itemize}
\end{theorem}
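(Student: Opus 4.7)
The direction (i) $\Rightarrow$ (ii) I will dispatch in one line: every strongly rainbow-stable coloring satisfies clause (a) of rainbow-stability for each $x$, hence is rainbow-stable, so $\srrt^2_2$ delivers the rainbow. The content lies in (ii) $\Rightarrow$ (i). My plan is to fix a rainbow-stable 2-bounded coloring $f : [\Nb]^2 \to \Nb$, label each $x \in \Nb$ as \emph{married} (case~(a)) or as a \emph{monk} (case~(b)), with respective classes $M$ and $N$ satisfying $M \cup N = \Nb$, observe that the marriage partner of a married element is itself married (otherwise its monk condition would be violated by the cofinite agreement), and then split into two cases depending on whether $N$ is infinite or finite.

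In the first case, if $N$ is infinite, I would show that $N$ is a prerainbow for $f$: for any $x, y \in N$, the monk condition applied to $x$ with witness $y$ gives $(\forall^{\infty} s)\, f(x,s) \neq f(y,s)$, so in particular $(\forall^{\infty} s \in N)\, f(x,s) \neq f(y,s)$. Using $\bst$ to turn the $\Sigma^0_2$-class $N$ into an actual infinite set, Lemma~\ref{lem:prerainbow-equiv} then yields an infinite $f$-rainbow. In the second case, if $N$ is finite, I let $F \supseteq N$ be obtained from $N$ by adjoining the marriage partner in $M$ of each element of $N$; 2-boundedness of $f$ keeps $F$ finite. Setting $M^{*} = \Nb \setminus F$, every $x \in M^{*}$ is married and has its partner also inside $M^{*}$. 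Reindexing $M^{*}$ via its increasing enumeration $\phi$ produces a 2-bounded coloring $g(i,j) := f(\phi(i), \phi(j))$ that is strongly rainbow-stable, so (ii) delivers a $g$-rainbow $R$, which pulls back to the infinite $f$-rainbow $\phi(R)$.

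The hard part will be formalizing the dichotomy on $N$ inside $\rca + \bst$, because $N$ is only $\Sigma^0_2$-definable and ``$N$ is infinite'' is not a $\Delta^0_1$ predicate. The axiom $\bst$ is precisely the collection principle needed to push the argument through: in Case~A it lets me iteratively select elements of $N$ while bounding the search at each stage to obtain an infinite subset as a set, and in Case~B it guarantees, from the finiteness of $N$, a uniform bound witnessing the finiteness of $F$ and hence the $\Delta^0_1$-definability of $M^{*}$. Once the case distinction is properly internalised, the two cases combine to give the desired $f$-rainbow, completing the equivalence.
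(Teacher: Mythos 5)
The direction (i)~$\Rightarrow$~(ii) is fine, and your Case~B (finite $N$) is a reasonable way to proceed; but Case~A (infinite $N$) has a genuine gap. The set of monks
\[
N = \{x : (\forall^{\infty} s)(\forall y \neq x)\, f(y,s) \neq f(x,s)\}
\]
is only $\Delta^0_2$ in~$f$, and $\bst$ is a first-order bounding/collection scheme: it does not provide $\Delta^0_2$-comprehension, nor any mechanism to ``iteratively select elements of~$N$'' as you describe. Each time you want to certify $x \in N$ you must verify a $\Pi^0_1$ tail condition, which never terminates; running the obvious limit construction produces a $\Delta^0_2$ sequence, and there is no reason for it to belong to a model of $\rca + \bst$. (In general a $\Delta^0_2$ set need not have any infinite computable subset.) So the observation ``$N$ is a prerainbow'' is correct but unusable: you have not reduced the problem to an instance of~(ii), you have just rephrased the goal. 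The whole point of proving (ii)~$\Rightarrow$~(i) is that in the case where many elements are monks there is nothing to hand to~(ii), and your proposal does not address this.

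The paper's proof handles this case by a construction rather than a selection. It builds from~$f$ a \emph{new} $f$-computable $2$-bounded coloring~$g$: at stage~$t$ it first duplicates the colors of any pair $x,y$ with $f(x,t)=f(y,t)$, and then pairs off the leftover elements two by two in increasing order, so that in~$g$ every element eventually has a stable partner -- the married ones inherit their $f$-partner, and the monks get matched with their neighbouring monk. This gives a strongly rainbow-stable~$g$ \emph{without ever deciding which elements are monks}, and the infinitude of~$N$ is used only to show that each monk has a stable partner in~$g$. Then (ii) applied to~$g$ yields a $g$-rainbow, which one checks is a prerainbow for~$f$, and Lemma~\ref{lem:prerainbow-equiv} finishes. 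This ``pair up the monks on the fly'' construction is the idea your proposal is missing. For Case~B, you also need a word on why $N$, once known to be bounded, is actually coded as a finite set in the model; this is standard from~$\bst$ via the limit lemma (bound the convergence stages of a $\Delta^0_2$ approximation to~$N$ below the bound on~$N$), and you should say so rather than merely asserting $\Delta^0_1$-definability of~$M^*$. Finally, your remark about adjoining ``the marriage partner in~$M$ of each element of~$N$'' is vacuous: monks have no marriage partner, so $F = N$; the content of Case~B is simply that the complement of~$N$ is closed under partnership.
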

\begin{proof}
$(i) \imp (ii)$ is straightforward as any strongly rainbow-stable coloring is rainbow-stable.
$(ii) \imp (i)$: Let $f : [\Nb]^2 \to \Nb$ be a 2-bounded rainbow-stable coloring.
Consider the following collection:
$$
S = \set{ x \in \Nb : (\forall^{\infty} s)(\forall y \neq x)[f(y, s) \neq f(x,s)]}
$$

If $S$ is finite, then take $n \geq max(S)$. The restriction of $f$ to $[n, +\infty)$
is a strongly rainbow-stable 2-bounded coloring and we are done. So suppose $S$ is infinite.
We build a 2-bounded strongly rainbow-stable  coloring $g \leq_T f$ by stages.

At stage $t$, assume $g(x, i)$ is defined for every $x, i < t$.
For every pair $x, y \leq t$ such that $f(x, t) = f(y, t)$,
define $g(x, t) = g(y, t)$. Let $S_t$ be the set of $x \leq t$
such that $g(x, t)$ has not been defined yet.
Writing $S_t = \set{x_1 < x_2 < \dots}$, we set $g(x_{2i}) = g(x_{2i+1})$ for each $i$.
If $S_t$ has an odd number of elements, there remains an undefined value.
Set it to a fresh color.
This finishes the construction. 
It is clear by construction that $g$ is 2-bounded.

\begin{claim}
$g$ is strongly rainbow-stable.
\end{claim}
\begin{proof}
Fix any $x \in \Nb$. Because $f$ is rainbow-stable, we have two cases:
\begin{itemize}
  \item Case 1: there is a $y \neq x$ such that $(\forall^{\infty} s) f(x, s) = f(y, s)$.
  Let $s_0$ be the threshold such that $(\forall s \geq s_0) f(x, s) = f(y, s)$.
  Then by construction, at any stage $s \geq s_0$, $g(x, s) = g(y, s)$ and we are done.
  
  \item Case 2: $x \in S$. Because $x$ is infinite, it has a successor $y_0 \in S$. 
  By $\bst$, let $s_0$ be the threshold such that for every $y \leq y_0$
  either there is a $z \leq y_0$, $z \neq y$ such that $(\forall s \geq s_0) f(y, s) = f(z, s)$ 
  or $(\forall s \geq s_0)$ $f(y, s)$ is a fresh color.
  Then by construction of $g$, for every $t \geq s_0$, $S_t \uh y = S \uh y$.
  Either $x = x_{2i}$ for some $i$ and then $(\forall t \geq s_0) g(x, t) = g(x_{2i+1}, t)$
  or $x = x_{2i+1}$ for some $i$ and then $(\forall t \geq s_0) g(x, t) = g(x_{2i}, t)$.
\end{itemize}
\end{proof}

\begin{claim}
Every infinite prerainbow for $g$ is a prerainbow for $f$.
\end{claim}
\begin{proof}
Let $X$ be an infinite prerainbow for $g$ and assume for the sake of contradiction
that it is not a prerainbow for $f$. Then there exists two elements $x, y \in X$
such that $(\forall s)(\exists t \geq s)[f(x, t) = f(y, t)]$.
But then because $f$ is rainbow-stable, 
there is a threshold $s_0$ such that $(\forall s \geq s_0)[f(x, s) = f(y, s)]$.

Then by construction of $g$, for every $s \geq s_0$, $g(x, s) = g(y, s)$.
For every $u \in X$ there is an $s \in X$ with $s \geq u, s_0$ such that
$g(x, s) = g(y, s)$ contradicting the fact that $X$ is a prerainbow for $g$.
\end{proof}

Using Lemma~\ref{lem:prerainbow-equiv}, for any infinite $H$ prerainbow for $g$,
$f \oplus H$ computes an infinite rainbow for $f$. This finishes the proof.
\end{proof}

\bigskip

\subsection{Relation with diagonal non-recursiveness}\label{sect:srrt22-dnr}

It is well-known that being able to compute a d.n.c.\ function is equivalent
to being able to uniformly find a member outside a finite $\Sigma^0_1$ set if we know an upper bound on its size,
and also equivalent to diagonalize against a $\Sigma^0_1$ function. The proof relativizes well 
and is elementary enough to be formalized in $\rca$ (see Theorem~\ref{thm:dnrzn-char-dnr}).

\begin{definition}
Let $(X_e)_{e \in \Nb}$ be a uniform family of finite sets.
An \emph{$(X_e)_{e \in \Nb}$-escaping function} is a function $f : \Nb^2 \to \Nb$
such that $(\forall e)(\forall n)[\card{X_e} \leq n \imp f(e,n) \not \in X_e]$.
Let $h : \Nb \to \Nb$ be a function. An \emph{$h$-diagonalizing function} $f$ is a function $\Nb \to \Nb$
such that $(\forall x)[f(x) \neq h(x)]$.
When $(X_e)_{e \in \Nb}$ and $h$ are clear from context, they may be omitted.
\end{definition}

\begin{theorem}[Folklore]\label{thm:dnrzn-char-dnr}
For every $n \in \Nb$, the following are equivalent over $\rca$:
\begin{itemize}
  \item[(i)] $\dnr[0^{(n)}]$
  \item[(ii)] Any uniform family $(X_e)_{e \in \Nb}$ of $\Sigma^0_{n+1}$ finite sets
  has an escaping function.
  \item[(iii)] Any partial $\Delta^0_{n+1}$ function has a diagonalizing function.
\end{itemize}
\end{theorem}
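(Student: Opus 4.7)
The plan is to cover all equivalences by proving $(i) \Leftrightarrow (iii)$ and $(i) \Leftrightarrow (ii)$ separately. The first equivalence is a routine relativized s-m-n argument, the implication $(ii) \Rightarrow (i)$ is a one-line singleton trick, and the content of the theorem lives in the direction $(i) \Rightarrow (ii)$.

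For $(i) \Leftrightarrow (iii)$: to show $(i) \Rightarrow (iii)$, given $f$ d.n.c.\ relative to $\emptyset^{(n)}$ and a partial $\Delta^0_{n+1}$ function $\psi$, I invoke the s-m-n theorem relativized to $\emptyset^{(n)}$ to obtain a total computable $s$ with $\Phi^{\emptyset^{(n)}}_{s(x)}(y) = \psi(x)$ for every $y$; then $g(x) := f(s(x))$ is total and satisfies $g(x) \neq \psi(x)$ whenever $\psi(x)$ is defined. Conversely, applying $(iii)$ to the specific partial $\Delta^0_{n+1}$ function $e \mapsto \Phi^{\emptyset^{(n)}}_e(e)$ directly produces a function that is d.n.c.\ relative to $\emptyset^{(n)}$.

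For $(ii) \Rightarrow (i)$: consider the uniform family $X_e = \{\Phi^{\emptyset^{(n)}}_e(e)\}$, understood as $\emptyset$ whenever $\Phi^{\emptyset^{(n)}}_e(e) \uparrow$. This is a uniform family of $\Sigma^0_{n+1}$ finite sets with $|X_e| \leq 1$ throughout, so any escaping function $g$ gives $e \mapsto g(e, 1)$ which is d.n.c.\ relative to $\emptyset^{(n)}$.

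The substantive direction is $(i) \Rightarrow (ii)$. Given $f$ d.n.c.\ relative to $\emptyset^{(n)}$ and a uniform $\Sigma^0_{n+1}$ family $(X_e)$ of finite sets, I construct the escaping function $g(e, m)$ by a coding through s-m-n relativized to $\emptyset^{(n)}$. For each pair $(e, m)$, use s-m-n to produce a family of indices (indexed by $i \leq m$) such that $\Phi^{\emptyset^{(n)}}$ at the $i$-th index enumerates $X_e$ and returns the $i$-th distinct element once it appears. The d.n.c.\ hypothesis then forces each $f$-value at these indices to avoid the corresponding element of $X_e$, and combined with the bound $|X_e| \leq m$ and a careful finite search in the enumeration of $X_e$, one extracts an element outside $X_e$ uniformly in $(e, m)$.

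The main obstacle is making the coding in $(i) \Rightarrow (ii)$ actually strong enough to yield an escape. The pointwise d.n.c.\ constraint only ensures each $f$-value differs from a single specified element of $X_e$, which does not immediately rule out configurations where the $f$-values together form a derangement strictly inside $X_e$. The remedy is to let the partial $\emptyset^{(n)}$-computable function encode more global information about $X_e$ --- for instance, the canonical code of the set of first $m$ enumerated elements of $X_e$ once it stabilizes --- so that the d.n.c.\ inequality forces the decoded finite set, or one of its elements, to lie outside $X_e$, recoverable by a bounded search using the enumeration of $X_e$ with the explicit size bound $m$.
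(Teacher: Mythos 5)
Your two easy directions, $(i) \Leftrightarrow (iii)$ and $(ii) \Rightarrow (i)$, are fine and essentially the same as the paper's (the paper closes the cycle $(i)\Rightarrow(ii)\Rightarrow(iii)\Rightarrow(i)$ rather than your decomposition, but the ideas are identical). You also correctly identify that all the content sits in $(i) \Rightarrow (ii)$ and you name the obstacle accurately: if the $j$-th d.n.c.\ call merely avoids the $j$-th element of $X_e$, the outputs could form a derangement entirely inside $X_e$, and nothing escapes.

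The problem is your proposed remedy does not close that gap. Coding ``the canonical index of the set of the first $m$ enumerated elements of $X_e$ once it stabilizes'' and diagonalizing against it only guarantees that the decoded set $D$ satisfies $D \neq X_e$. That inequality can hold with $D \subsetneq X_e$ (e.g.\ $X_e = \{1,2,3\}$, $D = \{1,2\}$), in which case no element of $D$ escapes $X_e$; and you cannot ``recover an escape by a bounded search along the enumeration of $X_e$,'' because the enumeration is only $\emptyset^{(n)}$-effective while $g$ must be total computable, and even $\emptyset^{(n)}$ cannot recognize when the enumeration of a $\Sigma^0_{n+1}$ set has stopped. So the derangement problem remains.

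The standard fix, and the one the paper uses, codes \emph{componentwise} rather than setwise. Fix a bijection $\Nb \cong \Nb^m$. For each $j \leq m$, let $i_j$ be an $A^{(n)}$-index for the partial function that, on any input, enumerates $X_e$, waits for its $j$-th element $k$ to appear, decodes $k$ as the $m$-tuple $\tuple{k_1,\dots,k_m}$, and returns $k_j$. Then $f(i_j) \neq k_j$ for each $j$ by d.n.c.-ness, and the single output $h(e,m) = \tuple{f(i_1),\dots,f(i_m)}$ differs from every $k \in X_e$: if $k$ is the $j$-th element of $X_e$, then $h(e,m)$ and $k$ disagree in the $j$-th coordinate. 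This avoids the derangement issue entirely because the disagreement is forced in a \emph{fixed coordinate} per element, not merely somewhere. You should replace your remedy with this coordinate trick.
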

\begin{proof}
Fix a set $A$.
\begin{itemize}
  \item $(i) \imp (ii)$:
  Let $(X_e)_{e \in \Nb}$ be a uniform family of finite $\Sigma^{0,A}_{n+1}$ finite sets
  and $f$ be a function d.n.c.\ relative to $A^{(n)}$. Define a function $h : \Nb^2 \to \Nb$
  by $h(e, n) = \tuple{f(i_1), \dots, f(i_n)}$ where $i_j$ is the index of the
  partial $\Delta^{0, A}_{n+1}$ function which on every input, looks at the $j$th element $k$ of $X_e$
  if it exists, interprets $k$ as a $n$-tuple $\tuple{k_1, \dots, k_n}$ and returns $k_j$.
  The function diverges if no such $k$ exists.
  One easily checks that $h$ is an $(X_e)_{e \in \Nb}$-escaping function.
  
  \item $(ii) \imp (iii)$:
  Let $f : \Nb \to \Nb$ be a partial $\Delta^{0,A}_{n+1}$ function.
  Consider the enumeration defined by $X_e = \set{f(e)}$ if it $f(e) \downarrow$ and $X_e = \emptyset$ otherwise.
  This is a uniform family of $\Sigma^{0,A}_{n+1}$ finite sets, each of size at most 1. Let $g: \Nb^2 \to \Nb$ be an
  $(X_e)_{e \in \Nb}$-escaping function. Then $h : \Nb \to \Nb$ defined by $h(e) = g(e, 1)$ is
  an $f$-diagonalizing function.  
  
  \item $(iii) \imp (i)$:
  Consider the partial $\Delta^{0,A}_{n+1}$ function $f(e) = \Phi^{A^{(n)}}_e(e)$. 
  Any $f$-diagonalizing function is d.n.c relative to $A^{(n)}$.
\end{itemize}
\end{proof}

In particular, using Miller's characterization of $\rrt^2_2$ by $\dnrzp$, we have the following theorem taking $n=1$:

\begin{theorem}[Folklore]\label{thm:rrt22-char-dnr}
The following are equivalent over $\rca$:
\begin{itemize}
  \item[(i)] $\rrt^2_2$
  \item[(ii)] Any uniform family $(X_e)_{e \in \Nb}$ of $\Sigma^0_2$ finite sets
  has an escaping function.
  \item[(iii)] Any partial $\Delta^0_2$ function has a diagonalizing function.
\end{itemize}
\end{theorem}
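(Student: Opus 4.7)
The plan is simply to chain two previously established results: Miller's characterization (Theorem~\ref{thm-rrt22-dnrzp}) gives $\rca \vdash \rrt^2_2 \biimp \dnrzp$, and the folklore characterization (Theorem~\ref{thm:dnrzn-char-dnr}) applied at $n=1$ gives the equivalence of $\dnrzp = \dnr[0^{(1)}]$ with the escaping and diagonalizing reformulations using uniform families of $\Sigma^0_2$ finite sets and partial $\Delta^0_2$ functions respectively. Composing these two equivalences yields the desired three-way equivalence over $\rca$.

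More explicitly, I would write: by Theorem~\ref{thm-rrt22-dnrzp}, (i) is equivalent to $\dnrzp$ over $\rca$. By Theorem~\ref{thm:dnrzn-char-dnr} instantiated at $n = 1$, $\dnrzp$ is in turn equivalent to each of (ii) and (iii) over $\rca$. The three conditions are therefore pairwise equivalent. Since both cited results are already stated in the paper with formal strength~$\rca$, no new arithmetic induction needs to be checked here.

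There is essentially no obstacle: this is a pure packaging lemma. The only thing to be mindful of is the bookkeeping of indices, namely that $\dnr[0^{(n)}]$ in Theorem~\ref{thm:dnrzn-char-dnr} corresponds to $\Sigma^0_{n+1}$ and $\Delta^0_{n+1}$, so that taking $n = 1$ indeed produces the $\Sigma^0_2$ and $\Delta^0_2$ formulations appearing in the statement. After noting this alignment, the proof is a two-line chain of biconditionals, which is why the result is attributed to folklore rather than to any single author.
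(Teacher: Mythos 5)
Your proposal is correct and is exactly what the paper does: the paper introduces this theorem with the words ``In particular, using Miller's characterization of $\rrt^2_2$ by $\dnrzp$, we have the following theorem taking $n=1$,'' i.e.\ it is obtained by chaining Theorem~\ref{thm-rrt22-dnrzp} with Theorem~\ref{thm:dnrzn-char-dnr} at $n=1$, with the same $\Sigma^0_{n+1}$/$\Delta^0_{n+1}$ index bookkeeping you note. Nothing further is needed.
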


In the rest of this section, we will give an equivalent of Theorem~\ref{thm:rrt22-char-dnr}
for $\srrt^2_2$.

\begin{lemma}[$\rca + \bst$]\label{lem:srrt22-char1}
For every $\Delta^0_2$ function $h : \Nb \to \Nb$,
there exists a computable rainbow-stable 2-bounded coloring $c : [\N]^2 \to \Nb$
such that every infinite rainbow $R$ for $c$ computes an $h$-diagonalizing function.
\end{lemma}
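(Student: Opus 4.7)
My plan is to adapt the construction of Miller from the proof of Theorem~\ref{thm-rrt22-dnrzp} to the stable, total setting: I reuse his encoding, observe that the resulting coloring is automatically rainbow-stable, and show that it diagonalizes against any total $\Delta^0_2$ function (not merely partial $\emptyset'$-computable ones). Using the Shoenfield limit lemma in $\rca+\bst$, I first fix a computable $\hat h:\Nb^2\to\Nb$ with $\lim_s \hat h(x,s)=h(x)$ for every $x$.

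I then define $c:[\Nb]^2\to\Nb$ as follows. For each pair $\{i,j\}$ with $i<j$, I search for the least $e\leq i$ with $\hat h(e,j)=i$; call it $e(i,j)$ when it exists, and let $r(i,j)=|\{j'\leq j : \hat h(e(i,j),j')=i\}|$. I set $c(\{i,j\})=\langle e(i,j),\,i,\,\lfloor(r(i,j)-1)/2\rfloor\rangle$ when $e(i,j)$ is defined, and give $c(\{i,j\})$ a distinct fresh color otherwise. This $c$ is computable, and is 2-bounded because a coded color $\langle e,i^*,m\rangle$ can only come from pairs $\{i^*,j\}$ with $\hat h(e,j)=i^*$ and $j$-rank in $\{2m+1,2m+2\}$, of which there are at most two.

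Rainbow-stability follows from the form of the coding. If $c(\{y,s\})=c(\{y',s\})$ is a coded color $\langle e,i^*,m\rangle$ with $y\neq y'$, then its second coordinate $i^*$ equals both $\min\{y,s\}$ and $\min\{y',s\}$. For $s$ bigger than both $y$ and $y'$, these equal $y$ and $y'$ respectively, contradicting $y\neq y'$. Hence for every $y$, the set $\{y'\neq y : c(\{y,s\})=c(\{y',s\})\}$ is empty for all but finitely many $s$, so clause~(b) of rainbow-stability holds: every element is a monk.

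The main technical step, and the one I expect to be most delicate, is extracting from any infinite rainbow $R$ a computable-in-$R$ function $f$ diagonalizing $h$. Here I follow Miller's argument: the rank-pairing in $c$ forbids $R$ from realizing two consecutive ranks simultaneously within the set $\{j : \hat h(e,j)=i\}$, so when $i=h(e)$ and this set is cofinite the ranks witnessed by $R$ must grow without bound, whereas when $i\neq h(e)$ and the set is finite the witnessed ranks remain bounded. The least-$e$ minimization in the encoding links every column~$e$ to all columns $e'\leq e$, allowing an $R$-computable procedure to identify, uniformly for each $e\in\Nb$, some $i\neq h(e)$ and set $f(e)=i$. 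The obstacle lies in arranging this identification uniformly over all $e$ (including those not directly met by $R$) while working only within $\bst$ and not $\ist$; Miller's least-$e$ minimization is precisely the device that makes this uniformity possible.
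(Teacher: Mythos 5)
There is a genuine gap, and it is structural rather than a missing detail. Your coloring is rainbow-stable only because it has \emph{no} same-time collisions at all: as your own stability argument shows, for $s > \max(y,y')$ one never has $c(\set{y,s}) = c(\set{y',s})$ with $y \neq y'$, so every element is a monk and all color repetitions have the form $c(\set{i,j_1}) = c(\set{i,j_2})$ (one person at two different times). But a 2-bounded coloring with this property always admits a \emph{computable} infinite rainbow, built greedily: given a finite rainbow $F$, a new element $s > \max F$ ruins $F \cup \set{s}$ only if $c(a,s) = c(a,b)$ for some $a < b$ in $F$ (at most one bad $s$ per pair in $[F]^2$, by 2-boundedness) or $c(a,s) = c(a',s)$ for distinct $a, a' \in F$ (which, by your stability computation, never happens for $s > \max F$). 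Both conditions are decidable since $c$ is computable, so a good $s$ is found by search. Hence your $c$ has a computable infinite rainbow; taking $h(e) = \Phi_e(e)$ if $\Phi_e(e)\downarrow$ and $h(e) = 0$ otherwise, which is $\Delta^0_2$ but admits no computable diagonalizing function (such a function would be d.n.c.), the conclusion of the lemma fails for your coloring. In other words, the proposed construction cannot work no matter how the extraction step is completed; your worry about the extraction is well founded, but the criterion you sketch (whether the ranks witnessed by $R$ stay bounded) is a limit property, not $R$-computable, and no $R$-computable criterion can exist here.

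The difficulty of the rainbow problem for rainbow-stable colorings lives entirely in clause (a) of the definition, the marriages, and that is what the paper's proof exploits: it runs a finite-injury priority construction (the injuries driven by changes in a computable approximation $g(x,s)$ of $h$, controlled by $\bst$) which \emph{commits} two elements $u,v$ of the finite set coded by $\lim_s g(x,s)$ to satisfy $c(u,s) = c(v,s)$ for cofinitely many $s$, whenever that set has size at least $3x+2$; elements never captured by a requirement get fresh colors, which yields rainbow-stability. Then any infinite rainbow $R$ cannot have its first $3x+2$ elements coded by $h(x)$, since a large $s \in R$ would witness the same-time collision $c(u,s) = c(v,s)$ inside $R$; so the map sending $x$ to the code of the first $3x+2$ elements of $R$ diagonalizes $h$. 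To salvage your approach you would have to make the approximation to $h$ create genuine marriages (same-time collisions between distinct elements, revised under injury), at which point you are essentially reproducing the paper's priority argument rather than Miller's encoding for $\rrt^2_2$, whose extra strength comes precisely from handling partial limits, which rainbow-stable colorings cannot simulate.
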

\begin{proof}
Fix a $\Delta^0_2$ function $h$ and a uniform family $(D_e)_{e \in \Nb}$ of all finite sets.
We will construct a rainbow-stable 2-bounded coloring $c : [\Nb]^2 \to \Nb$ by
a finite injury priority argument. By Schoenfield's limit lemma, there exists
a total computable function $g(\cdot, \cdot)$ such that $\lim_s g(x, s) = h(x)$ for every $x$.

Our requirements are the following:

\bigskip
$\Rcal_x$: If $\card{D_{\lim_s g(x, s)}} \geq 3x+2$ then $\exists u, v \in D_{\lim_s g(x, s)}$
such that $(\forall^{\infty} s) c(v, s) = c(v, s)$.
\smallskip

We first check that if every requirement is satisfied then we can compute
a function $f : \Nb \to \Nb$ such that $(\forall x)[f(x) \neq h(x)]$ from
any infinite rainbow for $c$. Fix any infinite set $R$ rainbow for $c$.
Let $f$ be the function which given $x$ returns the index of the set
of the first $3x+2$ elements of $R$. Because of the requirement $\Rcal_x$,
$D_{f(x)} \neq D_{\lim_s g(x,s)}$. Otherwise $\card{D_{f(x)}} = 3x+2$ and
there would be two elements $u, v \in D_{f(x)} \subset R$
such that $(\forall^{\infty} s) c(x, s) = c(y, s)$. So take an element $s \in R$ large enough to witness this fact.
$c(x, s) = c(y, s)$ for $x, y, s \in R$ contradicting the fact that $R$ is a rainbow.
So $D_{f(x)} \neq D_{\lim_s g(x,s)}$ from which we deduce $f(x) \neq \lim_s g(x, s) = h(x)$. 

Our strategy for satisfying a local requirement $\Rcal_x$ is as follows. 
If $\Rcal_x$ receives attention at stage $t$, it checks whether $\card{D_{g(x, t)}} \geq 3x+2$.
If this is not the case, then it is declared satisfied. If $\card{D_{g(x, t)}} \geq 3x+2$,
then it chooses the least two elements $u, v \geq x$, such that $u, v \in D_{g(x, s)}$ 
and $u$ and $v$ are \emph{not} restrained
by a strategy of higher priority and \emph{commits} to assigning a common color.
For any such pair $u, v$, this commitment will remain active as long as the strategy has a restraint
on that element. Having done all this, the local strategy is declared to be satisfied and will not act again
unless either a higher priority puts restraint on $u$ or $v$ or at a further stage $t' > t$, $g(x, t') \neq g(x, t)$.
In both cases, the strategy gets \emph{injured} and has to reset, releasing all its restraints.

To finish stage $t$, the global strategy assigns $c(u, t)$ for all $u \leq t$ as follows:
if $u$ is commited to some assignment of $c(u, t)$ due to a local strategy, define $c(u, t)$ to be this value.
If not, let $c(u, t)$ be a fresh color. This finishes the construction and we now turn to the verification.
It is easy to check that each requirement restrains at most two elements at a given stage.

\begin{claim}
Every given strategy acts finitely often.
\end{claim}
\begin{proof}
Fix some~$x \in \Nb$.
By~$\bst$ and because $g$ is limit-computable, there exists a stage $s_0$
such that $g(y, s) = g(y, s_0)$ for every~$y \leq x$ and~$s \geq s_0$.
If $|D_{g(x,s_0)}| < 3x+2$, then the requirement is satisfied and does not act any more.
If $|D_{g(x,s_0)}| \geq 3x+2$, then by a cardinality argument, there exists two elements~$u$ and~$v \in D_{g(x,s_0)}$
which are not restrained by a strategy of higher priority. Because $D_{g(y, s)} = D_{g(y, s_0)}$ for each~$y \leq x$
and~$s \geq s_0$, no strategy of higher priority will change its restrains and will therefore injure~$\Rcal_x$ after stage~$s_0$.
So $(\forall^{\infty} s) c(u, s) = c(v,s)$ for some $u, v \in D_{\lim_s g(x, s)}$ and requirement $\Rcal_x$ is satisfied.
\end{proof}

\begin{claim}
The resulting coloring $c$ is rainbow-stable.
\end{claim}
\begin{proof}
Consider a given element $u \in \Nb$. We distinguish three cases:
\begin{itemize}
  \item Case 1: the element becomes, during the construction, free from any restraint after some stage
$t \geq t_0$ . In this case, by construction, $c(u, t)$ is assigned a fresh color for all $t \geq t_0$.
Then $(\forall^{\infty} s)(\forall v \neq u)[c(u, s) \neq c(v, s)]$.

  \item Case 2: there is a stage $t_0$ at which some restraint is put on $u$ by some local strategy, and this
restraint is never released. In this case, the restraint comes together with a commitment that
all values of $c(u, s)$ and $c(v, s)$ be the same beyond some stage $t_0$ for some fixed $v \neq x$. 
Therefore for all but finitely many stages $s$, $c(u, s) = c(v, s)$.

  \item Case 3: during the construction, infinitely many restraints are put on $u$ and are later released. This
  is actually an impossible case, since by construction only strategies for requirements $\Rcal_y$
  with $y \leq u$ can ever put a restraint on $u$. By~$\bst$,
	there exists some stage after which no stragegy ~$\Rcal_y$ acts for every~$y \leq u$
	and therefore the restraints on~$u$ never change again.
\end{itemize}
\end{proof}
This last claim finishes the proof.
\end{proof}

\begin{lemma}[$\rca + \ist$] \label{lem:char1-srrt22}
For every computable strongly rainbow-stable 2-bounded coloring $f : [\Nb]^2 \to \Nb$
there exists a uniform family $(X_e)_{e \in \Nb}$ of $\Delta^0_2$ finite sets
whose sizes are uniformly $\Delta^0_2$ computable
such that every $(X_e)_{e \in \Nb}$-escaping function
computes a rainbow for~$c$.
\end{lemma}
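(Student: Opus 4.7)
The plan is to reduce the lemma to constructing an infinite \emph{prerainbow}, which Lemma~\ref{lem:prerainbow-equiv} then converts into an $f$-rainbow. First I would extract from strong rainbow-stability together with 2-boundedness a matching function $M : \Nb \to \Nb$, where $M(x)$ is the unique $y \neq x$ with $(\forall^\infty s)\, f(x,s) = f(y,s)$; the graph of $M$ is $\Delta^0_2$ since ``$M(x) = y$'' is $\Sigma^0_2$ by definition and, by uniqueness of the partner (which follows from 2-boundedness), also $\Pi^0_2$.

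Next, for each code $e = \langle a_0, \dots, a_{k-1} \rangle$ of a nonempty finite tuple I would set
\[
X_e \;=\; [0, a_{k-1}] \;\cup\; \{M(a_0), \dots, M(a_{k-1})\},
\]
and $X_e = \emptyset$ for indices not of this form. This family is uniformly $\Delta^0_2$ because $M$ is, and $|X_e|$ is bounded by $a_{k-1} + k + 1$ and can be computed uniformly from $\emptyset'$, which meets the ``uniformly $\Delta^0_2$ sizes'' requirement. Given any $(X_e)_{e \in \Nb}$-escaping function $h$, recursively define $a_0 = 0$ and $a_k = h(\langle a_0, \dots, a_{k-1}\rangle,\, a_{k-1} + k + 1)$ for $k \geq 1$. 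Since $a_{k-1} + k + 1$ is an upper bound on $|X_{\langle a_0, \dots, a_{k-1}\rangle}|$, the escape condition forces $a_k > a_{k-1}$ and $a_k \neq M(a_j)$ for every $j < k$; by symmetry of $M$ this ensures that no two members of $A = \{a_k : k \in \Nb\}$ are matched partners, and $A$ is $h$-computable and infinite.

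Finally, I would verify that $A$ is an $f$-prerainbow: given distinct $x,y \in A$ (so $y \neq M(x)$), for every $s$ large enough that $f(x,s) = f(M(x),s)$ and $f(y,s) = f(M(y),s)$, the equation $f(x,s) = f(y,s)$ would make the three pairwise distinct pairs $\{x,s\}$, $\{M(x), s\}$, $\{y,s\}$ share a single color, contradicting 2-boundedness. Hence $(\forall^\infty s \in A)\, f(x,s) \neq f(y,s)$, so $A$ is a prerainbow, and Lemma~\ref{lem:prerainbow-equiv} yields an $(A \oplus f)$-computable (hence $h$-computable, as $f$ is computable) infinite $f$-rainbow. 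The main technical subtlety is the formalization within $\rca + \ist$: totality of $M$, the $\Delta^0_2$-ness of its graph, and the uniform $\Delta^0_2$ computability of $|X_e|$ all rest on stabilization arguments that require $\ist$ (through $\bst$), while the greedy recursion producing $(a_k)_{k \in \Nb}$ is a straightforward $\Sigma^0_1$-definable iteration already available in $\rca$.
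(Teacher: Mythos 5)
Your proof is correct and follows essentially the same route as the paper: both extract the $\Delta^0_2$ ``eventual partner'' structure from strong rainbow-stability plus 2-boundedness (your $M(x)$ is the paper's $\bad(x)=\{x,M(x)\}$), feed indices of the finite sets built so far to the escaping function to greedily construct an infinite prerainbow, and then invoke Lemma~\ref{lem:prerainbow-equiv} to get a rainbow. The only differences are cosmetic — you index by increasing tuples and pad $X_e$ with $[0,a_{k-1}]$ to force monotonicity, while the paper indexes by finite sets $D_e$, takes $X_e=\bad(D_e)$, and computes $|X_e|$ exactly via $2|D_e|-2|\{\{x,y\}\subset D_e:\bad(x)=\bad(y)\}|$; your explicit three-pairs/2-boundedness argument is exactly what justifies the paper's implicit step that a non-partner is eventually colored differently.
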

\begin{proof}
Fix any uniform family $(D_e)_{e \in \Nb}$ of finite sets.
Let $f : [\Nb]^2 \to \Nb$ be a 2-bounded rainbow-stable computable coloring.
For an element $x$, define 
$$
\bad(x) = \set{ y \in \Nb : (\forall^\infty s)f(x,s) = f(y,s)}
$$
Notice that $x \in \bad(x)$.
Because $f$ is strongly rainbow-stable, $\bad$ is a $\Delta^0_2$ function.
For a set $S$, $\bad(S) = \bigcup_{x \in S} \bad(x)$.
Define $X_e = \bad(D_e)$. Hence $X_e$ is a $\Delta^0_2$ set, and this uniformly in $e$.
Moreover, $\card{X_e} \leq 2\card{D_e}$ and for every $x$, $\card{\bad(x)} = 2$
so we can $\emptyset'$-compute the size of $X_e$ with the following equality
$$
  \card{X_e} = 2|D_e| - 2 \card{\set{\set{x, y} \subset D_e : \bad(x) = \bad(y)}}
$$

Let $h : \Nb \to \Nb$ be a function satisfying $(\forall e)(\forall n)[\card{X_e} \leq n \imp h(e, n) \not \in X_e]$.
We can define $g : \Nb \to \Nb$ by $g(e) = h(e, 2\card{D_e})$.
Hence $(\forall e)g(e) \not \in X_e$.

We construct a prerainbow $R$ by stages $R_0 (=\emptyset) \subsetneq R_1 \subsetneq R_2, \dots$
Assume that at stage $s$, 
$(\forall \{x,y\} \subseteq R_s)(\forall^{\infty} s)[f(x, s)\neq f(y,s)]$.
Because $R_s$ is finite, we can computably find some index $e$ such that $R_s = D_e$.
Set $R_{s+1} = R_s \cup \set{g(e)}$. By definition, $g(e) \not \in X_e$.
Let $x \in R_s$. Because $g(e) \not \in X_e$, $(\forall^{\infty} s) f(x, s) \neq f(g(e), s)$.
By~$\ist$, the set~$R$ is a prerainbow for~$f$.
By Lemma~\ref{lem:prerainbow-equiv} we can compute an infinite rainbow for $f$ 
from $R \oplus f$.
\end{proof}

\begin{theorem}\label{thm:srrt22-characterizations}
The following are equivalent over $\rca + \ist$:
\begin{itemize}
  \item[(i)] $\srrt^2_2$
  \item[(ii)] Any uniform family $(X_e)_{e \in \Nb}$ of $\Sigma^0_2$ finite sets
  whose sizes are uniformly $\Delta^0_2$ has an escaping function.
  \item[(iii)] Any $\Delta^0_2$ function $h : \Nb \to \Nb$ has a diagonalizing function.
\end{itemize}
\end{theorem}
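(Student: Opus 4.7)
I will close the cycle $(i) \Rightarrow (iii) \Rightarrow (ii) \Rightarrow (i)$, using the two preceding lemmas to do the heavy lifting.

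For $(i) \Rightarrow (iii)$, let $h : \Nb \to \Nb$ be a $\Delta^0_2$ function. Apply Lemma~\ref{lem:srrt22-char1} to obtain a computable rainbow-stable $2$-bounded coloring $c$ every infinite rainbow of which computes an $h$-diagonalizing function, and apply $\srrt^2_2$ to $c$.

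For $(iii) \Rightarrow (ii)$, I will mimic the $(i) \Rightarrow (ii)$ step of Theorem~\ref{thm:dnrzn-char-dnr}. Let $(X_e)_{e \in \Nb}$ be a uniform family of $\Sigma^0_2$ finite sets whose size function $s(e) = \card{X_e}$ is uniformly $\Delta^0_2$. Define a partial $\Delta^0_2$ function $\psi$ as follows: on input $\tuple{e,n,j}$, compute $s(e)$; if $s(e) > n$ or $j > s(e)$, diverge; otherwise use $\emptyset'$ to enumerate $X_e$ until its $j$-th element $k$ appears (we know when we are done because $s(e)$ is known), interpret $k$ as an $n$-tuple $\tuple{k_1,\dots,k_n}$, and output $k_j$. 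Let $d$ be a $\psi$-diagonalizing function provided by $(iii)$ and set
\[
f(e,n) = \tuple{d(\tuple{e,n,1}),\ldots,d(\tuple{e,n,n})}.
\]
If $\card{X_e} \leq n$ and $k^{(j)}$ is the $j$-th enumerated element of $X_e$, then $\psi(\tuple{e,n,j}) = k^{(j)}_j$ is defined, so $d(\tuple{e,n,j}) \neq k^{(j)}_j$; thus $f(e,n)$ differs from $k^{(j)}$ in the $j$-th coordinate for every $j \leq \card{X_e}$, which shows $f(e,n) \notin X_e$. Hence $f$ is an escaping function for $(X_e)_{e \in \Nb}$.

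For $(ii) \Rightarrow (i)$, by Theorem~\ref{thm:rainbow-stable-vs-strongly} (whose hypothesis $\bst$ follows from $\ist$) it suffices to prove that every strongly rainbow-stable $2$-bounded coloring $c : [\Nb]^2 \to \Nb$ has a rainbow. Apply Lemma~\ref{lem:char1-srrt22} to obtain a uniform family $(X_e)_{e \in \Nb}$ of $\Delta^0_2$ finite sets whose sizes are uniformly $\Delta^0_2$, such that every escaping function for $(X_e)_{e \in \Nb}$ computes a rainbow for $c$. Since every $\Delta^0_2$ set is $\Sigma^0_2$, statement $(ii)$ furnishes such an escaping function, and we extract the desired rainbow.

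The main obstacle is essentially absorbed into Lemmas~\ref{lem:srrt22-char1} and~\ref{lem:char1-srrt22}. The only genuinely new point is the uniform coding step in $(iii) \Rightarrow (ii)$: one must verify that a single partial $\Delta^0_2$ function $\psi$ can be indexed to simultaneously encode, for every triple $\tuple{e,n,j}$, the $j$-th coordinate of the $j$-th enumerated element of $X_e$; this uses the fact that partial $\Delta^0_2$ functions are exactly the partial $\emptyset'$-computable functions, which admit a universal index formalizable in $\rca$.
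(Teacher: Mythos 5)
Your decomposition — $(i)\Rightarrow(iii)$ via Lemma~\ref{lem:srrt22-char1}, $(ii)\Rightarrow(i)$ via Lemma~\ref{lem:char1-srrt22}, and a direct argument for $(iii)\Rightarrow(ii)$ — is exactly the paper's. Your $(ii)\Rightarrow(i)$ is even a little more careful than the paper's in that you explicitly invoke Theorem~\ref{thm:rainbow-stable-vs-strongly} to pass from rainbow-stable to strongly rainbow-stable colorings before applying Lemma~\ref{lem:char1-srrt22}, a step the paper leaves implicit.

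There is, however, a technical slip in $(iii)\Rightarrow(ii)$ worth flagging because it touches exactly the distinction this theorem is about. You define a \emph{partial} $\Delta^0_2$ function $\psi$ (diverging when $s(e) > n$ or $j > s(e)$) and then write ``Let $d$ be a $\psi$-diagonalizing function provided by $(iii)$.'' But $(iii)$ quantifies over \emph{total} $\Delta^0_2$ functions $h : \Nb \to \Nb$; the variant for \emph{partial} $\Delta^0_2$ functions is item $(iii)$ of Theorem~\ref{thm:rrt22-char-dnr}, which is the characterization of $\rrt^2_2$, not $\srrt^2_2$. As written, you are invoking a strictly stronger diagonalization principle than the one you have. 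The fix is immediate and you do have all the pieces for it: since $s(e) = \card{X_e}$ is total $\Delta^0_2$, the domain of your $\psi$ is $\Delta^0_2$-decidable, so you can totalize $\psi$ by outputting $0$ off its domain and apply $(iii)$ to the resulting total $\Delta^0_2$ function; alternatively, define $h$ total from the start, as the paper does (setting $h(\tuple{e,i}) = 0$ when $i \geq \card{X_e}$). Your closing remark about universal indices for partial $\emptyset'$-computable functions addresses the coding of $\psi$ but not this totality issue, so be sure to say the totalization step explicitly — it is precisely where the requirement that the sizes be uniformly $\Delta^0_2$ (rather than merely bounded by a $\Sigma^0_2$-enumerable quantity) earns its keep.
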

\begin{proof}
$(i) \imp (iii)$ is Lemma~\ref{lem:srrt22-char1} and $(ii) \imp (i)$ follows from Lemma~\ref{lem:char1-srrt22}.
This is where we use~$\ist$.
We now prove $(iii) \imp (ii)$.
Let $(X_e)_{e \in \Nb}$ be a uniform family of $\Sigma^0_2$ finite sets
such that~$|X_e|$ is~$\Delta^0_2$ uniformly in~$e$. For each~$n, i \in \Nb$,
define~$(n)_i$ to be the $i$th component of the tuple whose code is~$n$, if it exists. Define
\[
h(\tuple{e,i}) = \cond{
	(n)_i & \mbox{ where } n \mbox{ is the } i\mbox{th element of } X_e \mbox{ if } i < |X_e|\\
	0 & \mbox{ oherwise}
}
\]
By (iii), let $g : \Nb \to \Nb$ be a total function such that $(\forall e)[g(e) \neq h(e)]$.
Hence for every pair $\tuple{e,i}$ such that~$i \leq |X_e|$, $g(\tuple{e,i}) \neq (n)_i$
where $n$ is the $i$th element of $X_e$.
Define $f : \Nb^2 \to \Nb$ to return on inputs $e$ and $s$ the tuple $\tuple{g(\tuple{e,0}), \dots, g(\tuple{e,s})}$.
Hence if $s \geq \card{X_e}$ then $f(e, s) \neq m$
where $m$ is the $i$th element of $X_e$ for each~$i < |X_e|$. So $f(e,n) \not \in X_e$.
\end{proof}

\begin{corollary}
Every $\omega$-model of $\srrt^2_2$ is a model of $\dnr$.
\end{corollary}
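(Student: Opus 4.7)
The plan is to exploit characterization (iii) of Theorem~\ref{thm:srrt22-characterizations}, relativized to an arbitrary set in the model. Specifically, fix an $\omega$-model $\Mcal$ of $\srrt^2_2$ and a set $X \in \Mcal$. Since $\Mcal$ is an $\omega$-model, the first-order part is the standard natural numbers, so $\ist$ holds in $\Mcal$ for free, and the equivalence of Theorem~\ref{thm:srrt22-characterizations} is available inside $\Mcal$ relative to $X$. The goal is then to produce, inside $\Mcal$, a function d.n.c.\ relative to~$X$.

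The key step is to manufacture a total $\Delta^{0,X}_2$ function whose diagonalizing functions are automatically d.n.c.\ relative to~$X$. I would define
\[
h(e) = \cond{
  \Phi^X_e(e) & \mbox{if } \Phi^X_e(e) \halts,\\
  0 & \mbox{otherwise.}
}
\]
This is total. It is also $\Delta^{0,X}_2$: the graph $h(e) = y$ is the disjunction of the $\Sigma^{0,X}_1$ condition $\Phi^X_e(e) = y$ with the $\Pi^{0,X}_1$ condition ($y = 0$ and $\Phi^X_e(e) \ua$). By Theorem~\ref{thm:srrt22-characterizations} applied inside $\Mcal$ relative to~$X$, there is a function $g \in \Mcal$ such that $g(e) \neq h(e)$ for every $e$. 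Whenever $\Phi^X_e(e)$ halts, this forces $g(e) \neq \Phi^X_e(e)$, so $g$ is d.n.c.\ relative to~$X$. Since $X$ was arbitrary, $\Mcal \models \dnr$.

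There is no real obstacle here beyond checking the classification of $h$ on the arithmetical hierarchy and observing that Theorem~\ref{thm:srrt22-characterizations} relativizes. The mild subtlety worth verifying is that the direction $(\textup{i}) \imp (\textup{iii})$ of that theorem, proved via Lemma~\ref{lem:srrt22-char1}, only uses $\bst$ and so applies without issue in any $\omega$-model; the use of $\ist$ occurs only in the reverse direction, which we do not need. Thus the argument reduces to a two-line application of the characterization, in the spirit of how Corollary~\ref{cor:rrt22-sts2} is extracted from Theorem~\ref{thm-rrt22-dnrzp}.
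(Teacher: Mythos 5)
Your proof is correct and follows essentially the same route as the paper: apply direction (iii) of Theorem~\ref{thm:srrt22-characterizations} to the $\Delta^0_2$ function $h(e) = \Phi_e(e)$ if it halts and $0$ otherwise, so any diagonalizing function is d.n.c. The paper writes this out only for $X = \emptyset$ and leaves the relativization implicit, whereas you spell it out; otherwise the arguments coincide.
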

\begin{proof}
Let $h : \Nb \to \Nb$ be the $\Delta^0_2$ function which on input $e$ returns $\Phi_e(e)$ if $\Phi_e(e) \downarrow$
and returns 0 otherwise. By (iii) of Theorem~\ref{thm:srrt22-characterizations} there
exists a total function $f : \Nb \to \Nb$ such that $(\forall e)[f(e) \neq h(e)]$.
Hence $(\forall e)[f(e)\neq \Phi_e(e)]$ so $f$ is a d.n.c.\ function. 
\end{proof}

\begin{theorem}
$\rca \vdash \srrt^2_2 \imp \dnr$
\end{theorem}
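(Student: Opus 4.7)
The plan is to construct, from an arbitrary set $X$, an $X$-computable $2$-bounded rainbow-stable coloring $c : [\Nb]^2 \to \Nb$ whose every infinite rainbow computes a function d.n.c.\ relative to~$X$, then invoke~$\srrt^2_2$. The construction adapts Theorem~\ref{thm:em-dnrzp} to the rainbow setting, pairing elements instead of building $3$-cycles. Let $(D^e_m)_m$ canonically enumerate the $3^{e+1}$-element subsets of $\Nb$, and let $g(e, s) = \Phi^X_e(e)[s]$ when this converges and be undefined otherwise. Build~$c$ by $\Sigma^0_1$-induction: at stage~$s+1$, processing $e \leq s$ in increasing order with $g(e, s) = m \downarrow$, take the least pair $\{u, v\} \subseteq D^e_m \setminus \bigcup_{k < e,\, g(k,s) \downarrow} D^k_{g(k,s)}$ --- such a pair exists because this difference contains at least $(3^{e+1} + 3)/2 \geq 3$ elements --- and if $c(\{u, s+1\})$ and $c(\{v, s+1\})$ are still unassigned, set both to a common fresh color. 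Then assign each remaining $c(\{z, s+1\})$ a distinct fresh color. Different stages draw from disjoint color pools, so $c$ is $2$-bounded.

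For rainbow-stability in~$\rca$, fix $x$ and argue by classical case analysis. If no~$e$ satisfies $\Phi^X_e(e) \downarrow$ with $x \in D^e_{\Phi^X_e(e)}$, then $x$ is never a candidate of any strategy, so $c(\{x, s\})$ is always fresh and case~(b) holds. Otherwise, choose any witness~$e_0$; once $\Phi^X_{e_0}(e_0)$ converges, every $\Rcal_e$ with $e > e_0$ excludes~$x$ because its exclusion union includes $D^{e_0}_{\Phi^X_{e_0}(e_0)}$. By LEM applied to each of the finitely many statements ``$\Phi^X_k(k) \downarrow$'' for $k \leq e_0$, one obtains a finite collection of convergence stages whose maximum is a stage past which every $\Rcal_e$ with $e \leq e_0$ has a fixed pair; from then on, $x$ is either permanently in exactly one such pair (case~(a)) or in none (case~(b)). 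Applying $\srrt^2_2$ yields an infinite rainbow~$R$ for~$c$.

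Finally, define $f_R(e)$ as the index $m$ such that the first $3^{e+1}$ elements of~$R$ form~$D^e_m$; then $f_R \leq_T R$ is total. Suppose $\Phi^X_e(e) \downarrow = m$. If $|D_m| \neq 3^{e+1}$, the size mismatch immediately forces $f_R(e) \neq m$. Otherwise $\Rcal_e$ eventually commits some pair $\{u^*, v^*\} \subseteq D^e_m$, so assuming $f_R(e) = m$ yields $\{u^*, v^*\} \subseteq D^e_m \subseteq R$; picking $s \in R$ past the stabilization stage of $\Rcal_e$ and larger than $u^*, v^*$ gives $c(\{u^*, s\}) = c(\{v^*, s\})$, two distinct pairs of $[R]^2$ sharing a color and contradicting the rainbow property. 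Hence $f_R$ is d.n.c.\ relative to~$X$.

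The main obstacle is the verification of rainbow-stability in~$\rca$ without~$\bst$. The key feature enabling the argument is the \emph{monotonicity} of~$g$: since $\Phi^X_e(e)$ is partial computable rather than $\Delta^0_2$, the approximation $g(e, s)$ only ever settles into a single value without oscillating, so the excluded sets grow monotonically. This reduces the aggregation of stabilization stages to a finite LEM-based case split bounded by~$e_0$, available in~$\rca$, whereas Lemma~\ref{lem:srrt22-char1} needed the strictly stronger $\bst$ to handle genuine $\Delta^0_2$ oscillation.
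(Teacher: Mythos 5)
Your proof is correct and is essentially the paper's own argument: the same requirements $\Rcal_e$ forcing some pair of the $3^{e+1}$-element set coded by $\Phi^X_e(e)$ to share a color cofinitely often, with the d.n.c.\ function read off a rainbow as the code of its first $3^{e+1}$ elements. The only divergence is bookkeeping in the stability verification --- the paper makes every requirement act on a default set with $\min(D_e) > e$ so that only requirements $e < x$ can ever touch $x$, while you let only converged requirements act and note that once a witness $e_0$ with $x \in D^{e_0}_{\Phi^X_{e_0}(e_0)}$ has converged all higher requirements exclude $x$ --- and your ``finite LEM-based case split'' is precisely the $\bsig^0_1$ instance the paper invokes, which $\rca$ proves, so both verifications use the same induction strength.
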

\begin{proof}
If $\Phi_{e}(e) \downarrow$ then interpret $\Phi_e(e)$ as the code of a finite set $D_e$ of size $3^{e+1}$
with $min(D_e) > e$.
Let $D_{e,s}$ be the approximation of $D_e$ at stage $s$, i.e. $D_{e, s}$ is the set $\{e+1, \dots, e+3^{e+1}\}$
if $\Phi_{e,s}(e) \uparrow$ and $D_{e,s} = D_e$ if $\Phi_{e,s}(e) \downarrow$.
We will construct a rainbow-stable coloring $f : [\Nb]^2 \to \Nb$ meeting the following requirements for each $e \in \Nb$.
$$
\Rcal_e : \Phi_e(e) \downarrow \imp (\exists a, b \in D_e)(\forall^{\infty} s)f(a, s) = f(b, s)
$$

Before giving the construction, let us explain how to compute a d.n.c.\ function from any infinite rainbow for $f$
if each requirement is satisfied. Let $H$ be an infinite rainbow for $f$. Define the function
$g : \Nb \to \Nb$ which given $e$ returns the code of the $3^{e+1}$ first elements of $H$.
We claim that $g$ is a d.n.c.\ function. Otherwise suppose $g(e) = \Phi_e(e)$ for some $e$.
Then $D_e \subseteq H$, but by $\Rcal_e$, $(\exists a, b \in D_e)(\forall^{\infty} s)f(a, s) = f(b, s)$.
As $H$ is infinite, there exists an $s \in H$ such that $f(a, s) = f(b, s)$, contradicting the 
fact that $H$ is a rainbow for~$f$.

We now describe the construction. The coloring $f$ is defined by stages.
Suppose that at stage $s$, $f(u,v)$ is defined for each $u, v < s$.
For each $e < s$ take the first pair $\set{a, b} \in D_{e,s} \setminus \bigcup_{k < e} D_{k,s}$.
Such a pair must exist by cardinality assumption on the $D_{e,s}$. Set $f(a, s) = f(b, s) = i$
for some fresh color $i$. Having done that, for any $u$ not yet assigned, assign $f(u, s)$ a fresh color
and go to stage $s+1$.

\begin{claim}
Each requirement $\Rcal_e$ is satisfied.
\end{claim}
\begin{proof}
Fix an $e \in \Nb$.
By $\bsig^0_1$ there exists a stage $s$ such that $\Phi_{k,s}(k) = \Phi_k(k)$ for each $k \leq e$.
Then at each further stage $t$, the same par $\set{a, b}$ will be chosen in $D_{e,s}$
to set $f(a, t) = f(b, t)$. Hence if $\Phi_e(e) \downarrow$, there are $a, b \in D_e$ such that
$(\forall^{\infty} s)f(a, s) = f(b, s)$.
\end{proof}

\begin{claim}
The coloring $f$ is rainbow-stable.
\end{claim}
\begin{proof}
Fix an element $u \in \Nb$. By $\bsig^0_1$ there is a stage $s$ such that 
$\Phi_{k,s}(k) = \Phi_k(k)$ for each $k < u$. If $u \in \set{a, b}$
for some pair $\set{a,b}$ chosen by a requirement of priority $k < u$ then
at any further stage $t$, $f(u, t) = f(a, t) = f(b,t)$. If $u$ is not chosen by any requirement
of priority $k < u$, then $u$ will not be chosen by any further requirement as $min(D_e) > e$ for each $e \in \Nb$.
So by construction, $f(u,t)$ will be given a fresh color for each $t > s$.
\end{proof}
\end{proof}

\subsection{K\"onig's lemma and relativized Schnorr tests}\label{srrt22-konig}

D.n.c.\ degrees admit other characterizations in terms of Martin-L\"of tests
and Ramsey-Type K\"onig's lemmas. For the former, it is well-known
that d.n.c.\ degrees are the degrees of infinite subsets of Martin-L\"of randoms \cite{Kjos-Hanssen2009Infinite,Greenberg2009Lowness}.
The latter has been introduced by Flood in~\cite{Flood2012Reverse} under the name $\rkl$ and and renamed into $\rwkl$
in~\cite{Bienvenu2014Ramsey}. It informally states the existence 
of an infinite subset of $P$ or $\overline{P}$ where $P$ is a path through a tree.

\begin{definition}
Fix a binary tree $T \subseteq 2^{<\Nb}$ and a $c \in \set{0,1}$.
A string $\sigma \in 2^{<\Nb}$ is \emph{homogeneous for a path through $T$ with color $c$} if there exists a $\tau \in T$
such that $\forall i < \card{\sigma}$, $\sigma(i) = 1 \imp \tau(i) = c$. A set $H$ is \emph{homogeneous for a path in
$T$} if there is a $c \in \set{0,1}$ such that for every initial segment $\sigma$ of $H$, $\sigma$
is homogeneous for a path in $T$ with color $c$.
$\rwwkl$ is the statement ``Every tree $T$ of positive measure has an infinite set homogeneous for a path through $T$''.
\end{definition}

Flood proved in~\cite{Flood2012Reverse} that $\rca \vdash \rwwkl \imp \dnr$.
Bienvenu et al.\ proved in~\cite{Bienvenu2014Ramsey} the reverse implication.

\begin{definition}
A \emph{Martin-L\"of test} relative to $X$ is a sequence $(U_i)_{i \in \Nb}$ 
of uniformly $\Sigma^{0,X}_1$ classes such that $\mu(U_n) \leq 2^{-n}$ for all $n$.
A set $H$ is \emph{homogeneous} for a Martin-L\"of test $(U_i)_{i \in \Nb}$
if there exists an $i$ such that $H$ is homogeneous for a path through the tree
corresponding to the closed set~$\overline{U_i}$.
\end{definition}

\begin{theorem}[Flood \cite{Flood2012Reverse}, Bienvenu \& al.~\cite{Bienvenu2014Ramsey}]\label{thm:dnrzp-rwwkl}
For every $n \in \Nb$, the following are equivalent over $\rca+\isig^0_{n+1}$:
\begin{itemize}
  \item[(i)] $\dnr[0^{(n)}]$
  \item[(ii)] Every Martin-L\"of test $(U_i)_{i \in \Nb}$ relative to $\emptyset^{(n)}$
  has an infinite homogeneous set.
  \item[(iii)] Every $\Delta^0_{n+1}$ tree of positive measure has an infinite set homogeneous for a path.
\end{itemize}
\end{theorem}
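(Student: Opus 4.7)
The plan is to obtain this theorem as a relativization to $\emptyset^{(n)}$ of the base case $n=0$, which is the Flood--Bienvenu et al.\ equivalence $\dnr \biimp \rwwkl$ from~\cite{Flood2012Reverse, Bienvenu2014Ramsey}, with $\isig^0_{n+1}$ playing the role that $\isig^0_1$ plays there. The key translations are: a Martin-L\"of test relative to $\emptyset^{(n)}$ is exactly a uniformly $\Sigma^0_{n+1}$ sequence of classes with the usual measure bounds; a $\Delta^0_{n+1}$ tree is precisely an $\emptyset^{(n)}$-computable tree; and by Theorem~\ref{thm:dnrzn-char-dnr}, $\dnr[\emptyset^{(n)}]$ coincides with the ability to escape uniform families of $\Sigma^0_{n+1}$ finite sets of uniformly bounded size.

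I will establish the cycle $(\text{i}) \imp (\text{iii}) \imp (\text{ii}) \imp (\text{i})$. The implication $(\text{iii}) \imp (\text{ii})$ is immediate: given a Martin-L\"of test $(U_i)_{i \in \Nb}$ relative to $\emptyset^{(n)}$, the complement $\overline{U_1}$ is of the form $[T]$ for some $\Delta^0_{n+1}$ tree $T$ with $\mu([T]) \geq 1/2$, and (iii) supplies an infinite set homogeneous for a path through $T$, which is in particular homogeneous for the test at index $i = 1$.

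For $(\text{i}) \imp (\text{iii})$, I relativize the argument of Bienvenu et al. Given a $\Delta^0_{n+1}$ tree $T$ with $\mu([T]) \geq 2^{-k}$, I build an infinite homogeneous set by stages, maintaining a finite string that is homogeneous for a path through $T$ with some fixed color $c$. At each stage, the set of ``bad'' positions in a suitable block, namely those for which extending by setting the bit to $1$ would leave no homogeneous extension, is a $\Sigma^0_{n+1}$ finite set whose size is uniformly bounded by a measure-counting argument depending on $k$ and the block length. Theorem~\ref{thm:dnrzn-char-dnr} then provides an escape function selecting a good position, and $\isig^0_{n+1}$ is needed to verify that iterating this procedure yields an infinite set.

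Conversely for $(\text{ii}) \imp (\text{i})$, I parametrize Flood's construction to produce, for each $k$, a $\Delta^0_{n+1}$ tree $T_k$ with $\mu([T_k]) \geq 1 - 2^{-k}$ such that any infinite set homogeneous for a path through $T_k$ uniformly computes a diagonalizing function for a fixed partial $\Delta^0_{n+1}$ function $\psi$. Setting $U_k = 2^{\Nb} \setminus [T_k]$ yields a Martin-L\"of test relative to $\emptyset^{(n)}$ whose homogeneous sets compute such diagonalizing functions, and Theorem~\ref{thm:dnrzn-char-dnr} upgrades this to $\dnr[\emptyset^{(n)}]$. The main obstacle throughout is to verify that every $\Sigma^0_1$-bounded search in the original Flood and Bienvenu et al.\ arguments translates faithfully into a $\Sigma^0_{n+1}$-bounded search under relativization, which is precisely what $\isig^0_{n+1}$ provides for the bookkeeping of these stage-by-stage constructions.
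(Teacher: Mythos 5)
The paper does not actually prove this theorem; it is stated as a recalled result, with the attribution to Flood~\cite{Flood2012Reverse} and Bienvenu et al.~\cite{Bienvenu2014Ramsey} serving as the proof. It is included only as a template for the $\srrt^2_2$ analogue developed immediately afterwards. So there is no internal proof to compare your sketch against; what I can do is assess whether your sketch would work.

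Your outline is sound and follows the intended relativization. The cycle $(\text{i}) \imp (\text{iii}) \imp (\text{ii}) \imp (\text{i})$ is a reasonable choice, and the individual steps check out: for $(\text{iii}) \imp (\text{ii})$, the complement of one component of the test (any $U_i$ with $i\geq 1$, so its measure is at most $1/2$) is the path set of an $\emptyset^{(n)}$-computable tree of positive measure, and homogeneity for that tree is exactly homogeneity for the test; for $(\text{ii}) \imp (\text{i})$, Flood's tree construction parametrized by a threshold $k$, with $U_k = \cs\setminus[T_k]$, together with Theorem~\ref{thm:dnrzn-char-dnr}, does the job. The one place your sketch is thin is the $(\text{i}) \imp (\text{iii})$ direction, which is the technical heart of the argument. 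You assert that the set of bad positions at each stage is a $\Sigma^0_{n+1}$ finite set of uniformly bounded size, which is correct, but the justification is nontrivial: one must verify that dropping below a rational measure threshold is a $\Sigma^0_1$ event relative to $\emptyset^{(n)}$ (because the measure of a $\Pi^{0,\emptyset^{(n)}}_1$ class is approximated from above), and one must track how the threshold $\delta$ degrades across stages so that the bound on bad positions remains uniformly controlled. These are exactly the calculations that the corresponding part of this paper (Theorem~\ref{thm:konig-srrt}, via the sets $\bad(H,k)$) carries out in detail for the $\emptyset'$-computable-measure setting, and the paper also makes explicit the role of $\ist$ in closing the stage-by-stage induction; your sketch gestures at $\isig^0_{n+1}$ but does not isolate where it is invoked. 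As a sketch, your proposal is correct and would expand into a proof along the same lines as the cited sources.
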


In the rest of this section, we will prove an equivalent theorem
for $\srrt^2_2$.

\begin{definition}[Downey \& Hirschfeldt~\cite{Downey2010Algorithmic}]
A Martin-L\"of test $(U_n)_{n \in \Nb}$ relative to $X$ 
is a \emph{Schnorr test} relative to $X$ if the measures $\mu(U_n)$
are uniformly $X$-computable.
\end{definition}

\begin{lemma}[$\rca+\bst$]
For every set $A$, every $n \in \Nb$ and every function $f \leq_T A'$
there exists a tree $T \leq_T A'$ such that $\mu(T)$ is an $A'$-computable positive real, $\mu(T) \geq 1 - \frac{1}{2^n}$
and every infinite set homogeneous for a path through $T$ computes a function $g$
such that $g(e) \neq f(e)$ for every~$e$.

Moreover the index for $T$ and for its measure can be found effectively from $n$
and $f$.
\end{lemma}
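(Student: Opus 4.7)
The plan is to apply the Limit Lemma to write $f(e)=\lim_s h(e,s)$ for some $A$-computable approximation $h$, and then construct $T$ by removing, for each $e$, a small clopen set of measure $2^{-(n+e+2)}$ whose location encodes $f(e)$. I partition $\Nb$ into consecutive disjoint blocks $B_0<B_1<\cdots$ of growing sizes and, within each $B_e$, lay out $M_e \ge 2^{n+e+2}$ disjoint sub-blocks $S_{e,0},\dots,S_{e,M_e-1}$ of length $n+e+3$. Declare $\tau\in T$ precisely when, for every $e$, the sub-block $S_{e,\,f(e)\bmod M_e}$ is neither all-$0$ nor all-$1$ under $\tau$. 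Each per-$e$ restriction removes a clopen set of measure $2\cdot 2^{-(n+e+3)}=2^{-(n+e+2)}$, and since this measure depends only on $n$ and $e$ (not on which sub-block is singled out) and the restrictions live on disjoint coordinate blocks, $\mu(T)=\prod_e\bigl(1-2^{-(n+e+2)}\bigr)$ is a uniformly computable positive real exceeding $1-2^{-n}$. Both an index for $T$ (obtained by querying $A'$ to compute $f$) and an index for $\mu(T)$ are then recoverable effectively from $n$ and $f$, as required by the ``moreover'' clause.

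For the diagonalization, consider any infinite set $H$ homogeneous with color $c$ for some path $\tau\in[T]$. The key observation is that whenever a sub-block satisfies $S_{e,k}\subseteq H$, $\tau$ must be constant $c$ on $S_{e,k}$, so the tree's forbidding clause forces $k\ne f(e)\bmod M_e$. I would define $g(e)$ to be the least $k<M_e$ with $S_{e,k}\subseteq H$ (with an appropriate fallback value when no such $k$ is found). Then $g(e)<M_e$ and $g(e)\ne f(e)\bmod M_e$, and these two facts together force $g(e)\ne f(e)$: if $f(e)<M_e$ then $f(e)=f(e)\bmod M_e\ne g(e)$, while if $f(e)\ge M_e$ then automatically $f(e)>g(e)$.

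The hard part will be guaranteeing that the search for a witnessing $k$ actually terminates for every $e$, without resorting to a fallback value that might coincide with $f(e)$, since an arbitrary infinite $H$ can be so sparse as to contain no entire sub-block of some $B_e$. My plan is to organize each $B_e$ with enough redundant copies of the relevant sub-block structure, at geometrically spaced scales, so that the infinitude of $H$ combined with $H\subseteq\tau^{-1}(c)$ forces $H$ to contain at least one complete sub-block of $B_e$. The measure budget of $2^{-n}$ must be carefully distributed across both the per-$e$ traps and the redundancy, and doing so while keeping $T$ and $\mu(T)$ uniformly $A'$-effective—and $\mu(T)$ independent of $f$ so as to remain computable—is the delicate step of the argument.
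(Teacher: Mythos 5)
There is a genuine gap in the diagonalization step, and it is one your own sketch already flags as the ``hard part.'' Confining the $e$-th trap to a fixed coordinate block $B_e$ and defining $g(e)$ by searching for a whole sub-block $S_{e,k}$ contained in $H$ cannot work, because an arbitrary infinite $H$ need never contain any complete sub-block of any $B_e$. For instance, take $H$ to be the set of last elements of the sub-blocks $S_{e,k}$; then $H$ is infinite and homogeneous for a path through your $T$ (set $\tau=c$ on $H$ and $\tau=1-c$ on the first element of each sub-block, so every sub-block is non-constant and in particular the forbidden one is), yet $g(e)$ has no witness for any $e$. Redundant copies of the sub-block structure inside a fixed finite $B_e$ do not help: no matter how many sub-blocks you place there, an infinite $H$ can hit each of them in a single coordinate. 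Any fix must let the trap for $e$ draw on arbitrary finite sets throughout $\Nb$ rather than a pre-assigned finite window, and at that point you are essentially forced into the paper's construction.

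The paper escapes this by indexing the traps over \emph{all} finite sets of the right shape. It fixes an enumeration $(D_{e,i})$ of all finite sets with $|D_{e,i}|=i+n+2$ and $\min(D_{e,i})\geq i$, arranged so that from any such finite set one can effectively recover an index $e$ realizing it. The tree is $T=\bigcap_i T_i$, where $T_i$ forbids the single coordinate set $D_{f(i),i}$ from being monochromatic. Given any infinite homogeneous $H$, the function $g(i)$ returns the index $e$ with $D_{e,i}$ equal to the first $i+n+2$ elements of $H$ above $i$; this always exists no matter how sparse $H$ is, precisely because every admissible finite set is in the enumeration. If $g(i)=f(i)$ then $D_{f(i),i}\subseteq H$ would be monochromatic along the witnessing path, contradicting $\tau\in T_i$. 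The measure estimate is the same union bound $\mu(T)\geq 1-\sum_i 2^{-(i+n+1)}=1-2^{-n}$, and $T$ is $f$-computable (hence $\leq_T A'$) because for $i\geq|\sigma|$ the condition $\sigma\in T_i$ is automatic (all of $D_{f(i),i}$ lies above $|\sigma|$), while $A'$-computability of $\mu(T)$ follows from the uniform bound on $|\mu(T_{\leq d})-\mu(T)|$. Finally, the Limit Lemma is unnecessary: the tree only needs to be $A'$-computable and $f\leq_T A'$ already, so one queries $f$ directly when building $T$.
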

\begin{proof}
Fix $n \in \Nb$. Let $(D_{e,i})_{e, i \in \Nb}$ be an enumeration of finite sets such that
\begin{itemize}
  \item[(i)] $min(D_{e,i}) \geq i$
  \item[(ii)] $\card{D_{e,i}} = i+2+n$
  \item[(iii)] given an $i$ and finite set $U$ satisfying (i) and (ii),
  one can effectively find an $e$ such that $D_{e, i} = U$.
\end{itemize}

For any canonical index $e$ of a finite set, define $T_e$ to be the downward closure
of the $f$-computable set $\set{\sigma \in \str :  \exists a, b \in D_{f(e), e} : \sigma(a) = 0 \wedge \sigma(b) = 1}$.
The set~$T_e$ exists by~$\bsig^{0,f}_1$, hence~$\bst$.
Define also $T_{\leq e} = \bigcap_{i=0}^e T_e$. It is easy to see that
$$
  \mu(T_e) = 1 - \frac{1}{2^{\card{D_{f(e), e}}-1}}
$$

Fix a $\emptyset'$-computable function $f$. Consider the following tree $T = \bigcap_{i = 0}^{\infty} T_i$.
Because of condition (ii),
$$
\mu(T) \geq 1 - \sum_{i = 0}^{\infty} [1 - \mu(T_i)] 
  = 1 - \sum_{i = 0}^{\infty} \frac{1}{2^{i+1+n}} = 1 - \frac{1}{2^{n}}
$$

\begin{claim}
$T$ is an $f$-computable tree.
\end{claim}
\begin{proof}
Fix a string $\sigma \in \str$.
$\sigma \in T$ iff $\sigma \in \bigcap_{i=0}^{\infty} T_i$
By definition, $\sigma \in T_i$ iff $\sigma \preceq \tau$
for some $\tau \in \str$ such that there are some elements $a, b \in D_{f(i), i}$
verifying $\tau(a) = 0$ and $\tau(b) = 1$.
When $i \geq \card{\sigma}$, because of conditions (i) and (ii)
there exists $a, b \geq i$ with $a, b \in D_{f(i), i}$
and $\tau \succeq \sigma$ such that $\tau(a) = 0$ and $\tau(b) = 1$.
Hence
$\sigma \in T$ iff $\sigma \in T_{\leq \card{\sigma}}$,
which is an $f$-computable predicate uniformly in $\sigma$.
\end{proof}

\begin{claim}
$\mu(T)$ is an $f$-computable real.
\end{claim}
\begin{proof}
Fix any $c \in \Nb$. For any $d \in \Nb$, by condition (ii)
$$
\mu(T_{\leq d}) \geq \mu(T) \geq \mu(T_{\leq d}) - \sum_{i=d}^{\infty} \frac{1}{2^{i+1+n}}
$$
In particular, for $d$ such that $2^{-n} - \sum_{i=0}^{d} \frac{1}{2^{i+1+n}} \leq 2^{-c}$ we have
$$
\card{\mu(T_{\leq d}) - \mu(T)} \leq \sum_{i=d}^{\infty} \frac{1}{2^{i+1+n}} \leq \frac{1}{2^c}
$$
It suffices to notice that $\mu(T_{\leq d})$ is easily $f$-computable
as for $u = max(\bigcup_{i=0}^d D_{f(i), i})$
$$
\mu(T_{\leq d}) = \frac{\card{\set{\sigma \in 2^u : \sigma \in T_{\leq d}}}}{2^u}
$$
\end{proof}

Let $H$ be an infinite set homogeneous for a path through $T$.

\begin{claim}
$H$ computes a function $g$ such that $g(i) \neq f(i)$ for every $i$.
\end{claim}
\begin{proof}
Let $g$ be the $H$-computable function which on input $i$
returns an $e \in \Nb$ such that $D_{e, i}$ is the set
of the first $i+2+n$ elements of $H$. Such an element can be effectively found
by condition (iii).

Assume for the sake of contradiction that $g(i) = f(i)$ for some $i$.
Then by definition of being homogeneous for a path through $T$, there exists a $j \in \set{0,1}$ and a $\sigma \in T$
such that $\sigma(u) = j$ whenever $u \in H$. In particular, $\sigma \in T_i$.
So there exists $a, b \in D_{f(i), i} = D_{g(i), i} \subset H$ such that
$\sigma(a) = 0$ and $\sigma(b) = 1$. Hence there exists an $a \in H$ such that $\sigma(a) \neq j$. 
Contradiction.
\end{proof}

This last claim finishes the proof.
\end{proof}

\begin{corollary}
For every 2-bounded, computable coloring $f : [\Nb]^2 \to \Nb$
there exists a $\emptyset'$-computable tree $T$ of positive $\emptyset'$-computable measure
such that every infinite set homogeneous for a path through $T$
computes an infinite rainbow for $f$.
\end{corollary}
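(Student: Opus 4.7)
The plan is to combine the preceding lemma with Miller's equivalence $\rca \vdash \rrt^2_2 \biimp \dnrzp$ (Theorem~\ref{thm-rrt22-dnrzp}). The effective form Theorem~\ref{thm:rrt22-dnrzp-rel}(b) provides, for the given 2-bounded computable coloring $f$, a uniform procedure that converts any function d.n.c.\ relative to $\emptyset'$ into an infinite $f$-rainbow. It therefore suffices to construct a $\emptyset'$-computable tree $T$ of $\emptyset'$-computable positive measure whose infinite sets homogeneous for paths through $T$ all compute a function d.n.c.\ relative to $\emptyset'$.

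To this end I would instantiate the preceding lemma with $A = \emptyset$, $n = 1$, and a carefully chosen total $\emptyset'$-computable input $\alpha : \Nb \to \Nb$. The lemma then produces a tree $T \leq_T \emptyset'$ with $\mu(T) \geq 1/2$ (a $\emptyset'$-computable real) together with, for each infinite homogeneous set $H$ for a path in $T$, a uniform $H$-computable function $g$ satisfying $g \neq \alpha$ pointwise. The role of $\alpha$ is to force any such diagonalizer $g$ to $H$-uniformly compute a d.n.c.\ function relative to $\emptyset'$. By the Folklore Theorem~\ref{thm:dnrzn-char-dnr}, this can be arranged via the escape-set formulation: fix a universal uniform family $(X_e)_{e \in \Nb}$ of $\Sigma^0_2$ finite sets with uniformly $\emptyset'$-computable size bounds $b(e)$, and define $\alpha$ by placing, at each coordinate $\tuple{e, j}$ with $1 \leq j \leq b(e)$, the $j$-th component of the $j$-th element of $X_e$ under a standard decoding of naturals as $b(e)$-tuples (with default values elsewhere). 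The tupling trick from the proof of Theorem~\ref{thm:dnrzn-char-dnr} then guarantees that the tuple $\tuple{g(\tuple{e, 1}), \ldots, g(\tuple{e, b(e)})}$ cannot lie in $X_e$, since matching the $m$-th element $x_m$ would force $g(\tuple{e, m}) = \alpha(\tuple{e, m})$; this gives an escape function for $(X_e)$, hence a d.n.c.\ function relative to $\emptyset'$.

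The main obstacle is the totality of $\alpha$: the naive scheme $\alpha(e) = \Phi_e^{\emptyset'}(e)$ is merely partial, as the $\emptyset'$-halting problem is $\Sigma^0_2$-complete, so no total $\emptyset'$-computable function can extend it at every index. The escape-set formulation resolves this because once the bounds $b(e)$ are fixed to be uniformly $\emptyset'$-computable, the first $b(e)$ elements of $X_e$ in a standard $\emptyset'$-enumeration are themselves $\emptyset'$-computable, making the proposed $\alpha$ total $\emptyset'$-computable. Chaining the three uniform reductions $H \leadsto g \leadsto (\text{d.n.c.\ relative to } \emptyset') \leadsto (\text{infinite } f\text{-rainbow})$ then completes the proof.
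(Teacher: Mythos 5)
There is a genuine gap, and it sits exactly at the point you yourself flagged as the ``main obstacle''. Your plan requires every infinite set homogeneous for a path through $T$ to compute a function d.n.c.\ relative to $\emptyset'$, i.e.\ to diagonalize the \emph{partial} $\Delta^0_2$ function $e \mapsto \Phi^{\emptyset'}_e(e)$, whereas the preceding lemma only diagonalizes a \emph{total} $\emptyset'$-computable function. Your proposed repair does not make $\alpha$ total: if $b(e)$ is merely an upper bound on $\card{X_e}$, then $\emptyset'$ cannot decide whether the $\Sigma^0_2$ set $X_e$ (which is only c.e.\ in $\emptyset'$) ever produces a $j$-th element for $j \leq b(e)$, so ``the $j$-th element of $X_e$ in a standard $\emptyset'$-enumeration'' is again only a partial $\Delta^0_2$ function. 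Totality is recovered precisely when the \emph{exact} sizes $\card{X_e}$ are uniformly $\Delta^0_2$, and with that restriction the escaping functions one obtains characterize $\srrt^2_2$ (Theorem~\ref{thm:srrt22-characterizations}), not $\dnrzp$ (Theorem~\ref{thm:rrt22-char-dnr}); the gap between the two is exactly the totality issue you tried to wave away. In fact no repair along your lines is possible: by Theorem~\ref{thm:low-tree-exact-measure}, every $\emptyset'$-computable tree of $\emptyset'$-computable positive measure has a low path, hence a low infinite set homogeneous for a path, and a low set cannot compute a function d.n.c.\ relative to $\emptyset'$ (such a function would be $\Delta^0_2$). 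So no tree whatsoever can have the property that all its homogeneous sets compute $\dnrzp$-witnesses; this also shows that the corollary must be read with the standing stability hypothesis of this section (its role is Corollary~\ref{cor:schnorr-srrt22} and the implication $(ii) \imp (i)$ in the characterization of $\srrt^2_2$, which concern rainbow-stable colorings), since for the coloring of Theorem~\ref{thm:rrt22-dnrzp-rel} the literal unrestricted statement would contradict the low basis theorem.

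The intended derivation avoids $\dnrzp$ altogether. For a rainbow-stable $f$ (reduced to a strongly rainbow-stable one via Theorem~\ref{thm:rainbow-stable-vs-strongly}), Lemma~\ref{lem:char1-srrt22} produces a uniform family $(X_e)_{e \in \Nb}$ of $\Delta^0_2$ finite sets whose exact sizes are uniformly $\Delta^0_2$ and whose escaping functions compute $f$-rainbows. As in the proof of $(iii) \imp (ii)$ of Theorem~\ref{thm:srrt22-characterizations}, escaping such a family reduces to diagonalizing a single \emph{total} $\Delta^0_2$ function $h$ (here the $\Delta^0_2$ knowledge of $\card{X_e}$ is what makes $h$ total), and the preceding lemma applied to $h$ yields the desired tree $T$.
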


\begin{corollary}\label{cor:schnorr-srrt22}
For every 2-bounded, computable coloring $f : [\Nb]^2 \to \Nb$
there exists a Schnorr test $(U_i)_{i \in \Nb}$ relative to $\emptyset'$ such that
every infinite set homogeneous for $(U_i)_{i \in \Nb}$ computes
an infinite rainbow for $f$.
\end{corollary}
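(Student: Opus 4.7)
The strategy is to bundle together a uniform sequence of the trees produced by the preceding corollary, one for each error level, and reinterpret this sequence as a Schnorr test relative to~$\emptyset'$.

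First, I would observe that the construction in the lemma preceding the previous corollary is uniform in the precision parameter~$n$: inspecting the definition $|D_{e,i}|=i+2+n$ and the resulting tree $T^{(n)}:=\bigcap_{i=0}^{\infty}T_i$, both the tree $T^{(n)}$ and its measure $\mu(T^{(n)})$ are obtained by a $\emptyset'$-computable procedure that depends uniformly on~$n$. Hence, applied to the $\emptyset'$-computable diagonalization function extracted from~$f$ in the manner of the previous corollary, we obtain a uniformly $\emptyset'$-computable sequence of trees $(T^{(n)})_{n\in\Nb}$ satisfying $\mu(T^{(n)})\geq 1-2^{-n}$ with $\mu(T^{(n)})$ uniformly $\emptyset'$-computable in~$n$, and such that any infinite set homogeneous for a path through any $T^{(n)}$ computes an infinite rainbow for~$f$.

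Next, I would define $U_n = 2^{\Nb}\setminus [T^{(n)}]$. Since $[T^{(n)}]$ is a $\Pi^{0,\emptyset'}_1$ class, each $U_n$ is a $\Sigma^{0,\emptyset'}_1$ class, uniformly in~$n$. The measure $\mu(U_n)=1-\mu(T^{(n)})\leq 2^{-n}$, and since the $\mu(T^{(n)})$ are uniformly $\emptyset'$-computable, so are the $\mu(U_n)$. Therefore $(U_n)_{n\in\Nb}$ is a Schnorr test relative to~$\emptyset'$.

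Finally, for the transfer of homogeneity: if $H$ is an infinite set homogeneous for $(U_n)_{n\in\Nb}$, then by definition there is some $n$ such that $H$ is homogeneous for a path through the tree whose paths form $\overline{U_n}=[T^{(n)}]$, which is exactly $T^{(n)}$ (up to pruning dead branches, which does not affect homogeneity for a path). By the uniform version of the preceding corollary, $H$ computes an infinite rainbow for~$f$. The only point to watch is the uniformity, i.e.\ that the trees $T^{(n)}$ and their measures can be produced by a single $\emptyset'$-computable procedure taking~$n$ as input; this is immediate from the shape of the construction in the underlying lemma, where $n$ enters only through the cardinality bound on the finite sets $D_{e,i}$, so no additional work beyond bookkeeping is required.
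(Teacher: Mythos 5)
Your proposal is correct and follows essentially the same route as the paper: the lemma's final clause (uniformity of the index for $T^{(n)}$ and its measure in $n$ and $f$) is exactly what licenses assembling the trees into a single sequence, and passing to the complements $U_n = 2^{\Nb}\setminus[T^{(n)}]$ yields the Schnorr test relative to $\emptyset'$, with the homogeneity transfer handled as you describe. The only point you inherit rather than prove, namely how the $\emptyset'$-computable function to be diagonalized is extracted from the coloring $f$, is likewise left implicit in the paper's preceding corollary, so your treatment matches the intended argument.
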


\begin{theorem}[$\rca+\ist$]\label{thm:konig-srrt}
Fix a set $X$. For every $X'$-computable tree $T$ of positive $X'$-computable measure $\mu(T)$
there exists a uniform family $(X_e)_{e \in \Nb}$ of $\Delta^{0, X}_2$ finite sets
whose sizes are uniformly $X'$-computable
and such that every $(X_e)_{e \in \Nb}$-escaping function
computes an infinite set homogeneous for a path through $T$.
\end{theorem}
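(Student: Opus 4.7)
The plan is to build an infinite set $H$ homogeneous for color $1$ through $T$ in stages, using the escape function to choose each successor. For a finite $F \subseteq \Nb$, write $T_F := \{P \in [T] : P(i) = 1 \text{ for all } i \in F\}$ and $m_F := \mu(T_F)$; the construction maintains the $X'$-computable positive invariant $m_{F_k} > 0$ along a chain $F_0 \subsetneq F_1 \subsetneq \cdots$. Since the $T_{F_k}$ are nonempty closed subsets of the compact space $[T]$, their intersection contains some $P$ with $P(n)=1$ for all $n \in H := \bigcup_k F_k$, which is precisely the required homogeneity for color $1$.

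The key quantitative input is a measure-drop lemma: for any finite $F$ with $m_F > 0$, the set $N_F := \{n > \max F : m_{F \cup \{n\}} = 0\}$ satisfies $|N_F| \le -\log_2 m_F - |F|$. Indeed, for $n \in N_F$ almost every $P \in T_F$ has $P(n)=0$, so by countable additivity almost every such $P$ satisfies $P(n)=0$ simultaneously for all $n \in N_F$, giving $m_F \le 2^{-|F|-|N_F|}$. This bound is $X'$-computable because $m_F$ is. Running the same estimate with $\epsilon$-slack shows $|\{n > \max F : m_{F \cup \{n\}} < \epsilon\}| \le B_F := \lceil -\log_2 m_F\rceil - |F| + 2$ whenever $\epsilon < m_F/(2(B_F+1))$, via the inequality $m_F - |A|\epsilon \le 2^{-|F|-|A|}$ obtained by the same union-bound argument.

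For each canonical index $e$ of a finite set $D_e$ with $m_{D_e} > 0$, $X'$-compute a rational threshold $\epsilon_e$ strictly below $m_{D_e}/(2(B_{D_e}+1))$, chosen generically so that $m_{D_e \cup \{n\}} \ne \epsilon_e$ for every $n$ (the offending set of $\epsilon$'s is countable and $X'$-enumerable, so such a choice is located by an effective genericity step inside $X'$). Define
\[
X_e := [0, \max D_e] \,\cup\, \{n > \max D_e : m_{D_e \cup \{n\}} < \epsilon_e\}.
\]
By the generic choice of $\epsilon_e$, both ``$m < \epsilon_e$'' and ``$m > \epsilon_e$'' are uniformly $\Sigma^{0,X'}_1 = \Sigma^{0,X}_2$, so $X_e$ is uniformly $\Delta^{0,X}_2$; its cardinality is bounded by the $X'$-computable quantity $\max D_e + 1 + B_{D_e}$. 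For $e$ not coding a positive-measure state, take $X_e = \emptyset$. Given an $(X_e)$-escaping function $g$, set $F_0 := \emptyset$ and $n_k := g(e_k, |X_{e_k}|)$ where $e_k$ is the canonical index of $F_k$; by escape $n_k \notin X_{e_k}$, whence $n_k > \max F_k$ and $m_{F_k \cup \{n_k\}} \ge \epsilon_{e_k} > 0$, so the invariant persists and $F_{k+1} := F_k \cup \{n_k\}$ strictly grows.

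The main obstacle is to ensure the family $(X_e)$ is \emph{uniformly} $\Delta^{0,X}_2$ rather than merely $\Sigma^{0,X}_2$: the obvious candidate $N_F$ is $\Pi^{0,X}_2$ and not uniformly $\Delta^{0,X}_2$, and a purely $\Sigma^{0,X}_2$ presentation would only reach the $\rrt^2_2$-style characterization of Theorem~\ref{thm:dnrzp-rwwkl}. The extra hypothesis that $\mu$ is $X'$-\emph{computable} (not merely $X'$-approximable from below) is decisive: it lets us place the threshold $\epsilon_e$ strictly in the interior of the range of ``$m$'' values, making the defining condition of $X_e$ uniformly $\Delta^{0,X}_2$; the size bound then follows from the measure-drop calculation above.
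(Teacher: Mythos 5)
The measure-drop lemma and the overall plan (build $F_0 \subsetneq F_1 \subsetneq \cdots$ maintaining positive measure, then appeal to compactness) are sound, but the family $(X_e)$ you construct does not satisfy the conclusion's requirement that the sizes $\card{X_e}$ be \emph{uniformly $X'$-computable}; you obtain only an $X'$-computable upper bound. The quantitative lemma controls the \emph{cardinality} of $\{n > \max D_e : m_{D_e \cup \{n\}} < \epsilon_e\}$ but gives no control on the \emph{location} of its elements, so $X_e$ is a $\Delta^{0,X}_2$ finite set that could a priori contain arbitrarily large $n$; with no $X'$-computable bound on $\max X_e$ you cannot $X'$-compute $\card{X_e}$ exactly, since you never know when the enumeration has terminated. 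The exact size is what the statement demands and what feeds Theorem~\ref{thm:srrt22-characterizations}. The paper closes precisely this gap by also bounding $\max(\bad(H,k))$: it uses the cylinder-counting approximation $\frac{\card{T_s \cap \Gamma^0_H}}{2^s}$, an $X'$-computable quantity decreasing to $\mu(T\cap\Gamma^0_H)$, to find an $X'$-computable level $s=s(H,k)$ past which the tree ``does not see'' coordinate $n$; for $n>s$ the split of $T\cap\Gamma^0_H$ at $n$ is nearly even, whence $\mu(T\cap\Gamma^0_H\cap\Gamma^0_n)$ stays above the threshold and $n\notin\bad(H,k)$. With $\bad(H,k)\subseteq[0,s(H,k)]$ in hand, both $\Delta^{0,X}_2$-membership and the exact size follow by enumerating $[0,s(H,k)]$. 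You need the analogous estimate: for $n$ beyond a suitable $X'$-computable level, $m_{F\cup\{n\}}\geq\frac{1}{2}(m_F-\epsilon)>\epsilon_e$, hence $n\notin X_e$.

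A secondary issue is that the ``effective genericity'' step for choosing $\epsilon_e$ is misjustified: the offending set $\{q\in\Qb : \exists n\ m_{D_e\cup\{n\}}=q\}$ is not $X'$-enumerable, since $m_{D_e\cup\{n\}}=q$ is only $\Pi^{0,X'}_1$; the set as a whole is merely $\Sigma^{0,X}_3$. One can repair this by building $\epsilon_e$ as the limit of an $X'$-computable nested sequence of rational intervals that at stage $n$ dodges a high-precision approximation of $m_{D_e\cup\{n\}}$, but the paper avoids the problem entirely: it compares the fixed-precision truncation $\mu_{4k}$ (a concrete dyadic rational of $4k$ bits) against the dyadic threshold $2^{-2k}$, so the defining condition of $\bad(H,k)$ is a decidable comparison of two explicit dyadic rationals, with no genericity needed. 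Both of your gaps are resolved by this one technical move --- working with an $X'$-computable finite-precision dyadic approximation to the measure rather than with the measure itself --- and that is the insight you are missing.
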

\begin{proof}
Consider $X$ to be computable for the sake of simplicity.
Relativization is straightforward.
We denote by $(D_e)_{e \in \Nb}$ the canonical enumeration of all finite sets.
Let $T$ be a $\emptyset'$-computable tree of positive $\emptyset'$-computable measure $\mu(T)$.
For each~$s \in \Nb$, let~$T_s$ be the set of strings~$\sigma \in 2^{<\Nb}$ of length~$s$
and let~$\mu_s(T)$ be the first $s$ bits approximation of~$\mu(T)$.
Consider the following set for each finite set $H \subseteq \Nb$ and $k \in \Nb$.
$$
\bad( H, k) = \set{ n \in \Nb : \mu_{4k}(T \cap \Gamma^0_H \cap \Gamma^0_n) < 2^{-2k}}
$$

First notice that the measure of $T \cap \Gamma^0_H$ (resp. $T \cap \Gamma^0_H \cap \Gamma^0_n$)
is $\emptyset'$-computable  uniformly in $H$ (resp. in $H$ and $n$), 
so one $\bad(H, k)$ is uniformly $\Delta^0_2$. We now prove that $\bad(H,k)$
has a uniform $\Delta^0_2$ upper bound, which is sufficient to deduce that~$|\bad(H, k)|$
is uniformly~$\Delta^0_2$.

Given an $H$ and a $k$, let $\epsilon = 2^{-k-1} - 2^{-2k} - 2^{-4k}$.
We can $\emptyset'$-computably find a length $s = s(H, k)$ such that
$$
\frac{|T_s \cap \Gamma^0_H|}{2^s} - \mu(T \cap \Gamma^0_H)  < \epsilon
$$

\begin{claim}
If $2^{-k} \leq \mu(T \cap \Gamma^0_H)$, then $max(\bad(H,k)) \leq s$
\end{claim}
\begin{proof}
Fix any $n > s$. By choice of $s$,
$$
\mu(T \cap \Gamma^0_H \cap \Gamma^1_n) \leq \frac{|T_s \cap \Gamma^0_H|}{2^{s+1}}
	\leq \frac{\mu(T \cap \Gamma^0_H)}{2} + \epsilon
$$
Furthermore,
$$
\mu(T \cap \Gamma^0_H \cap \Gamma^0_n) = \mu(T \cap \Gamma^0_H) - \mu(T \cap \Gamma^0_H \cap \Gamma^1_n)
$$
Putting the two together, we obtain
\begin{eqnarray*}
\mu(T \cap \Gamma^0_H \cap \Gamma^0_n) &\geq& \mu(T \cap \Gamma^0_H) - \frac{\mu(T \cap \Gamma^0_H)}{2} - \epsilon\\
	&\geq& \frac{\mu(T \cap \Gamma^0_H)}{2} - \epsilon \geq 2^{-k-1} - \epsilon \geq 2^{-2k} + 2^{-4k}
\end{eqnarray*}
In particular
$$
\mu_{4k}(T \cap \Gamma^0_H \cap \Gamma^0_n) \geq \mu(T \cap \Gamma^0_H \cap \Gamma^0_n) - 2^{-4k} \geq 2^{-2k}
$$
Therefore $n \not \in \bad(H,k)$.
\end{proof}

For each~$H$ and~$k$, let~$X_{H,k} = \bad(H, k) \cap [0, s(H, k)]$.
The set $X_{H, k}$ is ~$\Delta^0_2$ uniformly in~$H$ and~$k$, and its size is uniformly $\Delta^0_2$.
In addition, by previous claim, if $2^{-k} \leq \mu(T \cap \Gamma^0_H)$ then $\bad(H,k) \subseteq X_{H,k}$.

Let $g : \Pcal_{fin}(\Nb) \times \Nb \times \Nb \to \Nb$ be a total function such that for every finite set $H$
and $k \in \Nb$, $g(H, k, n) \not \in X_{H,k}$ whenever $n \geq \card{X_{H, k}}$. Fix any $k \in \Nb$ 
such that $2^{-k} \leq \mu(T)$.
We construct by $\isig^{0,g}_1$ a set $H$ and a sequence of integers $k_0, k_1, \dots$ by finite approximation as follows.
First let $H_0 = \emptyset$ and~$k_0 = k$. We will ensure during the construction that for all $s$:
\begin{itemize}
  \item[(a)] $\card{H_s} = s$
  \item[(b)] $T \cap \Gamma^0_{H_s}$ has measure at least $2^{-k_s}$
  \item[(c)] $H_s \subseteq H_{s+1}$ and every $n \in H_{s+1} \setminus H_s$ is greater than all elements in $H_s$.
\end{itemize}
Suppose $H_s$ has been defined already. The tree $T \cap \Gamma^0_{H_s}$ has measure at least $2^{-k_s}$
and $\card{\bad(H_s, k_s)}$ has at most $2k_s$ elements. Thus $g(H_s, k_s) \not \in X_{H_s,k_s} \supseteq \bad(H_s, k_s)$.
We set $H_{s+1} = H_s \cup \set{g(e, k_s)}$ and~$k_{s+1}$ be the least integer such that~$2^{-k_{s+1}} \leq 2^{-2k_{s}} - 2^{-4k_s}$.
By definition of~$\bad(H_s, k_s)$, $T \cap \Gamma^0_{H_{s+1}}$ has measure at least
$2^{-2k_s}$ with an approximation of~$2^{-4k_s}$, so has measure at least $2^{-k_{s+1}}$.

Let now $H = \bigcup_s H_s$.

\begin{claim}
$H$ is homogeneous for a path through $T$.
\end{claim}
\begin{proof}
Suppose for the sake of contradiction that $H$ is not homogeneous for a path through $T$.
This means that there are only finitely many $\sigma \in T$ such that $H$ is homogeneous for $\sigma$.
Therefore for some level $l$,
$\set{\sigma \in T_l \; \mid \; \forall i \in H \ \sigma(i) = 0}=\emptyset$.
Since $H \cap \{0,..,l\} = H_l \cap \{0,..,l\}$, we in fact have 
$\set{\sigma \in T_l \; \mid \; \forall i \in H_l \ \sigma(i) = 0}=\emptyset$.
 
In other words, $T \cap \Gamma^0_{H_l} = \emptyset$ which contradicts property (b) in the definition of $H_l$
ensuring that $T \cap \Gamma^0_{H_l}$ has measure at least $2^{-k_l}$.
Thus $H$ is homogeneous for a path through $T$.
\end{proof}
\end{proof}

\begin{theorem}
The following are equivalent over $\rca + \ist$:
\begin{itemize}
  \item[(i)] $\srrt^2_2$
  \item[(ii)] Every Schnorr test $(U_i)_{i \in \Nb}$ relative to $\emptyset'$
  has an infinite homogeneous set.
  \item[(iii)] Every $\Delta^0_2$ tree of $\emptyset'$-computable positive measure 
  has an infinite set homogeneous for a path.
\end{itemize}
\end{theorem}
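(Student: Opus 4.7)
The plan is to prove the cycle (i) $\imp$ (iii) $\imp$ (ii) $\imp$ (i), assembling the technical results already established in this section; no substantially new construction is required beyond converting between representations of open sets.

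For (i) $\imp$ (iii), fix a $\Delta^0_2$ tree $T$ of $\emptyset'$-computable positive measure. Theorem~\ref{thm:konig-srrt} produces a uniform family $(X_e)_{e \in \Nb}$ of $\Delta^0_2$ finite sets with uniformly $\emptyset'$-computable sizes such that any $(X_e)_{e \in \Nb}$-escaping function computes an infinite set homogeneous for a path through $T$. The equivalence (i) $\biimp$ (ii) of Theorem~\ref{thm:srrt22-characterizations} provides such an escaping function from $\srrt^2_2$.

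For (iii) $\imp$ (ii), given a Schnorr test $(U_n)_{n \in \Nb}$ relative to $\emptyset'$, fix $n \geq 1$ and let $U_{n,s}$ denote the clopen $s$-stage approximation to $U_n$ read off its $\emptyset'$-c.e.\ code. Define
\[
T = \{ \sigma \in 2^{<\Nb} : \cyl{\sigma} \not\subseteq U_{n,|\sigma|} \}.
\]
Since the $U_{n,s}$ are monotone in $s$, the set $T$ is closed under prefixes, and its membership is $\Delta^{0,\emptyset'}_1$, so $T$ is $\Delta^0_2$. A direct check gives $[T] = \overline{U_n}$, so $\mu(T) = 1 - \mu(U_n) \geq 1 - 2^{-n}$ is $\emptyset'$-computable and positive. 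Any infinite set homogeneous for a path through $T$ is by construction homogeneous for a path through the canonical tree of $\overline{U_n}$, hence homogeneous for the test.

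For (ii) $\imp$ (i), apply Corollary~\ref{cor:schnorr-srrt22} to a given rainbow-stable 2-bounded coloring $f$ (with the usual relativization if $f$ is not computable). This yields a Schnorr test relative to $\emptyset'$ whose infinite homogeneous sets compute infinite $f$-rainbows; hypothesis (ii) supplies such a homogeneous set, from which a rainbow for $f$ is extracted. The only delicate point is the verification in (iii) $\imp$ (ii) that $[T] = \overline{U_n}$ with the prescribed effectiveness, and all induction is already absorbed by Theorems~\ref{thm:srrt22-characterizations} and~\ref{thm:konig-srrt}, so $\ist$ suffices throughout.
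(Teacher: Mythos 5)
Your proof is correct and follows essentially the same route as the paper: the cycle (i)$\imp$(iii) via Theorem~\ref{thm:konig-srrt} combined with Theorem~\ref{thm:srrt22-characterizations}, (iii)$\imp$(ii) by viewing the complement of a test component $U_n$ (for $n \geq 1$) as a $\Delta^0_2$ tree of $\emptyset'$-computable positive measure, and (ii)$\imp$(i) via Corollary~\ref{cor:schnorr-srrt22}. The only difference is that you spell out the verification of the step the paper calls obvious, namely that the tree read off the $\emptyset'$-c.e.\ code of $U_n$ is $\Delta^0_2$, has the right measure, and that homogeneity for it yields homogeneity for the test.
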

\begin{proof}
$(i) \imp (iii)$ is Theorem~\ref{thm:konig-srrt}
together with Theorem~\ref{thm:srrt22-characterizations}.
$(iii) \imp (ii)$ is obvious
and $(ii) \imp (i)$ is Corollary~\ref{cor:schnorr-srrt22}. 
\end{proof}

Hirschfeldt et al.\ proved in \cite[Theorem 3.1]{Hirschfeldt2008Limit} that
every $X'$-computable martingale $M$ has a set low over $X$ on which $M$ does not succeed.
Schnorr proved in \cite{Schnorr1971Zufalligkeit} that for every Schnorr test $C$ relative to $X'$
there exists an $X'$-computable martingale $M$ such that a set does not succeeds on $M$ iff it passes the test $C$.
By Corollary~\ref{cor:schnorr-srrt22}, there exists an $\omega$-model of $\srrt^2_2$ containing only low sets.
However we will prove it more directly under the form of a low basis theorem for $\emptyset'$-computable
trees of $\emptyset'$-computable positive measure. This is an adaptation of \cite[Proposition 2.1]{Barmpalias2012Randomness}.

\begin{theorem}[Low basis theorem for $\Delta^0_2$ trees]\label{thm:low-tree-exact-measure}
Fix a set $X$. Every $X'$-computable tree of $X'$-computable positive measure
has an infinite path $P$ low over $X$ (i.e., such that $(X \oplus P)' \leq_T X'$).
\end{theorem}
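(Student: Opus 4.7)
\noindent\emph{Proof plan.} My plan is to adapt the Jockusch--Soare low basis theorem to $\Delta^{0,X}_2$ trees of $X'$-computable positive measure, in the style of \cite{Barmpalias2012Randomness}, by replacing the classical emptiness test with a positive-measure test. The goal is to construct, uniformly $X'$-computably, a nested sequence
\[
T = T_0 \supseteq T_1 \supseteq T_2 \supseteq \cdots
\]
of $\Delta^{0,X}_2$ subtrees, each carrying a certified $X'$-computable positive lower bound $m_e > 0$ on $\mu([T_e])$, such that for each $e$ all paths $P \in [T_{e+1}]$ agree on whether $\Phi_e^{X \oplus P}(e) \downarrow$. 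Since every $[T_e]$ is nonempty, by compactness $\bigcap_e [T_e] \neq \emptyset$; any $P$ in it will be a path through $[T]$ low over $X$, since $X'$ recovers $(X \oplus P)'(e)$ by reading off the branch taken at stage $e+1$ of the construction.

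To pass from $T_e$ to $T_{e+1}$, I would let $V_e^s = \{\sigma : \Phi_{e,s}^{X \oplus \sigma}(e) \downarrow\}$ be the clopen, $X$-computable approximation of the convergence open set $V_e = \bigcup_s V_e^s$, and use $X'$ to search for the least $s$ witnessing $\mu([T_e] \cap V_e^s) > (1 - 2^{-e-2}) m_e$. If such an $s$ is found, set $T_{e+1} = T_e \cap V_e^s$, declare \emph{converge}, and take $m_{e+1} = (1 - 2^{-e-2}) m_e$ as the new lower bound. Otherwise, one has $\mu([T_e] \cap V_e) \leq (1 - 2^{-e-2}) m_e$, so $T_{e+1} = T_e \setminus V_e$ is a $\Pi^{0,X}_1$ subtree of measure at least $2^{-e-2} m_e$ on all of whose paths $\Phi_e^{X \oplus P}(e)$ diverges; declare \emph{diverge} and set $m_{e+1} = 2^{-e-2} m_e$.

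The main obstacle will be keeping the construction genuinely uniformly $X'$-computable. In the divergence branch, $T_{e+1}$ is only $\Pi^{0,X}_1$ with $X'$-right-c.e. (not $X'$-computable) measure, which would obstruct future threshold computations if one insisted on exact measures; I would handle this by carrying only the $X'$-computable lower bounds $m_e$ through the construction, never the exact measure of $[T_e]$. The more delicate point is that the convergence-branch search does not terminate in the divergence case, so the dichotomy is not $X'$-decidable in a naive sense. I would address this by dovetailing the $X'$-search for a convergence witness $s$ with an $X'$-effective tracking of the $X'$-right-c.e.\ upper bound on $\mu([T_e] \cap V_e)$ inherited from the preceding stage's data, committing to the divergence branch as soon as this upper bound provably falls below the threshold $(1 - 2^{-e-2}) m_e$. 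Since $m_e > 0$ at every stage, the nested nonempty closed sets $[T_e]$ have nonempty intersection by compactness, producing the desired low path.
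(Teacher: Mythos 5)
Your proposal takes a genuinely different route from the paper's proof, and it has a real gap. The paper relativizes the argument of Barmpalias--Miller--Nies in a \emph{one-shot} fashion: for each $e$ independently, $X'$ computes a stage $f(e)$ after which at most measure $2^{-e-2}\mu(T)$ of oracles $A$ can newly make $\Phi_e^{X \oplus A}(e)$ converge; the late-convergers form a $\Sigma^{0,X}_1$ open set $V_e$ of small measure, and $T \cap \bigcap_e \overline{V_e}$ has measure $\geq \mu(T)/2 > 0$. Any $\Delta^{0,X}_2$ path through this class has its jump computed by $X'$ using $f$ (convergence, if it happens, happens by stage $f(e)$), so it is low over $X$. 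No sequential dichotomy ever has to be $X'$-decided. You instead attempt a Jockusch--Soare-style forcing, which runs into exactly the obstacle you half-identify, and your proposed repair does not close it.

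Concretely: at stage $e$ you must $X'$-decide whether $\mu([T_e]\cap V_e)$ exceeds a threshold. If you maintain the exact $X'$-computable measure $\mu([T_e])$, the clopen conditional measures $\mu([T_e]\cap V_e^s)$ are $X'$-computable (right-c.e.\ plus right-c.e.\ summing to a computable real forces both summands computable), and the convergence search would work; but then the divergence branch sets $\mu([T_{e+1}]) = \mu([T_e]) - \mu([T_e]\cap V_e)$, a computable real minus an $X'$-\emph{left}-c.e.\ real, hence only $X'$-right-c.e.\ --- the invariant is destroyed, as you note. Your fix is to carry only lower bounds $m_e$, but then $\mu([T_e]\cap V_e^s)$ is only $X'$-right-c.e., so ``$\mu([T_e]\cap V_e^s) > (1-2^{-e-2})m_e$'' is $\Pi^{0,X'}_1$ (at best) rather than $\Sigma^{0,X'}_1$, and the convergence search can never certify success in finite time. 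The dovetailing alternative fails too: the $X'$-right-c.e.\ quantity you propose to track bounds $\mu([T_e])$ from above, not $\mu([T_e]\cap V_e)$, so in the divergence case it need never fall below the threshold, and the construction stalls. (Also, $T_e\setminus V_e$ is $\Delta^{0,X}_2$, not $\Pi^{0,X}_1$ as you state, though this point is harmless.) To rescue the stage-by-stage scheme you would have to re-establish $X'$-computable measure after each divergence step, e.g.\ by shaving $T_{e+1}$ down to a subtree of some fixed $X'$-computable measure $\geq 2^{-e-2}m_e$ --- the same trick the paper applies once at the end --- but as written the proposal does not do this, and the dovetailing step as described is not $X'$-effective.
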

\begin{proof}
Fix $T$, an $X'$-computable tree of 
$X'$-computable positive measure $\mu(T)$. We will define
an $X'$-computable subtree $U$ of measure $\frac{\mu(T)}{2}$
such that any infinite path through $T$ is GL${}_1$ over~$X$. It then suffices
to take any $\Delta^{0,X}_2$ path through $U$ to obtain the desired path low over~$X$.

Let $f$ be an $X'$-computable function that on input $e$
returns a stage $s$ after which $e$ goes into $A'$ for at most
measure $2^{-e-2}\mu(T)$ of oracles~$A$. Given $e$ and $s = f(e)$, the oracles $A$
such that $e$ goes into $A'$ after stage $s$ form a $\Sigma^{0,X}_1$ class $V_e$
of measure $\mu(V_e) \leq 2^{-e-2}\mu(T)$.
Thus~$\mu(\bigcap_e \overline{V_e}) \geq 1 - \sum_e 2^{-e-2}\mu(T) \geq 1 - \frac{\mu(T)}{2}$.
Therefore~$\mu(T \cap \bigcap_e \overline{V_e}) \geq \frac{\mu(T)}{2}$.
One can easily restrict $T$ to a subtree $U$ such that $[U] \subseteq \bigcap_e \overline{V_e}$
and $\mu(U) = \frac{\mu(T)}{2}$.
For any path $P \in [U]$ and any $e \in \Nb$, $e \in P' \biimp e \in P'_{f(e)}$.
Hence $P$ is GL${}_1$ over~$X$.
\end{proof}

\begin{corollary}
There exists an $\omega$-model of $\srrt^2_2$ containing only low sets.
\end{corollary}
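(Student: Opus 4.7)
The plan is to iterate Theorem~\ref{thm:low-tree-exact-measure} together with the tree characterization of $\srrt^2_2$, building the second-order part of the desired $\omega$-model as the downward closure of a chain of low sets.

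First I would fix an effective pairing $\langle\cdot,\cdot\rangle : \Nb^2 \to \Nb$ and define inductively a sequence $\emptyset = X_0 \leq_T X_1 \leq_T X_2 \leq_T \dots$. At stage $s = \langle n, e\rangle$ with $n \leq s$, inspect $\Phi_e^{X_n}$: if it codes a rainbow-stable $2$-bounded coloring $f : [\Nb]^2 \to \Nb$, relativize to $X_n$ the (unnamed) corollary just preceding Corollary~\ref{cor:schnorr-srrt22}, producing an $X_n'$-computable tree $T$ of $X_n'$-computable positive measure such that every infinite set homogeneous for a path through $T$, together with $f$, computes an $f$-rainbow. Then apply the relativization of Theorem~\ref{thm:low-tree-exact-measure} to $X_n$, obtaining a path $P \in [T]$ with $(X_n \oplus P)' \leq_T X_n'$. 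At least one of the two $P$-computable sets $\{i : P(i) = 0\}$ and $\{i : P(i) = 1\}$ is infinite, and by the very definition of homogeneity it is homogeneous for a path through $T$ (witnessed by $P$ itself). Using this infinite homogeneous set, $X_n \oplus P$ computes an $f$-rainbow $R$, and I set $X_{s+1} := X_n \oplus R$; if the instance check fails I simply let $X_{s+1} := X_s$.

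Next I would let $\Mcal = \{ Y \subseteq \Nb : \exists s\; Y \leq_T X_s \}$. This family is closed under Turing reducibility and finite join, so forms the second-order part of an $\omega$-model of $\rca$. By induction on $s$ each $X_s$ is low: at a successor stage one has $(X_n \oplus R)' \leq_T (X_n \oplus P)' \leq_T X_n' \leq_T \emptyset'$, so $X_{s+1}$ is low. Consequently every $Y \in \Mcal$ is low, since $Y' \leq_T X_s' \leq_T \emptyset'$ for some $s$. Finally $\Mcal \models \srrt^2_2$: any instance $g \in \Mcal$ is of the form $\Phi_e^{X_n}$ for some $n$ and $e$, and the stage $s = \langle n, e \rangle$ explicitly inserts a solution for $g$ into $\Mcal$.

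The only genuine point to verify is that both ingredients, namely the direction $\srrt^2_2 \Leftarrow$ (iii) of the characterization theorem and Theorem~\ref{thm:low-tree-exact-measure} itself, relativize cleanly to an arbitrary low oracle $X_n$ — a routine inspection of the existing proofs. The rest is bookkeeping, and no real obstacle arises beyond organising the diagonalisation so that every instance present in $\Mcal$ is reached at some stage.
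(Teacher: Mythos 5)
Your overall strategy is exactly the intended one: iterate Theorem~\ref{thm:low-tree-exact-measure} together with the reduction of rainbow-stable colorings to $\Delta^0_2$ trees of positive $\Delta^0_2$-computable measure (the corollary preceding Corollary~\ref{cor:schnorr-srrt22}), and your observation that one of $P^{-1}(0)$, $P^{-1}(1)$ is an infinite set homogeneous for a path through $T$ (witnessed by $P$ itself) is the right way to pass from the path supplied by the low basis theorem to an $f$-rainbow. However, there is a genuine flaw in the bookkeeping at successor stages. You set $X_{s+1} := X_n \oplus R$ at stage $s = \langle n, e\rangle$, where typically $n < s$. With this definition the sequence $(X_s)_{s}$ is \emph{not} $\leq_T$-increasing (contrary to what you assert when introducing it), and nothing guarantees that $\Mcal = \{Y : (\exists s)\, Y \leq_T X_s\}$ is closed under join: two solutions $R_1 \leq_T X_{s_1}$ and $R_2 \leq_T X_{s_2}$ obtained over $\leq_T$-incomparable sets need not lie below any common $X_t$, so $\Mcal$ need not be a Turing ideal, hence need not be a model of $\rca$.

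The obvious patch, $X_{s+1} := X_s \oplus R$, is not covered by your lowness computation: you only establish that $X_n \oplus R$ is low, and lowness is not preserved under joins (by Sacks splitting, $\emptyset'$ is the join of two low c.e.\ sets), so ``$X_s$ low and $R$ low'' does not yield ``$X_s \oplus R$ low''. The correct fix is to carry out the entire successor step relative to $X_s$ rather than $X_n$: by induction $X_n \leq_T X_s$, so the coloring $f = \Phi^{X_n}_e$ is $X_s$-computable; relativize the corollary to $X_s$ to obtain an $X_s'$-computable tree of $X_s'$-computable positive measure whose homogeneous sets compute $f$-rainbows, apply Theorem~\ref{thm:low-tree-exact-measure} with $X = X_s$ (it is already stated for an arbitrary oracle, so no further relativization is needed there), obtaining $P$ with $(X_s \oplus P)' \leq_T X_s'$, and set $X_{s+1} := X_s \oplus R \leq_T X_s \oplus P$, whence $X_{s+1}' \leq_T X_s' \leq_T \emptyset'$. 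With this change the chain is genuinely increasing, $\Mcal$ is a Turing ideal consisting of low sets, and the rest of your argument (dovetailing over all pairs $\langle n, e\rangle$ to catch every instance in $\Mcal$, and the extraction of the homogeneous set from the path) goes through as you wrote it, matching the proof the paper has in mind.
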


\begin{corollary}
There exists an $\omega$-model of $\srrt^2_2$ which is neither a model of $\semo$ nor of $\sts(2)$.
\end{corollary}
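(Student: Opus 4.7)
The plan is to verify that the $\omega$-model of $\srrt^2_2$ produced by the preceding corollary --- i.e., an $\omega$-model $\Mcal$ whose second-order part consists entirely of low sets --- already witnesses both negative statements. Since $\Mcal$ is obtained from the low basis theorem for $\Delta^0_2$ trees of $\emptyset'$-computable positive measure (Theorem~\ref{thm:low-tree-exact-measure}) iterated in the usual way, every $X \in \Mcal$ is low relative to $\emptyset$. The strategy is then to exhibit, for each of $\semo$ and $\sts(2)$, a computable instance which has no low solution; such an instance lies in $\Mcal$ (being computable) while none of its solutions can be in $\Mcal$.

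For the $\semo$ side, I would invoke the Kreuzer corollary proved earlier in the paper: via the avoidance argument of Theorem~\ref{thm:avoidance-vs-semo} applied to the principle $\overline{\mathsf{Low}}$, one obtains a computable stable tournament $T$ with no low infinite transitive subtournament. Since $T \in \Mcal$ but no low set is a solution to $T$, we conclude $\Mcal \not\models \semo$. For the $\sts(2)$ side, I would simply apply Theorem~\ref{thm:sts-no-low}, which provides a computable stable coloring $f : [\Nb]^2 \to \Nb$ having no low infinite $f$-thin set; again $f \in \Mcal$, while $\Mcal$ contains no solution, so $\Mcal \not\models \sts(2)$.

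Both ingredients are already in place, so there is essentially no obstacle: the whole argument is a two-line assembly of the previous corollary with the two ``no low solution'' results. The only delicate point worth mentioning is that Theorem~\ref{thm:sts-no-low} is stated over $\rca + \ist$, but since $\Mcal$ is an $\omega$-model it automatically satisfies full arithmetic induction, so the theorem applies unconditionally; the same remark holds for the construction underlying the Kreuzer corollary.
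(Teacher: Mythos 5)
Your proof is correct and follows essentially the same route as the paper: take the low-only $\omega$-model of $\srrt^2_2$ from the preceding corollary, rule out $\sts(2)$ via Theorem~\ref{thm:sts-no-low}, and rule out $\semo$ via the existence of a computable stable tournament with no low infinite transitive subtournament. The only (cosmetic) difference is that the paper rederives that last fact by contradiction (if all such tournaments had low solutions one could build a low-only $\omega$-model of $\semo+\sads$, hence of $\srt^2_2$, contradicting Downey et al.), whereas you cite the earlier Kreuzer-style corollary, which rests on exactly the same Downey et al.\ result.
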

\begin{proof}
If every computable stable tournament had a low infinite subtournament then we could build an $\omega$-model $M$
of $\semo + \sads$ having only low sets, but then $M \models \srt^2_2$ contradicting \cite{Downey20010_2}.
Moreover, by Theorem~\ref{thm:sts-no-low} any $\omega$-model of $\sts(2)$ contains a non-low set.
\end{proof}

In fact we will see later that even $\rrt^2_2$ implies neither $\semo$ nor $\sts(2)$ on $\omega$-models.

\subsection{Relations to other principles}

We now relate the stable rainbow Ramsey theorem for pairs
to other existing principles studied in reverse mathematics.
This provides in particular a factorization of existing
implications proofs. For example, both the rainbow Ramsey theorem for pairs
and the stable Erd\H{o}s-Moser theorem are known to imply the omitting partial types principle ($\opt$) over~$\rca$.
In this section, we show that both principles imply~$\srrt^2_2$,
which itself implies~$\opt$ over~$\rca$.
Hirschfeldt \& Shore in~\cite{Hirschfeldt2007Combinatorial}
introduced~$\opt$ and proved its equivalence with~$\hyp$ over~$\rca$.

\begin{theorem}\label{thm:srrt22-imp-opt}
$\rca \vdash \srrt^2_2 \imp \hyp$
\end{theorem}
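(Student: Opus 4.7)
The plan is to invoke the escape-function form of~$\srrt^2_2$ given by Theorem~\ref{thm:srrt22-characterizations}(ii), relativized to any set~$X$: every uniform family~$(X_e)_{e\in\Nb}$ of~$\Sigma^{0,X}_2$ finite sets whose sizes are uniformly~$\Delta^{0,X}_2$ admits an escape function. I will design a single such family that encodes the behavior of every partial $X$-computable function, and then extract from an escape function an infinite set whose principal function is not dominated by any total $X$-computable function.

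Concretely, fix an arbitrary set~$X$, list the partial $X$-computable functions as~$(\Phi^X_e)_{e\in\Nb}$, and for each pair~$\langle e,k\rangle$ define
\[
X_{\langle e,k\rangle} \;=\; \{v\in\Nb : v\leq \Phi^X_e(k)\} \text{ if } \Phi^X_e(k)\halts,\qquad X_{\langle e,k\rangle}=\emptyset \text{ otherwise.}
\]
This is a uniform~$\Sigma^{0,X}_1$ (a fortiori~$\Sigma^{0,X}_2$) family whose sizes,~$\Phi^X_e(k)+1$ or~$0$, are uniformly~$\Delta^{0,X}_2$. Let~$F$ be an escape function supplied by~$\srrt^2_2$: whenever~$\Phi^X_e(k)\halts$, the value~$F(\langle e,k\rangle,\Phi^X_e(k)+1)$ lies outside~$[0,\Phi^X_e(k)]$ and therefore strictly exceeds~$\Phi^X_e(k)$. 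Using~$F\oplus X$, enumerate an infinite set~$A$ by placing~$F(\langle e,k\rangle,\Phi^X_e(k)+1)$ into~$A$ at the stage at which~$\Phi^X_e(k)$ is first seen to halt; the set~$A$ exists by~$\Sigma^0_1$-induction.

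To verify that~$A$ is hyperimmune relative to~$X$, fix any total $X$-computable function~$\tilde h=\Phi^X_{e_0}$. By the recursion theorem, which is provable in~$\rca$, there exists an index~$e^*$ with~$\Phi^X_{e^*}(k)=\tilde h(\langle e^*,k\rangle)$ for all~$k$; totality of~$\tilde h$ forces~$\Phi^X_{e^*}$ to be total. On the slice~$\{\langle e^*,k\rangle:k\in\Nb\}$, each escape value~$a_k:=F(\langle e^*,k\rangle,\tilde h(\langle e^*,k\rangle)+1)$ lies in~$A$ and satisfies~$a_k>\tilde h(\langle e^*,k\rangle)$. The principal function of~$A$ must therefore fail to be dominated by~$\tilde h$: if it were, then the~$a_k$'s, whose positions in~$A$ are computably bounded in terms of~$k$ by the enumeration order, would all be bounded by~$\tilde h$ evaluated at points at least as large as~$\langle e^*,k\rangle$, contradicting~$a_k>\tilde h(\langle e^*,k\rangle)$.

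The main obstacle is the bookkeeping needed to convert the slice-by-slice escape inequalities into a statement about the principal function of~$A$; the recursion theorem step is what makes a single set work against all total $X$-computable functions simultaneously, and the use of the full~$\Sigma^{0,X}_2$ and uniformly~$\Delta^{0,X}_2$ hypotheses (rather than just their~$\Sigma^{0,X}_1$ counterparts with computable sizes) is what allows the size bound~$\Phi^X_e(k)+1$ to grow arbitrarily fast, thereby yielding hyperimmunity rather than merely a d.n.c.\ function relative to~$X$.
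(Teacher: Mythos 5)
There is a genuine gap, and it sits exactly at the step your plan flags as ``bookkeeping'': converting the slice-wise inequalities $a_k > \tilde h(\tuple{e^*,k})$ into non-domination of the principal function of~$A$. Hyperimmunity of $A$ relative to $X$ requires $p_A(n) \geq \tilde h(n)$ for infinitely many $n$, i.e.\ it requires $A$ to have \emph{few small elements}; merely containing elements larger than $\tilde h(\tuple{e^*,k})$ is vacuous, since every infinite set contains arbitrarily large elements. Your construction places no control at all on how many small numbers enter $A$ from the other slices, and the claim that the positions of the $a_k$ in $A$ are ``computably bounded in terms of $k$ by the enumeration order'' is false: the position of $a_k$ in the increasing enumeration of $A$ is the number of elements of $A$ below $a_k$, and elements enumerated at later stages may well be smaller than $a_k$. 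Concretely, the escape function $F(\tuple{e,k},n) = \Phi^X_e(k)+1$ (when the computation halts, and $0$ otherwise) is a perfectly legitimate $(X_e)$-escaping function, and with it your $A$ contains $v+1$ for every $v$ in the range of any total $X$-computable function -- in particular $A$ is cofinite, hence of computable principal function and certainly not hyperimmune relative to~$X$. So the argument does not just have a repairable bookkeeping issue: the set you build need not be hyperimmune at all, and escaping finite $\Delta^{0,X}_2$ sets does not translate into sparseness without a genuinely different construction.

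Two secondary points. First, even if the verification worked, your route goes through Theorem~\ref{thm:srrt22-characterizations}, which the paper only establishes over $\rca + \ist$ (and its key lemma over $\rca + \bst$), so at best you would obtain the implication over that stronger base theory, not over $\rca$ as claimed; this is precisely why the paper gives separate direct constructions when it wants results over plain $\rca$. Second, your $A$ is only c.e.\ in $F \oplus X$, so its existence as a set in the model needs an extra argument in $\rca$. The paper's proof avoids all of this by a different strategy: it takes the Csima--Mileti coloring witnessing $\rrt^2_2 \imp \hyp$ and verifies that this particular $2$-bounded coloring is rainbow-stable, so that a single application of $\srrt^2_2$ to it already yields a hyperimmune solution; an alternative proof via $\Pi^0_1$-genericity is given over $\rca + \ist$. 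If you want to salvage an approach through the diagonalization characterizations, you would need an encoding in which the escape values control the number of elements below a bound (as the Csima--Mileti movable-marker construction does), not just their size.
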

\begin{proof}[Proof using Cisma \& Mileti construction, $\rca$]
We prove that the construction from Csima \& Mileti in~\cite{Csima2009strength} that $\rca \vdash \rrt^2_2 \imp \hyp$
produces a rainbow-stable coloring. We take the notations and definitions
of the proof of Theorem~4.1 in~\cite{Csima2009strength}. It is therefore essential
to have read it to understand what follows.
Fix an $x \in \Nb$. By $\bsig^0_1$ there exists an $e \in \Nb$ and a stage $t$ after which
$n^k_j$ and $m^k$ will remains stable for any $k \leq e$ and any $j \in \Nb$
and such that $n^e_i \leq x < n^e_{i+1}$ for some $i$.
\begin{itemize}
  \item If $i > 0$ then 
$x$ will be part of no pair $(m,l)$ for any requirement and $f(x, s) = \tuple{x,s}$
will be fresh for cofinitely many $s$.

  \item If $i = 0$ and $n^e_j$ is defined for each $j$
such that $j+1 \leq \frac{(n^e_0 - m^e)^2 - (n^e_0 - m^e)}{2}$
then as there are finitely many such $j$, after some finite stage
$x$ will not be paired any more and $f(x, s) = \tuple{x,s}$
will be fresh for cofinitely many $s$.

  \item If $i = 0$ and $n^e_j$ is undefined for some $j$
  such that $\tuple{m, x} = j+1$ or $\tuple{x, m} = j+1$ for some $m$, then
  $x$ will be part of a pair $(m, l)$ for cofinitely many $s$
  and so there exists an $m$ such that $f(x,s) = f(m, s)$ for cofinitely many $s$.
  
  \item If $i = 0$ and $n^e_j$ is undefined for some $j$
  such that $\tuple{m, x} \neq j+1$ or $\tuple{x, m} \neq j+1$ for any $m$
  then $x$ will not be paired after some stage
  and $f(x, s) = \tuple{x,s}$ will be fresh for cofinitely many $s$.
\end{itemize}
In any case, either $f(x, s)$ is fresh for cofinitely many $s$,
or there is a $y$ such that $f(x, s) = f(y, s)$ for cofinitely many $s$.
So the coloring is rainbow-stable.
\end{proof}

\smallskip

We can also adapt the proof using $\Pi^0_1$-genericity to $\srrt^2_2$.

\begin{proof}[Proof using $\Pi^0_1$-genericity, $\rca + \ist$]
Take any incomplete $\Delta^0_2$ set $P$ of PA degree.
The author proved in~\cite{Patey2015Degrees} the existence of
a $\Delta^0_2$ function $f$ such that $P$ does not compute any $f$-diagonalizing function.

Fix any functional $\Psi$.
Consider the $\Sigma^0_2$ class
$$
U = \set{X \in \cs : (\exists e) \Psi^X(e) \uparrow \vee \Psi^X(e) = f(e)}
$$

Consider any $\Pi^0_1$-generic $X$ such that $\Psi^X$ is total.
Either there exists a $X \in U$ in which case $\Psi^X(e) = f(e)$ hence $\Psi^X$ is not an $f$-diagonalizing function.
Or there exists a $\Pi^0_1$ class $F$ disjoint from $U$ and containing $X$. Any member of $F$ computes an $f$-diagonalizing
function. In particular $P$ computes an $f$-diagonalizing function. Contradiction.
\end{proof}

\begin{corollary}
$\rca \vdash \srrt^2_2 \imp \opt$
\end{corollary}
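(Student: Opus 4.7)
The plan is to derive this as an immediate consequence of the preceding Theorem~\ref{thm:srrt22-imp-opt} together with a known equivalence from the literature. First I would invoke Theorem~\ref{thm:srrt22-imp-opt}, which establishes $\rca \vdash \srrt^2_2 \imp \hyp$: given any set $X$ and any instance of $\srrt^2_2$, the arguments above (either the Csima--Mileti style construction or the $\Pi^0_1$-genericity route) supply, from a rainbow for the coloring, a set whose principal function eventually dominates no $X$-computable function, i.e.\ a set hyperimmune relative to $X$.

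Second, I would appeal to the equivalence $\rca \vdash \hyp \biimp \opt$ established by Hirschfeldt and Shore in~\cite{Hirschfeldt2007Combinatorial}, which is already cited in the paragraph immediately before Theorem~\ref{thm:srrt22-imp-opt}. Chaining the two implications yields $\rca \vdash \srrt^2_2 \imp \opt$, which is the desired conclusion.

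There is essentially no obstacle here: all of the mathematical work has been done in Theorem~\ref{thm:srrt22-imp-opt} and in the cited Hirschfeldt--Shore result. The only care needed is to note that the base theory in the previous theorem is $\rca$ (or $\rca + \ist$ in the alternative proof), and the Hirschfeldt--Shore equivalence likewise holds over $\rca$, so the composition is legitimate and the corollary follows in a single line.
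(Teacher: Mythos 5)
Your proposal is correct and follows exactly the route the paper intends: the corollary is immediate from Theorem~\ref{thm:srrt22-imp-opt} ($\rca \vdash \srrt^2_2 \imp \hyp$) combined with the Hirschfeldt--Shore equivalence $\rca \vdash \hyp \biimp \opt$ from~\cite{Hirschfeldt2007Combinatorial}, which the paper cites just before that theorem. One minor wording slip that does not affect the argument: hyperimmunity of a set means its principal function is \emph{not dominated by} any $X$-computable function, not that it ``dominates no'' such function.
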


The following theorem is not surprising as by a relativization of Theorem~\ref{thm:srrt22-imp-opt}
to $\emptyset'$, there exists an $\emptyset'$-computable rainbow-stable coloring of pairs
such that any infinite rainbow computes a function hyperimmune relative to $\emptyset'$.
Csima et al.~\cite{Csima2004Bounding} and Conidis~\cite{Conidis2008Classifying} proved
that $\amt$ is equivalent over $\omega$-models to the statement ``For any $\Delta^0_2$ function $f$, there exists a function $g$
not dominated by $f$''. Hence any $\omega$-model of $\srrt^2_2[\emptyset']$ is an $\omega$-model of $\amt$.
We will prove that the implication holds over $\rca$.

\begin{theorem}\label{thm:srrt2n-stsn}
$\rca \vdash (\forall n)[\srrt^{n+1}_2 \imp \sts(n)]$
\end{theorem}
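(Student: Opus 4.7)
The plan is to generalize the construction from Theorem~\ref{thm:rrt2n-tsn} so that, when the input coloring is stable, the resulting $(n+1)$-ary 2-bounded coloring is rainbow-stable in the natural higher-arity sense. Given a stable $f : [\Nb]^n \to \Nb$, I would build a $\Delta^{0,f}_1$ 2-bounded coloring $g : [\Nb]^{n+1} \to \Nb$ verbatim as in Theorem~\ref{thm:rrt2n-tsn}: for every increasing $(n+1)$-tuple $(y, \vec z)$ with $\vec z \in [\Nb]^n$ and $y < \min(\vec z)$, if $f(\vec z) = \langle x, y \rangle$ for some $x < y$, set $g(y, \vec z) = g(x, \vec z)$; otherwise give $g(y, \vec z)$ a fresh color, chosen so that it never repeats any previously assigned value of $g$. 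Two-boundedness of $g$ and the extraction, from any infinite rainbow $H$ for $g$, of an infinite $f$-thin set $H_1 = H \setminus [0, y]$ (of color $\langle x, y \rangle$, where $x < y$ are the two smallest elements of $H$) go through exactly as in Theorem~\ref{thm:rrt2n-tsn}.

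The only genuinely new step is verifying that $g$ is rainbow-stable. Fix an increasing $n$-tuple $\vec u = (u_0 < u_1 < \dots < u_{n-1})$. By stability of $f$, the limit $c = \lim_s f(u_1, \dots, u_{n-1}, s)$ exists. If $c$ decodes as $\langle x, u_0 \rangle$ with $x < u_0$, then for cofinitely many $s > u_{n-1}$ the pairing condition triggers at $\vec u$, so $g(\vec u, s) = g(x, u_1, \dots, u_{n-1}, s)$; this is case~(a) of rainbow-stability, with witness $\vec v = (x, u_1, \dots, u_{n-1}) \neq \vec u$. Otherwise, for cofinitely many such $s$ the pairing condition fails, $g(\vec u, s)$ is a fresh color, and by our bookkeeping no other $\vec v$ shares this value at stage $s$; this is case~(b). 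Applying $\srrt^{n+1}_2$ to $g$ then yields an infinite rainbow, and the thin set is extracted as above.

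The main obstacle is largely cosmetic: the higher-arity analogue of rainbow-stability (and of $\srrt^{n+1}_2$) is only stated in arity~$2$ in the excerpt, so I would first fix the convention that a 2-bounded $g : [\Nb]^{n+1} \to \Nb$ is \emph{rainbow-stable} if for every $\vec u \in [\Nb]^n$ either (a)~some $\vec v \neq \vec u$ satisfies $(\forall^\infty s)\, g(\vec u, s) = g(\vec v, s)$, or (b)~$(\forall^\infty s)$ no $\vec v \neq \vec u$ satisfies $g(\vec u, s) = g(\vec v, s)$. The care needed in the construction is then simply to keep the fresh-color bookkeeping tight enough that case~(b) really does hold across all $n$-tuples simultaneously. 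The construction of $g$ is primitive recursive in $f$, and the case analyses required for rainbow-stability and for the thin-set extraction are $\Sigma^0_1$ in the rainbow, so the entire argument formalizes in $\rca$ without extra induction hypotheses.
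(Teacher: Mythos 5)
Your overall route is exactly the paper's: reuse the coloring $g$ of Theorem~\ref{thm:rrt2n-tsn} verbatim, check that stability of $f$ makes $g$ rainbow-stable in the evident higher-arity sense (with the last coordinate playing the role of the stage), then apply $\srrt^{n+1}_2$ and extract the thin set as before. The problem is in your stability verification: your ``otherwise'' branch is false as stated. Fix $\vec u = (u_0 < u_1 < \dots < u_{n-1})$ and suppose $\lim_s f(u_1, \dots, u_{n-1}, s) = \tuple{u_0, v}$ for some $v$ with $u_0 < v < u_1$. Then for cofinitely many $s$ the pairing condition does not trigger at $(\vec u, s)$, so $g(\vec u, s)$ is indeed given a fresh color; but when the construction handles the tuple $(v, u_1, \dots, u_{n-1}, s)$ it sets $g(v, u_1, \dots, u_{n-1}, s) = g(u_0, u_1, \dots, u_{n-1}, s)$. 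Hence your claim that ``no other $\vec v$ shares this value at stage $s$'' fails in this subcase, and no fresh-color bookkeeping can restore it: this sharing is precisely the mechanism that forces rainbows of $g$ to be thin for $f$, so it cannot be legislated away.

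Fortunately the theorem is unharmed: in that subcase condition (a) of rainbow-stability holds with witness $(v, u_1, \dots, u_{n-1})$, and condition (b) is only needed when the stabilized value of $f(u_1, \dots, u_{n-1}, \cdot)$ fails to decode as a pair $\tuple{x,y}$ with $x < y < u_1$ and $u_0 \in \{x,y\}$. This is exactly how the paper argues: it splits on whether $u_0$ occurs in \emph{either} position of the decoded pair, not only the second. With that one extra subcase (and the deletion of the bookkeeping remark) your proof coincides with the paper's; the paper also treats $n = 1$ separately, which is a harmless convenience.
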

\begin{proof}
Fix some~$n \in \Nb$ and let $f : [\Nb]^{n} \to \Nb$ be a stable coloring. 
If~$n = 1$, then~$f$ has a $\Delta^{0,f}_1$ infinite thin set, so suppose~$n > 1$.
We build a $\Delta^{0,f}_1$
rainbow-stable 2-bounded coloring $g : [\Nb]^{n+1} \to \Nb$
such that every infinite rainbow for $g$ is, up to finite changes, thin for $f$.
Construct~$g$ as in the proof of Theorem~\ref{thm:rrt2n-tsn}.
It suffices to check that~$g$ is rainbow-stable whenever $f$ is stable.

Fix some~$x \in \Nb$ and~$\vec z \in [\Nb]^{n-1}$ such that~$x < min(\vec z)$. 
As~$f$ is stable, there exists a stage~$s_0 > max(\vec z)$ after which $f(\vec z, s) = f(\vec z, s_0)$.
Interpret~$f(\vec z, s_0)$ as a tuple~$\tuple{u,v}$.
If~$u \geq v$ or~$v \geq min(\vec z)$ or~$x \not \in \{u,v\}$, then $g(x, \vec z, s)$ will be given
a fresh color for every~$s \geq s_0$.
If~$u < v < min(\vec z)$ and~$x \in \{u,v\}$ (say $x = u$),
then~$g(x, \vec z, s) = g(v, \vec z, s)$ for every~$s \geq v$.
Therefore $g$ is rainbow-stable.
\end{proof}

\begin{corollary}
$\rca \vdash \srrt^3_2 \imp \amt$
\end{corollary}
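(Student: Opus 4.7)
The plan is to chain together two results already proved in the excerpt. First, I would apply Theorem~\ref{thm:srrt2n-stsn} with $n = 2$, which gives $\rca \vdash \srrt^3_2 \imp \sts(2)$. Next, I would invoke Theorem~\ref{thm:sts2-amt}, which states $\rca \vdash \sts(2) \imp \amt$. Composing these two implications inside $\rca$ yields the desired conclusion.

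The argument is immediate and there is no real obstacle: both implications have been established over $\rca$ (Theorem~\ref{thm:sts2-amt} uses only $\rca$ and a careful construction already present in the paper, while Theorem~\ref{thm:srrt2n-stsn} is stated uniformly in $n$ and specializes to $n=2$). Hence the conclusion follows by transitivity of implication in $\rca$. The proof can simply read: \emph{Immediate from Theorem~\ref{thm:srrt2n-stsn} (with $n = 2$) and Theorem~\ref{thm:sts2-amt}.}
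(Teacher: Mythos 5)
Your proof is correct and follows exactly the paper's intended route: the corollary is stated right after Theorem~\ref{thm:srrt2n-stsn}, and composing its $n=2$ instance ($\srrt^3_2 \imp \sts(2)$) with Theorem~\ref{thm:sts2-amt} ($\sts(2) \imp \amt$) over $\rca$ is precisely the argument.
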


\begin{remark}\label{rem:dnrzk-rrtk}\ 
As Bienvenu et al.\ \cite{Bienvenu2014role} proved that there is a computable instance of $\amt$ such that
no 2-random bounds a solution to it, we obtain as a corollary that the reverse implication
of Corollary~\ref{cor:rrtk-dnrzk} does not hold.
\end{remark}

\begin{theorem}[$\rca + \bst$]
For every $\Delta^0_2$ function $f$, 
there exists a computable stable coloring $c:[\Nb]^2 \to \Nb$ such that every infinite
set thin for $c$ computes an $f$-diagonalizing function.
\end{theorem}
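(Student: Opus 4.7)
The plan is to mimic the finite-injury priority construction in Lemma~\ref{lem:srrt22-char1}, arranging for infinite \emph{thin} sets (rather than rainbows) to extract diagonalizing functions. First, using the Limit Lemma, I fix a total computable $g:\Nb\times\Nb\to\Nb$ with $\lim_s g(e,s)=f(e)$ for every $e$, let $(D_e)_{e\in\Nb}$ enumerate the canonical finite sets, and set $k_i(e)=2\langle e,i\rangle+2$. For each pair $(e,i)\in\Nb^2$ (including $i=0$), I install the requirement $\Rcal_{e,i}$ demanding that if $|D_{f(e)}|\geq k_i(e)$ then there exists $u\in D_{f(e)}$ with $(\forall^{\infty}s)\,c(u,s)=i$.

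Granting that every $\Rcal_{e,i}$ is met, any infinite set $A$ thin for $c$ avoids some color $j$, and the function $h_j$ sending $e$ to the canonical index of the first $k_j(e)$ elements of $A$ is $f$-diagonalizing: if $h_j(e)=f(e)$, then $D_{f(e)}\subseteq A$ with $|D_{f(e)}|\geq k_j(e)$, so $\Rcal_{e,j}$ provides some $u\in D_{f(e)}\subseteq A$ with $c(u,s)=j$ cofinitely, and picking $s\in A$ large enough past $u$ yields a pair in $[A]^2$ of color $j$, contradicting thinness. Crucially, this argument is uniform in $j$ and applies equally well when $j=0$, provided the requirements $\Rcal_{e,0}$ are on the same footing as those for $i\geq 1$.

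For the construction, each $\Rcal_{e,i}$ carries a marker $d_{e,i}$. At stage $s+1$, I process $\Rcal_{e,i}$ for $\langle e,i\rangle<s$ in priority order: keep $d_{e,i}$ if it is defined and still lies in $D_{g(e,s)}$; otherwise undefine it together with all lower-priority markers, and, if possible, set $d_{e,i}$ to the least element of $D_{g(e,s)}\cap(\langle e,i\rangle,\infty)$ not currently held by a higher-priority marker. At the end of the stage, put $c(x,s)=i$ if $x=d_{e,i}$ for the unique holding requirement and $c(x,s)=0$ otherwise. The cardinality bound $k_i(e)=2\langle e,i\rangle+2$ leaves, after removing the at most $\langle e,i\rangle+1$ small elements and the at most $\langle e,i\rangle$ higher-priority markers, at least one valid pick in $D_{f(e)}$; finite injury, supported by $\bst$ for simultaneous stabilization of $g(e',\cdot)$ over bounded $e'$, yields that each $d_{e,i}$ stabilizes; and because markers are always drawn strictly above $\langle e,i\rangle$, each $x$ can ever be held only by the finitely many $\Rcal_{e,i}$ with $\langle e,i\rangle<x$, so $c(x,\cdot)$ stabilizes and $c$ is stable.

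The main obstacle is the pervasive default color $0$: without a requirement devoted to it, an infinite thin set could avoid $0$ while sitting entirely inside the union of persistent markers, giving no diagonalization. Including the requirements $\Rcal_{e,0}$ in the same priority list resolves this — their markers, shielded by priority exclusion from being overwritten by lower-priority $\Rcal_{e',i'}$ with $i'\geq 1$, stably hold elements of $D_{f(e)}$ whose color persists at $0$, so the $j=0$ branch of the diagonalization works uniformly with the branches for $j\geq 1$.
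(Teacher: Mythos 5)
Your proposal is correct and takes essentially the same route as the paper's proof: a computable movable-marker, finite-injury construction on a limit approximation of $f$, with requirements $\Rcal_{e,i}$ forcing some element of the finite set coded by $f(e)$ to receive color $i$ cofinitely (default color $0$, with $\bst$ used for simultaneous stabilization), so that an infinite thin set avoiding color $j$ diagonalizes by outputting codes of its initial segments. The only difference is bookkeeping: the paper uses a tailored enumeration of sets of exactly $\tuple{e,i}+1$ integers all greater than $\tuple{e,i}$ (making the requirements unconditional), whereas you use canonical finite sets with the cardinality padding $2\tuple{e,i}+2$ and restrict marker choices to elements above $\tuple{e,i}$.
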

\begin{proof}
Fix a $\Delta^0_2$ function $f$ as stated above.
For any $n \in \Nb$, fix a canonical enumeration $(D_{n, e})_{e \in \Nb}$ of all
finite sets of $n+1$ integers greater than $n$.
We will build a computable stable coloring $c : [\Nb]^2 \to \Nb$ fulfilling 
the following requirements for each $e, i \in \Nb$:

\smallskip
$\Rcal_{e, i}: $ $\exists u \in D_{\tuple{e,i}, f(e)}$ such that $(\forall^{\infty} s)c(u, s) = i$.
\smallskip

We first check that if every requirement is satisfied, then any
infinite set thin for $c$ computes an $f$-diagonalizing function.
Let $H$ be an infinite set thin for $c$ with witness color $i$.
Define $h: \Nb \to \Nb$ to be the $H$-computable function which on $e$
returns the value $v$ such that $D_{\tuple{e,i}, v}$
is the set of the $\tuple{e,i}+1$ first elements of $H$ greater than
$\tuple{e,i}$.

\begin{claim}
$h$ is an $f$-diagonalizing function.
\end{claim}
\begin{proof}
Suppose for the sake of absurd that $h(e) = f(e)$ for some $e$.
Then $D_{\tuple{e,i}, h(e)} = D_{\tuple{e,i}, f(e)}$.
But by $\Rcal_{e,i}$, $\exists u \in D_{\tuple{e,i}, f(e)}$ such that
$(\forall^{\infty} s)c(u,s) = i$. Then there is an $s \in H$ such that
$c(u, s) = i$, and as $D_{\tuple{e,i}, f(e)} \cup \set{s} \subset H$,
$H$ is not thin for $c$ with witness $i$. Contradiction.
\end{proof}

By Schoenfield's limit lemma, let $g(\cdot, \cdot)$ be the partial approximations of~$f$.
The strategy for satisfying a local requirement $\Rcal_{e,i}$ is as follows.
At stage $s$, it takes the least element $u$ of $D_{\tuple{e,i}, g(x,s)}$ not restrained by a strategy of
higher priority if it exists. Then it puts a restraint on $u$ and \emph{commits}
$u$ to assigning color $i$. For any such $u$, this commitment 
will remain active as long as the strategy has a restraint on that element.
Having done all this, the local strategy is declared to be satisfied and will not act again, unless either a higher priority
puts a restraint on $u$, or releases a $v \in D_{\tuple{e,i}, g(e,s)}$ with $v < u$ or 
at a further stage $t > s$, $g(e,t) \neq g(e, s)$.
In each case, the strategy gets \emph{injured} and has to reset, releasing its restraint.

To finish stage $s$, the global strategy assigns $c(u, s)$ for all $u \leq s$ as follows:
if $u$ is commited to some assignment of $c(u, s)$ due to a local strategy, define $c(u, s)$ to be this value.
If not, let $c(u, t) = 0$. This finishes the construction and we now turn to the verification.
It is easy to check that each requirement restrains at most one element at a given stage.

\begin{claim}
Each strategy $\Rcal_{e,i}$ acts finitely often.
\end{claim}
\begin{proof}
Fix some strategy~$\Rcal_{e,i}$.
By $\bst$, there is a stage~$s_0$ after which $g(x,s) = f(x)$ for every~$x \leq \tuple{e,i}$.
Each strategy restrains at most one element, 
and the strategies of higher priority will always choose the same elements after stage~$s_0$.
As $\card{D_{\tuple{e,i}, f(e)}} = \tuple{e,i}+1$,
the set of $u \in D_{\tuple{e,i}, f(e)}$ such that no strategy of higher priority puts a restraint on $u$ is non empty
and does not change.
Let $u_{min}$ be its minimal element. By construction, $\Rcal_{e,i}$ will choose $u_{min}$ before stage $s_0$
and will not be injured again.
\end{proof}

\begin{claim}
The resulting coloring $c$ is stable.
\end{claim}
\begin{proof}
Fix a $u \in \Nb$.
If $\tuple{e, i} > u$ then $\Rcal_{e,i}$ does not put a restraint on $u$ at any stage.
As each strategy acts finitely often, by $\bst$ there exists a stage $s_0$ after which
no strategy $\Rcal_{e, i}$ with $\tuple{e,i} \leq u$ will act on $u$.
There are two cases: In the first case, at stage $s_0$ the element $u$ is restrained
by some strategy $\Rcal_{e,i}$ with $\tuple{e,i} \leq u$ in which case 
$c(u, s)$ will be assigned a unique color specified by strategy $\Rcal_{e,i}$
for cofinitely many $s$. In the other case, after stage $s_0$, the element $u$ is free from
any restraint, and $c(u, s) = 0$ for cofinitely many $s$.
\end{proof}
\end{proof}

\begin{corollary}
$\rca + \ist \vdash \sts(2) \imp \srrt^2_2$
\end{corollary}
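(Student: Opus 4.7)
The plan is to combine the theorem immediately preceding the corollary with the characterization of $\srrt^2_2$ given by Theorem~\ref{thm:srrt22-characterizations}. Specifically, that theorem states (over $\rca + \ist$) that $\srrt^2_2$ is equivalent to the assertion that every $\Delta^0_2$ function $h : \Nb \to \Nb$ has a diagonalizing function, i.e., a total $g : \Nb \to \Nb$ with $g(x) \neq h(x)$ for all $x$. Hence it suffices to verify clause~(iii) of Theorem~\ref{thm:srrt22-characterizations} inside $\rca + \ist + \sts(2)$.

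Concretely, I would argue as follows. Fix any $\Delta^0_2$ function $f$. By the previous theorem (provable in $\rca + \bst$, which is available since $\ist$ implies $\bst$), there is a computable stable coloring $c : [\Nb]^2 \to \Nb$ such that every infinite $c$-thin set computes an $f$-diagonalizing function. Applying $\sts(2)$ to $c$ yields an infinite set $H$ thin for $c$, and then $H$ computes a function $g$ with $g(x) \neq f(x)$ for all $x$. Thus clause~(iii) of Theorem~\ref{thm:srrt22-characterizations} holds, and by that theorem we conclude $\srrt^2_2$.

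There is essentially no obstacle here: both ingredients have already been established in the paper, so this corollary is a direct composition. The only small point to check is that the hypothesis $\ist$ (rather than merely $\bst$) is strong enough for the equivalence in Theorem~\ref{thm:srrt22-characterizations} to apply; since that theorem is stated precisely over $\rca + \ist$, the base theory in the corollary is exactly what is needed.
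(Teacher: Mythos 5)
Your proposal is correct and is exactly the intended argument: the corollary follows by composing the immediately preceding theorem (apply $\sts(2)$ to the computable stable coloring built from a given $\Delta^0_2$ function to get an infinite thin set, hence a diagonalizing function) with clause (iii) of Theorem~\ref{thm:srrt22-characterizations}, whose equivalence with $\srrt^2_2$ is precisely what requires the base theory $\rca + \ist$. Your remark that $\ist$ supplies the $\bst$ needed for the preceding theorem is also the right bookkeeping.
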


\begin{theorem}[$\rca$]
For every rainbow-stable 2-bounded coloring $f : [\Nb]^2 \to \Nb$,
there exists an $f$-computable stable tournament $T$ such that every 
infinite transitive subtournament of $T$ computes a rainbow for~$f$.
\end{theorem}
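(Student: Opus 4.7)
The strategy is to exploit rainbow-stability to identify the marriage relation $x \sim y :\Leftrightarrow (\forall^\infty s)f(x,s)=f(y,s)$, which is a partial matching on $\Nb$, and to force any infinite transitive subtournament to omit at least one member of each married pair. Because $f$ is $2$-bounded, rainbow-stability implies that every non-married pair $\{x,y\}$ satisfies $(\forall^\infty s)f(x,s)\neq f(y,s)$; so a set meeting each married pair in at most one point is automatically a prerainbow, and Lemma~\ref{lem:prerainbow-equiv} then converts it into an infinite rainbow $f$-computably. Hence it suffices to build a stable $T\leq_T f$ whose infinite transitive subtournaments omit at least one endpoint of every married pair.

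The construction is stage by stage: at stage $s$, the value $T(x,s)$ is defined for each $x<s$. Call a pair $(a,b)$ with $a<b<s$ \emph{currently married at stage $s$} when $f(a,s)=f(b,s)$; $2$-boundedness guarantees that each $x<s$ belongs to at most one such pair. For each currently-married pair $(a,b)$ at stage $s$ the value $T(a,b)$ has already been fixed at stage $b$, and I will set $T(a,s)$ and $T(b,s)$ as the unique assignment making $\{a,b,s\}$ a directed $3$-cycle (namely $T(a,s)=0$ and $T(b,s)=1$ if $T(a,b)=1$, and the flip otherwise). For every other $x<s$, declare $T(x,s)=1$ by default. The resulting tournament $T$ is clearly $f$-computable.

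Stability reduces to the stabilization of ``currently married at stage $s$'': if $x$ is a monk, then cofinitely many stages $s$ find $x$ outside every currently-married pair, so $T(x,s)=1$ eventually; if $x$ is married to $y_0$, writing $(a,b)=(\min(x,y_0),\max(x,y_0))$, the pair $(a,b)$ is currently married at cofinitely many $s>b$, and the rule then assigns $T(x,s)$ a value depending only on the already-fixed $T(a,b)$, hence eventually constant. For the forcing argument, assume an infinite transitive subtournament $H$ contains both endpoints $a<b$ of a married pair: for cofinitely many $s>b$ the pair $(a,b)$ is currently married at stage $s$, so $\{a,b,s\}$ is a $3$-cycle in $T$ by construction, forcing $H$ to contain only finitely many $s>b$, contrary to infinitude. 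Therefore no two elements of $H$ are married, so $H$ is a prerainbow and $f$-computes an infinite rainbow via Lemma~\ref{lem:prerainbow-equiv}.

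The main delicate point is the mismatch between the true marriage relation, which is $\Sigma^0_2$ in $f$, and its $\Delta^{0,f}_1$ stage-by-stage approximation by currently-married pairs: spurious pairings involving monks or individuals married to someone else can fire the rule at transient stages, but by rainbow-stability and $2$-boundedness they do so only finitely often, so neither stability of $T$ nor the $3$-cycle forcing is disturbed.
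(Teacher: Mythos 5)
Your construction is exactly the one the paper delegates to Kang's Theorem 3.1 (encode each ``married'' pair $\{a,b\}$ with current partner $s$ as a $3$-cycle, default $T(x,s)=1$ otherwise), and your stability verification and prerainbow/Lemma~\ref{lem:prerainbow-equiv} argument match what the paper checks and tacitly inherits. The proposal is correct and essentially the same approach, just spelled out in full rather than by reference.
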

\begin{proof}
Use exactly the same construction as in Theorem~3.1 in~\cite{Kang2014Combinatorial}.
We will prove that in case of rainbow-stable colorings, the constructed tournament $T$ is stable.
Fix an $x \in \Nb$. By rainbow-stability, either $f(x, s)$ is a fresh color for cofinitely many $s$,
in which case $T(x, s)$ holds for cofinitely many $s$, or there exists a $y$ such that
$f(y, s) = f(x, s)$ for cofinitely many $s$. If $T(x, y)$ holds then
$T(x, s)$ does not hold and $T(y, s)$ holds for cofinitely many $s$.
Otherwise $T(x, s)$ holds and $T(y, s)$ does not hold for cofinitely many $s$.
Hence $T$ is stable.\end{proof}

\begin{corollary}
$\rca \vdash \semo \imp \srrt^2_2$
\end{corollary}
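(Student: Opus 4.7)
The plan is to derive this corollary essentially as a one-line consequence of the preceding theorem together with the definition of $\semo$, since the heavy lifting has already been done.

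First, I would fix an arbitrary rainbow-stable 2-bounded coloring $f : [\Nb]^2 \to \Nb$ (an instance of $\srrt^2_2$) and apply the preceding theorem to obtain an $f$-computable \emph{stable} tournament $T$ with the property that every infinite transitive subtournament of $T$ computes an $f$-rainbow. The stability of $T$ is exactly what allows us to plug it into $\semo$ rather than needing full $\emo$.

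Next, I would invoke $\semo$ on the instance $T$ to extract an infinite transitive subtournament $H \subseteq \Nb$. By the conclusion of the preceding theorem, $H \oplus f$ computes an infinite $f$-rainbow $R$, and by $\Delta^0_1$-comprehension in $\rca$ this set $R$ exists in our model. Since $f$ was arbitrary, this yields $\srrt^2_2$.

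The only subtlety worth noting is that the argument is a \emph{computable reduction} from $\srrt^2_2$ to $\semo$, performed uniformly inside $\rca$: the construction of $T$ from $f$ uses only $\Delta^0_1$-comprehension (it is a straightforward $f$-computable construction by the cited theorem), and the passage from a transitive subtournament $H$ back to a rainbow $R$ is likewise $H \oplus f$-computable. There is no obstacle here — in particular no induction beyond $\isig^0_1$ is needed — so no genuine difficulty arises; the corollary really is immediate once the preceding theorem is in hand.
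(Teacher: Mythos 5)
Your proposal is correct and is exactly what the paper intends: the corollary is an immediate consequence of the preceding theorem (apply it to a rainbow-stable 2-bounded coloring $f$ to get an $f$-computable stable tournament $T$, apply $\semo$ to $T$, and use the theorem's conclusion plus $\Delta^0_1$-comprehension to obtain the rainbow). The paper gives no separate argument for the corollary, so your write-up matches the intended proof.
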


\begin{question}
Does $\srrt^2_2 + \coh$ imply $\rrt^2_2$ over $\rca$ ?
\end{question}

\section{Weakly stable rainbow Ramsey theorem}

Despite the robustness of the stable rainbow Ramsey theorem for pairs
which has been shown to admit several simple characterizations,
rainbow-stability does not seem to be the natural stability notion corresponding
to~$\rrt^2_2$. In particular, it is unknown
whether~$\rca \vdash \coh + \srrt^2_2 \imp \rrt^2_2$.
In this section, we study another more general notion of stability introduced by Wang in~\cite{Wang2014Cohesive}
and which, together with cohesiveness, recovers the full raibow Ramsey's theorem for pairs.
However, this notion of stability does not admit as simple characterizations
as for~$\srrt^2_2$.

Recall that a coloring $f : [\Nb]^2 \to \Nb$ is \emph{weakly rainbow-stable} if
$$
(\forall x)(\forall y)[(\forall^{\infty} s)f(x, s) = f(y,s) \vee (\forall^{\infty} s)f(x, s) \neq f(y,s)]
$$
It is easy to see that every rainbow-stable coloring is weakly rainbow-stable,
hence $\rca \vdash \wsrrt^2_2 \imp \srrt^2_2$.
Wang proved in \cite[Lemma 4.11]{Wang2014Cohesive} that $\rca \vdash \coh + \wsrrt^n_2 \imp \rrt^n_2$
and that $\wsrrt^2_2[\emptyset']$ has an $\omega$-model
with only low${}_2$ sets.

We have seen in Lemma~\ref{lem:rrtn-jump} that~$\rca \vdash (\forall n)(\rrt^{n+1}_2 \imp \rrt^n_2[\emptyset'])$.
We show through the following theorem that~$\wsrrt^{n+1}_2$ corresponds to the exact strength
of~$\rrt^n_2[\emptyset']$ for every~$n$.

\begin{theorem}
For every standard $n \geq 1$, $\rca + \bst \vdash \wsrrt^{n+1}_2 \biimp \rrt^n_2[\emptyset']$.
\end{theorem}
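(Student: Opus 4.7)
My plan separates cleanly into the two directions, both anchored on Lemma~\ref{lem:rrtn-jump}.

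\textit{Forward direction $(\Leftarrow)$.} Let $f : [\Nb]^n \to \Nb$ be an $\emptyset'$-computable 2-bounded coloring. Apply the construction from the proof of Lemma~\ref{lem:rrtn-jump} to produce a computable 2-bounded coloring $g : [\Nb]^{n+1} \to \Nb$ whose rainbows are rainbows for~$f$. The extra observation is that this~$g$ is weakly rainbow-stable in the natural higher-arity sense, namely: for all $\vec x, \vec y \in [\Nb]^n$, either $(\forall^\infty s)\,g(\vec x, s) = g(\vec y, s)$ or $(\forall^\infty s)\,g(\vec x, s) \neq g(\vec y, s)$. Indeed, fix distinct~$\vec x, \vec y$; by the Limit Lemma there is a stage~$s_0$ after which $h(\vec z, s) = f(\vec z)$ for every~$\vec z \preceq_\Nb \vec x$ and every~$\vec z \preceq_\Nb \vec y$; combined with 2-boundedness of~$f$, this forces the first clause of~$g$'s definition to apply at every~$s \geq s_0$ for both~$\vec x$ and~$\vec y$. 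Hence $g(\vec x, s) = \tuple{f(\vec x), s, 0}$ and $g(\vec y, s) = \tuple{f(\vec y), s, 0}$ for all large~$s$, which are cofinitely equal when $f(\vec x) = f(\vec y)$ and cofinitely distinct otherwise. Applying~$\wsrrt^{n+1}_2$ to~$g$ returns an~$f$-rainbow.

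\textit{Backward direction $(\Rightarrow)$.} Given a computable 2-bounded weakly rainbow-stable $g : [\Nb]^{n+1} \to \Nb$, define $R \subseteq [\Nb]^n \times [\Nb]^n$ by $R(\vec x, \vec y) \iff (\forall^\infty s)\,g(\vec x, s) = g(\vec y, s)$. Weak rainbow-stability makes~$R$ an~$\emptyset'$-computable equivalence relation, and 2-boundedness of~$g$ forces every $R$-class to contain at most two elements -- three pairwise $R$-related~$n$-tuples would share a color at any large enough common last coordinate, contradicting 2-boundedness. Let~$f(\vec x)$ be a canonical code of the~$R$-class of~$\vec x$, so $f : [\Nb]^n \to \Nb$ is~$\emptyset'$-computable and 2-bounded. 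Applying~$\rrt^n_2[\emptyset']$ yields an infinite $f$-rainbow~$A$, on which distinct $n$-tuples are pairwise $R$-unrelated.

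\textit{Thinning.} I then build $H = \{h_0 < h_1 < \cdots\} \subseteq A$ by recursion: choose $h_k$ to be the least element of~$A$ strictly above $h_{k-1}$ satisfying (i)~$g(\vec x, h_k) \neq g(\vec y, h_k)$ for all distinct $\vec x, \vec y \in [H_{k-1}]^n$, and (ii)~$g(\vec x, h_k) \neq g(\vec z)$ for all $\vec x \in [H_{k-1}]^n$ and $\vec z \in [H_{k-1}]^{n+1}$. Condition~(i) is satisfied by cofinitely many elements of~$A$ because each pair of distinct $n$-tuples in the finite set $[H_{k-1}]^n$ is~$R$-unrelated; applying~$\bst$ across the finitely many pairs collapses their individual thresholds into a uniform bound. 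Condition~(ii) excludes only finitely many~$h_k$, since 2-boundedness of~$g$ leaves at most one~$t$ with $g(\vec x, t) = g(\vec z)$ for each pair~$(\vec x, \vec z)$. Hence $h_k$ always exists. Finally, two distinct tuples $\vec u = (\vec x, h_i)$ and $\vec v = (\vec y, h_j)$ in~$[H]^{n+1}$ with $i \leq j$ are separated by~(i) at stage~$j$ when $i = j$, and by~(ii) at stage~$j$ when $i < j$, so~$H$ is the desired $g$-rainbow.

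The main obstacle is the thinning step: it combines a $\Sigma^0_2$ threshold requirement (clause~(i)) with a computable exclusion (clause~(ii)), and~$\bst$ is essential to convert the finitely many per-pair thresholds in~(i) into a single uniform bound, keeping the recursion formalizable within~$\rca + \bst$.
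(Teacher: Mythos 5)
Your proposal is correct and takes essentially the same route as the paper: the forward direction consists in checking that the coloring of Lemma~\ref{lem:rrtn-jump} is weakly rainbow-stable (you spell out the verification the paper leaves implicit), and the backward direction builds the same $\emptyset'$-computable 2-bounded coloring of $n$-tuples obtained by quotienting by the ``cofinitely color-equal'' relation, the only difference being that your thinning recursion re-proves inline what the paper obtains by citing Lemma~\ref{lem:prerainbow-equiv} (prerainbows compute rainbows over $\rca + \bst$). One small repair: define the color of $\vec x$ to be (a code of) the \emph{least} member of its $R$-class, as the paper does -- a search bounded by $\vec x$ -- rather than a code of the whole class, since deciding from $\emptyset'$ whether $\vec x$ has a partner above it is a $\Pi^0_2$ question; with that choice your $f$ is indeed $\emptyset'$-computable and the rest of your argument goes through unchanged.
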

\begin{proof}
For the direction, $\rca \vdash \wsrrt^{n+1}_2 \imp \rrt^n_2[\emptyset']$, 
simply notice that the coloring of $(n+1)$-tuples constructed
in Lemma~\ref{lem:rrtn-jump} is weakly rainbow-stable. We will prove the converse over~$\rca +\bst$.
Let $f : [\Nb]^{n+1} \to \Nb$ be a 2-bounded weakly rainbow-stable coloring.

Let $g : [\Nb]^n \to \Nb$ be the 2-bounded coloring which on $\vec{x} \in [\Nb]^n$
will fetch the least $\vec{y} \preceq_n \vec{x}$ such that $(\forall^{\infty} s)f(\vec{x}, s) = f(\vec{y}, s)$
and return color $\tuple{\vec{y}}$. One easily sees that $g$ is $f'$-computable and 2-bounded.
By $\rrt^n_2[\emptyset']$, let $H$ be an infinite rainbow for $g$. 
We claim that $H$ is a prerainbow for~$f$.
Suppose for the sake of contradiction that there exists $\vec{x} \preceq_n \vec{y} \in H$
such that $(\forall^{\infty} s)f(\vec{x}, s) = f(\vec{y}, s)$. Then by definition
$g(\vec{x}) = g(\vec{y}) = \tuple{\vec{x}}$ and $H$ is not a rainbow for $g$.
By Lemma~\ref{lem:prerainbow-equiv} and~$\bst$, $f \oplus H$ computes an infinite $f$-rainbow.
\end{proof}

\begin{lemma}[$\rca+\ist$]\label{lem:char1-wsrrt22}
For every computable weakly rainbow-stable 2-bounded coloring $f : [\Nb]^2 \to \Nb$
there exists a uniform family $(X_e)_{e \in \Nb}$ of $\Delta^0_2$ finite sets
such that every $(X_e)_{e \in \Nb}$-escaping function computes an infinite $f$-rainbow.
\end{lemma}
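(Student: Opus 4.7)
The plan is to adapt the proof of Lemma~\ref{lem:char1-srrt22} nearly verbatim. The crucial observation is that weak rainbow-stability is precisely the hypothesis needed to make the set
\[
\bad(x) := \{y \in \Nb : (\forall^\infty s)\,f(x,s) = f(y,s)\}
\]
$\Delta^0_2$ uniformly in $x$: weak rainbow-stability forces, for every pair $(x,y)$, exactly one of the two mutually exclusive $\Sigma^0_2$ alternatives $(\forall^\infty s)f(x,s) = f(y,s)$ or $(\forall^\infty s)f(x,s) \neq f(y,s)$ to hold. Furthermore, 2-boundedness gives $|\bad(x)| \leq 2$: if there were distinct $y_1, y_2 \in \bad(x) \setminus \{x\}$, then at any sufficiently large stage $s$ the three pairs $\{x,s\}$, $\{y_1,s\}$, $\{y_2,s\}$ would share a single color, contradicting 2-boundedness.

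Fix a canonical uniform enumeration $(D_e)_{e \in \Nb}$ of all finite sets, and set $X_e := \bigcup_{x \in D_e} \bad(x)$. Then $(X_e)_{e \in \Nb}$ is a uniform family of $\Delta^0_2$ finite sets, with $|X_e| \leq 2|D_e|$, a bound which is computable in $e$. Given any $(X_e)_{e \in \Nb}$-escaping function $h$, define $g(e) := h(e,\,2|D_e|)$, which guarantees $g(e) \notin X_e$ for every $e$.

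Now construct a prerainbow $R = \bigcup_s R_s$ by finite approximation, starting with $R_0 := \emptyset$: at stage $s$, compute an index $e$ with $D_e = R_s$ and set $R_{s+1} := R_s \cup \{g(e)\}$. Because $g(e) \notin \bad(x)$ for every $x \in R_s$, weak rainbow-stability yields $(\forall^\infty s)\,f(x,s) \neq f(g(e),s)$, preserving the prerainbow property; $\ist$ then justifies that $R$ is itself a prerainbow for $f$, exactly as in Lemma~\ref{lem:char1-srrt22}. Finally apply Lemma~\ref{lem:prerainbow-equiv} to extract an infinite $f$-rainbow computable from $R \oplus f$. There is no substantial obstacle beyond what is already handled in the strongly rainbow-stable case: the only genuine change is that $|X_e|$ is no longer $\Delta^0_2$-computable—only the coarser computable bound $2|D_e|$ is effective—which is exactly why the present lemma drops the uniform $\Delta^0_2$ size condition and queries the escaping function at a computable upper bound rather than at $|X_e|$ itself.
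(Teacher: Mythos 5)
Your proof is correct and follows the paper's argument essentially step for step: the same $\bad$ map, the same definition $X_e = \bad(D_e)$ with the $2$-boundedness bound $|\bad(x)|\leq 2$, the same staged prerainbow construction verified by $\ist$, and the same final appeal to Lemma~\ref{lem:prerainbow-equiv}. The only cosmetic difference is that you query the escaping function at $2|D_e|$ where the paper writes $2\binom{|D_e|}{2}$; your choice is actually the safer one, since $2\binom{|D_e|}{2}$ fails to bound $|X_e|$ when $|D_e|=1$.
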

\begin{proof}
Fix any uniform family $(D_e)_{e \in \Nb}$ of finite sets.
Let $f : [\Nb]^2 \to \Nb$ be a 2-bounded weakly rainbow-stable computable coloring.
For an element $x$, define 
$$
\bad(x) = \set{ y \in \Nb : (\forall^\infty s)c(x,s) = c(y,s)}
$$
Notice that $x \in \bad(x)$.
Because $f$ is weakly rainbow-stable, $\bad$ is a $\Delta^0_2$ function.
For a set $S$, $\bad(S) = \bigcup_{x \in S} \bad(x)$.
Define $X_e = \bad(D_e)$. Hence $X_e$ is a $\Delta^0_2$ set, and this uniformly in $e$.
Moreover, $\card{X_e} \leq 2\card{D_e}$.

Let $h : \Nb \to \Nb$ be a function satisfying $(\forall e)(\forall n)[\card{X_e} \leq n \imp h(e, n) \not \in X_e]$.
We can define $g : \Nb \to \Nb$ by $g(e) = h(e, 2{\card{D_e} \choose 2})$.
Hence $(\forall e)g(e) \not \in X_e$.

We construct a prerainbow $R$ by stages $R_0 (=\emptyset) \subsetneq R_1 \subsetneq R_2, \dots$
as in Lemma~\ref{lem:char1-srrt22}.
Assume that at stage $s$, 
$(\forall \{x,y\} \subseteq R_s)(\forall^{\infty} s)[f(x, s)\neq f(y,s)]$.
Because $R_s$ is finite, we can computably find some index $e$ such that $R_s = D_e$.
Set $R_{s+1} = R_s \cup \set{g(e)}$. By definition, $g(e) \not \in X_e$.
Let $x \in R_s$. Because $g(e) \not \in X_e$, $(\forall^{\infty} s) f(x, s) \neq f(g(e), s)$.
By~$\ist$, the set~$R$ is a prerainbow for~$f$.
By Lemma~\ref{lem:prerainbow-equiv} we can compute an infinite rainbow for $f$ 
from $R \oplus f$.
\end{proof}

\subsection{Lowness and bushy tree forcing}

In this section, we prove that the rainbow Ramsey theorem for pairs
restricted to weakly rainbow-stable colorings is strictly weaker
than the full rainbow Ramsey theorem for pairs, by constructing
an~$\omega$-model of~$\wsrrt^2_2$ having only low set.
As~$\rrt^2_2$ does not admit such a model,
$\wsrrt^2_2$ does not imply $\rrt^2_2$ over~$\rca$.

\begin{theorem}\label{thm:wsrrt22-low}
For every set~$X$ and every weakly rainbow-stable $X$-computable 2-bounded function~$f : [\Nb]^2 \to \Nb$,
there exists an infinite $f$-rainbow low over~$X$.
\end{theorem}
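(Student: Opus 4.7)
The plan is to exploit the combinatorial structure supplied by weak rainbow-stability together with $2$-boundedness to reduce the problem to finding an infinite ``partner-free'' set low over~$X$, and then to run a Mathias/bushy-tree--style forcing. Define $xEy$ iff $(\forall^\infty s)\,f(x,s)=f(y,s)$. By weak rainbow-stability, both $xEy$ and its negation are $\Sigma^{0,X}_2$, so $E$ is $\Delta^{0,X'}_1$; by $2$-boundedness, each $E$-class has size at most~$2$, since three pairwise $E$-related points would force three pairs $\{x,s\},\{y,s\},\{y',s\}$ to share one color for infinitely many~$s$. Let $m\colon\N\to\N\cup\{\infty\}$ be the $X'$-computable partial function sending $x$ to its (unique) $E$-partner, if any.

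If $G$ is any infinite $E$-free set (no two distinct elements $E$-related), then weak rainbow-stability yields $(\forall x\ne y\in G)(\forall^\infty s)\,f(x,s)\ne f(y,s)$, so $G$ is a prerainbow for~$f$. By Lemma~\ref{lem:prerainbow-equiv}, $G\oplus f$ computes an infinite $f$-rainbow, and this rainbow is low over~$X$ whenever $G$ is. Hence it suffices to construct an infinite $E$-free set low over~$X$.

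I would build this set by forcing with conditions $(F,T)$, where $F$ is a finite $E$-free set (an $X'$-decidable property), $T$ is an infinite $X$-computable set with $\max F<\min T$, and $T\cap m(F)=\emptyset$; extensions move finitely many elements of $T$ into $F$ and then shrink $T$ to avoid the new partners (the shrinking is $X'$-computable because $m$ is). Density of the requirement ``extend $F$ by the least element of $T$'' guarantees that the generic $G=\bigcup_n F_n$ is infinite and $E$-free. For lowness over~$X$, at the $e$th stage I would use $X'$ to decide whether some extension of the current condition has $\Phi^{X\oplus F}_e(e)\!\downarrow$; if so I take such an extension and record $e\in(X\oplus G)'$, otherwise every extension preserves divergence and $e\notin(X\oplus G)'$. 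The resulting decision sequence is $X'$-computable, so $(X\oplus G)'\le_T X'$.

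The delicate point, and the main obstacle, is the mismatch between the $X'$-nature of $E$-freeness and the target lowness over the weaker oracle~$X$. The bushy-tree flavor handles this: $2$-boundedness forces the set of ``bad'' future elements at each stage to be finite (namely $m(F)$), so the reservoirs $T$ remain genuinely $X$-computable after removing an $X'$-identified finite bad set, and the extension question posed at each lowness step reduces to a $\Sigma^{0,X}_1$ search inside an $X$-computable reservoir, which $X'$ can decide without leaking further complexity into the generic itself.
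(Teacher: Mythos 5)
Your reduction is sound and matches the paper's: the partner relation $E$ is $\Delta^{0,X}_2$ by weak rainbow-stability, each $E$-class has size at most $2$ by $2$-boundedness, an infinite $E$-free set is a prerainbow, and Lemma~\ref{lem:prerainbow-equiv} converts a prerainbow low over~$X$ into a rainbow low over~$X$. The gap is in the lowness step of your Mathias forcing, exactly at the ``delicate point'' you flag but do not actually resolve. The question you propose to ask $X'$ at stage $e$ is: does some finite $F' \subseteq T$ satisfy $\Phi^{X \oplus (F \cup F')}_e(e)\downarrow$? If you require the witness $F \cup F'$ to be $E$-free, the question is of the form $\exists F'\,[\,\Delta^{0,X}_2 \wedge \Sigma^{0,X}_1\,]$, i.e.\ $\Sigma^{0,X'}_1$, and $X'$ cannot decide it. If you drop the freeness requirement (which is what your ``$\Sigma^{0,X}_1$ search inside an $X$-computable reservoir'' amounts to), then a positive answer is useless: the witness $F'$ may contain a matched pair $x\,E\,y$ among its \emph{new} elements -- removing $m(F)$ from $T$ only protects against partners of elements already in $F$, not against partners inside $F'$ -- so $F \cup F'$ is not a legal condition, and you can neither force convergence (no valid convergent extension in hand) nor force divergence (a positive answer to the unrestricted question is compatible with divergence being forced, but $X'$ cannot certify this). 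The construction therefore stalls precisely when the answer is ``yes but every witness is blocked,'' and nothing in your write-up rules this out or lets $X'$ recognize it.

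The paper's bushy tree forcing is not just flavor; it is the mechanism that closes this gap, and your proposal is missing its key content (Lemma~\ref{lem:wsrrt22-force}). Instead of asking whether \emph{some} extension yields convergence, one asks whether the $\Sigma^{0,X}_1$ set $D = \{\tau : \Phi^{\tau \oplus X}_e(e)\downarrow\}$ is $g$-\emph{big} above the current stem -- a purely $\Sigma^{0,X}_1$ question that never mentions the bad set. If $D$ is big, then since the bad set $B$ is $g$-small and $g$-closed, some leaf of the witnessing bushy tree lies outside $B$ (pigeonhole), and $X'$ can find it because $B$ is $\Delta^{0,X}_2$; this yields a legal extension forcing convergence. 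If $D$ is not big, then $D$ is $g$-small, and by smallness additivity (Lemma~\ref{lem:smallness-add}) one may add $D$ to the bad set, obtaining a legal condition that forces divergence. It is this quantitative ``many convergent extensions, few bad ones'' dichotomy -- not the finiteness of $m(F)$ alone -- that reconciles the $\Delta^{0,X}_2$ freeness constraint with an $X'$-computable decision procedure. To repair your argument you would have to import this largeness question (or an equivalent counting device) into your Mathias framework; as written, the proof does not go through.
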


We will use \emph{bushy tree forcing} for building a low solution to a computable
instance of~$\wsrrt^2_2$. This forcing notion has been successfuly used for proving
many properties over d.n.c.\ degrees~\cite{Ambos-Spies2004Comparing,BienvenuEvery,Khan2014Forcing,Patey2015Ramsey}. 
Indeed, the power of a d.n.c.\ function is known
to be equivalent to finding a function escaping a uniform family of c.e.\ sets~\cite{Kjos-Hanssen2011Kolmogorov},
which is exactly what happens with bushy tree forcing: we build an infinite set by finite approximations,
avoiding a set of bad extensions whose size is computably bounded.
We start by stating the definitions of bushy tree forcing and the basic properties
without proving them. See the survey of Kahn \& Miller~\cite{Khan2014Forcing} for a good introduction.

\begin{definition}[Bushy tree]
Fix a function $h$ and a string $\sigma \in \Nb^{<\Nb}$.
A tree $T$ is  $h$-bushy above $\sigma$ if every $\tau \in T$ is increasing
and comparable with $\sigma$ and whenever $\tau \succeq \sigma$ is not a leaf of~$T$,
it has at least $h(\card{\tau})$ immediate children. 
We call $\sigma$ the \emph{stem} of $T$.
\end{definition}

\begin{definition}[Big set, small set]
Fix a function $h$ and some string $\sigma \in \Nb^{<\Nb}$.
A set $B \subseteq \Nb^{<\Nb}$ is \emph{$h$-big above $\Nb$} 
if there exists a finite tree $T$ $h$-bushy above $\sigma$ 
such that all leafs of $T$ are in $B$. 
If no such tree exists, $B$ is said to be \emph{$h$-small above $\sigma$}.
\end{definition}

Consider for example a weakly rainbow-stable 2-bounded function~$f : [\Nb]^2 \to \Nb$.
We want to construct an infinite prerainbow for~$f$. We claim that
the following set is~$id$-small above~$\epsilon$, where~$id$ is the identity function:
\[
B_f = \{ \sigma \in \Nb^{<\Nb} : (\exists x, y \in \sigma)(\forall^{\infty} s)f(x,s) = f(y, s)\}
\]
Indeed, given some string~$\sigma \not \in B_f$,
there exists at most~$|\sigma|$ integers~$x$ such that~$\sigma x \in B_f$.
Therefore, given any infinite tree which is $h$-bushy above~$\emptyset$,
at least one of the paths will be a prerainbow for~$f$. Also note that because
$f$ is weakly rainbow-stable, the set~$B_f$ is~$\Delta^{0,f}_2$.
We now state some basic properties about bushy tree forcing.

%\begin{lemma}[Concatenation]\label{lem:concatenation-prop}
%Fix a function $h$.
%Suppose that $A \subseteq \Nb^{<\Nb}$ is $h$-big above~$\Nb$.
%If $A_\tau \subseteq \Nb^{<\Nb}$ is $h$-big above $\tau$
%for every $\tau \in A$, then $\bigcup_{\tau \in A} A_\tau$ is $h$-big above $\sigma$.
%\end{lemma}

\begin{lemma}[Smallness additivity] \label{lem:smallness-add}
Suppose that $B_1, B_2, \ldots, B_n$ are subsets of $\Nb^{<\Nb}$, 
$g_1$, $g_2$, ..., $g_n$ are functions, and $\sigma \in \Nb^{<\Nb}$.
If $B_i$ is $g_i$-small above~$\sigma$ for all~$i$, then $\bigcup_i B_i$ is $(\sum_i g_i)$-small above $\sigma$. 
\end{lemma}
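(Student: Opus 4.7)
The plan is to prove the lemma by induction on~$n$, with the crux being the case~$n=2$. Once the two-set case is in hand, the general statement follows by regrouping $B_1 \cup \ldots \cup B_{n+1}$ as $B_1 \cup \bigl(\bigcup_{i=2}^{n+1} B_i\bigr)$ with bound $g_1 + \bigl(\sum_{i=2}^{n+1} g_i\bigr)$ and invoking the inductive hypothesis on the right-hand union.

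For the two-set case I would argue the contrapositive: if $B_1 \cup B_2$ is $(g_1+g_2)$-big above~$\sigma$, witnessed by a finite tree~$T$ that is $(g_1+g_2)$-bushy above~$\sigma$ with all leaves in $B_1 \cup B_2$, then either $B_1$ is $g_1$-big above~$\sigma$ or $B_2$ is $g_2$-big above~$\sigma$. Prove this by induction on the size of the witnessing tree~$T$. If $\sigma$ is itself a leaf of~$T$, then $\sigma \in B_i$ for some $i \in \{1,2\}$, and the singleton tree $\{\sigma\}$ is vacuously $g_i$-bushy above~$\sigma$ with its only leaf in~$B_i$.

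Otherwise $\sigma$ is an internal node of~$T$ with at least $g_1(|\sigma|) + g_2(|\sigma|)$ immediate successors. For each such successor~$\tau$, the subtree $T_\tau$ of~$T$ rooted at~$\tau$ is $(g_1+g_2)$-bushy above~$\tau$ with leaves in $B_1 \cup B_2$, so by induction either there is a subtree $U^1_\tau \subseteq T_\tau$ which is $g_1$-bushy above~$\tau$ with leaves in~$B_1$, or there is a subtree $U^2_\tau \subseteq T_\tau$ which is $g_2$-bushy above~$\tau$ with leaves in~$B_2$. Partition the immediate successors of~$\sigma$ accordingly into $C_1$ and $C_2$. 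Since $|C_1| + |C_2| \geq g_1(|\sigma|) + g_2(|\sigma|)$, by pigeonhole at least one of $|C_1| \geq g_1(|\sigma|)$ or $|C_2| \geq g_2(|\sigma|)$ holds. In the first case, attaching~$\sigma$ as a new root to the trees $\{U^1_\tau : \tau \in C_1\}$ yields a finite tree that is $g_1$-bushy above~$\sigma$ with leaves in~$B_1$, witnessing that $B_1$ is $g_1$-big above~$\sigma$; the second case is symmetric.

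There is no real obstacle here: the only arithmetic ingredient is the pigeonhole inequality above, and the structural conditions (increasingness of strings, comparability with~$\sigma$) are inherited automatically from~$T$ by every subtree we build. The one point to be careful about is the base case, where one must observe that the singleton tree~$\{\sigma\}$ trivially satisfies the bushiness requirement because its stem has no non-leaf descendants to which the branching condition applies.
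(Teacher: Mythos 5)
The paper does not supply its own proof of this lemma; it merely states it and refers the reader to the Khan--Miller survey for the basic facts about bushy trees. Your argument is correct and is precisely the standard proof: reduce to two sets, pass to the contrapositive, and run a structural induction on the witnessing tree using the pigeonhole principle at the root. Two small cosmetic points worth noting: first, ``the singleton tree $\{\sigma\}$'' should really be the set of all prefixes of $\sigma$ (so that it is closed under prefix), though this makes no difference to the bushiness check since the branching condition only concerns nodes $\tau \succeq \sigma$; second, the induction is best phrased as being on the number of nodes of $T$ lying at or above the stem $\sigma$ rather than on $|T|$ outright, since in the degenerate case where $\sigma$ has a single immediate successor $\tau$ the sets $T$ and $T_\tau$ coincide while the quantity that actually shrinks is the count of nodes above the stem. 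Neither affects the substance of the argument.
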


\begin{lemma}[Small set closure]\label{lem:small-set-closure}
We say that $B  \subseteq \Nb^{<\Nb}$ is \emph{$g$-closed} if whenever $B$ is $g$-big above a string $\rho$ then $\rho \in B$. Accordingly, the \emph{$g$-closure} of any set~$B \subseteq \Nb^{<\Nb}$ is the set $C = \set{\tau \in \Nb^{<\Nb} : B \mbox{ is $g$-big above } \tau}$. If $B$ is $g$-small above a string~$\sigma$, then its closure is also $g$-small above $\sigma$.
\end{lemma}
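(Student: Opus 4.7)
The plan is to prove the contrapositive: if the $g$-closure $C$ of $B$ is $g$-big above~$\sigma$, then so is $B$ itself. This will immediately give the contrapositive of the claim, namely that if $C$ is $g$-big above~$\sigma$ then $B$ is $g$-big above~$\sigma$, so small-ness of $B$ above~$\sigma$ transfers to~$C$.

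First I would unfold the hypothesis: assuming $C$ is $g$-big above~$\sigma$, fix a witnessing finite tree $T$ which is $g$-bushy above~$\sigma$ and whose leaves all lie in~$C$. By definition of~$C$, for every leaf $\tau$ of~$T$ the set $B$ is $g$-big above~$\tau$, so we can fix a finite tree $T_\tau$ which is $g$-bushy above~$\tau$ with leaves in~$B$ (if $\tau$ already belongs to~$B$, take $T_\tau = \{\tau\}$, treating $\tau$ as a leaf).

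Then I would form the grafted tree
\[
T' \;=\; T \,\cup\, \bigcup_{\tau \text{ leaf of } T} T_\tau,
\]
and verify two things: that $T'$ is $g$-bushy above~$\sigma$, and that every leaf of $T'$ lies in~$B$. For the bushiness, each non-leaf of $T'$ is either a non-leaf of $T$ (in which case it already has at least $g(|\rho|)$ immediate children in~$T$, hence in~$T'$), or a non-leaf of some~$T_\tau$ (for which the same holds inside $T_\tau$); note that a former leaf $\tau$ of~$T$ that is not a leaf of~$T_\tau$ inherits its $\geq g(|\tau|)$ children from~$T_\tau$. For the leaf condition, every leaf of~$T'$ is a leaf of some~$T_\tau$ and therefore lies in~$B$. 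Thus $T'$ witnesses that $B$ is $g$-big above~$\sigma$, contradicting our hypothesis.

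The argument is essentially a purely combinatorial splicing, with no serious obstacle — the only mild care is in handling the case where a leaf of $T$ already belongs to $B$ (so that $T_\tau$ degenerates to a single node) and in observing that the bushiness condition at the stem of each $T_\tau$ is exactly what ensures the grafted interior node retains enough children.
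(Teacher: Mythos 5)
Your proof is correct and is exactly the standard grafting argument: the paper itself states this lemma without proof, citing the Khan--Miller survey on bushy tree forcing, and the proof there proceeds by the same contrapositive splicing of a witnessing tree for the closure with witnessing trees above each of its leaves. The only point left tacit in your write-up is that distinct leaves of $T$ are pairwise incomparable (leaves being maximal in a prefix-closed finite tree), so the grafted trees $T_\tau$ do not add extensions above other leaves and the leaf/non-leaf analysis in $T'$ goes through as you state.
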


Note that if~$B$ is a $\Delta^{0,X}_2$ $g$-small set for some computable function~$g$,
so is the $g$-closure of~$B$. Moreover, one can effectively find a $\Delta^{0,X}_2$ index of the $g$-closure of~$B$
given a $\Delta^{0,X}_2$ index of~$B$.
Fix some set~$X$.
Our forcing conditions are tuples~$(\sigma, g, B)$ where
$\sigma$ is an increasing string, $g$ is a computable function and~$B \subseteq \Nb^{<\Nb}$ is a $\Delta^{0,X}_2$
$g$-closed set $g$-small above~$\sigma$.
A condition~$(\tau, h, C)$ \emph{extends} $(\sigma, g, B)$ if~$\sigma \preceq \tau$ and~$B \subseteq C$.
Any infinite decreasing sequence of conditions starting with~$(\epsilon, id, B_f)$
will produce a prerainbow for~$f$.

The following lemma is sufficient to deduce the existence of a $\Delta^{0,X}_2$ infinite prerainbow for~$f$.

\begin{lemma}\label{lem:wsrrt22-ext}
Given a condition~$(\sigma, g, B)$, one can $X'$-effectively
find some~$x \in \Nb$ such that the condition~$(\sigma x, g, B)$ is a valid extension.
\end{lemma}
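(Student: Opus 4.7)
The plan is to reduce the extension condition to the single requirement that $\sigma x \notin B$, and then to perform a bounded $X'$-effective search for such an $x$. Since $\sigma$ is increasing by hypothesis, the string $\sigma x$ will be increasing for any $x > \max(\sigma)$. Moreover, because $B$ is $g$-closed, $B$ is $g$-small above $\sigma x$ if and only if $\sigma x \notin B$: the trivial one-node tree with leaf $\sigma x$ witnesses $g$-bigness whenever $\sigma x \in B$, and $g$-closure supplies the converse direction. Thus validity of the extension $(\sigma x, g, B)$ is equivalent to the conjunction $x > \max(\sigma)$ and $\sigma x \notin B$.

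Next I would show by a direct bushy-tree argument that among the integers $x > \max(\sigma)$, strictly fewer than $g(|\sigma|)$ satisfy $\sigma x \in B$. Suppose for contradiction that $\sigma x_1, \dots, \sigma x_{g(|\sigma|)} \in B$ for $g(|\sigma|)$ distinct $x_i > \max(\sigma)$. By $g$-closure, $B$ is $g$-big above each $\sigma x_i$, witnessed by a finite tree $T_i$ that is $g$-bushy above $\sigma x_i$ with all leaves in $B$. Attaching each $T_i$ below the common root $\sigma$, whose immediate children are $\sigma x_1, \dots, \sigma x_{g(|\sigma|)}$, produces a finite tree $g$-bushy above $\sigma$ with all leaves in $B$, contradicting the $g$-smallness of $B$ above $\sigma$. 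Hence at least one $x$ in the finite set $\{\max(\sigma)+1, \dots, \max(\sigma)+g(|\sigma|)\}$ satisfies $\sigma x \notin B$.

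Finally, since $B$ is $\Delta^{0,X}_2$, its characteristic function is $X'$-computable uniformly in a $\Delta^{0,X}_2$ index, while $g(|\sigma|)$ and $\max(\sigma)$ are available directly. Thus the $X'$-oracle checks the candidates $x = \max(\sigma)+1, \max(\sigma)+2, \dots, \max(\sigma)+g(|\sigma|)$ in turn and returns the first one satisfying $\sigma x \notin B$; termination is guaranteed by the previous paragraph, and the whole procedure is uniform in the presentation of the condition. The only nontrivial point is the reduction to the membership question $\sigma x \in B$ via $g$-closure, which is precisely why the definition of a condition was chosen to include this property; everything else is a routine application of the bushy-tree dichotomy and the standard fact that $\Delta^{0,X}_2$ predicates lie at the $X'$-computable level.
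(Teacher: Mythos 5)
Your proof is correct and follows essentially the same route as the paper: bound the number of bad one-step extensions using $g$-smallness of $B$ above $\sigma$, find a good $x$ by an $X'$-effective search (membership in the $\Delta^{0,X}_2$ set $B$ being $X'$-decidable), and invoke $g$-closure to conclude that $B$ is $g$-small above $\sigma x$. (One cosmetic remark: in your counting paragraph, the fact that $B$ is $g$-big above each $\sigma x_i$ with $\sigma x_i \in B$ comes from the one-node tree you already described, not from $g$-closure, which goes in the other direction.)
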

\begin{proof}
Pick the first~$x \in \Nb$ greater than $\sigma(|\sigma|)$ such that~$\sigma x \not \in B$.
Such~$x$ exists as there are at most~$g(|\sigma|)-1$ many bad~$x$ by~$g$-smallness of~$B$.
Moreover~$x$ can be found $X'$-effectively as~$B$ is~$\Delta^{0,X}_2$.
By $g$-closure of~$B$, $B$ is $g$-small above~$\sigma x$. Therefore~$(\sigma x, g, B)$
is a valid extension.
\end{proof}

A sequence~$G$ \emph{satisfies} the condition~$(\sigma, g, B)$
if it is increasing, $\sigma \prec G$ and~$B$ is~$g$-small above~$\tau$
for every~$\tau \prec G$.
We say that~$(\sigma, g, B) \Vdash \Phi^{G \oplus X}_e(e) \downarrow$
if~$\Phi^{\sigma \oplus X}_e(e) \downarrow$, and~$(\sigma, g, B) \Vdash \Phi^{G \oplus X}_e(e) \uparrow$
if~$\Phi^{G \oplus X}_e(e) \uparrow$ for every sequence~$G$ satisfying the condition~$(\sigma, g, B)$.
The following lemma decides the jump of the infinite set constructed.

\begin{lemma}\label{lem:wsrrt22-force}
Given a condition~$(\sigma, g, B)$ and an index~$e \in \Nb$,
one can $X'$-effectively find some extension~$d = (\tau, h, C)$ 
such that~$d \Vdash \Phi^{G \oplus X}_e(e) \downarrow$ or
$d \Vdash \Phi^{G \oplus X}_e(e) \uparrow$. 
Moreover, one can~$X'$-decide which of the two holds.
\end{lemma}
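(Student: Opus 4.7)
The plan is to mimic a standard lowness-style bushy-tree-forcing argument. Consider $U = \{\tau \succeq \sigma : \Phi^{\tau \oplus X}_e(e) \downarrow\}$ together with its $g$-closure $D = \{\tau : U \text{ is } g\text{-big above } \tau\}$. Both sets are $\Sigma^{0,X}_1$, since asking whether $U$ is $g$-big above a node reduces to guessing a finite tree and checking that its leaves have converged; hence $X'$ can decide $\sigma \in D$, which by definition is the same as asking whether $U$ is $g$-big above $\sigma$. The whole proof branches on this single question. The subtle point to watch, which I discuss below, will be ensuring that the new bad set introduced in the negative case remains $\Delta^{0,X}_2$ and not merely $\Sigma^{0,X}_2$.

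In the positive case, $\sigma \in D$, I would enumerate finite $g$-bushy trees above $\sigma$ until $X'$ finds one, call it $T$, with all leaves confirmed in $U$. Since $B$ is $g$-small above $\sigma$, a pruning argument rules out the possibility that every branch of $T$ meets $B$ before reaching a leaf: truncating each branch at its first $B$-node would otherwise yield a $g$-bushy subtree of $T$ with leaves in $B$, contradicting $g$-smallness of $B$ above $\sigma$. So $X'$ can extract a leaf $\tau \in U$ together with a path $\sigma \prec \dots \prec \tau$ entirely disjoint from $B$ (membership in $B$ being $X'$-decidable since $B$ is $\Delta^{0,X}_2$). The extension $d := (\tau, g, B)$ is valid because $\tau \notin B$ combined with $g$-closedness of $B$ gives $B$ $g$-small above $\tau$, and it forces $\Phi^{G \oplus X}_e(e) \downarrow$ since $\Phi^{\tau \oplus X}_e(e)$ already halts.

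In the negative case, $\sigma \notin D$, so $U$ is $g$-small above $\sigma$ and, by Lemma~\ref{lem:small-set-closure}, so is $D$. Set $C := B \cup D$. Lemma~\ref{lem:smallness-add} gives that $C$ is $(2g)$-small above $\sigma$, and its contrapositive shows the $(2g)$-closure of $C$: if $C$ were $(2g)$-big above some $\tau$, then $B$ would be $g$-big above $\tau$ or $D$ would, and either way $\tau$ ends up in $B \cup D = C$ by $g$-closedness of $B$ and $D$. The main obstacle I anticipate is precisely keeping $C$ inside $\Delta^{0,X}_2$, since a naive $(2g)$-closure of a $\Delta^{0,X}_2$ set lands in $\Sigma^{0,X}_2$ and would break Lemma~\ref{lem:wsrrt22-ext}; the resolution is that the new portion $D$ of the bad set is actually c.e.\ in $X$ (being the $g$-closure of the c.e.-in-$X$ set $U$) and already $g$-closed, so $B \cup D$ is $(2g)$-closed and $\Delta^{0,X}_2$ automatically, without invoking any further closure operation. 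The condition $d := (\sigma, 2g, C)$ then extends $(\sigma, g, B)$, and because $C \supseteq U$ with $C$ being $(2g)$-closed, no prefix of any sequence satisfying $d$ lies in $U$, which forces $\Phi^{G \oplus X}_e(e) \uparrow$.
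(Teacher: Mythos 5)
Your proof is correct and follows the same overall strategy as the paper: ask $X'$ whether the $\Sigma^{0,X}_1$ set of convergent stems is $g$-big above $\sigma$, extend the stem to a convergent leaf avoiding $B$ in the positive case, and enlarge the bad set with bound $2g$ in the negative case. The one place you genuinely deviate is the negative case, and your variant is arguably tighter than the paper's: the paper takes $C$ to be the $2g$-closure of $B \cup D$ and relies on its earlier (unproved) remark that the closure of a $\Delta^{0,X}_2$ set is again effectively $\Delta^{0,X}_2$, whereas a priori bigness above a node relative to a $\Delta^{0,X}_2$ set is only $\Sigma^{0,X}_2$. You sidestep this by closing only the c.e.-in-$X$ set $U$ (whose $g$-closure is still $\Sigma^{0,X}_1$) and then observing, via the contrapositive of Lemma~\ref{lem:smallness-add} together with $g$-closedness of $B$ and of the closure of $U$, that $B \cup D$ is already $(2g)$-closed, so no further closure operation is needed and the $\Delta^{0,X}_2$ bound on the bad set is manifest. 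Two small points you lean on implicitly: that the $g$-closure of a set is itself $g$-closed (the standard concatenation-of-bushy-trees fact, which the paper also uses silently), and that membership implies bigness (so a prefix in $U \subseteq C$ of a sequence satisfying the condition contradicts smallness, giving the forcing of divergence); both are routine, but worth a sentence if you write this up.
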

\begin{proof}
Consider the following~$\Sigma^{0,X}_1$ set:
\[
D = \{ \tau \in \Nb^{<\Nb} : \Phi^{\tau \oplus X}_e(e) \downarrow \}
\]
The question whether~$D$ is~$g$-big above~$\sigma$ is~$\Sigma^{0,X}_1$ and therefore can be~$X'$-decided.
\begin{itemize}
	\item If the answer is yes, we can~$X$-effectively find a finite tree~$T$ $g$-bushy above~$\sigma$
	witnessing this. As~$B$ is~$\Delta^{0,X}_2$, we can take~$X'$-effectively some leaf~$\tau \in T$.
	By definition of~$T$, $\sigma \prec \tau$. As~$B$ is $g$-closed, $B$ is $g$-small above~$\tau$,
	and therefore~$(\tau, g, B)$ is a valid extension. Moreover~$\Phi^{\tau \oplus X}_e(e) \downarrow$.
	\item If the answer is no, the set~$D$ is $g$-small above~$\sigma$.
	By the smallness additivity property (Lemma~\ref{lem:smallness-add}),
	$B \cup D$ is $2g$-small above~$\sigma$. We can $X$-effectively find a $\Delta^0_2$
	index for its $2g$-closure~$C$. The condition~$(\sigma, 2g, C)$ is a valid extension
	forcing~$\Phi^{G \oplus X}_e(e) \uparrow$.
\end{itemize}
\end{proof}

We are now ready to prove Theorem~\ref{thm:wsrrt22-low}.

\begin{proof}[Proof of Theorem~\ref{thm:wsrrt22-low}]
Fix some set~$X$ and some weakly rainbow-stable $X$-computable 2-bounded function~$f : [\Nb]^2 \to \Nb$.
Thanks to Lemma~\ref{lem:wsrrt22-ext} and Lemma~\ref{lem:wsrrt22-force}, define an infinite decreasing
$X'$-computable sequence of conditions~$c_0 \geq c_1 \geq \dots$ starting with~$c_0 = (\epsilon, id, B_f)$ and
such that for each~$s \in \Nb$,
\begin{itemize}
	\item[(i)] $|\sigma_s| \geq s$
	\item[(ii)] $c_{s+1} \Vdash \Phi^{G \oplus X}_s(s) \downarrow$ or~$c_{s+1} \Vdash \Phi^{G \oplus X}_s(s) \uparrow$
\end{itemize}
where~$c_s = (\sigma_s, g_s, B_s)$. The set~$G = \bigcup_s \sigma_s$
is a prerainbow for~$f$. By (i), $G$ is infinite and by~(ii), $G$ is low over~$X$. 
By Lemma~\ref{lem:prerainbow-equiv}, $G \oplus X$ computes an infinite $f$-rainbow.
\end{proof}

\begin{corollary}
There exists an $\omega$-model of $\wsrrt^2_2$ having only low sets.
\end{corollary}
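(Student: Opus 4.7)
The plan is to derive the corollary from Theorem~\ref{thm:wsrrt22-low} by the standard iterated-solution construction of an $\omega$-model. I would build a chain $\emptyset = X_0 \leq_T X_1 \leq_T \dots$ of low sets by dovetailing through all Turing functionals, and then take the second-order part of $\Mcal$ to be $\{Z : (\exists s)\, Z \leq_T X_s\}$. This set is trivially closed under Turing reducibility, and it is closed under join because $X_s \oplus X_t \leq_T X_{\max(s,t)}$, so $\Mcal$ is an $\omega$-structure of~$\rca$.

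First I would fix a bookkeeping enumerating every pair~$(e, t) \in \Nb^2$ at some stage $s \geq t$. At stage~$s$, given the pair~$(e, t)$, I would examine $\Phi^{X_t}_e$. If it computes (as a total function) a weakly rainbow-stable 2-bounded coloring $f : [\Nb]^2 \to \Nb$, I would apply Theorem~\ref{thm:wsrrt22-low} with $X = X_s$ to obtain an infinite $f$-rainbow $Y_s$ that is low over~$X_s$, and set $X_{s+1} = X_s \oplus Y_s$. Otherwise I would set $X_{s+1} = X_s$.

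The key verification is that lowness is preserved: if $X_s$ is low and $Y_s$ is low over~$X_s$, then $X_{s+1}' = (X_s \oplus Y_s)' \leq_T X_s' \leq_T \emptyset'$, so $X_{s+1}$ is again low. An induction then gives that every $X_s$ is low, whence every $Z \in \Mcal$ is low. Finally, any $\Mcal$-instance of~$\wsrrt^2_2$ is of the form $\Phi^{X_t}_e$ for some $e, t$, and the bookkeeping guarantees that this pair is treated at some stage $s \geq t$, producing a solution $Y_s \in \Mcal$. I do not expect any real obstacle: the content of the corollary is carried entirely by Theorem~\ref{thm:wsrrt22-low}, and only the routine $\omega$-model closure argument remains.
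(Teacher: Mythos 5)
Your proposal is correct and is exactly the standard iterated low-solution construction that the paper implicitly invokes by deriving the corollary directly from Theorem~\ref{thm:wsrrt22-low}. The key computation $(X_s \oplus Y_s)' \leq_T X_s' \leq_T \emptyset'$ is right, and the bookkeeping over pairs $(e,t)$ with $t \leq s$ correctly captures every $\Mcal$-instance since any $Z \in \Mcal$ lies below some $X_t$.
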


\begin{corollary}
$\wsrrt^2_2$ does not imply~$\rrt^2_2$ over~$\rca$.
\end{corollary}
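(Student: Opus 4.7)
The plan is to chain together two results already established in the paper. The preceding corollary provides an $\omega$-model $\mathcal{M}$ of $\wsrrt^2_2$ in which every set is low, and hence in particular every set is $\Delta^0_2$. On the other hand, the theorem from the Rainbow Ramsey section (the one deduced from Miller's characterization $\rca \vdash \rrt^2_2 \biimp \dnrzp$) shows that no model of $\rca + \rrt^2_2$ can consist entirely of $\Delta^0_2$ sets, since a $\Delta^0_2$ function d.n.c.\ relative to $\emptyset'$ would have some index $e$ with $\Phi^{\emptyset'}_e = f$, yielding the contradiction $f(e) \neq \Phi^{\emptyset'}_e(e) = f(e)$.

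So the argument I would write is essentially one sentence: the model $\mathcal{M}$ provided by the previous corollary satisfies $\rca + \wsrrt^2_2$ and contains only low, hence $\Delta^0_2$, sets, so by the cited theorem it cannot satisfy $\rrt^2_2$. Since $\omega$-model separation implies non-provability over $\rca$, this yields $\rca + \wsrrt^2_2 \not\vdash \rrt^2_2$.

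There is no real obstacle here, since all the work has been done in Theorem~\ref{thm:wsrrt22-low} and in the earlier theorem asserting that no $\omega$-model of $\rrt^2_2$ consists only of $\Delta^0_2$ sets; the corollary is just the combination. The only minor point to flag in passing is the standard fact that lowness implies $\Delta^0_2$, which is used implicitly when moving from ``only low sets'' in $\mathcal{M}$ to ``only $\Delta^0_2$ sets'', but this is immediate from the definition $A' \leq_T \emptyset'$ together with $A \leq_T A'$.
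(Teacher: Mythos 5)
Your argument is correct and is essentially the paper's own proof: the paper likewise combines the low $\omega$-model of $\wsrrt^2_2$ from Theorem~\ref{thm:wsrrt22-low} with Miller's equivalence $\rrt^2_2 \biimp \dnrzp$ and the observation that no low (hence $\Delta^0_2$) function can be d.n.c.\ relative to $\emptyset'$. Your explicit mention of the step ``low implies $\Delta^0_2$'' is a fine clarification but does not change the route.
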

\begin{proof}
By Theorem~\ref{thm-rrt22-dnrzp}, every model of~$\rrt^2_2$ is a model of~$\dnr[\emptyset']$,
and no function d.n.c.\ relative to~$\emptyset'$ is low.
\end{proof}

\subsection{Relations to other principles}

In this last section, we prove that the rainbow Ramsey theorem
for pairs for weakly rainbow-stable colorings
is a consequence of the stable free set theorem for pairs.
We need first to introduce some useful terminology.

\begin{definition}[Wang in~\cite{Wang2014Cohesive}]
Fix a 2-bounded coloring $f : [\Nb]^n \to \Nb$ and~$k \leq n$.
A set~$H$ is a \emph{$k$-tail $f$-rainbow} if $f(\vec{u}, \vec{v}) \neq f(\vec{w}, \vec{x})$
for all~$\vec{u}, \vec{w} \in [H]^{n-k}$ and~ \emph{distinct} $\vec{v}, \vec{x} \in [H]^k$.
\end{definition}

Wang proved in~\cite{Wang2014Cohesive} that for every 2-bounded coloring $f : [\Nb]^n \to \Nb$,
every $f$-random computes an infinite 1-tail $f$-rainbow $H$.
We refine this result by the following lemma.

\begin{lemma}[$\rca$]\label{lem:dnc-normal-rrt}
Let $f : [\Nb]^{n+1} \to \Nb$ be a 2-bounded coloring.
Every function d.n.c.\ relative to $f$ computes an infinite 1-tail $f$-rainbow $H$.
\end{lemma}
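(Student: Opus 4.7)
The plan is to construct $H$ by finite approximations $H_0 \subsetneq H_1 \subsetneq \cdots$, extending one element at a stage using an escaping function derived from the given d.n.c.\ function $d$ via Theorem~\ref{thm:dnrzn-char-dnr} (case $n = 0$, relativized to $f$): $d \oplus f$ computes a function $g : \Nb^2 \to \Nb$ with $g(e, m) \notin X_e$ whenever $|X_e| \leq m$, for any uniformly $\Sigma^{0,f}_1$ family $(X_e)_{e \in \Nb}$ of finite sets.

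First, I would analyze ``bad'' one-point extensions. Given a finite $1$-tail $f$-rainbow $H_s$ and $x > \max H_s$, the set $H_s \cup \{x\}$ fails to be a $1$-tail $f$-rainbow exactly when there exist $(\vec{u}, v), (\vec{w}, y) \in [H_s \cup \{x\}]^{n+1}$ with $v \neq y$ and $f(\vec{u}, v) = f(\vec{w}, y)$, where the violation genuinely involves $x$. Because $x > \max H_s$ and the tuples are increasing, any occurrence of $x$ must lie in the last coordinate; because $v \neq y$, only one side can use $x$, forcing $v = x$, $\vec{u} \subseteq H_s$, and $(\vec{w}, y) \in [H_s]^{n+1}$. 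For each such $(\vec{w}, y)$, the $2$-boundedness of $f$ yields at most one preimage of the color $f(\vec{w}, y)$ other than $(\vec{w}, y)$ itself, hence at most one bad $x$. Therefore the set of bad extensions above $\max H_s$ has size at most $\binom{|H_s|}{n+1}$ and is $f$-decidable uniformly from $H_s$.

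The construction then runs as follows. Starting with $H_0 = \emptyset$, at stage $s$ define $X_s = \{0, \dots, \max H_s\} \cup \{\textrm{bad } x > \max H_s\}$, a $\Sigma^{0,f}_1$ finite set whose index $e_s$ is $(d \oplus f)$-computable uniformly in $s$, with cardinality at most $N_s = \max H_s + 1 + \binom{|H_s|}{n+1}$. Set $x_s = g(e_s, N_s)$; by construction $x_s > \max H_s$ and $H_s \cup \{x_s\}$ is still a $1$-tail $f$-rainbow. Put $H_{s+1} = H_s \cup \{x_s\}$ and $H = \bigcup_s H_s$. The set $H$ is infinite, $(d \oplus f)$-computable, and a $1$-tail $f$-rainbow by a straightforward induction on $s$ available in $\rca$. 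The only delicate step is the reduction of bad extensions to the single configuration $v = x$, which relies on both the increasingness convention for tuples in $[H_s \cup \{x\}]^{n+1}$ and the choice $x > \max H_s$; the rest is a routine iteration of the escaping function.
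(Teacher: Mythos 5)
Your proof is correct and matches the paper's approach: build $H$ one element at a time, escaping the finite $\Sigma^{0,f}_1$ set of bad one-point extensions using the d.n.c.\ function, and note that this set has computably bounded cardinality because each $(\vec w, y) \in [H_s]^{n+1}$ can, by $2$-boundedness, account for at most one bad $x$. Your bound $\binom{|H_s|}{n+1}$ is in fact the sharper (and correct) estimate where the paper states $\binom{|H_s|}{n}$, and your explicit analysis of why the new element can only occur in the tail position is carried out implicitly in the paper.
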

\begin{proof}
By~\cite{Kjos-Hanssen2011Kolmogorov}, every function d.n.c.\ relative to $f$ computes a function $g$
such that if $|W^f_e| \leq n$ then $g(e, n) \not \in W^f_e$.
Given a finite 1-tail $f$-rainbow $F$,
there exists at most ${\card{F} \choose n}$ elements $x$ such that $F \cup \{x\}$
is not a 1-tail $f$-rainbow. We can define an infinite 1-tail $f$-rainbow $H$ by stages,
starting with $H_0 = \emptyset$. Given a finite 1-tail $f$-rainbow $H_s$ of cardinal $s$,
set $H_{s+1} = H_s \cup \{g(e, {s \choose n})\}$ where $e$ is a Turing index
such that $W^f_e = \{x : H_s \cup \{x\} \mbox{ is not a 1-tail } f\mbox{-rainbow}\}$.
\end{proof}

\begin{theorem}
$\rca + \bst \vdash \sfs(2) \imp \wsrrt^2_2$
\end{theorem}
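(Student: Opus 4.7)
The plan is to reduce $\wsrrt^2_2$ to an application of $\sfs(2)$ whose free set is a prerainbow for~$f$, and then extract a rainbow via Lemma~\ref{lem:prerainbow-equiv}. To that end, I would introduce the eventual-equality relation $x \sim y :\Leftrightarrow (\forall^\infty s)\,f(x,s) = f(y,s)$, which is $\Delta^{0,f}_2$ by weak rainbow-stability. Its equivalence classes have size at most~$2$, since three elements sharing one class would force three pairs to share a color at every large~$s$, contradicting 2-boundedness.

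The core of the argument is the $f$-computable ``look-back'' function
\[
g(x,s) \;=\; \min\{y \leq x : f(x,s) = f(y,s)\} \quad (x < s),
\]
which is well-defined because $y = x$ always qualifies. I would show $g$ is stable by case analysis on whether $x$ has a $\sim$-partner below itself: if yes, say $y^\ast < x$, then for cofinitely many~$s$ the pair $\{y^\ast,s\}$ is the unique other pair sharing the color of $\{x,s\}$, so by 2-boundedness no smaller~$y$ can intervene and $g(x,s) = y^\ast$ eventually; if no such $y^\ast$ exists, then for each $y \leq x$ with $y \neq x$ one has $(\forall^\infty s)\,f(x,s) \neq f(y,s)$, and invoking $\bst$ to collect these instances uniformly over the bounded range $y \leq x$ produces a single stage past which $g(x,s) = x$. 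This is the single step where $\bst$ is indispensable, and explains why the implication is stated over $\rca + \bst$ rather than pure $\rca$; I expect it to be the main obstacle if one tries to weaken the hypothesis.

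With $g$ stable and $f$-computable, $\sfs(2)$ yields an infinite $g$-free set~$H$. I would verify that no two distinct elements of $H$ are $\sim$-related: if $x,y \in H$ with $x < y$ and $x \sim y$, then by the class-size bound $\lim_s g(y,s) = x$, and any $z \in H$ past the stability threshold of $g(y,\cdot)$ gives $g(y,z) = x \in H$ with $x \notin \{y,z\}$, violating $g$-freeness. Hence for every pair $x \neq y$ in $H$ one has $x \not\sim y$, which by weak rainbow-stability gives $(\forall^\infty s)\,f(x,s) \neq f(y,s)$, so $H$ is a prerainbow for~$f$. Lemma~\ref{lem:prerainbow-equiv} then delivers the desired infinite $f$-rainbow computable from $H \oplus f$.
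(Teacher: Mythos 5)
Your proof is correct and takes a genuinely more economical route than the paper's. The paper's argument has an extra preprocessing layer: it first invokes $\sfs(2) \imp \dnr$ and Lemma~\ref{lem:dnc-normal-rrt} to extract an infinite 1-tail $f$-rainbow $X$, then defines a stable coloring on pairs of $X$ (essentially the same ``look-back'' function you construct, with $0$ as default instead of $x$ itself), applies $\sfs(2)$, and shows the free set is outright an $f$-rainbow; the 1-tail property does the work of handling colliding pairs with \emph{distinct} last coordinates, so freeness only needs to rule out collisions with the \emph{same} last coordinate. You skip the 1-tail extraction entirely, define the look-back function on all of $[\Nb]^2$, and settle for showing the free set is a \emph{prerainbow}, delegating the different-last-coordinate case to Lemma~\ref{lem:prerainbow-equiv}, which is available over $\rca + \bst$ anyway. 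Both proofs use $\bst$ (in the guise of $\s{B}\Pi^0_1$-collection) at the same place, to produce a uniform threshold for the finitely many $y \leq x$ when showing the look-back coloring is stable, so the hypothesis cost is identical; your version simply drops the $\dnr$ detour and uses $\sfs(2)$ once instead of twice. One cosmetic point: the $\Delta^{0,f}_2$-ness of $\sim$ that you mention is never actually used, and the remark on the equivalence classes having size at most $2$ is correct and exactly the cardinality fact driving the verification.
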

\begin{proof}
Fix a weakly rainbow-stable 2-bounded coloring $f : [\Nb]^2 \to \Nb$.
As $\rca \vdash \sfs(2) \imp \dnr$, there exists by Lemma~\ref{lem:dnc-normal-rrt}
an infinite 1-tail $f$-rainbow $X$.
We will construct an infinite $X \oplus f$-computable stable coloring 
$g : [X]^2 \to \set{0,1}$ such that every infinite $g$-free set is an $f$-rainbow.
We define the coloring $g : [\Nb]^2 \to \Nb$ by stages as follows.

At stage $s$, assume $g(x, y)$ is defined for every $x, y < s$.
For every pair $x < y < s$ such that $g(x, s) = g(y, s)$, set $g(y, s) = x$.
For the remaining $x < s$, set $g(x, s) = 0$.
This finishes the construction. We now turn to the verification.

\begin{claim}
Every infinite set $H$ free for $g$ is a rainbow for $f$.
\end{claim}
\begin{proof}
Assume for the sake of contradiction that $H$ is not a rainbow for $f$.
Because $X$ is a 1-tail $f$-rainbow and $H \subseteq X$, 
there exists $x, y, s \in H$ such that $c(x, s) = c(y, s)$
with $x < y < s$. As $f$ is 2-bounded, neither $x$ nor $y$ can be part of another pair
$u, v$ such that $f(u, s) = f(v, s)$. So neither $x$ nor $y$ is restrained by another pair
already satisfied, and during the construction we set $g(y, s) = x$.
So $g(y, s) = x$ with $\set{x, y, s} \subset H$, contradicting freeness of $H$ for $g$.
\end{proof}

\begin{claim}
The coloring $g$ is stable.
\end{claim}
\begin{proof}
Fix a $y \in \Nb$. As $f$ is weakly rainbow-stable, we have two cases.
Either there exists an $x < y$ such that $f(y, s) =  f(x, s)$ for cofinitely
many $s$, in which case $g(y, s) = x$ for cofinitely many $s$ and we are done.
Or $f(y, s) \neq f(x, s)$ for each $x < y$ and cofinitely many $s$.
Then by $\bst$, for cofinitely many $s$, $f(y, s) = 0$.
\end{proof}
\end{proof}

\begin{question}
Does $\sts(2)$ imply $\wsrrt^2_2$ over $\rca$ ?
\end{question}

\vspace{0.5cm}

\noindent \textbf{Acknowledgements}. 
The author is thankful to his PhD advisor Laurent Bienvenu 
and to Wei Wang for useful comments and discussions.

\bibliographystyle{plain}

\appendix

\clearpage

\section{Diagram of relations}

\begin{figure}[htbp]
\caption{Diagram of considered principles modulo $\rca$}
\begin{center}
\includegraphics[width=13cm]{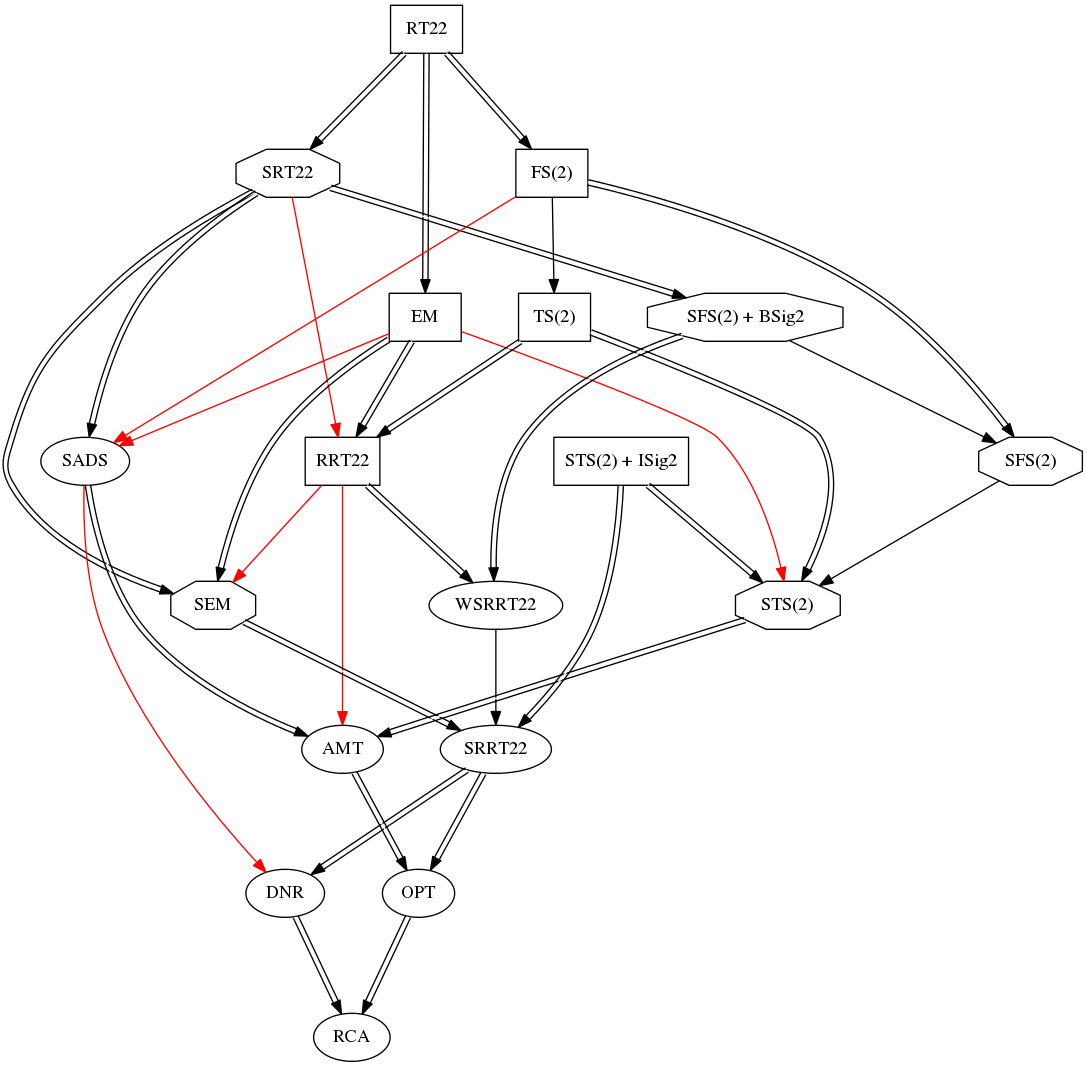}
\end{center}

\smallskip

\begin{itemize}
\begin{minipage}[t]{0.4\linewidth}  
  \item[] $\imp$ Simple implications
  \item[] $\Imp$ Strict implications
  \item[] $\textcolor{red}{\imp}$ Non-implications
\end{minipage}
\begin{minipage}[t]{0.6\linewidth}  
  \item[] Square : Standard model with only low${}_2$ sets
  \item[] Polygon : Model with only low sets
  \item[] Ellipse: Standard model with only low sets
\end{minipage}
\end{itemize}
\end{figure}

\begin{figure}[htbp]
\caption{Diagram of considered principles over $\omega$-models}
\begin{center}
\includegraphics[width=11cm]{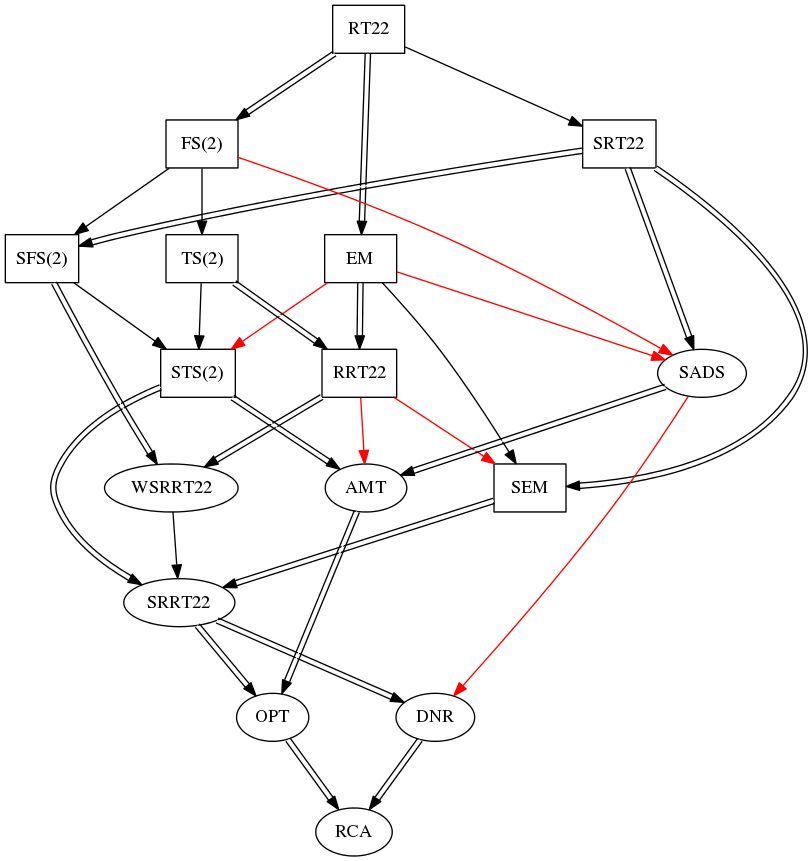}
\end{center}

\smallskip

\begin{itemize}
\begin{minipage}[t]{0.4\linewidth}  
  \item[] $\imp$ Simple implications
  \item[] $\Imp$ Strict implications
  \item[] $\textcolor{red}{\imp}$ Non-implications
\end{minipage}
\begin{minipage}[t]{0.6\linewidth}  
  \item[] Square : Standard model with only low${}_2$ sets
  \item[] Ellipse: Standard model with only low sets
\end{minipage}
\end{itemize}
\end{figure}

\end{document}